\def\Xint#1{\mathchoice
{\XXint\displaystyle\textstyle{#1}}%
{\XXint\textstyle\scriptstyle{#1}}%
{\XXint\scriptstyle\scriptscriptstyle{#1}}%
{\XXint\scriptscriptstyle\scriptscriptstyle{#1}}%
\!\int}
\def\XXint#1#2#3{{\setbox0=\hbox{$#1{#2#3}{\int}$}
\vcenter{\hbox{$#2#3$}}\kern-.5\wd0}}
\def\dashint{\Xint-}
\theoremstyle{plain}
\newtheorem{theorem}{Theorem}[subsection]
\newtheorem{proposition}[theorem]{Proposition}
\newtheorem{corollary}[theorem]{Corollary}
\newtheorem{lemma}[theorem]{Lemma}
\newtheorem{example}[theorem]{Example}
\newtheorem{definition}[theorem]{Definition}
\newtheorem{remark}[theorem]{Remark}
\newtheorem{fact}[theorem]{Fact}
\theoremstyle{definition}
\DeclareMathOperator{\Spec}{Spec}
\DeclareMathOperator{\Proj}{Proj}
\title[A uniform version of the Yau-Tian-Donaldson correspondence ]{A uniform version of the Yau-Tian-Donaldson correspondence for extremal K\"ahler metrics on polarized toric manifolds}
\author{Yasufumi Nitta}
\author{Shunsuke Saito}
\address{Department of Mathematics, Faculty of Science, Tokyo University of Science, 1-3 Kagurazaka, Shinjuku-ku, Tokyo 162-8601, Japan}
\email{nitta@rs.tus.ac.jp}
\address{Department of Mathematics, Faculty of Science, Tokyo University of Science, 1-3 Kagurazaka, Shinjuku-ku, Tokyo 162-8601, Japan}
\email{saito@rs.tus.ac.jp}
\keywords{Relative K-stability, extremal K\"ahler metrics, the Yau-Tian-Donaldson correspondence, toric varieties}
\subjclass[2010]{Primary~53C25,Secondary~32Q26,14M25}
\begin{document}
\begin{abstract}
The aim of this paper is to solve a uniform version of the Yau-Tian-Donaldson conjecture for polarized toric manifolds. 
Also, we show a combinatorial sufficient condition for uniform relative K-polystability. 
\end{abstract}
\maketitle
\setcounter{tocdepth}{2}
\tableofcontents
\section{Introduction}
  Let $(X, L)$ be an $n$-dimensional polarized algebraic manifold. We denote $\mathcal{H}(X, L)$ by the set of all K\"ahler metrics of $X$ in $c_{1}(L)$. A K\"ahler metric $\omega\in \mathcal{H}(X, L)$ is called a \emph{Calabi's extremal K\"ahler metric} if it is a critical point of the Calabi functional 
\begin{align*}
\mathcal{H}(X, L)\ni\alpha\mapsto\int_{X}(s(\alpha)-\overline{s})^{2}\,\frac{\alpha^{n}}{n!}\in\mathbf{R}, 
\end{align*}
where $s(\alpha)$ is a scalar curvature of $\alpha$ and $\overline{s} = -n(K_{X}\cdot L^{n-1})/(L^{n})$ is its average. 
Calabi showed in \cite{Ca82} that this condition is equivalent to that $\operatorname{grad}_{\omega}^{(1, 0)}(s(\omega)-\overline{s})$ is a holomorphic vector field on $X$. Extremal K\"ahler metrics have been widely studied as canonical K\"ahler metrics containing constant scalar curvature K\"ahler metrics and in particular K\"ahler-Einstein metrics. Especially, the existence problem of extremal K\"ahler metrics is known as the \emph{Yau-Tian-Donaldson conjecture}, and is one of the central problem in K\"ahler geometry. 

 The Yau-Tian-Donaldson conjecture predicts that the existence of an extremal K\"ahler metric is equivalent to the stability of $(X, L)$ in some sense of geometric invariant theory. Sz\'ekelyhidi introduced in \cite{S07} the notion of \emph{relative K-polystability} as a candidate of such a stability condition. 
Roughly speaking, relative K-polystability requires the positivity of the \emph{relative Donaldson-Futaki invariant} $DF_{V}>0$ for any nontrivial test object. Here $V$ is the extremal K\"ahler vector field of $(X, L)$ (\cite{FM95}), and test object is a special kind of degeneration of $(X, L)$, which is called a \emph{test configuration}. 

  On the other hand, calculations by Apostolov-Calderbank-Gauduchon-T\o nnesen-Friedman \cite{ACGTF08} suggest that relative K-polystability might not be sufficient to ensure the existence of a extremal K\"ahler metric, and leads to an expectation that we need to strengthen the notion of relative K-polystability. 
For strengthenings, the following two approaches are known: 
\begin{enumerate}[\upshape(a)]
\item Requiring that $DF_{V}$ is bounded from below by a multiple of some `norm' of test configurations (\cite{S06}, \cite{Der16}, \cite{BHJ17, BHJ19}). 
\item Testing $DF_{V}>0$ for more general test objects than test configurations (\cite{S15}, \cite{M15}). 
\end{enumerate}
In this paper, we will show the equivalence of both approaches for polarized toric manifolds. As a consequence, we will solve a uniform version of the Yau-Tian-Donaldson conjecture for polarized toric manifolds. 

An $n$-dimensional polarized toric manifold $(X, L)$ corresponds to an $n$-dimensional integral Delzant polytope $P \subset \mathbf{R}^{n}$ which is written by an intersection of half spaces 
\begin{align}\label{Delzant_polytope}
P = \{x \in \mathbf{R}^{n} \mid \langle \lambda_{j}, x \rangle + d_{j} \geq 0\ (j=1, \ldots, r)\}, 
\end{align}
where $\langle \cdot, \cdot \rangle$ is the standard inner product on $\mathbf{R}^{n}$, $r$ is the number of facets of $P$, $\lambda_{j} \in \mathbf{Z}^{n}$, $d_{j} \in \mathbf{Z}$ and each $\lambda_{j}$ is a primitive vector. 
Then, any toric K\"ahler metric representing $c_{1}(L)$ corresponds to a smooth strictly convex function on $P^{\circ}$ which satisfies the \emph{Guillemin's boundary condition} (see Section \ref{sec:SympPot}). 
Also, any toric test configuration $(\mathcal{X}, \mathcal{L})$ for $(X, L)$ corresponds to a rational piecewise affine function $f$ on $P$, and the relative Donaldson-Futaki invariant $DF_{V}(\mathcal{X}, \mathcal{L})$ of $(\mathcal{X}, \mathcal{L})$ can be written as 
\begin{align*}
L_{V}(f) = \int_{\partial P}f\,d\sigma-\int_{P}(\overline{s}+V)f\,dx, 
\end{align*}
where $\sigma$ is the Borel measure on $\partial P$ which is the multiple of the $(n-1)$-dimensional Lebesgue measure by $|\lambda_{j}|^{-1}$ on each facet $F_{j} = \{x \in P \mid \langle \lambda_{j}, x \rangle + d_{j} = 0\}$ (see \cite{ZZ08}). 

Let $\mathcal{S}$ be the set of all strictly convex functions on $P^{\circ}$ which satisfy the Guillemin's boundary condition. 
Let $\mathcal{C}_{PL}^{\mathbf{Q}}$ be the set of all rational piecewise affine functions on $P$. As mentioned above, we can regard $\mathcal{C}_{PL}^{\mathbf{Q}}$ as the set of all toric test configurations for $(X, L)$. 
Also, we define $\mathcal{C}_{PL}$ to be the set of all piecewise affine functions on $P$. 
Let $\mathcal{C}_{\infty}$ be the set of all continuous convex functions on $P$ which is smooth in the interior. 
Further, let $P^{\ast}$ be the union of relative interiors of faces of $P$ up to codimension one, and $\mathcal{C}_{\ast}$ be the set of all continuous convex functions on $P^{\ast}$ which are integrable on $\partial P$ with respect to the measure $\sigma$. 
We fix a point $x_{0}\in P$ in the interior, and denote by $\widetilde{\mathcal{C}}_{\ast}$ the set of all $f \in \mathcal{C}_{\ast}$ satisfying the \emph{normalized condition} $\inf_{P^{\ast}}f=f(x_{0})=0$. 
Also, set $\widetilde{\mathcal{F}}= \mathcal{F}\cap \widetilde{\mathcal{C}}_{\ast}$ for any $\mathcal{F} \subset \mathcal{C}_{*}$. 
Finally, for each $f\in\mathcal{C}_{\ast}$ we denote the \emph{$J$-norm} by $\|f\|_{J}$ (see Section \ref{sec:RedJ}). 

Under the notations above, our first main result is the following equivalence. 
\begin{theorem}\label{equiv_strengthenings}
Let $\mathcal{F}$ be one of the spaces $\mathcal{E}_{1}$, $\mathcal{C}_{\ast}$, $\mathcal{C}_{PL}$, $\mathcal{C}_{PL}^{\mathbf{Q}}$, $\mathcal{C}_{\infty}$ or $\mathcal{S}$. 
Then the followings are equivalent. 
\begin{description}
\item[$(b)_{\mathcal{F}}$] There exists a $\delta > 0$ such that $L_{V}(f)\ge \delta \int_{\partial P}f\,d\sigma$ for any $f\in\widetilde{\mathcal{F}}$. 
\item[$(J)_{\mathcal{F}}$] There exists a $\delta > 0$ such that $L_{V}(f)\ge \delta \|f\|_{J}$ for any $f\in\mathcal{F}$. 
\item[$(\mathrm{K})_{\mathcal{C}_{\ast}}$] $L_{V}(f)\ge 0$ for any $f\in\mathcal{C}_{\ast}$ and $L_{V}(f) = 0$ if and only if $f$ is affine. 
\item[$(\mathrm{K})_{\mathcal{E}_{1}}$] $L_{V}(f)\ge 0$ for any $f \in \mathcal{E}_{1}$ and $L_{V}(f) = 0$ if and only if $f$ is affine. 
\end{description}
\end{theorem}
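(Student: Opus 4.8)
The plan is to reduce the whole statement to three ingredients: the comparison $c_{0}\|f\|_{J}\le\int_{\partial P}f\,d\sigma\le C_{0}\|f\|_{J}$ on the normalized cone $\widetilde{\mathcal{C}}_{\ast}$ established in Section~\ref{sec:RedJ}; a collection of approximation arguments letting a uniform inequality pass between the various function classes; and a compactness argument promoting the qualitative positivity $(\mathrm{K})$ to the quantitative bounds $(b)$ and $(J)$. I begin with the formal reductions. Since $L_{V}$ is linear and kills affine functions (this is exactly how $\overline{s}$ and $V$ are normalized) and $\|\cdot\|_{J}$ descends to $\mathcal{C}_{\ast}/\mathrm{Aff}$, subtracting a supporting affine function of $f$ at $x_{0}$ — which can be taken rational when $\mathcal{F}=\mathcal{C}_{PL}^{\mathbf{Q}}$, and smooth, hence harmless for Guillemin's condition, when $\mathcal{F}=\mathcal{S}$ — produces a representative in $\widetilde{\mathcal{F}}$. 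Therefore, for each fixed $\mathcal{F}$, the comparison from Section~\ref{sec:RedJ} gives $(b)_{\mathcal{F}}\Leftrightarrow(J)_{\mathcal{F}}$ at once, and it also gives $(b)_{\mathcal{F}}\Rightarrow(\mathrm{K})_{\mathcal{F}}$: after normalizing, $L_{V}(f)=0$ forces $0\ge\delta\int_{\partial P}f\,d\sigma\ge 0$, so $f$ vanishes on the relative interiors of the facets and hence, being convex and nonnegative, vanishes identically on $P$, i.e.\ the original $f$ was affine; conversely affine functions contribute nothing.

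Next I treat the independence of $\mathcal{F}$ for the universally quantified conditions. Along the inclusions $\mathcal{S}\subset\mathcal{C}_{\infty}\subset\mathcal{C}_{\ast}$, $\mathcal{C}_{PL}^{\mathbf{Q}}\subset\mathcal{C}_{PL}$, and the inclusion between $\mathcal{C}_{\ast}$ and $\mathcal{E}_{1}$, the implications from the larger to the smaller class are immediate (for the non-convex members of $\mathcal{C}_{PL}$ one first passes to the convex envelope, using $L_{V}(f)\ge L_{V}(\widehat{f})$ and $\|f\|_{J}=\|\widehat{f}\|_{J}$). For the reverse implications I approximate: any convex $f\in\mathcal{C}_{\ast}$ is the increasing limit of the maxima $f_{k}$ of finitely many of its supporting affine functions, so the $f_{k}$ are convex and piecewise affine with $f_{k}\uparrow f$ on $P^{\circ}$ and $\sigma$-almost everywhere on $\partial P$, and monotone plus dominated convergence let both $L_{V}$ and $\|\cdot\|_{J}$ pass to the limit; perturbing the supporting functions to rational ones lands in $\mathcal{C}_{PL}^{\mathbf{Q}}$, and replacing the maxima by smooth soft-maxima and adding $\varepsilon$ times the Guillemin canonical potential lands in $\mathcal{S}\subset\mathcal{C}_{\infty}$, in each case with the functionals converging as the parameters degenerate. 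The subtle point here is that $f\mapsto\int_{\partial P}f\,d\sigma$ is only lower semicontinuous under $L^{1}$-convergence of convex functions, so the approximating families must be chosen (as above) to make the boundary integrals actually converge. Combining, all the conditions $(b)_{\mathcal{F}}$ and $(J)_{\mathcal{F}}$ are equivalent to $(J)_{\mathcal{C}_{\ast}}$.

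It remains to identify $(J)_{\mathcal{C}_{\ast}}$ with $(\mathrm{K})_{\mathcal{C}_{\ast}}$ and $(\mathrm{K})_{\mathcal{E}_{1}}$. The implication $(J)_{\mathcal{C}_{\ast}}\Rightarrow(\mathrm{K})_{\mathcal{C}_{\ast}}$ is clear because $\|f\|_{J}=0$ holds precisely for affine $f$. The core is $(\mathrm{K})_{\mathcal{C}_{\ast}}\Rightarrow(b)_{\mathcal{C}_{\ast}}$, which I prove by contradiction: assuming it fails, choose $f_{i}\in\widetilde{\mathcal{C}}_{\ast}$ with $\int_{\partial P}f_{i}\,d\sigma=1$ and $L_{V}(f_{i})\to 0$. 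Integrating the convexity inequality $f(tb)\le t f(b)$ along rays from $x_{0}$ yields both $\int_{P}f_{i}\,dx\le C_{P}$ and uniform integrability of $\{f_{i}\}$ up to $\partial P$; since convex functions with bounded $L^{1}$-norm are locally uniformly bounded, a subsequence converges in $L^{1}(P)$ to a convex $f_{\infty}\ge 0$ with $f_{\infty}(x_{0})=0$. Lower semicontinuity of the boundary term and continuity of $f\mapsto\int_{P}(\overline{s}+V)f\,dx$ give $L_{V}(f_{\infty})\le\liminf_{i}L_{V}(f_{i})=0$, so $(\mathrm{K})_{\mathcal{C}_{\ast}}$ forces $f_{\infty}$ to be affine and therefore $\equiv 0$; but then $\int_{P}(\overline{s}+V)f_{i}\,dx\to 0$, so $L_{V}(f_{i})\to\int_{\partial P}f_{i}\,d\sigma=1$, contradicting $L_{V}(f_{i})\to 0$. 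Finally $(\mathrm{K})_{\mathcal{C}_{\ast}}\Leftrightarrow(\mathrm{K})_{\mathcal{E}_{1}}$: one direction is restriction along the inclusion between $\mathcal{C}_{\ast}$ and $\mathcal{E}_{1}$, and the other follows from the equivalences already obtained together with the continuity of $L_{V}$ and $\|\cdot\|_{J}$ along the approximation of $\mathcal{E}_{1}$ by the smaller classes. The one genuine obstacle is the compactness step: one must prevent the boundary mass $\int_{\partial P}f_{i}\,d\sigma$ from being lost in the limit, and it is precisely the dual role played by $\int_{\partial P}f\,d\sigma$ — a summand of $L_{V}$ on the one hand, the quantity measuring uniformity on the other — that makes the contradiction go through.
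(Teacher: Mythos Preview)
Your overall architecture matches the paper's, and the compactness argument for $(\mathrm{K})_{\mathcal{C}_{\ast}}\Rightarrow(b)_{\mathcal{C}_{\ast}}$ is correct and is indeed the heart of the matter. But there is one genuine false claim. You assert that Section~\ref{sec:RedJ} furnishes a two-sided comparison
\[
c_{0}\|f\|_{J}\ \le\ \int_{\partial P}f\,d\sigma\ \le\ C_{0}\|f\|_{J}\qquad(f\in\widetilde{\mathcal{C}}_{\ast}),
\]
and you use this to get $(b)_{\mathcal{F}}\Leftrightarrow(J)_{\mathcal{F}}$ ``at once''. The left inequality is correct (combine Proposition~\ref{integral_bound_C*} with Proposition~\ref{red_J_estimate}), but the right inequality is \emph{false}: on $P=[-1,1]$ with $x_{0}=0$, the normalized piecewise-affine functions $f_{i}(x)=\max\{0,\,i(x-1)+1\}$ lie in $\widetilde{\mathcal{C}}_{\ast}$, have $\int_{\partial P}f_{i}\,d\sigma=f_{i}(1)+f_{i}(-1)=1$, yet $\|f_{i}\|_{J}\le\dashint_{P}f_{i}\,dx=\tfrac{1}{4i}\to 0$. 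So there is no direct implication $(J)_{\mathcal{F}}\Rightarrow(b)_{\mathcal{F}}$; the boundary integral genuinely sees more than the $J$-norm does.

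This does not destroy your proof, because later you \emph{do} prove $(J)_{\mathcal{C}_{\ast}}\Rightarrow(\mathrm{K})_{\mathcal{C}_{\ast}}$ and $(\mathrm{K})_{\mathcal{C}_{\ast}}\Rightarrow(b)_{\mathcal{C}_{\ast}}$; together with the valid direction $(b)_{\mathcal{C}_{\ast}}\Rightarrow(J)_{\mathcal{C}_{\ast}}$ this closes the cycle for $\mathcal{F}=\mathcal{C}_{\ast}$, and your approximation arguments then transport the equivalences to the other classes. This is exactly the route the paper takes: it never attempts $(J)_{\mathcal{F}}\Rightarrow(b)_{\mathcal{F}}$ directly, but goes around through $(\mathrm{K})_{\mathcal{C}_{\ast}}$. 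You should simply delete the ``$\Leftrightarrow$'' in the first paragraph and rely on the cycle you build afterwards.

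One smaller point: for $(\mathrm{K})_{\mathcal{C}_{\ast}}\Rightarrow(\mathrm{K})_{\mathcal{E}_{1}}$ you invoke ``continuity of $L_{V}$ along the approximation of $\mathcal{E}_{1}$'', but $L_{V}$ is only lower semicontinuous under $L^{1}$-convergence (precisely because of the boundary term). The paper's argument is cleaner: by Proposition~\ref{C*_NA_K_energy} one has $\mathcal{C}_{\ast}=\{f\in\mathcal{E}_{1}\mid L_{V}(f)<\infty\}$, so for $f\in\mathcal{E}_{1}\setminus\mathcal{C}_{\ast}$ the inequality $L_{V}(f)\ge 0$ is vacuous, and for $f\in\mathcal{C}_{\ast}$ one applies $(\mathrm{K})_{\mathcal{C}_{\ast}}$ directly.
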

Note that the proof of Theorem \ref{equiv_strengthenings} does not require the Delzant condition and that this equivalence also holds for polarized toric \emph{varieties}. 

Let $(X, L)$ be a polarized toric manifold associated to an integral Delzant polytope $P \subset \mathbf{R}^{n}$. 
We say that $(X, L)$ is \emph{uniformly relatively K-polystable} if $P$ satisfies the condition $(J)_{\mathcal{C}_{PL}^{\mathbf{Q}}}$. 
$(X, L)$ is \emph{uniformly K-polystable} if in addition that $V = 0$. 
By combining Theorem \ref{equiv_strengthenings} with the works of \cite{CLS14}, \cite{ZZ08}, \cite{He18} and \cite{Leg19}, we can prove a uniform version of the Yau-Tian-Donaldson correspondence for polarized toric manifolds. 
\begin{theorem}[The toric Yau-Tian-Donaldson correspondence]\label{toric_YTD}
  Let $(X, L)$ be an $n$-dimensional polarized toric manifold. 
Let $T$ be a maximal algebraic torus of $\mathrm{Aut}^{0}(X, L)$, and $S$ be the maximal compact subgroup. 
Then the followings are equivalent. 
\begin{enumerate}[\upshape(1)]
\item $(X, L)$ admits an $S$-invariant extremal K\"ahler metric. 
\item $(X, L)$ is uniformly relatively K-polystable. 
\item The relative K-energy of $(X, L)$ is $T$-coercive. 
\end{enumerate}
\end{theorem}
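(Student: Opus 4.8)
The plan is to prove the two equivalences $(2)\Leftrightarrow(3)$ and $(1)\Leftrightarrow(3)$ separately. Theorem \ref{equiv_strengthenings} enters only in the first: its role is to replace the combinatorial condition $(J)_{\mathcal{C}_{PL}^{\mathbf{Q}}}$, tested against toric test configurations, by the analytically natural $(J)_{\mathcal{E}_{1}}$ — equivalently $(b)_{\mathcal{S}}$ — which is the inequality one actually sees when one expands the relative K-energy on toric metrics. The second equivalence is then purely analytic and rests on \cite{CLS14}, \cite{He18} and \cite{Leg19}.

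For $(2)\Leftrightarrow(3)$, I would first recall from \cite{ZZ08} that on the space $\mathcal{S}$ of symplectic potentials the relative K-energy decomposes, up to an additive constant, as an entropy term $\int_{P}-\log\det(\mathrm{Hess}\,u)\,dx$ plus the linear functional $L_{V}(u)$, and that the entropy term is bounded below and, modulo the torus action and the base-point normalization, proper. Since on toric metrics the functional whose slope governs $T$-coercivity is, up to uniform equivalence, the combinatorial $J$-norm, $T$-coercivity of the relative K-energy is equivalent to an estimate $L_{V}(u)\ge\epsilon\,\|u\|_{J}-C$ for all normalized $u$; because $L_{V}$ and $\|\cdot\|_{J}$ are positively $1$-homogeneous, a scaling argument self-improves this to the sharp bound, i.e. $(J)_{\mathcal{S}}$. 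Passing to the closure, one uses the standard correspondence between finite-energy geodesic rays from a fixed toric metric and elements of $\mathcal{E}_{1}$, under which the radial slopes of the relative K-energy and of $J$ are $L_{V}$ and $\|\cdot\|_{J}$, to see that $T$-coercivity is in fact equivalent to $(J)_{\mathcal{E}_{1}}$. Theorem \ref{equiv_strengthenings} then gives $(J)_{\mathcal{E}_{1}}\Leftrightarrow(J)_{\mathcal{C}_{PL}^{\mathbf{Q}}}=(2)$.

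For $(1)\Leftrightarrow(3)$: the implication $(3)\Rightarrow(1)$ uses that coercivity forces the relative K-energy to attain its infimum — producing an $S$-invariant weak extremal metric — which is then promoted to a smooth toric extremal K\"ahler metric in $c_{1}(L)$ by the a priori estimates and regularity theory for Abreu's equation in the uniformly stable regime (\cite{CLS14} in complex dimension two and, in general dimension, the Chen--Cheng-type estimates behind \cite{He18} together with the toric regularity of \cite{Leg19}). The converse $(1)\Rightarrow(3)$ follows from convexity of the relative K-energy along finite-energy geodesics and uniqueness of extremal metrics modulo $\mathrm{Aut}^{0}(X,L)$, again via \cite{He18}, once one notes that in the toric case $T$ is a maximal torus of the (reductive) extremal automorphism group and acts through the Hamiltonian $S$-action, so properness modulo that group is exactly $T$-coercivity. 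I expect the main obstacle to be $(3)\Rightarrow(1)$ in dimension $\ge 3$ — extracting the $C^{0}$ and higher-order a priori estimates that turn coercivity into a genuine smooth solution — with a secondary bookkeeping issue throughout $(2)\Leftrightarrow(3)$, namely keeping the combinatorial normalization $\inf_{P^{\ast}}f=f(x_{0})=0$, the quotient by $T$, and the choice of base point mutually compatible so that the homogeneity argument really does bridge ``coercive up to a constant'' and the sharp inequalities of Theorem \ref{equiv_strengthenings}.
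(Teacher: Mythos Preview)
Your proposal is essentially correct and uses the same ingredients as the paper, but the logical organization and a couple of attributions differ. The paper runs the cycle $(1)\Rightarrow (b)_{\mathcal{C}_\infty}\Rightarrow(3)\Rightarrow(1)$ and then plugs in $(2)\Leftrightarrow(b)_{\mathcal{C}_\infty}$ from Theorem~\ref{equiv_strengthenings}: the step $(1)\Rightarrow(b)_{\mathcal{C}_\infty}$ is \cite{CLS14}, the step $(b)_{\mathcal{C}_\infty}\Rightarrow(3)$ is \cite{ZZ08}, and $(3)\Rightarrow(1)$ is the general He--Chen--Cheng machinery \cite{He18} reduced to the torus-invariant setting by \cite{Leg19}. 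You instead split off $(1)\Leftrightarrow(3)$ entirely via \cite{He18} (which the paper also records as Theorem~\ref{ext_coer_equiv}), and handle $(2)\Leftrightarrow(3)$ by a direct scaling argument on symplectic potentials.

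Two small corrections. First, \cite{CLS14} is not a regularity result for minimizers in the direction $(3)\Rightarrow(1)$; in this paper it supplies the reverse implication $(1)\Rightarrow(b)_{\mathcal{C}_\infty}$, i.e.\ an extremal metric forces the linear inequality. The regularity and existence for $(3)\Rightarrow(1)$ come from \cite{He18} (Chen--Cheng estimates) together with Legendre's toric reduction \cite{Leg19}, in all dimensions. Second, your heuristic that ``the entropy term is bounded below and, modulo the torus action, proper'' is not how \cite{ZZ08} actually proceeds: the entropy $-\int_P\log\det(u_{ij})\,dx$ is not separately bounded below on normalized potentials, and the coercivity in \cite{ZZ08} genuinely uses the linear condition $(b)_{\mathcal{C}_\infty}$ inside the estimate, not just as an additive correction. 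Your scaling argument for $(3)\Rightarrow(J)_{\mathcal{S}}$ is fine (and is essentially the Case~5 computation in the proof of Theorem~\ref{equiv_strengthenings}, or equivalently the slope formula behind Theorem~\ref{rel_K_coer}), but for the converse $(J)_{\mathcal{S}}\Rightarrow(3)$ you should invoke \cite{ZZ08} as a black box rather than the entropy heuristic.
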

The implications (1) $\Rightarrow$ $(b)_{\mathcal{C}_{\infty}}$ and $(b)_{\mathcal{C}_{\infty}}$ $\Rightarrow$ (3) are proved in \cite{CLS14} and \cite{ZZ08}, respectively. 
The implication (3) $\Rightarrow$ (1) can be obtained by the work of He \cite{He18} and its toric reduction by Legendre (see Section 3.2 of \cite{Leg19}). 
Finally, the equivalence $(J)_{\mathcal{C}_{PL}^{\mathbf{Q}}} \Leftrightarrow (b)_{\mathcal{C}_{\infty}}$ is proved in Theorem \ref{equiv_strengthenings}. 

If we consider the case that the extremal K\"ahler vector field of $(X, L)$ is zero, we obtain the following: 

\begin{corollary}\label{toric_YTD_cscK}
Let $(X, L)$, $T$, $S$ be as above. 
Then the followings are equivalent. 
\begin{enumerate}[\upshape(1)]
\item $(X, L)$ admits an $S$-invariant K\"ahler metric of constant scalar curvature. 
\item $(X, L)$ is uniformly K-polystable. 
\item The K-energy of $(X, L)$ is $T$-coercive. 
\end{enumerate}
\end{corollary}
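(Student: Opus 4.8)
The plan is to obtain Corollary~\ref{toric_YTD_cscK} as the specialization of Theorem~\ref{toric_YTD} to the case where the extremal K\"ahler vector field $V$ of $(X,L)$ vanishes: each of the three conditions in the corollary should be exactly the corresponding condition of Theorem~\ref{toric_YTD} read with $V=0$, so the task is only to verify that every ``relative'' notion there degenerates to its non-relative counterpart under this hypothesis.

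The three identifications are as follows. (i) By Calabi's characterization a metric $\omega\in\mathcal{H}(X,L)$ is extremal exactly when $\operatorname{grad}_{\omega}^{(1,0)}(s(\omega)-\overline{s})$ is holomorphic, and this vector field --- which is independent of $\omega$ by the Futaki--Mabuchi theory --- is $V$; hence if $V=0$ an $S$-invariant extremal metric is precisely an $S$-invariant metric with $s(\omega)\equiv\overline{s}$, i.e.\ an $S$-invariant cscK metric. This matches condition (1) with condition (1). (ii) For a toric test configuration $f\in\mathcal{C}_{PL}^{\mathbf{Q}}$ the relative Donaldson--Futaki invariant is $L_{V}(f)=\int_{\partial P}f\,d\sigma-\int_{P}(\overline{s}+V)f\,dx$, which for $V=0$ reduces to the ordinary Donaldson--Futaki expression $L_{0}(f)=\int_{\partial P}f\,d\sigma-\int_{P}\overline{s}f\,dx$, while the $J$-norm $\|f\|_{J}$ does not depend on $V$; therefore the condition $(J)_{\mathcal{C}_{PL}^{\mathbf{Q}}}$ defining uniform relative K-polystability becomes verbatim the condition defining uniform K-polystability, matching (2) with (2). (iii) The relative K-energy is obtained from the Mabuchi K-energy by a correction term built from the extremal potential of $V$, which vanishes identically when $V=0$; hence the two functionals coincide in this case and their $T$-coercivity properties agree, matching (3) with (3). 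Feeding these into the equivalence $(1)\Leftrightarrow(2)\Leftrightarrow(3)$ of Theorem~\ref{toric_YTD} gives the corollary.

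The single delicate point is that the hypothesis $V=0$ be well posed: on a polarized toric manifold $V$ is an affine function on $P$, uniquely determined by the requirement that $L_{V}$ vanish on all affine functions (the pairing $(W,f)\mapsto\int_{P}Wf\,dx$ being nondegenerate on the finite-dimensional space of affine functions). Thus the vanishing of the extremal K\"ahler vector field is an unambiguous condition on the polytope $P$, under which the reductions in (i)--(iii) are literal identities rather than approximations, and nothing further remains. I do not expect a genuine obstacle here: all the substance is already contained in Theorem~\ref{toric_YTD} and, through it, in Theorem~\ref{equiv_strengthenings}.
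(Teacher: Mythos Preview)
Your proposal is correct and matches the paper's approach exactly: the paper derives Corollary~\ref{toric_YTD_cscK} simply by specializing Theorem~\ref{toric_YTD} to the case $V=0$, and your three identifications (i)--(iii) spell out precisely why each ``relative'' condition collapses to its absolute counterpart under that hypothesis. The paper gives no formal proof beyond the sentence preceding the corollary, so your write-up is in fact more detailed than the original while following the same route.
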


We mention the very recent papers \cite{Jubert} and \cite{LLS}, which have some overlaps with the present paper. 

As a simple application of Theorem \ref{toric_YTD}, we obtain the following product theorem for uniform relative K-polystability. 

\begin{corollary}
Let $(X_{1}, L_{1})$ and $(X_{2}, L_{2})$ be polarized toric manifolds, and set $X = X_{1} \times X_{2}$ and $L = L_{1} \boxtimes L_{2}$. If $(X_{1}, L_{1})$ and $(X_{2}, L_{2})$ are uniformly relatively K-polystable, then so is $(X, L)$. 
\end{corollary}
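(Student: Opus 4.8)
The plan is to deduce the corollary from the equivalence $(1)\Leftrightarrow(2)$ of Theorem \ref{toric_YTD}, together with the elementary fact that a K\"ahler product of extremal K\"ahler metrics is again extremal. First I would record that $X=X_1\times X_2$ with $L=L_1\boxtimes L_2$ is itself a polarized toric manifold: its moment polytope is $P_1\times P_2$, and a product of Delzant polytopes is Delzant because the facet normals of $P_1\times P_2$ through a vertex $(v_1,v_2)$ are the normals through $v_1$ padded with zeros together with those through $v_2$, and together these form a $\mathbf Z$-basis of $\mathbf Z^{n_1+n_2}$. Its compact torus is $S_1\times S_2$, where $S_i\cong(S^1)^{n_i}$ is the torus of $X_i$, and — being of full rank $n_1+n_2$ — it is a maximal compact torus of $\mathrm{Aut}^0(X,L)$.

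Next, since $(X_1,L_1)$ and $(X_2,L_2)$ are uniformly relatively K-polystable, the implication $(2)\Rightarrow(1)$ of Theorem \ref{toric_YTD} supplies, for $i=1,2$, an $S_i$-invariant extremal K\"ahler metric $\omega_i\in\mathcal H(X_i,L_i)$. I would then set $\omega:=\mathrm{pr}_1^{\ast}\omega_1+\mathrm{pr}_2^{\ast}\omega_2$ on $X$. It represents $\mathrm{pr}_1^{\ast}c_1(L_1)+\mathrm{pr}_2^{\ast}c_1(L_2)=c_1(L)$, hence $\omega\in\mathcal H(X,L)$, and it is $(S_1\times S_2)$-invariant, i.e. $S$-invariant in the sense of Theorem \ref{toric_YTD}. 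To see that $\omega$ is extremal, one uses that for a K\"ahler product $s(\omega)=\mathrm{pr}_1^{\ast}s(\omega_1)+\mathrm{pr}_2^{\ast}s(\omega_2)$ and, taking averages, $\overline s=\overline s_1+\overline s_2$, while $\operatorname{grad}_{\omega}^{(1,0)}(\mathrm{pr}_j^{\ast}g)=\mathrm{pr}_j^{\ast}\operatorname{grad}_{\omega_j}^{(1,0)}g$ for $g\in C^{\infty}(X_j)$; therefore
\[
\operatorname{grad}_{\omega}^{(1,0)}\bigl(s(\omega)-\overline s\bigr)=\mathrm{pr}_1^{\ast}\operatorname{grad}_{\omega_1}^{(1,0)}\bigl(s(\omega_1)-\overline s_1\bigr)+\mathrm{pr}_2^{\ast}\operatorname{grad}_{\omega_2}^{(1,0)}\bigl(s(\omega_2)-\overline s_2\bigr),
\]
and each summand is a holomorphic vector field on $X$, since $\omega_j$ is extremal and the pullback of a holomorphic vector field along a holomorphic projection stays holomorphic. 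By Calabi's criterion \cite{Ca82}, $\omega$ is extremal, and the implication $(1)\Rightarrow(2)$ of Theorem \ref{toric_YTD} applied to $(X,L)$ then finishes the argument.

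The one place that is not purely formal — and which I expect to be the main, if minor, obstacle — is matching the torus $S$ in the hypotheses of Theorem \ref{toric_YTD} (the maximal compact subgroup of a maximal algebraic torus of $\mathrm{Aut}^0(X,L)$) with the product torus $S_1\times S_2$. This is settled above by the fact that the big torus of a polarized toric manifold is already a maximal torus of its automorphism group, so that ``$S$-invariant'' simply means ``toric''. If one prefers to avoid this structural input, the same conclusion follows from Calabi's theorem that the identity component of the isometry group of an extremal metric is a maximal compact subgroup of $\mathrm{Aut}^0(X,L)$: since all maximal compact subgroups are conjugate, pulling $\omega$ back by a suitable automorphism produces an $S$-invariant extremal metric in $c_1(L)$, after which one concludes as before. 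A purely combinatorial route is also available via Theorem \ref{equiv_strengthenings} — one checks that the extremal affine function of $P_1\times P_2$ is $\zeta_1(x_1)+\zeta_2(x_2)$ and then verifies $(\mathrm{K})_{\mathcal C_{\ast}}$ by slicing in the two groups of variables — but this is more technical, so I would present the analytic argument above.
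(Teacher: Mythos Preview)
Your proposal is correct and follows essentially the same route as the paper: use Theorem~\ref{toric_YTD} to pass from uniform relative K-polystability of the factors to torus-invariant extremal metrics, take the product metric (which is again extremal), and then apply Theorem~\ref{toric_YTD} in the reverse direction to $(X,L)$. The paper's own proof is a terse two-sentence version of exactly this argument; your additional justifications (that $P_1\times P_2$ is Delzant, the explicit check that the product metric is extremal, and the discussion of why $S_1\times S_2$ is the relevant maximal torus) simply flesh out details the paper takes for granted.
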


In fact, if $(X_{1}, L_{1})$ and $(X_{2}, L_{2})$ are uniformly relatively K-polystable, then they admit torus invariant extremal K\"ahler metrics by Theorem \ref{toric_YTD}. 
Since the product metric of extremal K\"ahler metrics is also an extremal K\"ahler metric, $(X, L)$ is uniformly relatively K-polystable by Theorem \ref{toric_YTD} again. 

Next, we give a combinatorial sufficient condition for unirofm relative K-polystability. 
Let $(X, L)$ be a polarized toric manifold associated an integral Delzant polytope $P$ defined by \eqref{Delzant_polytope}. 
We fix a point $x_{0}\in P$ in the interior, and put $d_{x_{0}, j} = \langle \lambda_{j}, x_{0}\rangle + d_{j}$ for each $j=1, \ldots, r$. 
Note that $d_{x_{0}, j} > 0$ for any $j$. 
Then the following theorem strengthens \cite[Theorem 0.1]{ZZ08}, where relative K-polystability was deduced under the same assumptions. 
\begin{theorem}\label{suff}
Let $d_{x_{0}}= \max\{d_{x_{0}, 1}, \ldots, d_{x_{0}, r}\}$. 
Suppose that $P$ satisfies either 
\begin{align}\label{unif.stab}
V=0\quad \text{and} \quad\overline{s}< \frac{n+1}{d_{x_{0}}}, 
\end{align}
or 
\begin{align}\label{unif.rel.stab}
V \neq 0\quad \text{and}\quad \overline{s} + \max_{P}V \le \frac{n+1}{d_{x_{0}}}. 
\end{align}
Then $(X, L)$ is uniformly relatively K-polystable. 
\end{theorem}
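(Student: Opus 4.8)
The plan is to verify, via Theorem~\ref{equiv_strengthenings}, one of the equivalent forms of uniform relative K-polystability: the condition $(b)_{\mathcal{C}_{\ast}}$ when the relevant inequality is strict, and the Mabuchi-type condition $(\mathrm{K})_{\mathcal{C}_{\ast}}$ in the borderline case of \eqref{unif.rel.stab}. The single analytic ingredient I would establish first is a \emph{pyramid estimate}. For each facet $F_{j}$ let $C_{j}$ be the convex hull of $\{x_{0}\}\cup F_{j}$; the cones $C_{j}$ cover $P$ and have pairwise Lebesgue-null intersections. The map $(t,y)\mapsto (1-t)x_{0}+ty$ from $[0,1]\times F_{j}$ onto $C_{j}$ has Jacobian $t^{n-1}$ times the Euclidean distance from $x_{0}$ to the hyperplane $\{\langle\lambda_{j},x\rangle+d_{j}=0\}$, that is, $t^{n-1}\,d_{x_{0},j}/|\lambda_{j}|$. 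For $f\in\widetilde{\mathcal{C}}_{\ast}$ one has $f\ge 0$, $f(x_{0})=0$, and $f$ is convex on each segment $[x_{0},y]\subset P^{\ast}$, hence $f((1-t)x_{0}+ty)\le t\,f(y)$; integrating in $t$ (the factor $\int_{0}^{1}t\cdot t^{n-1}\,dt=\tfrac{1}{n+1}$ appears) and using that $\sigma$ equals $|\lambda_{j}|^{-1}$ times Lebesgue measure on $F_{j}$ gives $\int_{C_{j}}f\,dx\le\tfrac{d_{x_{0},j}}{n+1}\int_{F_{j}}f\,d\sigma$; summing over $j$ and using $d_{x_{0},j}\le d_{x_{0}}$,
\[
\int_{P}f\,dx\ \le\ \frac{d_{x_{0}}}{n+1}\int_{\partial P}f\,d\sigma\qquad\text{for all }f\in\widetilde{\mathcal{C}}_{\ast}.
\]
(Applied to $f$ minus an affine support function at $x_{0}$, the same bound shows every $f\in\mathcal{C}_{\ast}$ is integrable on $P$, so $L_{V}$ is well defined on $\mathcal{C}_{\ast}$.)

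Next I would set $A:=\overline{s}+\max_{P}V$ and $\ell:=\max_{P}V-V$, so that $\ell\ge 0$ is affine, and is non-constant when $V\neq 0$, hence $\ell>0$ Lebesgue-almost everywhere on $P$. For $f\in\widetilde{\mathcal{C}}_{\ast}$ one has the identity
\[
L_{V}(f)=\int_{\partial P}f\,d\sigma-A\int_{P}f\,dx+\int_{P}\ell f\,dx .
\]
If $A<\tfrac{n+1}{d_{x_{0}}}$: when $A\le 0$ the last two terms are each nonnegative, so $L_{V}(f)\ge\int_{\partial P}f\,d\sigma$; when $0<A<\tfrac{n+1}{d_{x_{0}}}$, discarding $\int_{P}\ell f\,dx\ge 0$ and applying the pyramid estimate to the middle term gives $L_{V}(f)\ge(1-\tfrac{A\,d_{x_{0}}}{n+1})\int_{\partial P}f\,d\sigma$ with a positive constant. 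In either subcase $(b)_{\mathcal{C}_{\ast}}$ holds, hence by Theorem~\ref{equiv_strengthenings} so does $(J)_{\mathcal{C}_{PL}^{\mathbf{Q}}}$, i.e.\ $(X,L)$ is uniformly relatively K-polystable. This settles \eqref{unif.stab} (there $V=0$, so $A=\overline{s}<\tfrac{n+1}{d_{x_{0}}}$), and also \eqref{unif.rel.stab} whenever the inequality there is strict.

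It remains to treat \eqref{unif.rel.stab} with equality, $A=\tfrac{n+1}{d_{x_{0}}}$. For $f\in\widetilde{\mathcal{C}}_{\ast}$ the pyramid estimate yields
\[
L_{V}(f)=\int_{\partial P}f\,d\sigma-A\int_{P}f\,dx+\int_{P}\ell f\,dx\ \ge\ \int_{P}\ell f\,dx\ \ge\ 0,
\]
and $L_{V}(f)=0$ forces $\int_{P}\ell f\,dx=0$, hence $f\equiv 0$ since $\ell>0$ a.e.\ and $f\ge 0$. For an arbitrary $f\in\mathcal{C}_{\ast}$ I would pick an affine support function $a_{0}$ of $f$ at the interior point $x_{0}$, so that $f-a_{0}\in\widetilde{\mathcal{C}}_{\ast}$; since $L_{V}$ vanishes on affine functions (equivalently, $V$ is the extremal affine function of $P$), $L_{V}(f)=L_{V}(f-a_{0})\ge 0$ with equality exactly when $f-a_{0}\equiv 0$, i.e.\ when $f$ is affine. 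Thus $(\mathrm{K})_{\mathcal{C}_{\ast}}$ holds, and Theorem~\ref{equiv_strengthenings} again gives $(J)_{\mathcal{C}_{PL}^{\mathbf{Q}}}$, completing all cases.

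The step I expect to be the main obstacle is the borderline case of \eqref{unif.rel.stab}: there the pyramid estimate by itself gives only $L_{V}\ge 0$, never a uniform lower bound by a multiple of $\int_{\partial P}f\,d\sigma$, so $(b)_{\mathcal{C}_{\ast}}$ cannot be reached directly. The affine factor $\ell$, available precisely because $V\neq 0$, restores strictness, but only in the weak form $(\mathrm{K})_{\mathcal{C}_{\ast}}$; the passage from there to a uniform constant is exactly the equivalence $(\mathrm{K})_{\mathcal{C}_{\ast}}\Leftrightarrow(J)_{\mathcal{C}_{PL}^{\mathbf{Q}}}$ of Theorem~\ref{equiv_strengthenings}. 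For $V=0$ no such factor is available and the inequality $\overline{s}\le\tfrac{n+1}{d_{x_{0}}}$ is not enough in general --- the pyramid estimate can already be an equality for non-affine $f$ that are linear along the rays emanating from $x_{0}$ --- which is why \eqref{unif.stab} is imposed with a strict inequality. Beyond this, the only care required is the routine verification that the pyramid estimate and the support function $a_{0}$ behave well for the rough class $\mathcal{C}_{\ast}$, whose members are convex and continuous only on $P^{\ast}$.
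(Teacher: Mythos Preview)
Your proposal is correct and shares with the paper both its analytic core --- the pyramid estimate (the paper's Proposition~\ref{integral_bound_C*}) and the observation that $\ell=\max_{P}V-V>0$ almost everywhere when $V\neq 0$ --- and its overall strategy of reducing to Theorem~\ref{equiv_strengthenings}. The one genuine difference is in how the borderline case of~\eqref{unif.rel.stab} is handled. The paper aims directly at $(b)_{\mathcal{C}_{PL}^{\mathbf{Q}}}$: assuming it fails, it normalizes a minimizing sequence, invokes the compactness result (Proposition~\ref{cptness}) to extract a limit $f\in\widetilde{\mathcal{C}}_{\ast}$ with $L_{V}(f)\le 0$, uses $V<\max_{P}V$ a.e.\ to force $f\equiv 0$, and then derives a numerical contradiction from the normalization. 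You instead verify $(\mathrm{K})_{\mathcal{C}_{\ast}}$ directly via the pointwise inequality $L_{V}(f)\ge\int_{P}\ell f\,dx$, and let Theorem~\ref{equiv_strengthenings} (whose implication $(\mathrm{K})_{\mathcal{C}_{\ast}}\Rightarrow(b)_{\mathcal{F}}$ is itself proved by exactly this kind of compactness argument) supply the uniform constant. Your route is a little shorter and more transparent, since it avoids re-running the compactness step already contained in the equivalence theorem; the paper's route has the minor advantage of making the role of the normalization $\int_{\partial P}f_{i}\,d\sigma=1$ explicit.
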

In case that $(X, L) = (X, -K_{X})$ is a toric Fano manifold with anticanonical polarization, then $\overline{s} = n$ and $d_{j} = 1$ for all $j$. 
Moreover, we can choose $x_{0}$ to be the origin of $\mathbf{R}^{n}$. 
Hence the conditions \eqref{unif.stab} and \eqref{unif.rel.stab} reduce 
\begin{align}\label{unif.stab.Fano}
V=0 
\end{align}
and
\begin{align}\label{unif.rel.stab.Fano}
&V \neq 0\quad \text{and}\quad \max_{P}V \le 1, 
\end{align}
respectively. 
Combining this with the computations in \cite{ZZ08} and \cite{NSY19} yields the following: 
\begin{corollary}\label{classification_K}
\begin{enumerate}[(1)]
\item All toric del Pezzo surfaces are uniformly relatively K-polystable. 
\item In 3-dimensional case, at least 13 toric Fano manifolds out of 18 are uniformly relatively K-polystable. 
\end{enumerate}
\end{corollary}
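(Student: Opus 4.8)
The plan is to deduce both parts from Theorem \ref{suff} in its anticanonical form, the actual content being a finite computation of extremal vector fields over the classification lists. For a toric Fano manifold $(X, -K_{X})$ the reflexive polytope $P$ may be taken centred at the origin, and then $\overline{s} = n$, $d_{j} = 1$, $d_{x_{0}} = 1$, so --- as already recorded in the excerpt --- Theorem \ref{suff} guarantees uniform relative K-polystability as soon as $V = 0$, or $V \neq 0$ and $\max_{P} V \leq 1$. Now the extremal affine function $A_{P} := \overline{s} + V$ is characterized by the requirement that $\int_{P} A_{P}\,\ell\, dx = \int_{\partial P} \ell\, d\sigma$ for every affine function $\ell$ on $P$ (equivalently, $L_{V}$ vanishes on affine functions); writing $A_{P}$ in the basis $1, x_{1}, \dots, x_{n}$ this is a linear system of size $n+1$ whose coefficients are the moments $\int_{P} x_{i} x_{j}\, dx$ and $\int_{\partial P} x_{i}\, d\sigma$, all elementary functions of the vertex/facet data of $P$. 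Thus $V = A_{P} - n$ is explicitly computable for each $P$, and since $V$ is affine its maximum over $P$ is attained at a vertex, so for each $X$ the condition of Theorem \ref{suff} reduces to a finite check.

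For (1): the smooth toric del Pezzo surfaces are $\mathbf{P}^{2}$, $\mathbf{P}^{1}\times\mathbf{P}^{1}$, $\mathbf{F}_{1}$, and the blow-ups of $\mathbf{P}^{2}$ at two and at three torus-fixed points. For $\mathbf{P}^{2}$, $\mathbf{P}^{1}\times\mathbf{P}^{1}$ and the three-point blow-up the Futaki invariant vanishes and $V = 0$ (these are K\"ahler--Einstein), so the first alternative of Theorem \ref{suff} applies. For $\mathbf{F}_{1}$ and the two-point blow-up the extremal vector field is nonzero; solving the $3\times 3$ system above --- as done in \cite{ZZ08} and again in \cite{NSY19} --- one gets an explicit affine $V$ with $\max_{P} V \leq 1$ in each case, so the second alternative applies. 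Hence all five are uniformly relatively K-polystable. (Alternatively this would follow from Theorem \ref{toric_YTD} together with the known existence of extremal K\"ahler metrics on del Pezzo surfaces, but the point is that the combinatorial criterion already suffices.)

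For (2): there are exactly $18$ smooth toric Fano threefolds, and the extremal vector field of each was computed in \cite{NSY19} (partly already in \cite{ZZ08}) by solving the $4\times 4$ version of the system above. Running through the list, one records for each threefold whether $V = 0$ and, if not, the value of $\max_{P} V$ over its vertices; those with $V = 0$ or $\max_{P} V \leq 1$ number $13$, while the remaining (at most) five satisfy $\max_{P} V > 1$ and so fall outside the scope of Theorem \ref{suff}. Applying Theorem \ref{suff} to the $13$ identified cases gives the assertion. There is no conceptual obstacle here; the only point demanding care is bookkeeping --- ensuring that the $V$ quoted from \cite{ZZ08}, \cite{NSY19} is normalized exactly so that $L_{V}$ vanishes on affine functions and that $P$ is the origin-centred reflexive polytope --- after which both parts come down to comparing finitely many explicit rational numbers with $1$.
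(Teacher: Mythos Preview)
Your proposal is correct and follows essentially the same approach as the paper: apply Theorem \ref{suff} in its anticanonical specialization (conditions \eqref{unif.stab.Fano} and \eqref{unif.rel.stab.Fano}) and invoke the explicit computations of the extremal vector field carried out in \cite{ZZ08} and \cite{NSY19}. The paper itself gives no further argument beyond this citation, so your additional explanation of how $V$ is determined by a linear system in the moments of $P$ is a helpful elaboration rather than a deviation.
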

It is known that all toric del Pezzo surfaces admit an extremal K\"ahler metric in its anticanonical class by the works of \cite{Ca82} (degree 8 case), \cite{Siu88} and \cite{TY87} (degree 6 case) and \cite{CLW08} (degree 7 case). 
Corollary \ref{classification_K} gives the stability counterpart of this fact. 

We finally concern uniform K-polystability of toric Fano manifolds. 
Note that the condition \eqref{unif.stab.Fano} is equivalent to that the Futaki invariant of $X$ vanishes (\cite{FM95}). 
Also, it is known that the Futaki invariant vanishes if and only if the barycenter of $P$ coincides with the origin of $\mathbf{R}^{n}$ (\cite{M87b}). 
Thus, 
\begin{corollary}\label{stability_Fano_cases_vanFtaki} 
Let $(X, -K_{X})$ be an $n$-dimensional toric Fano manifold with anticanonical polarization associated to an integral Delzant polytope $P$. 
Then the followings are equivalent: 
\begin{enumerate}[(1)]
\item $(X, -K_{X})$ is uniformly K-polystable. 
\item $(X, -K_{X})$ is K-polystable. 
\item $(X, -K_{X})$ is K-semistable. 
\item The Futaki invariant of $X$ vanishes. 
\item The barycenter of $P$ coincides with the origin of $\mathbf{R}^{n}$. 
\item $V = 0$. 
\end{enumerate}
\end{corollary}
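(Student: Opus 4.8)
The plan is to establish the cycle of implications $(1)\Rightarrow(2)\Rightarrow(3)\Rightarrow(4)\Rightarrow(6)\Rightarrow(1)$ together with the equivalence $(4)\Leftrightarrow(5)$; once (1)--(6) all sit on a single chain of implications they are mutually equivalent. The implications $(1)\Rightarrow(2)\Rightarrow(3)$ are immediate from the definitions. Indeed, for a toric Fano manifold with anticanonical polarization we have $\overline{s}=n$ and $d_{j}=1$ for all $j$, and we may take $x_{0}$ to be the origin of $\mathbf{R}^{n}$; uniform K-polystability is the condition $(J)_{\mathcal{C}_{PL}^{\mathbf{Q}}}$ together with $V=0$, so it provides a $\delta>0$ with $L_{0}(f)\ge\delta\|f\|_{J}$ for all $f\in\mathcal{C}_{PL}^{\mathbf{Q}}$, where $L_{0}(f)=\int_{\partial P}f\,d\sigma-\overline{s}\int_{P}f\,dx$ is the (non-relative) Donaldson-Futaki invariant. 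Since $\|\cdot\|_{J}\ge 0$ and vanishes only on affine functions, this forces $L_{0}(f)\ge 0$ with equality exactly for affine $f$, which is K-polystability, and a fortiori $L_{0}(f)\ge 0$ for all $f\in\mathcal{C}_{PL}^{\mathbf{Q}}$, which is K-semistability. (Alternatively, $(1)\Rightarrow(2)$ follows from Corollary \ref{toric_YTD_cscK} together with the standard fact that the existence of a constant scalar curvature K\"ahler metric implies K-polystability.)

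For $(3)\Rightarrow(4)$, I would use that every $f\in\mathcal{C}_{PL}^{\mathbf{Q}}$ corresponds to a toric test configuration whose Donaldson-Futaki invariant equals $L_{0}(f)$, so K-semistability gives $L_{0}(f)\ge 0$ for all $f\in\mathcal{C}_{PL}^{\mathbf{Q}}$. Applying this to an affine function $\ell$ and to $-\ell$ and using the linearity of $L_{0}$ yields $L_{0}(\ell)=0$ for every affine $\ell$. Since $L_{0}$ vanishes on constants automatically (by the normalization of $\overline{s}$) and on linear functions recovers the classical Futaki invariant, this is precisely the vanishing of the Futaki invariant of $(X,-K_{X})$. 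The remaining steps are citations recorded above the statement: $(4)\Rightarrow(6)$ is the equivalence of the vanishing of the Futaki invariant with $V=0$ (\cite{FM95}), and $(4)\Leftrightarrow(5)$ is Mabuchi's identification \cite{M87b} of the vanishing of the Futaki invariant of a toric Fano manifold with the coincidence of the barycenter of $P$ and the origin of $\mathbf{R}^{n}$.

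Finally, $(6)\Rightarrow(1)$ carries the real content: with $x_{0}$ the origin, the hypothesis $V=0$ is exactly condition \eqref{unif.stab.Fano}, so Theorem \ref{suff} shows that $(X,-K_{X})$ is uniformly relatively K-polystable, and since $V=0$ this is by definition uniform K-polystability. The only step not reducible to a direct citation is $(3)\Rightarrow(4)$, and the one point there deserving care is the bookkeeping of normalizations needed to see that ``$L_{0}$ vanishes on all affine functions'' is literally the vanishing of the Futaki invariant; this is routine, so the substantive input to the corollary is Theorem \ref{suff}.
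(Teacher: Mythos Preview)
Your proposal is correct and follows the same route as the paper. The paper does not spell out a formal proof after the corollary; the paragraph immediately preceding it records precisely the ingredients you invoke---that in the anticanonical Fano case condition \eqref{unif.stab} reduces to $V=0$, that $V=0$ is equivalent to vanishing of the Futaki invariant by \cite{FM95}, and that this in turn is equivalent to the barycenter condition by \cite{M87b}---with Theorem \ref{suff} then supplying $(6)\Rightarrow(1)$. Your explicit treatment of $(3)\Rightarrow(4)$ via $L_{0}(\pm\ell)\ge 0$ for affine $\ell$ is the standard argument the paper leaves implicit.
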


\section{Background materials}
\subsection{Notation and convensions}
Throughout this paper, we use the following notation and convensions: 
In this paper, we mean an $n$-dimensional polarized manifold a pair $(X, L)$ of an $n$-dimensional compact complex manifold $X$ and an ample holomorphic line bundle $L$ over $X$. 
\begin{itemize}
\item $S^{1} = \mathbf{R}/\mathbf{Z}$, $\mathrm{Lie}(S^{1}) = 2\pi\sqrt{-1}\mathbf{R}$, $\mathrm{vol}(S^{1}) = 1$. 
\item $\mathbf{C} = (-1/2)\mathbf{R} \oplus 2\pi\sqrt{-1}\mathbf{R} = (-\sqrt{-1}/4\pi) \mathrm{Lie}(S^{1}) \oplus \mathrm{Lie}(S^{1})$. 
\item $\mathbf{C}^{\times} = \mathbf{C}\setminus\{0\} = \exp((-1/2)\mathbf{R} \oplus 2\pi\sqrt{-1}\mathbf{R})$. 
\item $\omega(\cdot, \cdot) = (1/2\pi)g(J\cdot, \cdot)$ for any K\"ahler metric $g$. 
Here $J$ is the integrable almost complex structure of the complex manifold $X$. 
\item $\Delta = \Delta_{d} = 2\Delta_{\bar \partial} = \nabla^{i}\nabla_{i} = \mathrm{tr}_{g}\mathrm{Hess}_{g}$ for any K\"ahler metric $g$. 
\end{itemize}

Let  $T$ be a maximal algebraic torus of $\mathrm{Aut}^{0}(X)$, with $\mathfrak{t} = \mathrm{Lie}(T)$. 
\begin{itemize}
\item $\mathbf{A}^{1} = \Spec \mathbf{C}[t]$, $\mathbf{G}_{m} = \Spec \mathbf{C}[t, t^{-1}]$. 
\item $N = \mathrm{Hom}(\mathbf{G}_{m}, T)$, which is a lattice of rank $\dim_{\mathbf{C}}T$. 
\item $N_{k} = N \otimes_{\mathbf{Z}}k$ for $k = \mathbf{Q}$ or $\mathbf{R}$. 
\item $M = \mathrm{Hom}(N, \mathbf{Z})$, $M_{k} = M \otimes_{\mathbf{Z}}k$ for $k = \mathbf{Q}$ or $\mathbf{R}$. 
\item $S = N \otimes_{\mathbf{Z}}S^{1}$, which is the maximal compact subgroup of $T$, with $\mathfrak{s} = \mathrm{Lie}(S) = 2\pi\sqrt{-1}N_{\mathbf{R}}$, which satisfies $\mathfrak{t} = (-\sqrt{-1}/4\pi)\mathfrak{s} \oplus \mathfrak{s} = (-1/2)N_{\mathbf{R}} \oplus 2\pi\sqrt{-1}N_{\mathbf{R}}$. 
Note that $M_{\mathbf{R}} = \mathfrak{s}^{\vee}$. 
\end{itemize}

\subsection{Extremal K\"ahler metrics}
In this section, we recall the definition of extremal K\"ahler metrics. 
Let $(X, L)$ be an $n$-dimensional polarized manifold, and fix a maximal algebraic torus $T$ of $\mathrm{Aut}^{0}(X)$. 
We denote by $\mathcal{H}(X, L)^{S}$ the set of all $S$-invariant K\"ahler metrics $\omega$ such that $\omega \in c_{1}(L)$. 
By setting $\overline{s} = -n(K_{X}\cdot L^{n-1})/(L^{n})$, we define $\Phi \colon \mathcal{H}(X, L)^{S} \to \mathbf{R}$ by 
\begin{align*}
\Phi(\omega) = \dashint_{X}(S(\omega) - \overline{s})^{2}\omega^{n} = \frac{1}{\mathrm{vol}_{\omega}(X)}\int_{X}(s(\omega) - \overline{s})^{2}\omega^{n}, 
\end{align*}
where $\mathrm{vol}_{\omega}(X) = \int_{X}\omega^{n}$ and $\overline{s} = \dashint_{X}s(\omega)\omega^{n}$. 
Note that $\mathrm{vol}_{\omega}(X)$ and $\overline{s}$ is independent of choice of $\omega \in \mathcal{H}(X, L)^{S}$ since $\mathrm{vol}_{\omega}(X) = (L^{n}) = \langle c_{1}(L)^{n}, [X]\rangle$ and $\overline{s} = -n(K_{X}\cdot L^{n-1})/(L^{n})$. 
We call $\Phi$ the Calabi functional. 
\begin{definition}[\cite{Ca82}]
A K\"ahler metric $\omega \in \mathcal{H}(X, L)^{S}$ is called an extremal  K\"ahler metric if $\omega$ is a critical point of $\Phi$. 
\end{definition}
K\"ahler-Einstein metrics are extremal metrics, and (more generally) constant scalar curvature K\"ahler metrics in $\mathcal{H}(X, L)^{S}$ are typical examples of extremal K\"ahler metrics. 
The following fact is well-known. 
\begin{proposition}[{\cite[Theorem 2.1]{Ca82}}]
A K\"ahler metric $\omega \in \mathcal{H}(X, L)^{S}$ is an extremal  K\"ahler metric if and only if $\mathrm{grad}_{g}(s(\omega) - \overline{s})$ is a real holomorphic vector field on $X$. 
\end{proposition}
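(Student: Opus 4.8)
The plan is to compute the first variation of the Calabi functional $\Phi$ along $S$-invariant Kähler potentials and to identify its $L^2$-gradient, up to a positive constant, with the Lichnerowicz operator applied to $s(\omega)-\overline{s}$. First I would fix $\omega\in\mathcal{H}(X,L)^S$ and recall that, by the $\partial\bar\partial$-lemma, every nearby $S$-invariant metric in $c_1(L)$ has the form $\omega_\varphi=\omega+\sqrt{-1}\partial\bar\partial\varphi$ for an $S$-invariant real function $\varphi$, unique up to an additive constant; thus the tangent space to $\mathcal{H}(X,L)^S$ at $\omega$ is identified with $S$-invariant functions. Since $\mathrm{vol}_\omega(X)=(L^n)$ and $\overline{s}=-n(K_X\cdot L^{n-1})/(L^n)$ are topological, they are constant along the deformation, so differentiating $\Phi$ leaves exactly two contributions: the variation of the scalar curvature $\dot{s}=D_\omega s(\varphi)$ and the variation of the volume form, $\frac{d}{dt}\frac{\omega_t^n}{n!}\big|_{t=0}=\tfrac12(\Delta\varphi)\frac{\omega^n}{n!}$, the precise constant being fixed by the conventions $\omega=(1/2\pi)g(J\cdot,\cdot)$ and $\Delta=\nabla^i\nabla_i$ of Section 2.1.

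Next I would introduce the complex Lichnerowicz operator $\mathcal{D}\varphi=\bar\partial\,\operatorname{grad}_\omega^{(1,0)}\varphi$, a fourth-order elliptic operator on functions, and use the standard linearization formula $D_\omega s(\varphi)=-\mathcal{D}^*\mathcal{D}\varphi+\tfrac12(\nabla s,\nabla\varphi)$. Substituting this into $\delta\Phi=\frac{1}{\mathrm{vol}}\big[\,2\int_X(s-\overline{s})\dot{s}\,\frac{\omega^n}{n!}+\tfrac12\int_X(s-\overline{s})^2\,\Delta\varphi\,\frac{\omega^n}{n!}\big]$ and integrating the leading term by parts, using the self-adjointness of $\mathcal{D}^*\mathcal{D}$, produces the term $-\frac{2}{\mathrm{vol}}\int_X\varphi\,\mathcal{D}^*\mathcal{D}(s-\overline{s})\,\frac{\omega^n}{n!}$. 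The key algebraic step is to show that the remaining pieces cancel: writing $\int_X(s-\overline{s})(\nabla s,\nabla\varphi)=\tfrac12\int_X(\nabla(s-\overline{s})^2,\nabla\varphi)$ and integrating by parts turns it into $-\tfrac12\int_X(s-\overline{s})^2\Delta\varphi$, which exactly cancels the volume-variation term. This yields the clean identity $\delta\Phi(\varphi)=-\frac{2}{\mathrm{vol}}\int_X\varphi\,\mathcal{D}^*\mathcal{D}(s-\overline{s})\,\frac{\omega^n}{n!}$ for every $S$-invariant $\varphi$.

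Finally I would read off the Euler--Lagrange equation. Since $s-\overline{s}$ is $S$-invariant whenever $\omega$ is, and $\mathcal{D}^*\mathcal{D}$ preserves $S$-invariance, one may test the identity against $\varphi=\mathcal{D}^*\mathcal{D}(s-\overline{s})$; hence $\omega$ is a critical point of $\Phi$ on $\mathcal{H}(X,L)^S$ if and only if $\mathcal{D}^*\mathcal{D}(s-\overline{s})=0$. Pairing this against $s-\overline{s}$ gives $\|\mathcal{D}(s-\overline{s})\|_{L^2}^2=0$, so the condition is equivalent to $\mathcal{D}(s-\overline{s})=\bar\partial\,\operatorname{grad}_\omega^{(1,0)}(s-\overline{s})=0$, i.e. to $\operatorname{grad}_\omega^{(1,0)}(s-\overline{s})$ being a holomorphic vector field. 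On a compact Kähler manifold this holds precisely when the real gradient $\mathrm{grad}_g(s(\omega)-\overline{s})$ is a real holomorphic vector field, which is the claim.

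I expect the main obstacle to be, on the one hand, the bookkeeping in the key cancellation, and on the other hand justifying the linearization formula $D_\omega s(\varphi)=-\mathcal{D}^*\mathcal{D}\varphi+\tfrac12(\nabla s,\nabla\varphi)$ with the precise signs and constants dictated by the normalizations of Section 2.1; once these two consistent constants are in hand, the cancellation is forced and the rest is integration by parts together with the elementary fact $\ker\mathcal{D}^*\mathcal{D}=\ker\mathcal{D}$.
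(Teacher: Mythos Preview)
The paper does not supply its own proof of this proposition; it is stated as background with a citation to Calabi's original paper \cite{Ca82}, so there is nothing in the paper to compare against directly. Your argument is the standard first-variation computation going back to Calabi, and it is correct in outline: linearize the scalar curvature via the Lichnerowicz operator, observe that the cross term $\int(s-\overline{s})(\nabla s,\nabla\varphi)$ integrates by parts to cancel the volume-form variation, and read off the Euler--Lagrange equation $\mathcal{D}^*\mathcal{D}(s-\overline{s})=0$, equivalently $\mathcal{D}(s-\overline{s})=0$.

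One point worth making explicit, since the paper defines extremality as criticality on the \emph{restricted} space $\mathcal{H}(X,L)^S$ of $S$-invariant metrics: you only get $\delta\Phi(\varphi)=0$ for $S$-invariant $\varphi$, so to conclude $\mathcal{D}^*\mathcal{D}(s-\overline{s})=0$ you need that this function is itself $S$-invariant. You do note this, but it is the genuinely new ingredient compared with Calabi's unrestricted computation, and it is what guarantees that criticality among $S$-invariant metrics already forces the full extremal equation. Apart from the bookkeeping of constants (which you flag), the argument is complete.
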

We fix $\omega \in \mathcal{H}(X, L)^{S}$. 
Set
\begin{align*}
&C^{\infty}(X, \omega, \mathbf{R})_{0}^{S} = \{f \in C^{\infty}(X, \mathbf{R}) \mid \int_{X}f\omega^{n} = 0\}, \\
&\mathfrak{s}(X, \omega)_{0} = \{u \in C^{\infty}(X, \omega, \mathbf{R})_{0}^{S} \mid 4\pi J \mathrm{grad}_{g}u \in \mathfrak{s}\}. 
\end{align*}
Conversely, for each $W \in \mathfrak{s}$ there exists unique $u_{\omega}^{W} \in C^{\infty}(X, \mathbf{R})^{S}$ such that 
\begin{align*}
i_{W}\omega = -du_{\omega}^{W},\quad \int_{X}u_{\omega}^{W}\omega^{n} = 0. 
\end{align*}
Then we have 
\begin{align*}
-du_{\omega}^{W} = g(JJ\mathrm{grad}_{g}u_{\omega}^{W}, \cdot) = i_{2\pi J\mathrm{grad}_{g}u_{\omega}^{W}}\omega
\end{align*}
and $2\pi J\mathrm{grad}_{g}u_{\omega}^{W} = W \in \mathfrak{s}$, which implies $u_{\omega}^{W} \in \mathfrak{s}(X, \omega)_{0}$. 
Hence the correspondence $\mathfrak{s} \ni W \mapsto u_{\omega}^{W} \in \mathfrak{s}(X, \omega)_{0}$ gives a Lie algebra isomorphism. 
The following is also ovbious. 
\begin{proposition}
A K\"ahler metric $\omega \in \mathcal{H}(X, L)^{S}$ is extremal if and only if $s(\omega) - \overline{s} \in \mathfrak{s}(X, \omega)_{0}$. 
\end{proposition}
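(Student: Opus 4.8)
The plan is to reduce the assertion to Calabi's criterion recalled just above (\cite[Theorem~2.1]{Ca82}) together with the explicit description of $\mathfrak{s}(X,\omega)_{0}$. First I would observe that $s(\omega)-\overline{s}$ automatically lies in $C^{\infty}(X,\omega,\mathbf{R})_{0}^{S}$: it is $S$-invariant because $\omega$, hence the Riemannian metric $g$ and its scalar curvature $s(\omega)$, are $S$-invariant, and $\int_{X}(s(\omega)-\overline{s})\,\omega^{n}=0$ by the very definition $\overline{s}=\dashint_{X}s(\omega)\,\omega^{n}$. Consequently the condition ``$s(\omega)-\overline{s}\in\mathfrak{s}(X,\omega)_{0}$'' is equivalent to the single requirement $J\,\mathrm{grad}_{g}(s(\omega)-\overline{s})\in\mathfrak{s}$, the constant $4\pi$ being irrelevant since $\mathfrak{s}$ is a linear subspace. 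I would also use the standard K\"ahler fact that, for a real function $u$ on $(X,\omega)$, the field $\mathrm{grad}_{g}u$ is real holomorphic if and only if $J\,\mathrm{grad}_{g}u$ is real holomorphic, if and only if $J\,\mathrm{grad}_{g}u$ is a Killing field; this is elementary, since $i_{J\,\mathrm{grad}_{g}u}\omega=-\tfrac{1}{2\pi}du$ is exact so $J\,\mathrm{grad}_{g}u$ always preserves $\omega$, while on a complex manifold $\mathcal{L}_{A}J=0\Leftrightarrow\mathcal{L}_{JA}J=0$ for any real vector field $A$.

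Granting this, the implication ``$s(\omega)-\overline{s}\in\mathfrak{s}(X,\omega)_{0}\Rightarrow\omega$ extremal'' is immediate: if $J\,\mathrm{grad}_{g}(s(\omega)-\overline{s})\in\mathfrak{s}=\mathrm{Lie}(S)$, then it is a real holomorphic vector field, hence so is $\mathrm{grad}_{g}(s(\omega)-\overline{s})$ by the equivalence above, and Calabi's criterion says $\omega$ is extremal.

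The content is in the converse. Assume $\omega$ is extremal and set $Z=\mathrm{grad}_{g}(s(\omega)-\overline{s})$; by Calabi's criterion $Z$ is real holomorphic, so $JZ$ is a holomorphic Killing vector field for $g$. Since $g$ and $s(\omega)$ are $S$-invariant, $S$ fixes $Z$ and hence $JZ$, so $JZ$ commutes with $\mathfrak{s}$. I would then pass to the closure $T_{1}$ of the one-parameter group $\exp(\mathbf{R}\,JZ)$ inside the compact group of holomorphic isometries of $(X,g)$: it is a compact torus commuting with $S$, so $S\cdot T_{1}$ is a compact torus in $\mathrm{Aut}^{0}(X)$ whose complexification is an algebraic torus containing $T=S^{\mathbf{C}}$. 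Maximality of $T$ forces $S\cdot T_{1}=S$, whence $JZ\in\mathrm{Lie}(T_{1})\subseteq\mathfrak{s}$; together with the first paragraph this gives $s(\omega)-\overline{s}\in\mathfrak{s}(X,\omega)_{0}$.

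I expect the only genuinely non-formal point to be this last step, i.e.\ upgrading ``$JZ$ is a holomorphic Killing field'' to ``$JZ\in\mathfrak{s}$'': this is precisely where one needs $T$ to be a \emph{maximal} torus, plus the fact that on a compact manifold the closure of a one-parameter group of isometries is a torus. One could instead invoke Calabi's theorem that the identity component of the isometry group of an extremal metric is a maximal compact subgroup of $\mathrm{Aut}^{0}(X)$ in which $J\,\mathrm{grad}_{g}(s(\omega)-\overline{s})$ is central, combined with the fact that $S$ is a maximal torus of that subgroup. Everything else is soft.
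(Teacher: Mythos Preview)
Your argument is correct. The paper itself gives no proof of this proposition: it is stated immediately after the isomorphism $\mathfrak{s}\ni W\mapsto u_{\omega}^{W}\in\mathfrak{s}(X,\omega)_{0}$ with the remark ``The following is also obvious,'' and no further justification is supplied. Your sketch therefore cannot be compared to a paper proof; rather, it fills in exactly what the authors leave implicit.

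Your identification of the one genuinely non-formal point is accurate: the implication ``$\omega$ extremal $\Rightarrow J\,\mathrm{grad}_{g}(s(\omega)-\overline{s})\in\mathfrak{s}$'' is the place where maximality of $T$ is used, and your argument via the closure of the one-parameter group (or alternatively via Calabi's structure theorem) is the standard way to see it. The forward direction and the preliminary reductions are, as you say, soft. Nothing is missing.
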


\subsection{The Futaki invariant and the extremal K\"ahler vector field}
Let $(X, L)$ be a polarized manifold and $\omega \in \mathcal{H}(X, L)^{S}$. 
Since $\mathfrak{t} = (-1/4\pi)J\mathfrak{s} \oplus \mathfrak{s}$, every $W \in \mathfrak{t}$ can be written as 
\begin{align*}
W = -\frac{1}{4\pi}JW_{1} + W_{2}
\end{align*}
for some $W_{1}, W_{2} \in \mathfrak{s}$. 
Then we define $u_{\omega}^{W} \in C^{\infty}(X, \omega, \mathbf{C})_{0}^{S}$ by 
\begin{align*}
u_{\omega}^{W} = -\frac{1}{4\pi}\sqrt{-1}u_{\omega}^{W_{1}} + u_{\omega}^{W_{2}}. 
\end{align*}
\begin{definition}[\cite{Fut83a}, \cite{Fut83b}, \cite{Ca85}, \cite{Ba06}]
We call the Lie algebra character $F \colon \mathfrak{t} \to \mathbf{C}$ defined by 
\begin{align*}
F(W) 
= \dashint_{X}u_{\omega}^{W}(s(\omega)-\overline{s})\omega^{n} 
\end{align*}
the Futaki invariant of $(X, L)$. 
\end{definition}
We remark that (i) the definition of $F$ is independent of choice of $\omega \in \mathcal{H}(X, L)^{S}$, (ii) if $\mathcal{H}(X, L)^{S}$ contains a K\"ahler metric of constant scalar curvature, then $F$ vanishues identically, and (iii) an extremal K\"ahler metric $\omega \in \mathcal{H}(X, L)^{S}$ has constant scalar curvature if and inly if $F = 0$ (see \cite{Fut83a}, \cite{Fut83b}, \cite{Ca85}, \cite{Ba06}). 

Let $\mathrm{pr}_{\omega} \colon C^{\infty}(X, \omega, \mathbf{R})_{0}^{S} \to \mathfrak{s}(X, \omega)_{0}$ be the $L^{2}$-orthogonal projection with respect to the volume form $\omega^{n}$. 
\begin{definition}[\cite{FM95}]
Let $\theta_{\omega} = \mathrm{pr}_{\omega}(s(\omega) - \overline{s}) \in \mathfrak{s}(X, \omega)_{0}$. 
We call the real holomorphic vector field defined by 
\begin{align*}
V \coloneqq \mathrm{grad}_{g}\theta_{\omega} \in N_{\mathbf{R}}
\end{align*}
the extremal K\"ahler vector field of $(X, L)$ with respect to $T$. 
\end{definition}

It is known that (i) the definition of $V$ is independent of choice of $\omega \in \mathcal{H}(X, L)^{S}$, (ii) a K\"ahler metric $\omega \in \mathcal{H}(X, L)^{S}$ is extremal if and only if $s(\omega) - \overline{s} = \theta_{\omega}$, and (iii) $\mathrm{grad}_{g}\theta_{\omega} \in N_{\mathbf{Q}}$, that is, $\exp(mV) =\mathrm{id}_{X}$ for some $m \in \mathbf{Z}_{>0}$ (see \cite{FM95} and \cite{Nak99}). 
\begin{remark}
We remark that the definition of $V$ depends only on the choice of $T$ and independent of neither the choice of $K$ nor $K$-invariant K\"ahler metric in $c_{1}(L)$. 
Indeed, by \cite{FM95} $V$ is independent of the choice of $K$-invariant K\"ahler metric in $c_{1}(L)$ for fixed $K$. 
If $K'$ is another choice, then $K$ and $K'$ have a common maximal torus $S$. 
Hence there exists $\tau \in T$ so that $K' = \tau K \tau^{-1}$ by \cite[Proposition A.2]{Li19}. 
Finally, by \cite[Corollary E]{FM95} we obtain $V' = \mathrm{Ad}(\tau)V = V$. 
\end{remark}

\begin{definition}[\cite{FM95}]
We define $B \colon \mathfrak{t} \times \mathfrak{t} \to \mathbf{C}$ by 
\begin{align*}
B(W, W') = \dashint_{X}u_{\omega}^{W}u_{\omega}^{W'}\omega^{n}
\end{align*}
and call the Futaki-Mabuchi bilinear form. 
\end{definition}
It is known that (i) the definition of $B$ is independent of choice of $\omega \in \mathcal{H}(X, L)^{S}$, (ii) $B$ is a nondegenerate and symmetric bilinear form on $\mathfrak{t}$, and (iii) for any $W \in \mathfrak{t}$
\begin{align*}
B(W, V) = -F(W), 
\end{align*}
namely, $F$ is dual to $-V$ (see \cite{FM95}). 
In paricular, the Futaki invariant vanishes identically if and only if $V = 0$. 

\subsection{Riemannian geometry of the space of K\"ahler potentials} 
In \cite{M87a}, Mabuchi studied a natural Riemannian structure on the space of K\"ahler potentials. 
Let $(X, L)$ be an $n$-dimensional polarized manifold and fix $\omega_{0} \in \mathcal{H}(X, L)^{S}$. 
By the well-known $dd^{c}$-lemma, $\mathcal{H}(X, L)^{S}$ can be identified the space 
\begin{align*}
\mathcal{H}^{S} = \mathcal{H}(X, \omega_{0})^{S} \coloneqq \{\varphi \in C^{\infty}(X, \mathbf{R})^{S} \mid \omega_{\varphi} \coloneqq \omega_{0} + dd^{c}\varphi > 0\}. 
\end{align*}
If $\varphi \in \mathcal{H}^{S}$, the tangent space $T_{\varphi}\mathcal{H}^{S}$ at $\varphi$ can be identified with $C^{\infty}(X, \mathbf{R})^{S}$. 
Then the natural Riemannian metric on $T_{\varphi}\mathcal{H}^{S}$ is given by 
\begin{align*}
\langle u, v \rangle_{\varphi} \coloneqq \dashint_{X}uv\omega_{\varphi}^{n}. 
\end{align*}
Let $\{\varphi^{t}\}$ be a smooth path in $\mathcal{H}^{S}$, and $\{u^{t}\}$ be a smooth vector filed of $\mathcal{H}^{S}$ along the path $\{\varphi^{t}\}$, namely, $u^{t} \in T_{\varphi^{t}}\mathcal{H}^{S} = C^{\infty}(X, \mathbf{R})^{S}$ for each $t$. 
Then the Levi-Civita connection is given by
\begin{align*}
\nabla_{\dot{\varphi}^{t}}u^{t} = \dot{u}^{t}-\frac{1}{2}\langle d\dot{\varphi}^{t}, du^{t}\rangle_{g_{\varphi^{t}}}. 
\end{align*}
This immidiately implies that $t \mapsto \varphi^{t}$ is a geodesic if and only if $\nabla_{\dot{\varphi}^{t}}\dot{\varphi}^{t} = 0$, or equivalently 
\begin{align}\label{geod_eq}
\ddot{\varphi}^{t}-\frac{1}{2}|d\dot{\varphi}^{t}|_{g_{\varphi^{t}}}^{2} = 0. 
\end{align}
As discovered by Semmes \cite{Sem92} and Donaldson \cite{Don99}, the above equation can be understood as a complex Monge-Amp\`ere equation as follows. 
For each $T \in (0, \infty]$, let $A = \{w \in \mathbf{C} \mid e^{-T/2} \leq |w| \leq 1\}$. 
Here we propose $A = \{w \in \mathbf{C} \mid 0 < |w| \leq 1\}$ if $T = \infty$. 
By setting $w = \exp(-t/2 + 2\pi\sqrt{-1}\theta)$, we have $t = -\log|w|^{2}$ and 
\begin{align*}
e^{-T/2} \leq |w| \leq 1 \iff 0 \leq t \leq T. 
\end{align*}
Let $\overline{X} = X \times A$. 
We regard $\overline{X}$ as a complex manifold with boundary. 
Let $\pi \colon \overline{X} \to X$ be the natural projection. 
For each smooth path $\{\varphi^{t}\}$ in $\mathcal{H}^{S}$, we define $\Phi \colon \overline{X} \to \mathbf{R}$ by 
\begin{align*}
\Phi(x, w) \coloneqq \varphi^{-\log|w|^{2}}(x). 
\end{align*}
\begin{theorem}[\cite{Sem92}]
We denote the $d$ and $d^{c}$-operator on $\overline{X}$ by $\overline{d}$ and $\overline{d}^{c}$, respectively. 
Then 
\begin{align*}
(\pi^{*}\omega_{0}+\overline{d}\,\overline{d}^{c}\Phi)^{n+1} = (n+1)\left(\ddot{\varphi}^{t}-\frac{1}{2}|d\dot{\varphi}^{t}|_{g_{\varphi^{t}}}^{2}\right)\pi^{*}\omega_{\varphi^{t}}^{n}+ \frac{\sqrt{-1}}{2\pi}\frac{dw \wedge d\overline{w}}{|w|^{2}}. 
\end{align*}
In particular, $\{\varphi^{t}\}$ is a geodesic if and only if $\Phi$ satisfies 
\begin{align}\label{geod_MA}
(\pi^{*}\omega_{0}+\overline{d}\,\overline{d}^{c}\Phi)^{n+1} = 0. 
\end{align}
\end{theorem}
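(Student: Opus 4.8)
\emph{Proof proposal.} This is the Semmes correspondence, and I would prove it by a direct local computation on the product complex manifold $\overline{X}=X\times A$ in holomorphic coordinates. Fix a point of $\overline{X}$, choose holomorphic coordinates $(z^{1},\dots,z^{n})$ on $X$ centered there, and take $w$ as the holomorphic coordinate on $A$. Writing $t=-\log|w|^{2}$, one has $\partial t=-dw/w$, $\bar\partial t=-d\bar w/\bar w$, and — crucially, since $0\notin A$ — $\partial\bar\partial t=0$ on $\overline{X}$. Because $\Phi(x,w)=\varphi^{t}(x)$ depends on $w$ only through $t$, the chain rule (with $\dot{}\;=\partial_{t}$, and $\partial_{X},\bar\partial_{X}$ the differentials in the $X$-directions) gives
\begin{align*}
\partial\bar\partial\Phi = \partial_{X}\bar\partial_{X}\varphi^{t} + \partial t\wedge\bar\partial_{X}\dot\varphi^{t} + \partial_{X}\dot\varphi^{t}\wedge\bar\partial t + \ddot\varphi^{t}\,\partial t\wedge\bar\partial t .
\end{align*}
Using $\bar d\,\bar d^{c}\Phi=\tfrac{\sqrt{-1}}{2\pi}\partial\bar\partial\Phi$ in the paper's normalization, adding $\pi^{*}\omega_{0}$ assembles the first term into $\pi^{*}\omega_{\varphi^{t}}$, so that
\begin{align*}
\pi^{*}\omega_{0}+\bar d\,\bar d^{c}\Phi = \pi^{*}\omega_{\varphi^{t}} - \tfrac{\sqrt{-1}}{2\pi}\Big(\tfrac{dw}{w}\wedge\bar\partial_{X}\dot\varphi^{t} + \partial_{X}\dot\varphi^{t}\wedge\tfrac{d\bar w}{\bar w} - \ddot\varphi^{t}\,\tfrac{dw}{w}\wedge\tfrac{d\bar w}{\bar w}\Big)=:A+B+C+D ,
\end{align*}
where $A=\pi^{*}\omega_{\varphi^{t}}$ (a $(1,1)$-form in the $z$-directions only), $B,C$ are the two cross terms (of types $dw\wedge d\bar z$ and $dz\wedge d\bar w$), and $D$ is the $dw\wedge d\bar w$ term.

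Next I would compute $(A+B+C+D)^{n+1}$. Since $\dim_{\mathbf{C}}\overline{X}=n+1$ and $dw\wedge dw=d\bar w\wedge d\bar w=0$, one has $B\wedge B=C\wedge C=D\wedge D=B\wedge D=C\wedge D=0$, so the only top-degree contributions in the multinomial expansion are $A^{n}\wedge D$, with coefficient $n+1$, and $A^{n-1}\wedge B\wedge C$, with coefficient $n(n+1)$. The first equals $(n+1)\ddot\varphi^{t}\,\pi^{*}\omega_{\varphi^{t}}^{n}\wedge\tfrac{\sqrt{-1}}{2\pi}\tfrac{dw\wedge d\bar w}{|w|^{2}}$. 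For the second, reordering the wedge factors (each $\frac{dw}{w}$, $\frac{d\bar w}{\bar w}$, $\partial_{X}\dot\varphi^{t}$, $\bar\partial_{X}\dot\varphi^{t}$ has odd degree, $\pi^{*}\omega_{\varphi^{t}}^{n-1}$ even) gives $-\big(\tfrac{\sqrt{-1}}{2\pi}\pi^{*}\omega_{\varphi^{t}}^{n-1}\wedge\partial_{X}\dot\varphi^{t}\wedge\bar\partial_{X}\dot\varphi^{t}\big)\wedge\tfrac{\sqrt{-1}}{2\pi}\tfrac{dw\wedge d\bar w}{|w|^{2}}$; the pointwise Kähler linear-algebra identity
\begin{align*}
\tfrac{\sqrt{-1}}{2\pi}\,\partial_{X}u\wedge\bar\partial_{X}u\wedge\omega_{\varphi^{t}}^{\,n-1} = \tfrac{1}{2n}\,|du|^{2}_{g_{\varphi^{t}}}\;\omega_{\varphi^{t}}^{\,n},
\end{align*}
verified at the point in $\omega_{\varphi^{t}}$-normal coordinates, converts this into $-\tfrac{1}{2n}|d\dot\varphi^{t}|^{2}_{g_{\varphi^{t}}}\,\pi^{*}\omega_{\varphi^{t}}^{n}\wedge\tfrac{\sqrt{-1}}{2\pi}\tfrac{dw\wedge d\bar w}{|w|^{2}}$. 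Multiplying by $n(n+1)$ and adding the two contributions yields exactly the claimed formula (the right-hand side being an $(n+1,n+1)$-form, i.e. the wedge of $(n+1)(\ddot\varphi^{t}-\tfrac12|d\dot\varphi^{t}|^{2}_{g_{\varphi^{t}}})\pi^{*}\omega_{\varphi^{t}}^{n}$ with $\tfrac{\sqrt{-1}}{2\pi}\tfrac{dw\wedge d\bar w}{|w|^{2}}$). Finally, since $\omega_{\varphi^{t}}>0$, the form $\pi^{*}\omega_{\varphi^{t}}^{n}\wedge\tfrac{\sqrt{-1}}{2\pi}\tfrac{dw\wedge d\bar w}{|w|^{2}}$ is a nowhere-vanishing volume form on $\overline{X}$, so $(\pi^{*}\omega_{0}+\bar d\,\bar d^{c}\Phi)^{n+1}\equiv 0$ if and only if $\ddot\varphi^{t}-\tfrac12|d\dot\varphi^{t}|^{2}_{g_{\varphi^{t}}}\equiv 0$, which is precisely the geodesic equation \eqref{geod_eq}.

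There is no genuine conceptual obstacle here: the proof is essentially bookkeeping. The only places that require care are the sign and constant tracking in the paper's normalization ($\omega=\tfrac{1}{2\pi}g(J\cdot,\cdot)$, $\bar d\,\bar d^{c}=\tfrac{\sqrt{-1}}{2\pi}\partial\bar\partial$), the combinatorics that singles out the two surviving multinomial terms $A^{n}\wedge D$ and $A^{n-1}\wedge B\wedge C$, and a clean proof of the pointwise identity relating $\partial_{X}u\wedge\bar\partial_{X}u\wedge\omega^{n-1}$ to $|du|^{2}_{g}\,\omega^{n}$ — it is that identity which produces the coefficient $\tfrac12$ in $\ddot\varphi^{t}-\tfrac12|d\dot\varphi^{t}|^{2}_{g_{\varphi^{t}}}$.
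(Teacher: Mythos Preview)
The paper does not actually prove this theorem; it is stated with the citation \cite{Sem92} and no proof environment follows --- the text moves directly to remarks on regularity of solutions of \eqref{geod_MA}. So there is no ``paper's own proof'' to compare against. Your direct local computation on $\overline{X}=X\times A$ is the standard argument and is correct: the chain-rule decomposition of $\partial\bar\partial\Phi$, the identification of the two surviving multinomial terms $A^{n}\wedge D$ and $A^{n-1}\wedge B\wedge C$, and the pointwise K\"ahler identity producing the factor $\tfrac{1}{2}$ are exactly what is needed, and your final equivalence with the geodesic equation \eqref{geod_eq} via nonvanishing of $\pi^{*}\omega_{\varphi^{t}}^{n}\wedge\tfrac{\sqrt{-1}}{2\pi}\tfrac{dw\wedge d\bar w}{|w|^{2}}$ is fine. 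You also correctly read the ``$+$'' in the displayed formula as a wedge (the right-hand side must be an $(n+1,n+1)$-form); this is evidently a typo in the paper rather than a gap in your argument.
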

Unfortunately, the Dirichlet problem of the equation \eqref{geod_MA} does not usually have a smooth solution (see \cite{LV13}). 
This means that in general one cannot find a smooth geodesic segment connecting two points in $\mathcal{H}_{0}^{S}$. 
On the other hand, Chen \cite{Ch00a} proved that the Dirichlet problem of \eqref{geod_MA} always has a unique solution in a weak sense (\cite{BT76}). 
After successive refinement in \cite{Bl12} and \cite{CTW17}, it is proved that the weak solution has $C^{1,1}$ regularity. 

\subsection{Energy functionals} 
In this subsection, we recall functionals on the space of K\"ahler potentials. 
\begin{definition}
\begin{enumerate}[(1)]
\item We define the Monge-Amp\`ere energy of $\varphi \in \mathcal{H}^{S}$ by 
\begin{align*}
E(\varphi) = \frac{1}{n+1}\sum_{i=0}^{n}\dashint_{X}\varphi \omega_{\varphi}^{i} \wedge \omega_{0}^{n-i}. 
\end{align*}
\item We define the $J$-functional of $\varphi \in \mathcal{H}^{S}$ by 
\begin{align*}
J(\varphi) = \dashint_{X}\varphi \omega_{0}^{n} -E(\varphi). 
\end{align*}
\end{enumerate}
\end{definition}
\begin{fact}
\begin{enumerate}[(1)]
\item $E(\varphi + c) = E(\varphi) + c$ for any $\varphi \in \mathcal{H}^{S}$ and $c \in \mathbf{R}$. 
\item $J(\varphi) \geq 0$ and $J(\varphi + c) = J(\varphi)$ for any $\varphi \in \mathcal{H}^{S}$ and $c \in \mathbf{R}$. 
\item For any smooth path $\{\varphi^{t}\}$ in $\mathcal{H}^{S}$ we have 
\begin{align*}
\frac{d}{dt}E(\varphi^{t}) &= \dashint_{X}\dot{\varphi}^{t}\omega_{\varphi^{t}}^{n}, \\
\frac{d}{dt}J(\varphi^{t}) &= \dashint_{X}\dot{\varphi}^{t}(\omega_{0}^{n} - \omega_{\varphi^{t}}^{n}). 
\end{align*}
Here we mean $\dot{\varphi}^{t}$ the derivative of $\varphi^{t}$ with respective to $t$. 
\end{enumerate}
\end{fact}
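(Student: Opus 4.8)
The plan is to prove statement (3) first and to deduce (1) and (2) from it. Fix a smooth path $\{\varphi^{t}\}$ in $\mathcal{H}^{S}$ and set $a_{i}(t)=\dashint_{X}\dot{\varphi}^{t}\,\omega_{\varphi^{t}}^{i}\wedge\omega_{0}^{n-i}$ for $0\le i\le n$. Differentiating the $i$-th summand of $E(\varphi^{t})$ under the integral sign (legitimate since $\{\varphi^{t}\}$ is smooth and $X$ is compact) yields $a_{i}(t)$ from the factor $\varphi^{t}$ and $i\dashint_{X}\varphi^{t}\,dd^{c}\dot{\varphi}^{t}\wedge\omega_{\varphi^{t}}^{i-1}\wedge\omega_{0}^{n-i}$ from $\omega_{\varphi^{t}}^{i}=(\omega_{0}+dd^{c}\varphi^{t})^{i}$. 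Integrating by parts (the forms in play are closed and $X$ has no boundary) moves $dd^{c}$ onto $\varphi^{t}$, and $dd^{c}\varphi^{t}=\omega_{\varphi^{t}}-\omega_{0}$ turns this into $i\,a_{i}(t)-i\,a_{i-1}(t)$. Hence the derivative of the $i$-th summand is $(i+1)a_{i}(t)-i\,a_{i-1}(t)$, and summing over $i$ the series telescopes to $(n+1)a_{n}(t)$; dividing by $n+1$ gives $\tfrac{d}{dt}E(\varphi^{t})=a_{n}(t)=\dashint_{X}\dot{\varphi}^{t}\,\omega_{\varphi^{t}}^{n}$. The formula for $\tfrac{d}{dt}J$ then follows immediately from $J=\dashint_{X}\varphi\,\omega_{0}^{n}-E$ by linearity.

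For (1), cohomological invariance together with the normalization of $\dashint_{X}$ gives $\dashint_{X}\omega_{\varphi}^{i}\wedge\omega_{0}^{n-i}=1$ for every $i$, so applying (3) to the affine path $\varphi^{t}=\varphi+tc$ (for which $\omega_{\varphi^{t}}=\omega_{\varphi}$ and $\dot{\varphi}^{t}\equiv c$) gives $\tfrac{d}{dt}E(\varphi^{t})=c$, and integrating from $0$ to $1$ yields $E(\varphi+c)=E(\varphi)+c$ (alternatively one expands $E(\varphi+c)$ directly). The translation invariance $J(\varphi+c)=J(\varphi)$ in (2) is then immediate. For $J(\varphi)\ge 0$, I would regroup $J(\varphi)=\dashint_{X}\varphi\,\omega_{0}^{n}-\tfrac{1}{n+1}\sum_{i=0}^{n}\dashint_{X}\varphi\,\omega_{\varphi}^{i}\wedge\omega_{0}^{n-i}$ — the $i=0$ term cancels — as $\tfrac{1}{n+1}\sum_{i=1}^{n}\dashint_{X}\varphi\,(\omega_{0}^{i}-\omega_{\varphi}^{i})\wedge\omega_{0}^{n-i}$, use the factorization $\omega_{0}^{i}-\omega_{\varphi}^{i}=-dd^{c}\varphi\wedge\sum_{k=0}^{i-1}\omega_{\varphi}^{k}\wedge\omega_{0}^{i-1-k}$, and integrate by parts once more; each resulting summand equals $\dashint_{X}d\varphi\wedge d^{c}\varphi\wedge\omega_{\varphi}^{k}\wedge\omega_{0}^{n-1-k}\ge 0$, since $d\varphi\wedge d^{c}\varphi$ is a nonnegative $(1,1)$-form and $\omega_{\varphi}^{k}\wedge\omega_{0}^{n-1-k}$ is a positive $(n-1,n-1)$-form, whence $J(\varphi)\ge 0$.

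I do not expect a genuine obstacle: the whole argument is a chain of integrations by parts on the closed manifold $X$ combined with elementary combinatorics. The only points requiring care are the justification of differentiation under the integral sign, the vanishing of boundary terms in each integration by parts, the bookkeeping in the telescoping sum of (3) and the regrouping used for $J(\varphi)\ge 0$, and the standard positivity $d\varphi\wedge d^{c}\varphi\ge 0$.
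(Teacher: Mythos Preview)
Your argument is correct and is the standard one. Note, however, that the paper does not actually prove this statement: it is recorded as a \emph{Fact} without proof, presumably because these identities are classical and well known in the literature on K\"ahler potentials. Your computation---differentiating each summand of $E$, integrating $dd^{c}$ by parts, and telescoping---together with the factorization $\omega_{0}^{i}-\omega_{\varphi}^{i}=-dd^{c}\varphi\wedge\sum_{k}\omega_{\varphi}^{k}\wedge\omega_{0}^{i-1-k}$ for the nonnegativity of $J$, is exactly the textbook route (see e.g.\ Sz\'ekelyhidi's book or Darvas's survey), so there is nothing to compare against here.
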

By using $E$, we define 
\begin{align*}
\mathcal{H}_{0}^{S} = \mathcal{H}(X, \omega)_{0}^{S} \coloneqq \{\varphi \in \mathcal{H}^{S} \mid E(\varphi) = 0\}. 
\end{align*}
Then we have the identification $\mathcal{H}^{S} \ni \varphi \mapsto (\varphi - E(\varphi), E(\varphi)) \in \mathcal{H}_{0}^{S} \mathbf{R}$. 

\begin{definition}
We define the functionals on $\mathcal{H}_{0}^{S}$ as follows: 
\begin{enumerate}[(1)]
\item We define th Ricci energy of $\varphi \in \mathcal{H}^{S}$ by 
\begin{align*}
R(\varphi) = -\sum_{i=0}^{n-1}\dashint_{X}\varphi \mathrm{Ric}(\omega_{0}) \wedge \omega_{\varphi}^{i} \wedge \omega_{0}^{n-i-1}. 
\end{align*}
\item We define the entropy of $\varphi \in \mathcal{H}^{S}$ by 
\begin{align*}
H(\varphi) = \dashint_{X}\log\left(\frac{\omega_{\varphi}^{n}}{\omega_{0}^{n}}\right)\omega_{\varphi}^{n}. 
\end{align*}
\item We define the K-energy of $\varphi \in \mathcal{H}^{S}$ by 
\begin{align*}
M(\varphi) = -\int_{0}^{1}dt\dashint_{X}\dot{\varphi}^{t}(s(\omega_{\varphi^{t}})-\overline{s})\omega_{\varphi^{t}}^{n},  
\end{align*}
where $\{\varphi^{t}\}_{t \in [0, 1]}$ is a smooth path in $\mathcal{H}^{S}$ so that $\varphi^{0} = 0$ and $\varphi^{1} = \varphi$. 
\end{enumerate}
\end{definition}

\begin{fact}
\begin{enumerate}[(1)]
\item For any smooth path $\{\varphi^{t}\}$ in $\mathcal{H}^{S}$ we have 
\begin{align*}
\frac{d}{dt}R(\varphi^{t}) &= -n\dashint_{X}\dot{\varphi}^{t}\mathrm{Ric}(\varphi_{0}) \wedge \omega_{\varphi^{t}}^{n-1}. 
\end{align*}
\item $M(\varphi) = H(\varphi) + R(\varphi) + \overline{s}E(\varphi)$ for any $\varphi \in \mathcal{H}_{0}^{S}$, and in particular the definition of $M(\varphi)$ is independent of choice of the path $\{\varphi^{t}\}_{t \in [0, 1]}$ in $\varphi \in \mathcal{H}_{0}^{S}$ (\cite{Ch00b}, \cite[Proposition 3.2]{BB17}). 
\item For any smooth path $\{\varphi^{t}\}$ in $\mathcal{H}^{S}$ we have 
\begin{align*}
\frac{d}{dt}M(\varphi^{t}) &= -\dashint_{X}\dot{\varphi}^{t}(s(\omega_{\varphi^{t}})-\overline{s})\omega_{\varphi^{t}}^{n}. 
\end{align*}
In particlar, $\varphi \in \mathcal{H}_{0}^{S}$ is a critical point of $M$ if and only if the scalar curvature of $\omega_{\varphi}$ is constant. 
\item For each $\varphi \in \mathcal{H}_{0}^{S}$ and $W \in \mathfrak{s}$, put  
\begin{align*}
\varphi^{t} = \exp\left(-\frac{t}{4\pi}JW\right)^{*}\varphi. 
\end{align*}
Then $\dot{\varphi}^{t} = u_{\omega_{\varphi^{t}}}^{W}$ and 
\begin{align*}
\frac{d}{dt}M(\varphi^{t}) = \dashint_{X}u_{\omega_{\varphi^{t}}}^{W}(s(\omega_{\varphi^{t}})-\overline{s})\omega_{\varphi^{t}}^{n} = F(W). 
\end{align*}
In other word, the K-energy functional is the integration of the Futaki invariant. 
\item For a smooth geodesic $\{\varphi^{t}\}$ in $\mathcal{H}_{0}^{S}$ we have 
\begin{align*}
\frac{d^{2}}{dt^{2}}M(\varphi^{t}) = \dashint_{X}|\overline{\partial}\mathrm{grad}_{g_{\varphi^{t}}}\dot{\varphi^{t}}|_{g_{\varphi^{t}}}^{2}\omega_{\varphi^{t}}^{n} \geq 0. 
\end{align*}
Hence $M$ is convex along the geodesic $\{\varphi^{t}\}$. 
\end{enumerate}
\end{fact}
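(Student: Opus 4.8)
The plan is to obtain all five statements by the standard variational calculus on $\mathcal{H}^{S}$, using only the first-variation formulae for $E$ and $J$ already recorded, Semmes' geodesic equation \eqref{geod_eq}, and the Hamiltonian correspondence $W\leftrightarrow u^{W}_{\omega}$ of Section 2.2. For (1) I would differentiate the definition of $R$ along $\{\varphi^{t}\}$ using $\partial_{t}\omega_{\varphi^{t}}=dd^{c}\dot\varphi^{t}$; in each term where $dd^{c}\dot\varphi^{t}$ appears I integrate by parts (legitimately, since $\mathrm{Ric}(\omega_{0})$ and $\omega_{0}$ are closed) to move $dd^{c}$ back onto $\varphi^{t}$, and then $dd^{c}\varphi^{t}=\omega_{\varphi^{t}}-\omega_{0}$ makes the sum telescope down to $-n\dashint_{X}\dot\varphi^{t}\,\mathrm{Ric}(\omega_{0})\wedge\omega_{\varphi^{t}}^{n-1}$.

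For (2) and (3) the key computation is the first variation of the entropy. Writing $F_{t}=\log(\omega_{\varphi^{t}}^{n}/\omega_{0}^{n})$, one has $\partial_{t}F_{t}=\Delta_{\varphi^{t}}\dot\varphi^{t}$ (with the Laplacian normalized as in Section 2.1), so after one integration by parts $\tfrac{d}{dt}H(\varphi^{t})=\dashint_{X}(\Delta_{\varphi^{t}}F_{t})\,\dot\varphi^{t}\,\omega_{\varphi^{t}}^{n}$; since $\mathrm{Ric}(\omega_{\varphi^{t}})=\mathrm{Ric}(\omega_{0})-dd^{c}F_{t}$, taking the $\omega_{\varphi^{t}}$-trace gives $\Delta_{\varphi^{t}}F_{t}=\mathrm{tr}_{\omega_{\varphi^{t}}}\mathrm{Ric}(\omega_{0})-s(\omega_{\varphi^{t}})$. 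Adding $\tfrac{d}{dt}H$, the formula of (1) for $\tfrac{d}{dt}R$, and $\overline{s}\,\tfrac{d}{dt}E$, the Ricci contributions cancel and exactly $-\dashint_{X}\dot\varphi^{t}(s(\omega_{\varphi^{t}})-\overline{s})\omega_{\varphi^{t}}^{n}$ remains; since $H$, $R$, $E$ all vanish at $\varphi=0$ and $M$ is by definition the integral of precisely this $1$-form along a path from $0$ to $\varphi$, both the independence of the path and the identity $M=H+R+\overline{s}E$ follow, and the displayed derivative of (3) is what we have just computed. The criticality statement in (3) is then immediate, by taking $\dot\varphi^{0}=s(\omega_{\varphi})-\overline{s}$, which is itself $S$-invariant of mean zero. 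For (4) I would note that $Y=-\tfrac{1}{4\pi}JW$ is a real holomorphic vector field, so $\omega_{\varphi^{t}}=\exp(tY)^{*}\omega_{\varphi}$ up to the normalization of $\varphi^{t}$, whence $\partial_{t}\omega_{\varphi^{t}}=\exp(tY)^{*}d(i_{Y}\omega_{\varphi})$; matching this with $dd^{c}\dot\varphi^{t}$ and unwinding the conventions of Section 2.1 identifies $\dot\varphi^{t}$ with the normalized Hamiltonian $u^{W}_{\omega_{\varphi^{t}}}$ of $W$, and substituting into the formula of (3), together with the independence of $F$ from the reference metric, recovers the displayed identities of (4).

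The substantive item is (5), which I would prove by differentiating the formula of (3) a second time along a smooth geodesic $\{\varphi^{t}\}$. This produces three terms: one from $\ddot\varphi^{t}$, one from $\tfrac{d}{dt}s(\omega_{\varphi^{t}})$, and one from $\tfrac{d}{dt}(\omega_{\varphi^{t}}^{n})=(\Delta_{\varphi^{t}}\dot\varphi^{t})\,\omega_{\varphi^{t}}^{n}$. Into the first I substitute the geodesic equation $\ddot\varphi^{t}=\tfrac12|d\dot\varphi^{t}|_{g_{\varphi^{t}}}^{2}$, and into the second the standard linearization formula for scalar curvature, whose lower-order part has exactly the shape $\langle ds(\omega_{\varphi^{t}}),d\dot\varphi^{t}\rangle$ and is organized so as to cancel against the $\ddot\varphi^{t}$- and volume-form contributions once the geodesic equation is imposed; what then survives is the single manifestly nonnegative term $\dashint_{X}|\overline{\partial}\,\mathrm{grad}_{g_{\varphi^{t}}}\dot\varphi^{t}|_{g_{\varphi^{t}}}^{2}\,\omega_{\varphi^{t}}^{n}$, the squared $L^{2}$-norm of the Lichnerowicz operator applied to $\dot\varphi^{t}$, and convexity follows. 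I expect this algebraic cancellation -- assembling the linearization of scalar curvature against the geodesic equation so that only the Lichnerowicz term remains -- to be the main obstacle; it is the step where I would lean most directly on the computations of Mabuchi, Chen and Donaldson, whereas (1)--(4) are routine by comparison.
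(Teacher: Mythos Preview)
Your outline is correct and follows the standard arguments of Mabuchi, Chen, and Donaldson; there is nothing to compare against, since the paper does not prove this Fact but merely records it as background material with citations to \cite{Ch00b} and \cite{BB17}. Your sketch of (1)--(3) via the Chen--Tian decomposition $M=H+R+\overline{s}E$, of (4) via the Hamiltonian identification, and of (5) via the second variation leading to the Lichnerowicz term, is exactly how these statements are established in the cited literature.
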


\subsection{Vector field energies and the relative K-energy}
In this subsection, we introduce the vector field energy functionals defined by Mabuchi \cite{M01} and the relative K-energy functional defined by \cite{M01}, \cite{CG00}, \cite{Gua00}, and \cite{Sim00} independently. 

Let $(X, L)$ be an $n$-dimensional polarized manifold. 
Fix $\omega_{0} \in \mathcal{H}(X, L)^{S}$, and define 
\begin{align*}
H_{V}(\varphi) = \int_{0}^{1}dt\dashint_{X}\dot{\varphi}^{t}\theta_{\varphi^{t}}\omega_{\varphi^{t}}^{n} 
\end{align*}
for any $\varphi \in \mathcal{H}^{S}$, where $\{\varphi^{t}\}_{t \in [0, 1]}$ is a smooth path in $\mathcal{H}^{S}$ so that $\varphi^{0} = 0$ and $\varphi^{1} = \varphi$. 
\begin{fact}[\cite{M01}]
\begin{enumerate}[(1)]
\item For any $\varphi \in \mathcal{H}_{0}^{S}$ 
\begin{align*}
H_{V}(\varphi) 
&= \frac{1}{n+1}\sum_{i=0}^{n}\dashint_{X}\varphi\theta_{\omega_{\varphi}}\omega_{\varphi}^{i} \wedge \omega_{0}^{n-i} \\
&\quad - \frac{1}{(n+1)(n+2)}\sum_{i=0}^{n}(n-i+1)\dashint_{X}\varphi V\varphi \omega_{\varphi}^{i}\wedge \omega_{0}^{n-i}. 
\end{align*}
In particular, the definition of $H_{V}(\varphi)$ is independent of choice of the path $\{\varphi^{t}\}_{t \in [0, 1]}$ in $\varphi \in \mathcal{H}_{0}^{S}$. 
\item For any smooth path $\{\varphi^{t}\}$ in $\mathcal{H}_{0}^{S}$ we have 
\begin{align*}
\frac{d}{dt}H_{V}(\varphi^{t}) = \dashint_{X}\dot{\varphi}^{t}\theta_{\varphi^{t}}\omega_{\varphi^{t}}^{n}. 
\end{align*}
\item For each $\varphi \in \mathcal{H}_{0}^{S}$ and $W \in \mathfrak{s}$, put $\varphi^{t} = \exp(-tJW/4\pi)^{*}\varphi$. 
Then $\dot{\varphi}^{t} = u_{\omega_{\varphi^{t}}}^{W}$ and 
\begin{align*}
\frac{d}{dt}H_{V}(\varphi^{t}) = \dashint_{X}u_{\omega_{\varphi^{t}}}^{W}\theta_{\omega_{\varphi}}\omega_{\varphi^{t}}^{n} = B(W, V). 
\end{align*}
\item For a smooth geodesic $\{\varphi^{t}\}$ in $\mathcal{H}_{0}^{S}$, we have 
\begin{align*}
\frac{d^{2}}{dt^{2}}H_{V}(\varphi^{t}) = \dashint_{X}\theta_{\omega_{\varphi^{t}}}\left(\ddot{\varphi}^{t}- \frac{1}{2}|d\dot{\varphi}^{t}|_{g_{\varphi^{t}}}^{2}\right) \omega_{\varphi^{t}}^{n} = 0. 
\end{align*}
Hence $H_{V}$ is affine along the geodesic $\{\varphi^{t}\}$. 
\end{enumerate}
\end{fact}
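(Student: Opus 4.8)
The plan is to establish (1) first — this simultaneously yields the well-definedness of $H_{V}$ and assertion (2) — and then deduce (3) and (4) from (2) together with the properties of $\theta_{\omega_{\varphi}}$, of the Futaki--Mabuchi form $B$, and of the geodesic equation already recorded above. The one external input I would invoke is the standard transformation rule for the normalized Hamiltonian of the (metric-independent) vector field $V$: since $i_{V}\omega_{\varphi} = i_{V}\omega_{0} + i_{V}dd^{c}\varphi$ and $V$ is holomorphic with $L_{V}\omega_{0} = 0$, one has $\theta_{\omega_{\varphi}} = \theta_{\omega_{0}} + V\varphi - c_{\varphi}$, where $c_{\varphi} = \dashint_{X}(\theta_{\omega_{0}} + V\varphi)\,\omega_{\varphi}^{n}$ is the constant enforcing $\int_{X}\theta_{\omega_{\varphi}}\omega_{\varphi}^{n} = 0$. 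Now denote by $\widetilde{H}_{V}(\varphi)$ the right-hand side of the displayed formula in (1); clearly $\widetilde{H}_{V}(0) = 0$. Differentiating $\widetilde{H}_{V}(\varphi^{t})$ along an arbitrary smooth path in $\mathcal{H}^{S}$, using the standard variation $\frac{d}{dt}\dashint_{X}f\,\omega_{\varphi^{t}}^{i}\wedge\omega_{0}^{n-i} = \dashint_{X}\dot{f}\,\omega_{\varphi^{t}}^{i}\wedge\omega_{0}^{n-i} + i\dashint_{X}f\,dd^{c}\dot{\varphi}^{t}\wedge\omega_{\varphi^{t}}^{i-1}\wedge\omega_{0}^{n-i}$, integration by parts, and the transformation rule above (so that $\dot{\theta}_{\varphi^{t}} = V\dot{\varphi}^{t} - \dot{c}_{\varphi^{t}}$, with the $V$-twisted terms telescoping by means of $L_{V}\omega_{\varphi^{t}} = 0$), the two sums collapse and one is left with
\begin{align*}
\frac{d}{dt}\widetilde{H}_{V}(\varphi^{t}) = \dashint_{X}\dot{\varphi}^{t}\,\theta_{\varphi^{t}}\,\omega_{\varphi^{t}}^{n}.
\end{align*}
After the routine reduction of an arbitrary path to one lying in $\mathcal{H}_{0}^{S}$ (replacing $\varphi^{t}$ by $\varphi^{t} - E(\varphi^{t})$ does not change the integrand), integrating this identity from $0$ to $\varphi$ gives $\int_{0}^{1}dt\,\dashint_{X}\dot{\varphi}^{t}\theta_{\varphi^{t}}\omega_{\varphi^{t}}^{n} = \widetilde{H}_{V}(\varphi)$, independently of the path; this is precisely (1) together with path-independence of the definition of $H_{V}$, and (2) is then the same identity read off the now well-defined $H_{V}$.

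For (3), apply (2) to $\varphi^{t} = \exp(-tJW/4\pi)^{*}\varphi$; as recorded earlier $\dot{\varphi}^{t} = u^{W}_{\omega_{\varphi^{t}}}$, and since $u^{W}$ has zero average $E(\varphi^{t})$ is constant, so the path stays in $\mathcal{H}_{0}^{S}$. Because $\theta_{\omega_{\varphi^{t}}}$ is the normalized Hamiltonian of $V$, it equals $u^{W_{V}}_{\omega_{\varphi^{t}}}$ for the element $W_{V}\in\mathfrak{s}$ determined by $V$, so by (2) and the definition of $B$,
\begin{align*}
\frac{d}{dt}H_{V}(\varphi^{t}) = \dashint_{X}u^{W}_{\omega_{\varphi^{t}}}\,u^{W_{V}}_{\omega_{\varphi^{t}}}\,\omega_{\varphi^{t}}^{n} = B(W, V),
\end{align*}
which is independent of $t$ since $B$ does not depend on the metric. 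For (4), differentiate the identity of (2) once more along an arbitrary smooth path in $\mathcal{H}_{0}^{S}$, using $\frac{d}{dt}\omega_{\varphi^{t}}^{n} = n\,dd^{c}\dot{\varphi}^{t}\wedge\omega_{\varphi^{t}}^{n-1}$, $\dot{\theta}_{\varphi^{t}} = V\dot{\varphi}^{t} - \dot{c}_{\varphi^{t}}$, and integration by parts; the terms carrying a $V$-derivative of $\dot{\varphi}^{t}$ cancel (here $d\theta_{\varphi^{t}} = -i_{V}\omega_{\varphi^{t}}$ and $L_{V}\omega_{\varphi^{t}} = 0$ are used), and the purely metric terms assemble into
\begin{align*}
\frac{d^{2}}{dt^{2}}H_{V}(\varphi^{t}) = \dashint_{X}\theta_{\omega_{\varphi^{t}}}\left(\ddot{\varphi}^{t} - \frac{1}{2}|d\dot{\varphi}^{t}|_{g_{\varphi^{t}}}^{2}\right)\omega_{\varphi^{t}}^{n},
\end{align*}
the coefficient $\tfrac{1}{2}$ being dictated by the normalization $\omega = \tfrac{1}{2\pi}g(J\cdot,\cdot)$ through $n\,d\dot{\varphi}^{t}\wedge d^{c}\dot{\varphi}^{t}\wedge\omega_{\varphi^{t}}^{n-1} = \tfrac{1}{2}|d\dot{\varphi}^{t}|_{g_{\varphi^{t}}}^{2}\omega_{\varphi^{t}}^{n}$. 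If $\{\varphi^{t}\}$ is a geodesic, the bracket vanishes by \eqref{geod_eq}, so the second derivative is zero and $H_{V}$ is affine along it.

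The substantive content lies in the first computation of Step~1 — equivalently, showing that $\dashint_{X}(\,\cdot\,)\theta_{\varphi}\omega_{\varphi}^{n}$ is the differential of $\widetilde{H}_{V}$ on $\mathcal{H}^{S}$ — and in its second-order analogue used for (4). Both demand careful bookkeeping of the telescoping sums and of repeated integrations by parts, and, above all, the cancellation of the $V$-twisted contributions, which rests on the Hamiltonian identity $i_{V}\omega_{\varphi} = -d\theta_{\omega_{\varphi}}$ (with its variant $\theta_{\omega_{\varphi}} = \theta_{\omega_{0}} + V\varphi + \mathrm{const}$) and on $L_{V}\omega_{\varphi} = 0$. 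Everything else — assertion (2), the evaluation $B(W,V)$ in (3), and the geodesic conclusion in (4) — is formal once the first-variation formula of Step~1 is in place.
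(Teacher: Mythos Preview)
The paper does not supply its own proof of this Fact: it is stated with the citation \cite{M01} and no proof environment follows. So there is nothing in the present paper to compare your argument against; the result is simply quoted from Mabuchi's original work.

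Your outline is the standard one, and it is the approach Mabuchi himself takes: verify that the explicit expression $\widetilde{H}_{V}$ in (1) is a primitive for the $1$-form $\dot{\varphi}\mapsto\dashint_{X}\dot{\varphi}\,\theta_{\varphi}\,\omega_{\varphi}^{n}$ by differentiating along a path, then read off (2), (3), (4) as corollaries. The ingredients you name --- the transformation law $\theta_{\omega_{\varphi}}=\theta_{\omega_{0}}+V\varphi+\text{const}$, integration by parts against the mixed forms $\omega_{\varphi}^{i}\wedge\omega_{0}^{n-i}$, and the telescoping induced by $L_{V}\omega_{\varphi}=0$ --- are exactly the right ones.

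That said, what you have written is an outline rather than a proof. The sentence ``the two sums collapse and one is left with $\frac{d}{dt}\widetilde{H}_{V}(\varphi^{t})=\dashint_{X}\dot{\varphi}^{t}\theta_{\varphi^{t}}\omega_{\varphi^{t}}^{n}$'' is precisely the content of (1), and you have asserted it rather than demonstrated it; the same applies to the second-variation computation for (4). You acknowledge this in your final paragraph, but in a written proof the telescoping has to be exhibited explicitly: the cross-terms involving $V\varphi$ and the shift in the summation index do not cancel automatically, and the second sum in (1) with its asymmetric coefficients $(n-i+1)$ is there for a reason. If you intend this as a complete argument, the differentiation of $\widetilde{H}_{V}$ must be written out term by term.
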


\begin{definition}[\cite{M01}, \cite{CG00}, \cite{Gua00}, \cite{Sim00}]
We define the relative K-energy functional $M_{V} \colon \mathcal{H}^{S} \to \mathbf{R}$ by 
\begin{align*}
M_{V}(\varphi) \coloneqq M(\varphi) + H_{V}(\varphi) = -\int_{0}^{1}dt\dashint_{X}\dot{\varphi}^{t}(s(\omega_{\varphi^{t}})-\overline{s}-\theta_{\varphi^{t}})\omega_{\varphi^{t}}^{n}. 
\end{align*}
\end{definition}

\begin{fact}
\begin{enumerate}[(1)]
\item For any smooth path $\{\varphi^{t}\}$ in $\mathcal{H}_{0}^{S}$ we have 
\begin{align*}
\frac{d}{dt}M_{V}(\varphi^{t}) = -\dashint_{X}\dot{\varphi}^{t}(s(\omega_{\varphi^{t}})-\overline{s}-\theta_{\varphi^{t}})\omega_{\varphi^{t}}^{n}. 
\end{align*}
Hence $\omega_{\varphi}$ is an extremal K\"ahler metric if and only if $\varphi \in \mathcal{H}_{0}^{S}$ is a critical point of $M_{V}$. 
\item For each $\varphi \in \mathcal{H}_{0}^{S}$ and $W \in \mathfrak{s}$, put $\varphi^{t} = \exp(-tJW/4\pi)^{*}\varphi$. 
Then $\dot{\varphi}^{t} = u_{\omega_{\varphi^{t}}}^{W}$ and 
\begin{align*}
\frac{d}{dt}M_{V}(\varphi^{t}) = F(W) + B(W, V) = 0. 
\end{align*}
\item For a smooth geodesic $\{\varphi^{t}\}$ in $\mathcal{H}_{0}^{S}$ we have 
\begin{align*}
\frac{d^{2}}{dt^{2}}M_{V}(\varphi^{t}) = \frac{d^{2}}{dt^{2}}M(\varphi^{t}) \geq 0. 
\end{align*}
Hence $M_{V}$ is also convex along the geodesic $\{\varphi^{t}\}$. 
\end{enumerate}
\end{fact}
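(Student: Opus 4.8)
My plan is to derive all three assertions from the single decomposition $M_{V} = M + H_{V}$ together with the first- and second-variation formulas for $M$ and for $H_{V}$ that were recorded in the two preceding subsections; none of the underlying integration-by-parts computations needs to be repeated.

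For (1), I differentiate $M_{V}(\varphi^{t}) = M(\varphi^{t}) + H_{V}(\varphi^{t})$ along a smooth path in $\mathcal{H}_{0}^{S}$ and add the known derivatives: $\tfrac{d}{dt}M(\varphi^{t}) = -\dashint_{X}\dot{\varphi}^{t}(s(\omega_{\varphi^{t}})-\overline{s})\omega_{\varphi^{t}}^{n}$ from the energy-functionals subsection and $\tfrac{d}{dt}H_{V}(\varphi^{t}) = \dashint_{X}\dot{\varphi}^{t}\theta_{\varphi^{t}}\omega_{\varphi^{t}}^{n}$ from the vector-field-energies subsection, which immediately yields the stated formula. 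For the critical-point characterization I use that, along a path in $\mathcal{H}_{0}^{S}$, the speed $\dot{\varphi}^{t}$ has zero $\omega_{\varphi^{t}}^{n}$-average (because $E\equiv 0$ there and $\tfrac{d}{dt}E(\varphi^{t}) = \dashint_{X}\dot{\varphi}^{t}\omega_{\varphi^{t}}^{n}$), while conversely $s(\omega_{\varphi})-\overline{s}-\theta_{\varphi}$ is $S$-invariant with zero $\omega_{\varphi}^{n}$-average and hence is itself a legitimate variation at $\varphi$. Thus $\varphi$ is critical for $M_{V}$ exactly when $\dashint_{X}v\,(s(\omega_{\varphi})-\overline{s}-\theta_{\varphi})\omega_{\varphi}^{n}=0$ for all such $v$, i.e. exactly when $s(\omega_{\varphi})-\overline{s}=\theta_{\varphi}$, which is the extremal condition by the characterization of extremal metrics in the subsection on the extremal K\"ahler vector field.

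For (2), I take $\varphi^{t} = \exp(-tJW/4\pi)^{*}\varphi$: the facts $\dot{\varphi}^{t} = u_{\omega_{\varphi^{t}}}^{W}$, $\tfrac{d}{dt}M(\varphi^{t}) = F(W)$ and $\tfrac{d}{dt}H_{V}(\varphi^{t}) = B(W,V)$ are exactly the corresponding items in the two earlier subsections (and the flow preserves $\mathcal{H}_{0}^{S}$ since $\tfrac{d}{dt}E(\varphi^{t}) = \dashint_{X}u_{\omega_{\varphi^{t}}}^{W}\omega_{\varphi^{t}}^{n}=0$). Adding gives $\tfrac{d}{dt}M_{V}(\varphi^{t}) = F(W)+B(W,V)$, and this vanishes because $B(W,V) = -F(W)$ by the duality between $F$ and $-V$ via the Futaki--Mabuchi bilinear form. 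For (3), along a smooth geodesic in $\mathcal{H}_{0}^{S}$ the functional $H_{V}$ is affine, so $\tfrac{d^{2}}{dt^{2}}H_{V}(\varphi^{t}) = 0$ and therefore $\tfrac{d^{2}}{dt^{2}}M_{V}(\varphi^{t}) = \tfrac{d^{2}}{dt^{2}}M(\varphi^{t})$, which is $\geq 0$ by the geodesic convexity of $M$; convexity of $M_{V}$ follows at once.

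I do not expect any genuine obstacle: the whole argument is bookkeeping over previously stated facts. The only place calling for a little care is the critical-point step in (1), where one must check both that the admissible variations in $\mathcal{H}_{0}^{S}$ are precisely the $S$-invariant functions of zero $\omega_{\varphi}^{n}$-average and that $s(\omega_{\varphi})-\overline{s}-\theta_{\varphi}$ lies in that class, so that vanishing of the first variation against all of them forces the function itself to vanish.
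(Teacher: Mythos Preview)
Your proposal is correct and is precisely the argument the paper has in mind: the Fact is stated without proof because it follows immediately from $M_{V}=M+H_{V}$ together with the first- and second-variation formulas already recorded for $M$ and $H_{V}$, and your bookkeeping reproduces exactly that. The extra care you take in (1) on the critical-point characterization (identifying the admissible variations with zero-average $S$-invariant functions and noting that $s(\omega_{\varphi})-\overline{s}-\theta_{\varphi}$ lies in that class) is the right way to make the ``hence'' precise.
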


\subsection{$d_{1}$-distances, reduced $d_{1}$-distances and the reduced J-functionals}
Let $(X, L)$ be an $n$-dimensional polarized manifold. 
For any smooth path $\{\varphi^{t}\}_{t \in [0, 1]} \in \mathcal{H}_{0}^{S}$, we define the $L^{1}$-length of $\{\varphi^{t}\}_{t \in [0, 1]}$ by 
\begin{align*}
\ell_{1}(\{\varphi^{t}\}_{t \in [0, 1]}) \coloneqq \int_{0}^{1}dt\dashint_{X}|\dot{\varphi^{t}}|\omega_{\varphi^{t}}^{n}.  
\end{align*}
\begin{definition}
For each $\varphi_{0}, \varphi_{1} \in \mathcal{H}_{0}^{S}$, we define the $L^{1}$-distance of $\varphi_{0}$ and $\varphi_{1}$ by 
\begin{align*}
d_{1}(\varphi_{0}, \varphi_{1}) \coloneqq \inf_{\{\varphi^{t}\}_{t \in [0, 1]}}\ell_{1}(\{\varphi^{t}\}_{t \in [0, 1]}), 
\end{align*}
where $\{\varphi^{t}\}_{t \in [0, 1]}$ runs through all smooth paths in $\mathcal{H}_{0}^{S}$ so that $\varphi^{0} = \varphi_{0}$ and $\varphi^{1} = \varphi_{1}$. 
\end{definition}

\begin{fact}[\cite{Dar15}]
The $L^{1}$-distance $d_{1}$ is in fact a distance function on $\mathcal{H}_{0}^{S}$. 
However, the metric space $(\mathcal{H}_{0}^{S}, d_{1})$ is not complete. 
\end{fact}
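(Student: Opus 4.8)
The plan is to verify that $d_1$ satisfies the axioms of a metric---where only nondegeneracy (the separation axiom) is substantial---and then to exhibit an explicit $d_1$-Cauchy sequence in $\mathcal{H}_0^S$ with no limit in $\mathcal{H}_0^S$. Symmetry is immediate: reversing a path $\{\varphi^t\}_{t\in[0,1]}$ to $\{\varphi^{1-t}\}_{t\in[0,1]}$ leaves the $L^1$-length $\ell_1$ unchanged, and $d_1(\varphi,\varphi)=0$ via the constant path. For the triangle inequality I would concatenate a path from $\varphi_0$ to $\varphi_1$ with one from $\varphi_1$ to $\varphi_2$, reparametrized onto $[0,1]$; the concatenation is only piecewise smooth, but rounding the corner changes $\ell_1$ by an arbitrarily small amount, so passing to the infimum over such competitors gives $d_1(\varphi_0,\varphi_2)\le d_1(\varphi_0,\varphi_1)+d_1(\varphi_1,\varphi_2)$.

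The core is nondegeneracy, and here I would follow \cite{Dar15}. First, Chen's theorem \cite{Ch00a}, refined by \cite{Bl12} and \cite{CTW17}, provides for any $\varphi_0,\varphi_1\in\mathcal{H}_0^S$ a weak $C^{1,1}$ geodesic $\{\varphi^t\}_{t\in[0,1]}$ joining them, and the key analytic input is that this geodesic minimizes the $L^1$-length among all smooth competing paths, so that $d_1(\varphi_0,\varphi_1)$ is attained along it. The naive lower bound $\ell_1(\{\varphi^t\})\ge|E(\varphi_0)-E(\varphi_1)|$ coming from the first variation of $E$ is vacuous on $\mathcal{H}_0^S$, where $E\equiv 0$; instead one uses the rooftop envelope $P(\varphi_0,\varphi_1):=\sup\{\psi:\psi\le\varphi_0,\ \psi\le\varphi_1,\ \omega_0+dd^c\psi\ge 0\}$ (upper semicontinuous regularization) and establishes the identity
\[
d_1(\varphi_0,\varphi_1)=E(\varphi_0)+E(\varphi_1)-2E\bigl(P(\varphi_0,\varphi_1)\bigr),
\]
proved by splitting the geodesic through $P(\varphi_0,\varphi_1)$ and observing that along a geodesic decreasing down to $P(\varphi_0,\varphi_1)$ one has $|\dot\varphi^t|=-\dot\varphi^t$, so its $L^1$-length equals the drop of $E$. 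Since $P(\varphi_0,\varphi_1)\le\varphi_i$ and $E$ is strictly monotone, the right-hand side is $\ge 0$ and vanishes only when $P(\varphi_0,\varphi_1)=\varphi_0=\varphi_1$; hence $\varphi_0\ne\varphi_1$ forces $d_1(\varphi_0,\varphi_1)>0$.

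For non-completeness I would regularize a non-smooth potential. Fix a continuous $S$-invariant function $\psi$ with $\omega_0+dd^c\psi\ge 0$ that is not of class $C^1$---for instance a suitable maximum of two elements of $\mathcal{H}_0^S$. By the standard regularization of $\omega_0$-psh functions (Demailly; B{\l}ocki--Ko{\l}odziej), followed by a small correction to restore strict positivity and by averaging over the compact torus $S$, one obtains $\psi_j\in\mathcal{H}^S$ converging uniformly to $\psi$ on $X$ (uniform convergence since $\psi$ is continuous). Then $E(\psi_j)\to E(\psi)$, using $|E(u)-E(v)|\le\|u-v\|_{C^0}$ for $\omega_0$-psh $u,v$ (the affine segment between them is again $\omega_0$-psh, and each Monge--Amp\`ere measure along it is a probability measure after normalization). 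Set $\widetilde\psi_j:=\psi_j-E(\psi_j)\in\mathcal{H}_0^S$; bounding $d_1(\widetilde\psi_j,\widetilde\psi_k)$ by the $\ell_1$-length of the image under $\varphi\mapsto\varphi-E(\varphi)$ of the affine path from $\psi_j$ to $\psi_k$ gives $d_1(\widetilde\psi_j,\widetilde\psi_k)\le 2\|\psi_j-\psi_k\|_{C^0}\to 0$, so $\{\widetilde\psi_j\}$ is $d_1$-Cauchy. If it converged to some $\varphi_\infty\in\mathcal{H}_0^S$, then since $d_1$-convergence entails $L^1(X,\omega_0^n)$-convergence \cite{Dar15} while $\widetilde\psi_j\to\psi-E(\psi)$ uniformly, we would get $\varphi_\infty=\psi-E(\psi)$ everywhere (both sides being continuous), contradicting smoothness of $\varphi_\infty$. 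Hence $(\mathcal{H}_0^S,d_1)$ is not complete.

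The step I expect to be the main obstacle is the nondegeneracy: everything there rests on the fact that the weak $C^{1,1}$ geodesic realizes the $d_1$-distance and on the rooftop-envelope formula for that distance, which is precisely the content of \cite{Dar15} and which I would import rather than reprove. Once $d_1$ is known to be an honest metric, the non-completeness part is soft, as indicated above: it amounts to the finite-energy class (the $d_1$-completion of $\mathcal{H}_0^S$) being strictly larger than $\mathcal{H}_0^S$, witnessed concretely by any non-smooth potential approximated uniformly by elements of $\mathcal{H}_0^S$.
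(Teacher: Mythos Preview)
The paper does not supply a proof of this statement: it is recorded as a \emph{Fact} with a bare citation to \cite{Dar15}, and no argument is given. There is therefore nothing in the paper to compare your proposal against. Your sketch is a faithful outline of the arguments in \cite{Dar15}---in particular the rooftop-envelope identity $d_1(\varphi_0,\varphi_1)=E(\varphi_0)+E(\varphi_1)-2E(P(\varphi_0,\varphi_1))$ is exactly the mechanism Darvas uses for nondegeneracy, and the non-completeness is witnessed (as you say) by any bounded $\omega_0$-psh function outside $\mathcal{H}_0^S$, whose smooth approximants form a $d_1$-Cauchy sequence. Your caveat that the nondegeneracy step is imported rather than reproved is appropriate; that is also what the paper does by citing the fact.
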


Next we consider the reduced $L^{1}$-distance. 
Let us consider the action of $T$ on $\mathcal{H}_{0}^{S}$. 
For each $\tau \in T$ and $\varphi \in \mathcal{H}_{0}^{S}$, define $\varphi_{\tau} \in \mathcal{H}_{0}^{S}$ by 
\begin{align*}
\tau^{*}\omega_{\varphi} = \omega_{0} + dd^{c}\varphi_{\tau} = \omega_{\varphi_{\tau}}. 
\end{align*}
Since 
\begin{align*}
\tau^{*}\omega_{\varphi} = \tau^{*}(\omega_{0} + dd^{c}\varphi) = \omega_{0} + dd^{c}(0_{\tau} + \tau^{*}\varphi)
\end{align*}
and 
\begin{align*}
E(0_{\tau} + \tau^{*}\varphi) 
&= E(0_{\tau} + \tau^{*}\varphi) - E(0_{\tau}) \\
&= \frac{1}{n+1}\sum_{i=0}^{n}\dashint_{X}(\tau^{*}\varphi)(\tau^{*}\omega_{\varphi})^{i} \wedge (\tau^{*}\omega_{0})^{n-i} \\
&= E(\varphi) = 0, 
\end{align*}
we have $0_{\tau} + \tau^{*}\varphi \in \mathcal{H}_{0}^{S}$ and $\varphi_{\tau} = 0_{\tau} + \tau^{*}\varphi$. 

\begin{proposition}\label{T-action_isom}
The $T$-action on $(\mathcal{H}_{0}^{S}, d_{1})$ is isometric, that is, for each $\tau \in T$ and $\varphi_{0}, \varphi_{1} \in \mathcal{H}_{0}^{S}$, we have
\begin{align*}
d_{1}((\varphi_{0})_{\tau}, (\varphi_{1})_{\tau}) = d_{1}(\varphi_{0}, \varphi_{1}). 
\end{align*}
\end{proposition}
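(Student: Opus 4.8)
The plan is to reduce the isometry statement to the observation that the $T$-action preserves the $L^1$-length functional $\ell_1$ on smooth paths, together with the fact that $T$ acts on $\mathcal H_0^S$ (so that it maps admissible competing paths to admissible competing paths). First I would fix $\tau\in T$ and two points $\varphi_0,\varphi_1\in\mathcal H_0^S$, and consider an arbitrary smooth path $\{\varphi^t\}_{t\in[0,1]}$ in $\mathcal H_0^S$ from $\varphi_0$ to $\varphi_1$. Applying the construction preceding the proposition pointwise in $t$, the path $\{(\varphi^t)_\tau\}_{t\in[0,1]}$ is again a smooth path in $\mathcal H_0^S$, now joining $(\varphi_0)_\tau$ to $(\varphi_1)_\tau$; here I use that $(\varphi^t)_\tau = 0_\tau + \tau^*\varphi^t$ and that $t\mapsto \tau^*\varphi^t$ is smooth since $\tau$ is a fixed biholomorphism. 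This sets up a bijection between paths from $\varphi_0$ to $\varphi_1$ and paths from $(\varphi_0)_\tau$ to $(\varphi_1)_\tau$, with inverse given by the action of $\tau^{-1}$.

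The key computation is that $\ell_1$ is invariant under this bijection. Writing $\psi^t = (\varphi^t)_\tau$, I have $\omega_{\psi^t} = \tau^*\omega_{\varphi^t}$ by definition, and differentiating $\psi^t = 0_\tau + \tau^*\varphi^t$ in $t$ gives $\dot\psi^t = \tau^*(\dot\varphi^t)$ (the term $0_\tau$ is $t$-independent). Therefore
\begin{align*}
\dashint_X |\dot\psi^t|\,\omega_{\psi^t}^n = \dashint_X |\tau^*\dot\varphi^t|\,(\tau^*\omega_{\varphi^t})^n = \dashint_X \tau^*\bigl(|\dot\varphi^t|\,\omega_{\varphi^t}^n\bigr) = \dashint_X |\dot\varphi^t|\,\omega_{\varphi^t}^n,
\end{align*}
where the last equality is the change-of-variables formula for the diffeomorphism $\tau$ (note $\mathrm{vol}_{\omega}(X)$ is the same cohomological constant $(L^n)$ on both sides, so the normalized integrals match). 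Integrating over $t\in[0,1]$ yields $\ell_1(\{\psi^t\}) = \ell_1(\{\varphi^t\})$.

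Taking the infimum over all competing paths then gives $d_1((\varphi_0)_\tau,(\varphi_1)_\tau)\le d_1(\varphi_0,\varphi_1)$, and applying the same inequality to $\tau^{-1}$ and the points $(\varphi_0)_\tau,(\varphi_1)_\tau$ gives the reverse inequality, establishing equality. I do not expect a genuine obstacle here; the only point requiring a little care is the verification that $\{(\varphi^t)_\tau\}$ genuinely lies in $\mathcal H_0^S$ (positivity of $\omega_{\psi^t}$ is immediate from $\omega_{\psi^t}=\tau^*\omega_{\varphi^t}$, and the normalization $E((\varphi^t)_\tau)=0$ is exactly the computation $E(0_\tau+\tau^*\varphi)=E(\varphi)$ reproduced in the paragraph before the proposition), so the argument is essentially a bookkeeping of definitions plus one invariance-of-integral step.
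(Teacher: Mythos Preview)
Your proposal is correct and follows essentially the same approach as the paper: both arguments take an arbitrary smooth path, use the formula $(\varphi^t)_\tau = 0_\tau + \tau^*\varphi^t$ to compute $\dot\psi^t = \tau^*\dot\varphi^t$, and then invoke change of variables to show that $\ell_1$ is preserved. The paper's version is terser (it stops at the equality of lengths and leaves the infimum step implicit), while you spell out the bijection of competing paths and the reverse inequality via $\tau^{-1}$, but the content is the same.
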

\begin{proof}
Let $\{\varphi^{t}\}_{t \in [0, 1]}$ be a smooth path in $\mathcal{H}_{0}^{S}$ connecting $\varphi_{0}$ and $\varphi_{1}$. 
Then $\{\varphi_{\tau}^{t}\}_{t \in [0, 1]}$ is a smooth path in $\mathcal{H}_{0}^{S}$ connecting $(\varphi_{0})_{\tau}$ and $(\varphi_{1})_{\tau}$, and 
\begin{align*}
\dot{\varphi}_{\tau}^{t} = \frac{d}{dt}(0_{\tau} + \tau^{*}\varphi^{t}) = \tau^{*}\dot{\varphi}^{t}. 
\end{align*}
Hence we have
\begin{align*}
\ell_{1}(\{\varphi_{\tau}^{t}\}_{t \in [0, 1]}) 
&= \int_{0}^{1}dt\dashint_{X}|\dot{\varphi}_{\tau}^{t}|\omega_{\varphi_{\tau}^{t}}^{n} \\
&= \int_{0}^{1}dt\dashint_{X}|\tau^{*}\dot{\varphi}^{t}|\tau^{*}\omega_{\varphi^{t}}^{n} 
= \ell_{1}(\{\varphi^{t}\}_{t \in [0, 1]}),  
\end{align*}
as required. 
\end{proof}

\begin{definition}
For each $\varphi_{0}, \varphi_{1} \in \mathcal{H}_{0}^{S}$, we define the reduced $L^{1}$-distance of $\varphi_{0}$ and $\varphi_{1}$ by 
\begin{align*}
d_{1, T}(\varphi_{0}, \varphi_{1}) 
\coloneqq \inf_{\tau_{0}, \tau_{1} \in T}d_{1}((\varphi_{0})_{\tau_{0}}, (\varphi_{1})_{\tau_{1}}) = \inf_{\tau \in T}d_{1}((\varphi_{0})_{\tau}, \varphi_{1}). 
\end{align*}
\end{definition}

Similarly, we introduce the reduced $J$-functional. 
\begin{definition}
We define the reduced $J$-functional by 
\begin{align*}
J_{T}(\varphi) \coloneqq \inf_{\tau \in T}J(\varphi_{\tau}), 
\end{align*}
for any $\varphi \in \mathcal{H}_{0}^{S}$. 
\end{definition}

\begin{proposition}\label{equiv_L1_J}
The $J$-functional is equivalent to the $L^{1}$-distance in the following sense: there exists a $C > 0$ such that for any $\varphi \in \mathcal{H}_{0}^{S}$ 
\begin{align*}
\frac{1}{C}J(\varphi) - C \leq d_{1}(\varphi, 0) \leq CJ(\varphi) + C. 
\end{align*}
\end{proposition}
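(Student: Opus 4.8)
The plan is to establish the two inequalities $\tfrac{1}{C}J(\varphi)-C\le d_1(\varphi,0)$ and $d_1(\varphi,0)\le CJ(\varphi)+C$ separately, both by comparing $J$ with a more tractable functional and then invoking known coercivity estimates between Darvas-type energies. The natural intermediary is the functional $I(\varphi)=\dashint_X\varphi(\omega_0^n-\omega_\varphi^n)$ together with $J$ itself: it is classical (and elementary from the cocycle/integration-by-parts identities in the Fact on energy functionals) that $\tfrac{1}{n}I\le J\le \tfrac{n}{n+1}I$ on $\mathcal{H}^S$, so it suffices to compare $I$ with $d_1(\cdot,0)$. First I would recall Darvas' fundamental comparison: there is a dimensional constant $c_n>0$ with
\begin{align*}
\frac{1}{c_n}\bigl(I(\varphi)+J(\varphi)\bigr)\le d_1(\varphi,0)\le c_n\bigl(I(\varphi)+J(\varphi)\bigr)
\end{align*}
for $\varphi\in\mathcal{H}_0^S$ (this is exactly the content of the $d_1$–energy equivalence in \cite{Dar15}, after restricting the $L^1$-Mabuchi metric to $E=0$). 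Combined with $I\asymp J$, this already yields a clean two-sided bound $\tfrac{1}{C}J(\varphi)\le d_1(\varphi,0)\le CJ(\varphi)$ with multiplicative constants only.

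The additive constants $-C$ and $+C$ in the statement are therefore a weakening of what one gets, and are presumably included so that the proposition reads uniformly with the other coercivity statements in the paper; they cost nothing. Concretely, once $\tfrac{1}{C_0}J\le d_1(\cdot,0)\le C_0 J$ is in hand, enlarging $C_0$ to $C=C_0$ makes $\tfrac{1}{C}J(\varphi)-C\le \tfrac{1}{C}J(\varphi)\le d_1(\varphi,0)$ trivially, and $d_1(\varphi,0)\le C J(\varphi)\le CJ(\varphi)+C$ likewise. So the real work is purely the multiplicative equivalence.

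The main obstacle is the upper bound $d_1(\varphi,0)\le C_0\,J(\varphi)$: bounding $d_1$ from above requires exhibiting an actual path from $0$ to $\varphi$ in $\mathcal{H}_0^S$ of controlled $L^1$-length, and the honest way to do this without re-deriving Darvas' theory is to use the linear path $\varphi^t=t\varphi$, whose $L^1$-length is $\int_0^1 dt\dashint_X|\varphi|\,\omega_{t\varphi}^n$ — one must then control $\dashint_X|\varphi|\,\omega_{t\varphi}^n$ by $J(\varphi)$ uniformly in $t$, which is precisely a monotonicity-of-energy-along-$t\varphi$ estimate and is the technically delicate point (the measures $\omega_{t\varphi}^n$ degenerate at the endpoints of $[0,1]$ in general position only under the $E=0$ normalization). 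The lower bound $d_1(\varphi,0)\ge \tfrac{1}{C_0}J(\varphi)$ is softer: it follows from the fact that $J$ is, up to constants, the $d_1$-distance to the orbit, and more directly from $|E(\varphi)-E(\psi)|\le d_1(\varphi,\psi)$-type Lipschitz estimates for $E$ along $d_1$ together with $J=\dashint_X\varphi\,\omega_0^n-E(\varphi)$ and the bound $|\dashint_X\varphi\,\omega_0^n|\le d_1(\varphi,0)+O(1)$. I would present the argument by first citing \cite{Dar15} for the $d_1$–$(I+J)$ equivalence, then spending a short paragraph on $I\asymp J$, and finally absorbing everything into a single constant $C$; if one wants a self-contained treatment, the linear-path upper bound above is where the nontrivial estimate lives.
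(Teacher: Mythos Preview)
The paper states this proposition without proof, treating it as a known background result (implicitly from \cite{Dar15} and \cite{DR17}), so there is no ``paper's own proof'' to compare against. Your plan---reduce to the Aubin comparison $I\asymp J$ and then invoke Darvas' equivalence between $d_1(0,\varphi)$ and the Aubin energy on the slice $E=0$---is exactly the standard argument and is correct; the additive constants are indeed a harmless weakening. Two small corrections: the sharp Aubin inequality is $\tfrac{1}{n+1}I\le J\le \tfrac{n}{n+1}I$, not $\tfrac{1}{n}I\le J$; and the upper bound $d_1(0,\varphi)\le C\,J(\varphi)$ in \cite{Dar15} is not obtained via the linear path $t\varphi$ (which does not stay in $\mathcal{H}_0^S$ and whose $L^1$-length is awkward to control) but via Darvas' rooftop-envelope formula $d_1(0,\varphi)=E(0)+E(\varphi)-2E\bigl(P(0,\varphi)\bigr)$ together with elementary estimates on $E(P(0,\varphi))$. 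If you want the argument to be self-contained rather than a citation, that is the step to expand.
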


\begin{proposition}\label{equiv_redL1_redJ}
The reduced $J$-functional is equivalent to the reduced $L^{1}$-distance in the following sense: there exists a $C > 0$ such that for any $\varphi \in \mathcal{H}_{0}^{S}$ 
\begin{align*}
\frac{1}{C}J_{T}(\varphi) - C \leq d_{1, T}(\varphi, 0) \leq CJ_{T}(\varphi) + C. 
\end{align*}
\end{proposition}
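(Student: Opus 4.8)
The plan is to obtain the reduced equivalence directly from the non-reduced one, Proposition \ref{equiv_L1_J}, by passing to the infimum over the torus $T$. Let $C>0$ be the constant supplied by Proposition \ref{equiv_L1_J}. The key point is that this $C$ is uniform over all of $\mathcal{H}_0^S$, and, as recorded just before Proposition \ref{T-action_isom}, $\varphi_\tau = 0_\tau + \tau^*\varphi$ again lies in $\mathcal{H}_0^S$ for every $\tau \in T$; hence Proposition \ref{equiv_L1_J} may be applied to each $\varphi_\tau$ with the \emph{same} constant $C$.

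For the upper bound, fix $\varphi\in\mathcal{H}_0^S$. For every $\tau\in T$ we have $d_1(\varphi_\tau, 0)\le CJ(\varphi_\tau)+C$. Taking the infimum over $\tau\in T$ and using $d_{1,T}(\varphi,0)=\inf_{\tau\in T}d_1(\varphi_\tau,0)$ (which is how $d_{1,T}$ was defined, via the isometry of the $T$-action in Proposition \ref{T-action_isom}) together with $\inf_{\tau\in T}J(\varphi_\tau)=J_T(\varphi)$, we get $d_{1,T}(\varphi,0)\le CJ_T(\varphi)+C$. For the lower bound, for each $\tau\in T$ we have $\frac{1}{C}J(\varphi_\tau)-C\le d_1(\varphi_\tau,0)$; since $J_T(\varphi)\le J(\varphi_\tau)$ by definition of $J_T$, this gives $\frac{1}{C}J_T(\varphi)-C\le d_1(\varphi_\tau,0)$ for every $\tau$, and taking the infimum over $\tau$ yields $\frac{1}{C}J_T(\varphi)-C\le d_{1,T}(\varphi,0)$. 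Combining the two displays gives the claim.

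There is no substantial obstacle here: the argument is a soft infimum-monotonicity manipulation once Propositions \ref{equiv_L1_J} and \ref{T-action_isom} are in hand. The one point that deserves a line of care is the identity $d_{1,T}(\varphi,0)=\inf_{\tau\in T}d_1(\varphi_\tau,0)$: it follows from Proposition \ref{T-action_isom} together with the cocycle relation $(\varphi_\sigma)_\rho=\varphi_{\sigma\rho}$ and $0_e=0$, so that $d_1(\varphi_{\tau_0},0_{\tau_1})=d_1(\varphi_{\tau_0\tau_1^{-1}},0)$; one should also note that a shift by a constant leaves $d_1$ unchanged, which is implicit in working with the $E$-normalized representatives of $\mathcal{H}_0^S$.
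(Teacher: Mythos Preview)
Your argument is correct. The paper states Proposition \ref{equiv_redL1_redJ} without proof, evidently regarding it as an immediate consequence of Proposition \ref{equiv_L1_J}; your infimum-monotonicity derivation is precisely the intended deduction. One minor remark: the identity $d_{1,T}(\varphi,0)=\inf_{\tau\in T}d_1(\varphi_\tau,0)$ that you justify at the end is in fact already built into the paper's Definition of $d_{1,T}$ (the second equality there), so no additional cocycle argument is needed.
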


By using the theory of Chen-Cheng and He \cite{CC17}, \cite{CC18a}, \cite{CC18b}, \cite{He18}, and the arguments in \cite{Li19}, we shall show the following theorem. 
\begin{theorem}\label{ext_coer_equiv}
For a polarized manifold $(X, L)$, the followings are equivalent. 
\begin{enumerate}[(1)]
\item $(X, L)$ admits an $S$-invariant extremal K\"ahler metric. 
\item There exists $\delta, C >0$ such that 
\begin{align*}
M_{V}(\varphi) \geq \delta d_{1, T}(\varphi, 0) - C 
\end{align*}
for any $\varphi \in \mathcal{H}_{0}^{S}$. 
\item There exists $\delta, C >0$ such that 
\begin{align*}
M_{V}(\varphi) \geq \delta J_{T}(\varphi) - C 
\end{align*}
for any $\varphi \in \mathcal{H}_{0}^{S}$. 
\end{enumerate}
\end{theorem}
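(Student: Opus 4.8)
The plan is to reduce Theorem \ref{ext_coer_equiv} to He's existence theorem for extremal K\"ahler metrics \cite{He18}, which rests on the Chen--Cheng a priori estimates \cite{CC17,CC18a,CC18b}, and to organize the torus reduction following \cite{Li19}. The equivalence $(2)\Leftrightarrow(3)$ is immediate from Proposition \ref{equiv_redL1_redJ}: since $\tfrac1C J_T(\varphi)-C\le d_{1,T}(\varphi,0)\le C J_T(\varphi)+C$, a bound $M_V(\varphi)\ge\delta\,d_{1,T}(\varphi,0)-C$ holds for some $\delta,C>0$ precisely when $M_V(\varphi)\ge\delta' J_T(\varphi)-C'$ holds for some $\delta',C'>0$. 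So it remains to prove $(1)\Leftrightarrow(2)$.

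For $(2)\Rightarrow(1)$ I would extend $M_V$ to the $d_1$-metric completion of $\mathcal H_0^S$ (in the sense of \cite{Dar15}). Writing $M_V=M+H_V$ with $M$ convex and $d_1$-lower semicontinuous along weak $C^{1,1}$-geodesics and $H_V$ affine and $d_1$-continuous along them, $M_V$ extends to a convex, $d_1$-lower semicontinuous functional on the completion. Using that the $T$-action is $d_1$-isometric (Proposition \ref{T-action_isom}), a minimizing sequence $\{\varphi_j\}$ of $M_V$ can be replaced by $T$-translates for which $d_1(\varphi_j,0)$ stays bounded — this is exactly where coercivity $(2)$ enters — and one extracts a $d_1$-limit $u$, a minimizer of $M_V$ modulo $T$. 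The $d_1$-bound together with the structure of $M_V$ forces the entropy $H(u)$ to be finite, so the Chen--Cheng estimates in the form used by He apply and promote $u$ to a smooth $S$-invariant potential; by the first-variation formula for $M_V$, $\omega_u$ then satisfies $s(\omega_u)-\overline{s}-\theta_{\omega_u}=0$, i.e. it is an extremal K\"ahler metric.

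For $(1)\Rightarrow(2)$, let $\omega_e=\omega_{\varphi_e}$ be an $S$-invariant extremal metric, so $\varphi_e$ is a critical point of $M_V$ on $\mathcal H_0^S$. Connecting $\varphi_e$ to an arbitrary $\varphi\in\mathcal H_0^S$ by a weak $C^{1,1}$-geodesic, convexity of $M_V$ together with the nondegeneracy of the Futaki--Mabuchi form $B$ and the reductivity of $\mathrm{Aut}^0(X)$ yields, after the arguments of \cite{He18,Li19}, a bound $M_V(\varphi)\ge\delta\,d_1(\varphi,T\!\cdot\!\varphi_e)-C$ for some $\delta,C>0$; since $d_1(\varphi,T\!\cdot\!\varphi_e)\ge d_{1,T}(\varphi,0)-d_{1,T}(0,\varphi_e)$, this gives $(2)$. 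The possible mismatch between the maximal torus $T$ used to define $d_{1,T}$ and a torus adapted to $\omega_e$ is harmless, because any two maximal compact subgroups containing $S$ are conjugate by an element of $\mathrm{Aut}^0(X)$ by \cite[Proposition A.2]{Li19}, so the estimate does not depend on that choice.

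The main obstacle is the implication $(2)\Rightarrow(1)$: deriving smoothness of the prospective minimizer from a bare $d_1$-coercivity hypothesis needs the full Chen--Cheng program — one must turn coercivity into an entropy bound along a suitable minimizing sequence and then run the chain of $C^0$ and higher-order estimates — and the torus reduction, i.e. the choice of the translating elements $\tau_j\in T$, must be carried out compatibly with those estimates, keeping the transformed potentials in a $d_1$-bounded region while the entropy stays controlled. Everything else is bookkeeping with the functional identities recorded above.
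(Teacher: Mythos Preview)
Your outline and the paper's proof use the same circle of ideas (Chen--Cheng estimates, He's regularity, Darvas' $d_1$-geometry, Li's torus reduction), but the paper organizes the equivalence $(1)\Leftrightarrow(2)$ differently and more economically: rather than arguing the two implications separately, it invokes the Darvas--Rubinstein existence/properness principle \cite[Theorem~3.4]{DR17} with the data $(\mathcal R,d,F,G)=(\mathcal H_0^S,d_1,M_V,T)$ and then verifies the abstract hypotheses (P1)--(P7). Most of these verifications are quick citations to \cite{Ch00a,Dar15,BB17,He18,BBEGZ19}, which matches your sketch for $(2)\Rightarrow(1)$.

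The one place your proposal has a genuine gap is $(1)\Rightarrow(2)$. Convexity of $M_V$ along geodesics together with nondegeneracy of $B$ does not by itself yield a \emph{linear} lower bound with a positive slope $\delta$; in the DR17 framework the decisive hypothesis for that direction is (P5), namely that $T$ acts transitively on the set of minimizers of $M_V$. The paper devotes most of its proof to this point: it uses Calabi's theorem that $\mathrm{Aut}(X,V)_0$ --- not $\mathrm{Aut}^0(X)$, as you wrote --- is reductive, then Berman--Berndtsson's uniqueness of extremal K\"ahler metrics modulo $\mathrm{Aut}(X,V)_0$ \cite[Theorem~4.16]{BB17}, and finally \cite[Propositions~A.1--A.2]{Li19} to show the connecting automorphism can be taken in $T$. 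You mention the conjugacy of maximal compact subgroups, but the uniqueness input from \cite{BB17} is the step that actually makes $(1)\Rightarrow(2)$ work, and your sketch omits it.
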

We say that the relative K-energy is $T$-coercive if $M_{V}$ satisfies the condition (2) and (or) (3) in Theorem \ref{ext_coer_equiv}. 
\begin{proof}[Proof of Theorem \ref{ext_coer_equiv}]
The equivalence of (2) and (3) is ovbious from Proposition \ref{equiv_redL1_redJ}. 
We shall show the equivalence of (1) and (2). 
For this, we apply Darvas-Rubinstein's existence/properness principle \cite[Theorem 3.4]{DR17}. 
In their notation, we consider the following data 
\begin{align*}
\mathcal{R} = \mathcal{H}_{0}^{S},\quad d=d_{1},\quad F = M_{V},\quad G = T. 
\end{align*}
Note that the lower semicontinuity of $M_{V}$ is obtained in \cite[Corollary 2.2]{He18}. 
For (P1), existence of $d_{1}$-geodesic can be found in \cite[Theorem 3]{Ch00a} and \cite[Theorem 2]{Dar15}. 
Continuity and convexity of $M_{V}$ can be found in \cite[Theorem 3.4]{BB17} and \cite[Proposition 2.2]{He18}. 
The property (P2) is obtained by the compactness theorem in \cite[Theorem 2.17]{BBEGZ19} and the lower semicontinuity of $M_{V}$. 
The property (P3) is due to \cite[Theorem 3.6]{He18}. 
The property (P4) is already showed in Proposition \ref{T-action_isom}. 
The property (P6) can be omitted (see Remark 4.8 of \cite{Dar19}). 
The property (P7) is easily proved. 

We need to check the property (P5). 
Let $\mathfrak{aut}(X, V) = \{W \in \mathfrak{aut}(X) \mid L_{V}W = 0\}$. 
Then $\mathfrak{aut}(X, V)$ is a Lie subalgebra of $\mathfrak{aut}(X)$. 
We denote by $H \coloneqq \mathrm{Aut}(X, V)_{0}$ the connected Lie subgroup of $\mathrm{Aut}(X)_{0}$ corresponding to $\mathfrak{aut}(X, V)$. 
Let $\omega_{i}$, $i=1, 2$ be two $S$-invariant extremal K\"ahler metrics in $\mathcal{H}(X, L)^{S}$, and 
\begin{align*}
K_{i} = \mathrm{Isom}(X, \omega_{i})_{0}. 
\end{align*}
By a theorem of Calabi \cite{Ca85}, $\mathrm{Aut}(X, V)_{0}$ is a reductive and each $K_{i}$ is a maximal compact subgroup of $\mathrm{Aut}(X, V)_{0}$. 
Since $\omega_{i}$ is $S$-invariant, we have $S \subset K_{1} \cap K_{2}$ and that $S$ is a maximal compact torus of both $K_{1}$ and $K_{2}$. 
Hence by \cite[Proposition A.2]{Li19} $K_{2} = \tau K_{1} \tau^{-1}$ for some $\tau \in T$. 

By Berman-Berndttson's uniqueness theorem of extremal K\"ahler metrics, we can find $f \in H$ satisfying $\omega_{1} = f^{*}\omega_{2}$ (see the proof of \cite[Theorem 4.16]{BB17}). 
Then we have $K_{1} = f K_{2} f^{-1} = (f\tau)K_{1}(f\tau)^{-1}$ and hence $f\tau \in N_{H}(K_{1})$, where $N_{H}(K_{1})$ is the normalizer of $K_{1}$ in $H$. 
Furthermore, by \cite[Proposition A.1]{Li19} we have $f\tau \in C(H)_{0}K_{1}$, where $C(H)_{0}$ is the identity component of the center of $H$. 
Hence there exists $\tau' \in C(H)_{0} \subset T$ and $k_{1} \in K_{1}$ such that $f\tau = \tau'k_{1}$. 
By setting $\tau_{1} = \tau'\tau^{-1}$, we have $\tau_{1} \in T$, $f = k_{1}\tau_{1}$ and  
\begin{align*}
\omega_{1} = f^{*}\omega_{2} = (k_{1}\tau_{1})^{*}\omega_{2} = \tau_{1}^{*}k_{1}^{*}\omega_{2} = \tau_{1}^{*}\omega_{2}. 
\end{align*}
\end{proof}

\section{Test configurations and relative K-stability}
\subsection{Test configurations}

Let $(X, L)$ be an $n$-dimensional polarized manifold. 

\begin{definition}
A test configuration for $(X, L)$ of exponent $r$ consists of 
\begin{enumerate}
\item a scheme $\mathcal{X}$ with a $\mathbf{G}_{m}$-action, 
\item a $\mathbf{G}_{m}$-equivariant flat and proper morphism of schemes $\pi \colon \mathcal{X} \to \mathbf{A}^{1}$, where $\mathbf{G}_{m}$ acts on $\mathbf{A}^{1}$ by multiplication, 
\item a $\mathbf{G}_{m}$-linearlized $\pi$-very ample line bundle $\mathcal{L}$ over $\mathcal{X}$, and
\item an isomorphism $(\mathcal{X}_{1}, \mathcal{L}_{1}) \cong (X, L^{r})$. 
\end{enumerate}
Here $\mathcal{X}_{t}$ is the fiber of $\pi$ over $t \in \mathbf{A}^{1}$, and $\mathcal{L}_{t} = \mathcal{L}|_{\mathcal{X}_{t}}$. 
A test configuration $(\mathcal{X}, \mathcal{L})$ is called product if $\mathcal{X}$ is isomorphic to the fiber product of $X$ and $\mathbf{A}^{1}$ over $\mathbf{A}^{1}$, and trivial if in addition $\mathbf{G}_{m}$ acts only on the second factor. 
A test configuration $(\mathcal{X}, \mathcal{L})$ is called normal if $\mathcal{X}$ is a normal variety. 
For an algebraic subgroup $G$ of $\mathrm{Aut}(X, L)$, a $G$-equivariant test configuration is a test configuration $(\mathcal{X}, \mathcal{L})$ with a lifted $G$-action on $(\mathcal{X}, \mathcal{L})$ which preserves each fiber, commutes with the fiber $\mathbf{G}_{m}$-action of $(\mathcal{X}, \mathcal{L})$, and coincides with $G$-action when actiong on $(\mathcal{X}_{1}, \mathcal{L}_{1}) \cong (X, L^{r})$. 
\end{definition}
For our purpose, it is convenient to compactify test configurations. 
Let $(\mathcal{X}, \mathcal{L})$ be a test configuration for $(X, L)$ of exponent $r$. 
The $\mathbf{G}_{m}$-action on $(\mathcal{X}, \mathcal{L})$ and the isomorphism $(\mathcal{X}_{1}, \mathcal{L}_{1}) \cong (X, L^{r})$ induces an equivariant trivialization $(\mathcal{X}, \mathcal{L})|_{\mathbf{G}_{m}} \cong (X \times \mathbf{G}_{m}, p_{1}^{\ast}L^{r})$, where $p_{1} \colon X \times \mathbf{G}_{m} \to X$ is the projection to the first factor, and $\mathbf{G}_{m}$ acts only on the second factor. 
Then we can compactify $(\mathcal{X}, \mathcal{L})$ by gluing it with the product $(X \times (\mathbf{P}^{1}\setminus\{0\}), p_{1}^{\ast}L^{r})$ along their respective open sets $\mathcal{X} \setminus \mathcal{X}_{0}$ and $X \times (\mathbf{A}^{1} \setminus \{0\})$. 
The resulting $(\overline{\mathcal{X}}, \overline{\mathcal{L}})$ with $\overline{\pi} \colon \overline{\mathcal{X}} \to \mathbf{P}^{1}$ is a $\mathbf{G}_{m}$-equivariant flat family over $\mathbf{P}^{1}$ with fibers isomorphic to $(X, L^{r})$ except over 0. 
We call $(\overline{\mathcal{X}}, \overline{\mathcal{L}})$ \emph{the canonical compactification} of the test configuration $(\mathcal{X}, \mathcal{L})$. 
Note that the action of $\mathbf{G}_{m}$ on the $\infty$-fiber $(\overline{\mathcal{X}}_{\infty}, \overline{\mathcal{L}}_{\infty})$ is trivial. 
\begin{example}
Since $L$ is ample, there exists an embeddings 
\begin{align*}
\Phi_{m} \colon X \to \mathbf{P}(H^{0}(X, L^{m}))^{\vee} 
\end{align*}
for any sufficiently divisible $m \in \mathbf{Z}_{>0}$. 
By replacing $m$ if necessary, we obtain a faithful representation 
\begin{align*}
\theta_{m} \colon T \to \mathrm{GL}(H^{0}(X, L^{m})) 
\end{align*}
satisfying 
\begin{align*}
\theta_{m}(\tau) \circ \Phi_{m} = \Phi_{m}  \circ \tau
\end{align*}
for any $\tau \in T$. 
Then the infinitesimal representaion $\theta_{m*} \colon \mathfrak{t} \to \mathfrak{gl}(H^{0}(X, L^{m}))$ is given by 
\begin{align*}
-\frac{1}{4\pi}\theta_{m*}(JW) = \nabla_{W} + mu_{\omega}^{W}\mathrm{id}_{H^{0}(X, L^{m})}. 
\end{align*}
Let $C_{m}(T)$ be the centralizer of a subset $\theta_{m}(T)$ in $\mathrm{GL}(H^{0}(X, L^{m}))$. 
For any morphism $\lambda \colon \mathbf{G}_{m} \to C_{m}(T)$, 
we define $\mathcal{X}$ as the Zariski closure of the set 
\begin{align*}
\bigcup_{\tau \in \mathbf{G}_{m}}\lambda^{\vee}(\tau)(\Phi_{m}(X)) \times \{\tau\}
\end{align*}
in $\mathbf{P}(H^{0}(X, L^{m}))^{\vee} \times \mathbf{A}^{1}$, and set $\mathcal{L} = \mathcal{O}_{\mathcal{X}}(1)$. 
Then $(\mathcal{X}, \mathcal{L})$ is a test confiduration for $(X, L)$ of exponent $m$. 
The central fiber $\mathcal{X}_{0}$ is obtained as the flat limit of the image of $\Phi_{m}(X)$ under $\lambda^{\vee}$ as $\tau \to 0$. 
\end{example}

\begin{example}
The set of test configurations for $(X, L)$ admits two natural operations: translations and scalings. 
$(\mathcal{X}, \mathcal{L})$ be a test configuration for $(X, L)$. 
\begin{enumerate}
\item Let $c \in \mathbf{Q}$. 
A translation of $(\mathcal{X}, \mathcal{L})$ is defined to be $(\mathcal{X}, \mathcal{L} + c\mathcal{X}_{0}) \coloneqq (\mathcal{X}, \mathcal{L} \otimes_{\mathcal{X}}\mathcal{O}_{\mathcal{X}}(c\mathcal{X}_{0}))$. 
\item Let $d \in \mathbf{Z}_{>0}$. 
A scaling of $(\mathcal{X}, \mathcal{L})$ is defined to be the normalization of the base change of $(\mathcal{X}, \mathcal{L})$ by $\tau \mapsto \tau^{d}$. 
We denote such a test configuration by $(\mathcal{X}_{d}, \mathcal{L}_{d})$. 
\end{enumerate}
\end{example}

Associated to a test configuration, the Donaldson-Futaki invariant is defined as follows. 
Let $(\mathcal{X}, \mathcal{L})$ be a test configuration for $(X, L)$ of exponent $r$. 
By the flatness of $\pi \colon \mathcal{X} \to \mathbf{A}^{1}$ we have 
\begin{align*}
N_{rm} \coloneqq \dim_{\mathbf{C}}H^{0}(\mathcal{X}_{0}, \mathcal{L}_{0}^{m}) = \dim_{\mathbf{C}}H^{0}(X, L^{rm}) 
\end{align*}
for sufficiently large $m \in \mathbf{Z}_{>0}$. 
Moreover, the asymptotic Riemann-Roch theorem tells us that 
\begin{align*}
N_{m} = a_{0}m^{n} + a_{1}m^{n-1}+O(m^{n-2}), 
\end{align*}
where 
\begin{align*}
a_{0} = \frac{(L^{n})}{n!},\quad a_{1} = -\frac{1}{2}\frac{(K_{X} \cdot L^{n-1})}{(n-1)!}. 
\end{align*}
Note that 
\begin{align*}
\overline{s} = -n\frac{(K_{X}\cdot L^{n-1})}{(L^{n})} = -2\frac{a_{1}}{a_{0}}. 
\end{align*}
Since $0 \in \mathbf{A}^{1}$ is fixed by the $\mathbf{G}_{m}$-action, the $\mathbf{G}_{m}$-action on $(\mathcal{X}, \mathcal{L})$ induces a coaction $\lambda_{m} \colon H^{0}(\mathcal{X}_{0}, \mathcal{L}_{0}^{m}) \to H^{0}(\mathcal{X}_{0}, \mathcal{L}_{0}^{m}) \otimes \mathbf{C}[t, t^{-1}]$. 
Let 
\begin{align*}
H^{0}(\mathcal{X}_{0}, \mathcal{L}_{0}^{m}) = \bigoplus_{i=1}^{N_{rm}}H^{0}(\mathcal{X}_{0}, \mathcal{L}_{0}^{m})_{\lambda_{i}^{(rm)}} 
\end{align*}
be the weight decomposition. 
Here $H^{0}(\mathcal{X}_{0}, \mathcal{L}_{0}^{m})_{\lambda} \coloneqq \{s \in H^{0}(\mathcal{X}_{0}, \mathcal{L}_{0}^{m}) \mid \lambda_{m}(s) = s \otimes \tau^{\lambda}\}$ is the $\lambda$-weight space for this action. 
By \cite[Theorem 3.1]{BHJ17}, we have 
\begin{align*}
w_{m} \coloneqq \sum_{i=1}^{N_{m}}\lambda_{i}^{(m)} = b_{0}m^{n+1}+ b_{1}m^{n} + O(m^{n-1}) 
\end{align*}
for sufficiently large $m$ divisible by $r$. 
We then consider the coeffients 
\begin{align*}
\frac{w_{m}}{mN_{m}} = F_{0}(\mathcal{X}, \mathcal{L}) + F_{1}(\mathcal{X}, \mathcal{L})m^{-1} + O(m^{-2}), 
\end{align*}
\begin{align*}
F_{0}(\mathcal{X}, \mathcal{L}) = \frac{b_{0}}{a_{0}},\quad F_{1}(\mathcal{X}, \mathcal{L}) = \frac{a_{0}b_{1}-a_{1}b_{0}}{a_{0}^{2}}. 
\end{align*}
\begin{definition}
We call 
\begin{align*}
DF(\mathcal{X}, \mathcal{L}) \coloneqq -2F_{1}(\mathcal{X}, \mathcal{L}) = -2\frac{a_{0}b_{1}-a_{1}b_{0}}{a_{0}^{2}} 
\end{align*}
the Donaldson-Futaki invariant of a test configuration $(\mathcal{X}, \mathcal{L})$. 
\end{definition}

When the test configuration $(\mathcal{X}, \mathcal{L})$ is normal, we can describe the Donaldson-Futaki invariant via intersection numbers (\cite[Proposition 17]{W12}, \cite[Proposition 6]{LX14}, \cite[Proposition 3.12]{BHJ17}). 
Indeed, by using the canonical compactification and asymptotic Riemann-Roch theorem, we have 
\begin{align*}
w_{rm} 
&= \chi(\overline{\mathcal{X}}, \overline{\mathcal{L}}^{m}) - \chi(X, L^{rm}) \\
&= \frac{(\overline{\mathcal{L}}^{n+1})}{r^{n+1}(n+1)!}(rm)^{n+1} - \frac{(K_{\overline{\mathcal{X}}} \cdot \overline{\mathcal{L}}^{n})}{2r^{n}n!}(rm)^{n} + O(m^{n-1}) \\
&\quad - \frac{(L^{n})}{n!}(rm)^{n} + O(m^{n-1}) \\
&= \frac{(\overline{\mathcal{L}}^{n+1})}{r^{n+1}(n+1)!}(rm)^{n+1} - \frac{1}{2r^{n}n!}\left((K_{\overline{\mathcal{X}}} \cdot \overline{\mathcal{L}}^{n}) + 2r^{n}(L^{n})\right)(rm)^{n} \\
&\quad + O(m^{n-1})  
\end{align*}
for any sufficiently large $m \in \mathbf{Z}_{>0}$ (see the proof of \cite[Proposition 3.12]{BHJ17}). 
By setting $K_{\mathcal{X}/\mathbf{P}^{1}} \coloneqq K_{\overline{\mathcal{X}}} - \overline{\pi}^{*}K_{\mathbf{P}^{1}}$, we obtain 
\begin{align*}
2r^{n}(L^{n}) 
&= 2((L^{r})^{n}) 
= 2(\overline{\mathcal{L}}|_{\mathcal{\overline{X}_{1}}})^{n} 
= 2(\overline{\mathcal{X}}_{1} \cdot \overline{\mathcal{L}}_{1}^{n}) \\
&= (\overline{\pi}^{*}(2[1]) \cdot \overline{\mathcal{L}}_{1}^{n}) 
= -(\overline{\pi}^{*}K_{\mathbf{P}^{1}} \cdot \overline{\mathcal{L}}^{n})
\end{align*} 
and
\begin{align*}
b_{0} = \frac{(\overline{\mathcal{L}}^{n+1})}{r^{n+1}(n+1)!},\quad b_{1} = - \frac{1}{2r^{n}n!}(K_{\overline{\mathcal{X}}/\mathbf{P}^{1}} \cdot \overline{\mathcal{L}}^{n}). 
\end{align*}
Therefore, we obtain the intersection number formula 
\begin{align}\label{DF_intersection}
DF(\mathcal{X}, \mathcal{L}) = \overline{s}\frac{(\overline{\mathcal{L}}^{n+1})}{r(n+1)(L^{n})} + \frac{1}{(L^{n})}(K_{\overline{\mathcal{X}}/\mathbf{P}^{1}} \cdot \overline{\mathcal{L}}^{n}). 
\end{align}

\subsection{Duistermaat-Heckman measures}
Let $(\mathcal{X}, \mathcal{L})$ be a test configuration for $(X, L)$ of exponent $r$. 
Then the central fiber $(\mathcal{X}_{0}, \mathcal{L}_{0})$ is a polarized $\mathbf{G}_{m}$-scheme. 
For sufficiently divisible $m \in \mathbf{Z}_{>0}$, we can consider the induced $\mathbf{G}_{m}$-action on $H^{0}(\mathcal{X}_{0}, \mathcal{L}_{0}^{m})$. 
Then there is a weak limit of the measure 
\begin{align*}
\mathrm{DH}(\mathcal{X}, \mathcal{L}) = \lim_{m \to \infty}\frac{1}{N_{rm}}\sum_{\lambda \in \mathbf{Z}}\dim H^{0}(\mathcal{X}_{0}, \mathcal{L}_{0}^{m})_{\lambda}\delta_{\lambda/rm}
\end{align*}
on $\mathbf{R}$, where $\delta_{\lambda/rm}$ is the Dirac delta measure concentrated at $\lambda/rm$ (see \cite{BHJ17}). 
This is called \emph{the Duistermaat-Heckman measure} of the test configuratin $(\mathcal{X}, \mathcal{L})$. 

\subsection{Non-Archimedean functionals}
In this subsection we introduce non-Archimedean functionals. 
Let $(X, L)$ be an $n$-dimensional polarized manifold and $(\mathcal{X}, \mathcal{L})$ a test configuration for $(X, L)$ of exponent $r$. 
\begin{definition}[\cite{BHJ17}]
\begin{enumerate}[(1)]
\item We define the non-Archimedean Monge-Amp\`ere energy of $\varphi$ by 
\begin{align*}
E^{NA}(\mathcal{X}, \mathcal{L}) \coloneqq \frac{(\overline{\mathcal{L}}^{n+1})}{(n+1)(L^{n})} = \int_{\mathbf{R}}\lambda\,d\mathrm{DH}(\mathcal{X}, \mathcal{L}). 
\end{align*}
\item We define the non-Archimedean $J$-functional of $\varphi$ by 
\begin{align*}
J^{NA}(\mathcal{X}, \mathcal{L}) \coloneqq \sup \mathrm{supp}(\mathrm{DH}(\mathcal{X}, \mathcal{L}))-E^{NA}(\mathcal{X}, \mathcal{L}). 
\end{align*}
\end{enumerate}
\end{definition}
When $\mathcal{X}$ dominates the product $X \times \mathbf{P}^{1}$, namely, there exists $\rho \colon \mathcal{X} \to X \times \mathbf{P}^{1}$ which is equivariant with respect to the trivial action on the target, $J^{NA}$ can be written as 
\begin{align*}
J^{NA}(\mathcal{X}, \mathcal{L}) = \frac{1}{(L^{n})}(\overline{\mathcal{L}}\cdot\rho^{*}p_{1}^{*}L^{n}) - \frac{(\overline{\mathcal{L}}^{n+1})}{r(n+1)(L^{n})}. 
\end{align*}

Let 
\begin{align*}
&K_{\overline{\mathcal{X}}}^{\mathrm{log}} \coloneqq K_{\overline{\mathcal{X}}} + \overline{\mathcal{X}}_{0,\mathrm{red}} + \overline{\mathcal{X}}_{\infty, \mathrm{red}} = K_{\overline{\mathcal{X}}} + \overline{\mathcal{X}}_{0,\mathrm{red}} + \overline{\mathcal{X}}_{\infty}, \\
&K_{\mathbf{P}^{1}}^{\mathrm{log}} \coloneqq K_{\mathbf{P}^{1}} + [0] + [\infty], \\
&K_{\overline{\mathcal{X}}/\mathbf{P}^{1}}^{\mathrm{log}} \coloneqq K_{\overline{\mathcal{X}}}^{\mathrm{log}} - \overline{\pi}^{*}K_{\mathbf{P}^{1}}^{\mathrm{log}} = K_{\overline{\mathcal{X}}/\mathbf{P}^{1}} + (\overline{\mathcal{X}}_{0,\mathrm{red}} - \overline{\mathcal{X}}_{0}). 
\end{align*} 

\begin{definition}[\cite{BHJ17}]
Let $(\mathcal{X}, \mathcal{L})$ be a test configuration for $(X, L)$. 
\begin{enumerate}[(1)]
\item The non-Archimedean Ricci energy is defined by
\begin{align*}
R^{NA}(\mathcal{X}, \mathcal{L}) \coloneqq \frac{1}{(L^{n})}(\rho^{*}K_{X \times \mathbf{P}^{1}/\mathbf{P}^{1}}^{\mathrm{log}} \cdot \overline{\mathcal{L}}^{n}). 
\end{align*}
\item The non-Archimedean entropy is defined by
\begin{align*}
H^{NA}(\mathcal{X}, \mathcal{L}) \coloneqq \frac{1}{(L^{n})}(K_{\overline{\mathcal{X}}/\mathbf{P}^{1}}^{\mathrm{log}} \cdot \overline{\mathcal{L}}^{n})-\frac{1}{(L^{n})}(\rho^{*}K_{X \times \mathbf{P}^{1}/\mathbf{P}^{1}}^{\mathrm{log}} \cdot \overline{\mathcal{L}}^{n}). 
\end{align*}
\item The non-Archimedean K-energy is defined by 
\begin{align*}
M^{NA}(\varphi) 
&\coloneqq H^{NA}(\mathcal{X}, \mathcal{L}) + R^{NA}(\mathcal{X}, \mathcal{L}) + \overline{s}E^{NA}(\mathcal{X}, \mathcal{L}) \\
&= \overline{s}\frac{(\overline{\mathcal{L}}^{n+1})}{(n+1)(L^{n})} + \frac{1}{(L^{n})}(K_{\overline{\mathcal{X}}/\mathbf{P}^{1}}^{\mathrm{log}} \cdot \overline{\mathcal{L}}^{n}). 
\end{align*}
\end{enumerate}
\end{definition}

The non-Archimedean K-energy is closely related to the Donaldson-Futaki invariant. 
In fact, by comparing \eqref{DF_intersection} we have 
\begin{align*}
M^{NA}(\mathcal{X}, \mathcal{L}) 
&= DF(\mathcal{X}, \mathcal{L}) + \frac{1}{(L^{n})}(K_{\overline{\mathcal{X}}/\mathbf{P}^{1}}^{\mathrm{log}} \cdot \overline{\mathcal{L}}^{n}) - \frac{1}{(L^{n})}(K_{\overline{\mathcal{X}}/\mathbf{P}^{1}} \cdot \overline{\mathcal{L}}^{n}) \\
&= DF(\mathcal{X}, \mathcal{L}) + \frac{1}{(L^{n})}((\overline{\mathcal{X}}_{0,\mathrm{red}} - \overline{\mathcal{X}}_{0}) \cdot \overline{\mathcal{L}}^{n}). 
\end{align*}
Hence $M^{NA}(\mathcal{X}, \mathcal{L}) \leq DF(\mathcal{X}, \mathcal{L})$ for any test configuration $(\mathcal{X}, \mathcal{L})$ and equality holds if and only if $\mathcal{X}$ is regular in codimension one and $\mathcal{X}_{0}$ is generically reduced (\cite[Proposition 7.15]{BHJ17}, \cite[Proposition 2.8]{BHJ19}). 
An advantage of $M^{NA}$ is the following homogeneity with respect to base change, which fails for $DF$ when the central fiber is non-reduced. 
\begin{proposition}[{\cite[Proposition 7.14]{BHJ17}}]\label{homogenuity_NA_K-energy}
Let $(\mathcal{X}, \mathcal{L})$ be a test configuration for $(X, L)$. 
For each $d \in \mathbf{Z}_{>0}$, let $(\mathcal{X}_{d}, \mathcal{L}_{d})$ be the test configuration obtained by the base change $z \mapsto z^{d}$. 
Then $M^{NA}(\mathcal{X}_{d}, \mathcal{L}_{d}) = dM^{NA}(\mathcal{X}, \mathcal{L})$. 
\end{proposition}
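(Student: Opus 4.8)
The plan is to substitute the finite base change into the intersection-number formula
\[
M^{NA}(\mathcal{X},\mathcal{L})=\overline{s}\,\frac{(\overline{\mathcal{L}}^{n+1})}{(n+1)(L^{n})}+\frac{1}{(L^{n})}(K_{\overline{\mathcal{X}}/\mathbf{P}^{1}}^{\mathrm{log}}\cdot\overline{\mathcal{L}}^{n})
\]
and to check that each of the two terms scales by $d$. We may assume $\overline{\mathcal{X}}$ is normal (for the general case one recalls that $M^{NA}$ is computed on the normalization and that $\mathcal{X}_{d}$ is normal by construction). Write $p\colon\mathbf{P}^{1}\to\mathbf{P}^{1}$ for the degree-$d$ map $z\mapsto z^{d}$; it is totally ramified of index $d$ over $0$ and $\infty$ and is \'etale elsewhere. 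Let $\overline{\mathcal{X}_{d}}$ be the normalization of $\overline{\mathcal{X}}\times_{\mathbf{P}^{1},p}\mathbf{P}^{1}$, with the finite degree-$d$ morphism $g\colon\overline{\mathcal{X}_{d}}\to\overline{\mathcal{X}}$ and structure map $\overline{\pi}_{d}\colon\overline{\mathcal{X}_{d}}\to\mathbf{P}^{1}$ (so that $p\circ\overline{\pi}_{d}=\overline{\pi}\circ g$), and put $\overline{\mathcal{L}_{d}}=g^{*}\overline{\mathcal{L}}$; one first checks, as in \cite{BHJ17}, that $(\overline{\mathcal{X}_{d}},\overline{\mathcal{L}_{d}})$ is the canonical compactification of the scaled test configuration $(\mathcal{X}_{d},\mathcal{L}_{d})$. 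The scaling $(\overline{\mathcal{L}_{d}}^{n+1})=d\,(\overline{\mathcal{L}}^{n+1})$ is immediate from the projection formula since $\deg g=d$; so it remains to prove $(K_{\overline{\mathcal{X}_{d}}/\mathbf{P}^{1}}^{\mathrm{log}}\cdot\overline{\mathcal{L}_{d}}^{n})=d\,(K_{\overline{\mathcal{X}}/\mathbf{P}^{1}}^{\mathrm{log}}\cdot\overline{\mathcal{L}}^{n})$, and for this, again by the projection formula, it suffices to establish the identity of Weil divisor classes
\[
g^{*}K_{\overline{\mathcal{X}}/\mathbf{P}^{1}}^{\mathrm{log}}=K_{\overline{\mathcal{X}_{d}}/\mathbf{P}^{1}}^{\mathrm{log}},
\]
where the log relative canonical classes are regarded as Weil divisor classes on the respective normal varieties and $g^{*}$ is the usual pullback of Weil divisor classes under a finite morphism of normal varieties (no $\mathbf{Q}$-Gorenstein hypothesis on $\overline{\mathcal{X}}$ is needed). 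Equivalently, one could verify the scalings of $E^{NA}$, $R^{NA}$ and $H^{NA}$ separately, but all three reduce to essentially this same functoriality statement.

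To prove the displayed identity I would first dispose of the base $\mathbf{P}^{1}$. The Hurwitz formula gives $p^{*}K_{\mathbf{P}^{1}}=K_{\mathbf{P}^{1}}-(d-1)([0]+[\infty])$, while $p^{*}([0]+[\infty])=d([0]+[\infty])$, so $p^{*}K_{\mathbf{P}^{1}}^{\mathrm{log}}=K_{\mathbf{P}^{1}}^{\mathrm{log}}$; hence $g^{*}\overline{\pi}^{*}K_{\mathbf{P}^{1}}^{\mathrm{log}}=\overline{\pi}_{d}^{*}K_{\mathbf{P}^{1}}^{\mathrm{log}}$ and the identity reduces to $g^{*}K_{\overline{\mathcal{X}}}^{\mathrm{log}}=K_{\overline{\mathcal{X}_{d}}}^{\mathrm{log}}$. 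Now $g$ is \'etale over $\overline{\mathcal{X}}\setminus\overline{\pi}^{-1}(\{0,\infty\})$ (it is the normalized base change of an \'etale morphism there, with source already normal), so its ramification divisor $\mathrm{Ram}_{g}$ is supported on the prime divisors $F'$ of $\overline{\mathcal{X}_{d}}$ lying over a component of $\overline{\mathcal{X}}_{0,\mathrm{red}}$ or over $\overline{\mathcal{X}}_{\infty}$; since the characteristic is zero, $\mathrm{Ram}_{g}=\sum_{F'}(e_{F'}-1)F'$, where $e_{F'}$ is the ramification index of $g$ along $F'$, and Hurwitz reads $K_{\overline{\mathcal{X}_{d}}}=g^{*}K_{\overline{\mathcal{X}}}+\mathrm{Ram}_{g}$. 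On the other hand, by the very definition of the ramification index, $g^{*}(\overline{\mathcal{X}}_{0,\mathrm{red}}+\overline{\mathcal{X}}_{\infty})$ has coefficient exactly $e_{F'}$ along each such $F'$, and these $F'$ are precisely the prime divisors of $(\overline{\mathcal{X}_{d}})_{0,\mathrm{red}}+(\overline{\mathcal{X}_{d}})_{\infty}$ (indeed $\overline{\pi}_{d}(F')\in\{0,\infty\}$ if and only if $\overline{\pi}(g(F'))\in\{0,\infty\}$). Hence $g^{*}(\overline{\mathcal{X}}_{0,\mathrm{red}}+\overline{\mathcal{X}}_{\infty})-\mathrm{Ram}_{g}$ has coefficient $e_{F'}-(e_{F'}-1)=1$ along each $F'$ and is supported exactly there, i.e.\ it equals $(\overline{\mathcal{X}_{d}})_{0,\mathrm{red}}+(\overline{\mathcal{X}_{d}})_{\infty}$. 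Adding $g^{*}K_{\overline{\mathcal{X}}}=K_{\overline{\mathcal{X}_{d}}}-\mathrm{Ram}_{g}$ yields $g^{*}K_{\overline{\mathcal{X}}}^{\mathrm{log}}=K_{\overline{\mathcal{X}_{d}}}^{\mathrm{log}}$, and with the first scaling this gives $M^{NA}(\mathcal{X}_{d},\mathcal{L}_{d})=dM^{NA}(\mathcal{X},\mathcal{L})$.

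The projection-formula manipulations and the Hurwitz computation on $\mathbf{P}^{1}$ are routine; the step requiring the most care is the component-by-component bookkeeping near the central and $\infty$ fibres showing that the reduced boundary divisor upstairs cancels exactly the ramification divisor of $g$. That cancellation is the crux: it is the reason $K^{\mathrm{log}}$ is functorial under these base changes while $K$ alone is not, and hence why $M^{NA}$ is homogeneous whereas $DF$ is not (for $DF$ the non-reduced central fibre is uncorrected, and while $g^{*}\overline{\mathcal{X}}_{0}=d\,(\overline{\mathcal{X}_{d}})_{0}$ one has $g^{*}\overline{\mathcal{X}}_{0,\mathrm{red}}\neq d\,(\overline{\mathcal{X}_{d}})_{0,\mathrm{red}}$ in general). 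The remaining technical points — that $\overline{\mathcal{X}_{d}}$ really is the canonical compactification of $\mathcal{X}_{d}$, and that pullback of Weil divisor classes together with the ramification formula $K_{Y}=g^{*}K_{X}+\mathrm{Ram}_{g}$ are available for finite morphisms of normal varieties without $\mathbf{Q}$-Gorenstein assumptions — are standard, but worth recording explicitly because the intersection numbers in the definition of $M^{NA}$ genuinely involve Weil rather than Cartier divisor classes.
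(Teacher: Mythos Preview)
The paper does not give its own proof of this proposition: it is stated with a citation to \cite[Proposition~7.14]{BHJ17} and used as a black box. Your argument is correct and is essentially the proof given in \cite{BHJ17}: the projection formula handles the $(\overline{\mathcal{L}}^{n+1})$ term, and the heart of the matter is the functoriality $g^{*}K_{\overline{\mathcal{X}}/\mathbf{P}^{1}}^{\mathrm{log}}=K_{\overline{\mathcal{X}_{d}}/\mathbf{P}^{1}}^{\mathrm{log}}$ under tame finite base change, which you verify by the Hurwitz computation showing that the ramification divisor of $g$ is exactly absorbed by the reduced boundary.
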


On the other hand, Mumford's semistable reduction theorem (\cite[p.53, pp.100--101]{KKMS73}. See \cite[Theorem 3.8]{ADVLN12} and \cite[Lemma 5]{LX14}  for equivariant version) gives us the following proposition.   

\begin{proposition}[{\cite[Proposition 7.16]{BHJ17}}]\label{equiv_NA_K_DF}
For each test configuration $(\mathcal{X}, \mathcal{L})$ for $(X, L)$, there exists $d_{0} \in \mathbf{Z}_{>0}$ such that $DF(\mathcal{X}_{d}, \mathcal{L}_{d}) = M^{NA}(\mathcal{X}_{d}, \mathcal{L}_{d}) = dM^{NA}(\mathcal{X}, \mathcal{L})$ for all $d \in \mathbf{Z}_{>0}$ divisible by $d_{0}$. 
\end{proposition}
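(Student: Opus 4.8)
The plan, following \cite{BHJ17}, is to combine three facts already recorded above: the inequality $M^{NA}(\mathcal{X},\mathcal{L})\le DF(\mathcal{X},\mathcal{L})$ together with its equality criterion (equality holds precisely when $\mathcal{X}$ is regular in codimension one and $\mathcal{X}_{0}$ is generically reduced), the homogeneity $M^{NA}(\mathcal{X}_{d},\mathcal{L}_{d})=d\,M^{NA}(\mathcal{X},\mathcal{L})$ of Proposition \ref{homogenuity_NA_K-energy}, and Mumford's semistable reduction theorem. The point that makes these fit together is that, after a suitable finite base change, the \emph{normalized} test configuration automatically satisfies the two hypotheses of the equality criterion.

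First I would reduce to the case in which $\overline{\mathcal{X}}$ is a normal variety. The normalized base change $(\mathcal{X}_{d},\mathcal{L}_{d})$ depends only on the normalization of $(\mathcal{X},\mathcal{L})$, since $\mathcal{X}^{\nu}\to\mathcal{X}$ stays finite and birational after the base change $z\mapsto z^{d}$, and $M^{NA}$ is itself unchanged under normalization by \cite{BHJ17}; hence we may assume $\overline{\mathcal{X}}$ normal from the outset. Write the central fibre as the Weil divisor $\overline{\mathcal{X}}_{0}=\sum_{i}b_{i}E_{i}$ with $b_{i}\in\mathbf{Z}_{>0}$, and set $d_{0}=\operatorname{lcm}_{i}(b_{i})$.

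Next, for every $d$ divisible by $d_{0}$ I claim that $\mathcal{X}_{d}$ is regular in codimension one and that $(\mathcal{X}_{d})_{0}$ is generically reduced. Regularity in codimension one is automatic, because $\mathcal{X}_{d}$ is normal by the very definition of the scaling operation and a normal variety is $R_{1}$. For generic reducedness of the central fibre it suffices to work at a generic point $\eta_{i}$ of each component $E_{i}$; there $\overline{\mathcal{X}}$ is regular and the family is given, in a local uniformizer $u$ of $\mathcal{O}_{\overline{\mathcal{X}},\eta_{i}}$ cutting out $E_{i}$, by $z=(\text{unit})\cdot u^{b_{i}}$, so after the base change $z\mapsto z^{d}$ and normalization a direct monomial computation shows that the multiplicity of the corresponding component of $(\mathcal{X}_{d})_{0}$ equals $b_{i}/\gcd(b_{i},d)=1$. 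This is precisely the weak form of Mumford's semistable reduction theorem recalled above, carried out $\mathbf{G}_{m}$-equivariantly.

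With this established the conclusion is immediate: for such $d$ the equality criterion gives $DF(\mathcal{X}_{d},\mathcal{L}_{d})=M^{NA}(\mathcal{X}_{d},\mathcal{L}_{d})$, and Proposition \ref{homogenuity_NA_K-energy} gives $M^{NA}(\mathcal{X}_{d},\mathcal{L}_{d})=d\,M^{NA}(\mathcal{X},\mathcal{L})$, and chaining these yields the assertion. I expect the main obstacle to be the middle step, the passage from an arbitrary normal $\mathcal{X}$ to one whose central fibre is generically reduced: this is where semistable reduction genuinely enters, and the points requiring care are that the base change can be chosen $\mathbf{G}_{m}$-equivariantly and that no further birational modification of $\mathcal{X}$ is needed --- the latter being true precisely because the equality criterion asks only for generic reducedness of $(\mathcal{X}_{d})_{0}$ and the $R_{1}$ property of $\mathcal{X}_{d}$, both of which are automatic from normality once $d_{0}\mid d$.
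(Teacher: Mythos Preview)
Your proposal is correct and follows exactly the approach the paper has in mind: the paper does not supply its own proof but simply cites \cite[Proposition 7.16]{BHJ17} and notes that it rests on Mumford's semistable reduction theorem, and your sketch reproduces the BHJ argument --- reduce to normal $\mathcal{X}$, take $d_{0}=\operatorname{lcm}$ of the multiplicities of the components of $\mathcal{X}_{0}$, observe that the normalized base change by any $d$ divisible by $d_{0}$ has generically reduced central fibre (the local monomial computation you describe), and then combine the equality criterion $M^{NA}=DF$ with the homogeneity of Proposition~\ref{homogenuity_NA_K-energy}. One minor remark: what you actually use is lighter than full semistable reduction, since you only need generic reducedness of $(\mathcal{X}_{d})_{0}$ rather than an SNC central fibre, and your direct local computation already delivers this without invoking the full toroidal machinery of \cite{KKMS73}.
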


Now we consider the $T$-action on $(X, L)$. 
Let $(\mathcal{X}, \mathcal{L})$ be a $T$-equivariant test configuration for $(X, L)$. 
Let $V$ be the extremal K\"ahler vector field for $T$, and $T_{V}$ the one-dimensional algebraic torus generated by $V$. 
Then $T_{V} \subset T$ and induces a coaction 
\begin{align*}
\theta_{m} \colon H^{0}(\mathcal{X}_{0}, \mathcal{L}_{0}^{m}) \to H^{0}(\mathcal{X}_{0}, \mathcal{L}_{0}^{m}) \otimes \mathbf{C}[t, t^{-1}] 
\end{align*}
for each $m \in \mathbf{Z}_{> 0}$. 
We denote by $(\theta_{1}^{(rm)}, \ldots, \theta_{N_{rm}}^{(rm)})$ the weight of $\theta_{m}^{\vee}$. 
Let $\Theta_{rm} \coloneqq \theta_{m*}(1)$ be the generator of $\theta_{m}$ and $\Theta_{rm}^{\circ}$ the its traceless part. 

\begin{definition}
Let $(\mathcal{X}, \mathcal{L})$ a $T$-equivariant test configuration for $(X, L)$. 
\begin{enumerate}[(1)]
\item ${\displaystyle H_{V}^{NA}(\mathcal{X}, \mathcal{L}) \coloneqq \lim_{m \to \infty}\frac{1}{(rm)^{2}N_{rm}}\mathrm{tr}(\Lambda_{rm}^{\circ}\Theta_{rm}^{\circ})}$. 
\item We define the non-Archimedean relatice K-energy of $\varphi$ by 
\begin{align*}
M_{V}^{NA}(\mathcal{X}, \mathcal{L}) \coloneqq M^{NA}(\mathcal{X}, \mathcal{L}) + H_{V}^{NA}(\mathcal{X}, \mathcal{L}). 
\end{align*}
\item We define the relative Donaldson-Futaki invariant of $\varphi$ by 
\begin{align*}
DF_{V}(\mathcal{X}, \mathcal{L}) \coloneqq DF(\mathcal{X}, \mathcal{L}) + H_{V}^{NA}(\mathcal{X}, \mathcal{L}). 
\end{align*}
\end{enumerate}
\end{definition}

The following slope formula relates the Archimedean and non-Archimedean functionals. 
Let $(\mathcal{X}, \mathcal{L})$ be a $T$-equivariant test configuration for $(X, L)$ and $\lambda \colon \mathbf{G}_{m} \to \mathrm{Aut}^{0}(\mathcal{X}, \mathcal{L})$ the $\mathbf{G}_{m}$-action. 
We denote by $h_{\mathrm{FS}}$ the pull back of the Fubini-Study metric on $\mathbf{P}H^{0}(\mathcal{X}_{0}, \mathcal{L}_{0}^{m})^{\vee}$ by the composition 
\begin{align*}
\mathcal{X} \hookrightarrow \mathbf{P}H^{0}(\mathcal{X}_{0}, \mathcal{L}_{0}^{m})^{\vee} \times \mathbf{A}^{1} \to \mathbf{P}H^{0}(\mathcal{X}_{0}, \mathcal{L}_{0}^{m})^{\vee}, 
\end{align*}
and $h_{\mathrm{FS}}|_{\tau}$ the restriction of $h_{\mathrm{FS}}$ to $(\mathcal{X}_{\tau}, \mathcal{L}_{\tau})$ for any $\tau \in \mathbf{G}_{m}$. 
By setting 
\begin{align*}
h_{t} \coloneqq \lambda(e^{-t/2})^{*}h_{\mathrm{FS}}|_{e^{-t/2}},\quad \omega_{0} \coloneqq c_{1}(L, h_{0}) 
\end{align*}
for each $t \in [0, \infty)$, we have $\varphi^{t} \in \mathcal{H}(X, \omega_{0})_{0}^{S}$ by 
\begin{align*}
c_{1}(L, h_{t}) - \omega_{0} = dd^{c}\varphi^{t}. 
\end{align*}

\begin{theorem}[{\cite[Theorem 3.6]{BHJ19}, \cite[Theorem 12]{Y19}}]\label{slope_formula}
Let $(\mathcal{X}, \mathcal{L})$ and $\{\varphi^{t}\}_{t \in [0, \infty)}$ be as above, and $F$ be one of the following functionals: $E$, $J$, $M$, $H_{V}$, and $M_{V}$. 
Then 
\begin{align*}
\lim_{t \to \infty}\frac{F(\varphi^{t})}{t} = F^{NA}(\mathcal{X}, \mathcal{L}). 
\end{align*}
\end{theorem}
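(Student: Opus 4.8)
The plan is to prove the slope formula one functional at a time, reducing each case to an intersection-number computation on a convenient model of $(\mathcal{X},\mathcal{L})$. First I would replace $\overline{\mathcal{X}}$ by a smooth $T$-equivariant model that dominates it and also dominates the product $X\times\mathbf{P}^{1}$ via some $\rho$ acting trivially on the target, and whose central fibre has simple normal crossing support; such a model exists by resolving the graph of the rational map $\overline{\mathcal{X}}\dashrightarrow X\times\mathbf{P}^{1}$, which is an isomorphism over $\mathbf{C}^{\times}$ since the equivariant trivialization there is by the trivial $X$-factor. The intersection numbers defining $E^{NA},J^{NA},R^{NA}$ are unchanged by the projection formula, $H^{NA}$ is unchanged by its birational invariance (\cite{BHJ17}), and the ray $\{\varphi^{t}\}$ pulls back unchanged since it is the ray of the (semipositive) Fubini--Study metric $h_{\mathrm{FS}}$ on $\overline{\mathcal{L}}$. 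Writing $\Omega:=c_{1}(\overline{\mathcal{L}},h_{\mathrm{FS}})\ge 0$ and using the trivialization over $\{0<|w|\le 1\}$, the function $\Phi(x,w)=\varphi^{-\log|w|^{2}}(x)$ is $\pi^{*}\omega_{0}$-plurisubharmonic, so the Semmes identity of the excerpt gives fibrewise $\ddot\varphi^{t}\ge\tfrac12|d\dot\varphi^{t}|^{2}_{g_{\varphi^{t}}}\ge 0$; over $\{|w|\ge 1\}$ one has $\Omega=\rho^{*}p_{1}^{*}\omega_{0}$.

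For the energy-type functionals $E$, $J$ and the Ricci energy $R$ (which will feed into $M$) I would integrate the first-derivative formulas of the excerpt twice in $t$. Using the Semmes identity, $\int_{0}^{\infty}\tfrac{d^{2}}{dt^{2}}E(\varphi^{t})\,dt$ and $\int_{0}^{\infty}\tfrac{d^{2}}{dt^{2}}\!\bigl(\dashint_{X}\varphi^{t}\omega_{0}^{n}\bigr)\,dt$ become the integrals over the model of $\Omega^{n+1}$ and of $\Omega\wedge(\rho^{*}p_{1}^{*}\omega_{0})^{n}$, fibred over $\overline\pi$; the product part contributes nothing there because $\Omega^{n+1}=(\rho^{*}p_{1}^{*}\omega_{0})^{n+1}=0$, and the smooth central fibre carries no mass, so these equal $(\overline{\mathcal{L}}^{\,n+1})/((n+1)(L^{n}))$ and $(\overline{\mathcal{L}}\cdot\rho^{*}p_{1}^{*}L^{n})/(L^{n})$. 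Since $\tfrac{d}{dt}E(\varphi^{t})$ and $\tfrac{d}{dt}\dashint_{X}\varphi^{t}\omega_{0}^{n}$ are monotone (the integrands above are $\ge 0$) and bounded, a Ces\`aro argument yields $\lim_{t}E(\varphi^{t})/t=E^{NA}$ and $\lim_{t}\bigl(\dashint_{X}\varphi^{t}\omega_{0}^{n}\bigr)/t=J^{NA}+E^{NA}$, hence $\lim_{t}J(\varphi^{t})/t=J^{NA}$. Writing $\mathrm{Ric}(\omega_{0})=C\omega_{0}-(C\omega_{0}-\mathrm{Ric}(\omega_{0}))$ as a difference of semipositive forms reduces $R$ to two functionals of the same type as $E$, and running the analogous computation with a factor of $\Omega$ replaced by $-\rho^{*}p_{1}^{*}\mathrm{Ric}(\omega_{0})$, which represents $\rho^{*}K^{\mathrm{log}}_{X\times\mathbf{P}^{1}/\mathbf{P}^{1}}=\rho^{*}p_{1}^{*}K_{X}$, gives $\lim_{t}R(\varphi^{t})/t=R^{NA}$.

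For $H_{V}$ I would pass to the limit directly in $\tfrac{d}{dt}H_{V}(\varphi^{t})=\dashint_{X}\dot\varphi^{t}\theta_{\varphi^{t}}\omega_{\varphi^{t}}^{n}$. Along the Fubini--Study ray $\dot\varphi^{t}$ converges uniformly, by the explicit shape of $h_{\mathrm{FS}}$ together with the Semmes bound, to the normalized Hamiltonian $u_{\infty}^{\lambda}$ of the test-configuration $\mathbf{G}_{m}$-action on the central fibre $(\mathcal{X}_{0},\mathcal{L}_{0})$, while $\theta_{\varphi^{t}}$ converges to the Hamiltonian $u_{\infty}^{V}$ of the extremal field $V$ there; the $L^{2}$-pairing of two Hamiltonians of commuting torus actions on $(\mathcal{X}_{0},\mathcal{L}_{0})$ is computed by asymptotic equivariant Riemann--Roch as $\lim_{m}(rm)^{-2}N_{rm}^{-1}\mathrm{tr}(\Lambda_{rm}^{\circ}\Theta_{rm}^{\circ})$, which is exactly $H_{V}^{NA}$. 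The case $F=M$ then follows from $M=H+R+\overline{s}E$, the cases $E,R$, and the entropy case below; and the case $F=M_{V}$ follows from $M_{V}=M+H_{V}$ together with $M_{V}^{NA}=M^{NA}+H_{V}^{NA}$.

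The hard part is the entropy slope $\lim_{t}H(\varphi^{t})/t=H^{NA}$, the one functional not polynomial in $\varphi$. Lower semicontinuity of the entropy yields $\liminf_{t}H(\varphi^{t})/t\ge H^{NA}$ without much effort; for the matching bound I would work on the simple normal crossing model, write $\overline{\mathcal{X}}_{0}=\sum_{i}b_{i}E_{i}$, and analyze the logarithmic blow-up of the fibrewise volume ratio $\omega_{\varphi^{t}}^{n}/\omega_{0}^{n}$ near each component $E_{i}$: its order along $E_{i}$ equals the coefficient of $E_{i}$ in the effective divisor $K^{\mathrm{log}}_{\overline{\mathcal{X}}/\mathbf{P}^{1}}-\rho^{*}K^{\mathrm{log}}_{X\times\mathbf{P}^{1}/\mathbf{P}^{1}}$ (which is supported on $\overline{\mathcal{X}}_{0}$ and has nonnegative coefficients precisely because of the log structure), and, after integrating against $\omega_{\varphi^{t}}^{n}$ over $X$ and then over $t\in[0,\infty)$, each component contributes that coefficient times $(E_{i}\cdot\overline{\mathcal{L}}^{n})$, reassembling the intersection number $(L^{n})H^{NA}$. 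This is the analytically delicate step, and it is precisely the content of \cite[Theorem~3.6]{BHJ19} and \cite[Theorem~12]{Y19}, whose argument I would follow. Combining the four pieces gives the slope formula for all of $E$, $J$, $M$, $H_{V}$ and $M_{V}$.
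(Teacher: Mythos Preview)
The paper does not give its own proof of this theorem: it is stated with citations to \cite[Theorem~3.6]{BHJ19} and \cite[Theorem~12]{Y19} and then used as a black box. Your proposal is therefore not competing with any argument in the paper; rather, you are sketching the proofs from those references. The outline you give for $E$, $J$, $R$ via the Semmes identity and intersection numbers, for $H_{V}$ via convergence of Hamiltonians and equivariant Riemann--Roch, and for the entropy via the log-discrepancy analysis on an SNC model, is indeed the shape of the arguments in \cite{BHJ19} and \cite{Y19}, so in that sense your approach matches the literature the paper defers to.
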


\subsection{Filtrations}
To introduce the non-Archimedean version of the reduced $J$-functional, it is convenient to use the notion of the \emph{filtration} of the section ring 
\begin{align*}
R = R(X, L) \coloneqq \bigoplus_{m=0}^{\infty}R_{m},\quad R_{m} = H^{0}(X, L^{m})
\end{align*}
of a polarized manifold $(X, L)$. 
Following \cite{BHJ17}, we give a brief review of filtrations. 

\begin{definition}
A filtration $\mathcal{F}$ of the graded $\mathbf{C}$-algebra $R$ consists of a family of subspaces $\{F^{\lambda}R_{m}\}$ of $R_{m}$ for each $m \in \mathbf{Z}_{\geq 0}$ satisfying: 
\begin{enumerate}[(1)]
\item decreasing: $F^{\lambda}R_{m} \subset F^{\lambda'}R_{m}$ if $\lambda \geq \lambda'$, 
\item left-continuous: $F^{\lambda}R_{m} = \bigcap_{\lambda' < \lambda}F^{\lambda'}R_{m}$, 
\item multiplicative: $F^{\lambda}R_{m} \cdot F^{\lambda'}R_{m'} \subset F^{\lambda + \lambda'}R_{m + m'}$, and 
\item linearly bounded: there exist $e_{-}, e_{+} \in \mathbf{Z}$ such that $F^{me_{-}}R_{m} = R_{m}$ and $F^{me_{+}}R_{m} = \{0\}$ for all $m \in \mathbf{Z}_{\geq 0}$. 
\end{enumerate}
We say that $\mathcal{F}$ is a $\mathbf{Z}$-filtration if $F^{\lambda}R_{m} = F^{\lceil \lambda \rceil}R_{m}$ for any $\lambda \in \mathbf{R}$ and $m \in \mathbf{Z}_{\geq 0}$. 
A filtration $FR$ is $T$-equivariant if $F^{\lambda}R_{m}$ is $T$-invariant subspace of $R_{m}$ for all $\lambda \in \mathbf{R}$. 
\end{definition}

\begin{definition}
Let $\mathcal{F} = \{F^{\lambda}R_{m}\}$ be a $\mathbf{Z}$-filtration  of $R$. 
\begin{enumerate}[(1)]
\item The Rees algebra of $\mathcal{F}$ is a $\mathbf{C}[t]$-algebra defined by 
\begin{align*}
\mathrm{Rees}(\mathcal{F}) \coloneqq \bigoplus_{m=0}^{\infty}\bigoplus_{\lambda \in \mathbf{Z}}t^{-\lambda}F^{\lambda}R_{m}. 
\end{align*}
We say that $\mathcal{F}$ is finitely generated if its Rees algebra is a finitely generated graded $\mathbf{C}[t]$-algebra. 
\item The associated graded algebra of $\mathcal{F}$ is a graded $\mathbf{C}$-algebra defined by 
\begin{align*}
\mathrm{Gr}^{\mathcal{F}}\mathcal{R} 
= \bigoplus_{m=0}^{\infty}\mathrm{Gr}_{m}^{\mathcal{F}}\mathcal{R} 
\coloneqq \mathcal{R}/t\mathcal{R}
= \bigoplus_{m=0}^{\infty}\bigoplus_{\lambda \in \mathbf{Z}}F^{\lambda}R_{m}/F^{\lambda+1}R_{m}. 
\end{align*}
\end{enumerate}
\end{definition}

Given a finitely generated $\mathbf{Z}$-filtration $\mathcal{F} = \{F^{\lambda}R_{m}\}$, one can construct a test configuration for $(X, L)$ as follows. 
Let $\mathcal{R} \coloneqq \mathrm{Rees}(\mathcal{F})$ be the Rees algebra of the filtration $FR$. 
Then $\mathcal{R}$ is a torsion free $\mathbf{C}[t]$-algebra. 
Since $\mathbf{C}[t]$ is a principal ideal domain, $\mathcal{R}$ is flat over $\mathbf{C}[t]$. 
Let 
\begin{align*}
\mathcal{X} \coloneqq \Proj\mathcal{R},\quad \mathcal{L} \coloneqq \mathcal{O}_{\mathcal{X}}(1), 
\end{align*}
and $\pi \colon \mathcal{X} \to \mathbf{A}^{1}$ be the structure morphism. 
Then $(\mathcal{X}, \mathcal{L})$ is a flat family over $\mathbf{A}^{1}$. 
Further, for any $a \in \mathbf{A}^{1}(\mathbf{C})$ we have 
\begin{align*}
\mathcal{X}_{a} 
&= \mathcal{X} \times_{\mathbf{A}^{1}}\Spec(\mathbf{C}[t]/(t-a)\mathbf{C}[t]) \\
&= \Proj (\mathcal{R}/(t-a)\mathcal{R}) \\
&= 
\begin{cases}
\Proj R & a \neq 0, \\
\Proj \mathrm{Gr}^{\mathcal{F}}\mathcal{R} & a = 0. 
\end{cases}
\end{align*}
Hence $(\mathcal{X}_{a}, \mathcal{L}_{a}) = (X, L)$ for any $a \in \mathbf{A}^{1} \setminus\{0\}$ and $(\mathcal{X}, \mathcal{L})$ is a test configuration for $(X, L)$. 

\begin{example}[Test configurations]
Let $(\mathcal{X}, \mathcal{L})$ be a test configuration for $(X, L)$ of exponent $1$. 
For each $\lambda \in \mathbf{Z}$, define $F^{\lambda}R_{m}$  to be the image of the restriction map 
\begin{align*}
H^{0}(\mathcal{X}, \mathcal{L}^{m})_{\lambda} \to H^{0}(\mathcal{X}_{1}, \mathcal{L}_{1}^{m}) = H^{0}(X, L^{m}). 
\end{align*}
By regarding $H^{0}(\mathcal{X} \setminus \mathcal{X}_{0}, \mathcal{L}^{m})$ as a $\mathbf{C}[t, t^{-1}]$ module, $F^{\lambda}R_{m}$ can be written as 
\begin{align*}
F^{\lambda}R_{m} = \{\sigma \in H^{0}(X, L^{m}) \mid t^{-\lambda}\overline{\sigma} \in H^{0}(\mathcal{X}, \mathcal{L}^{m})\}, 
\end{align*}
where $\overline{\sigma} \in H^{0}(\mathcal{X} \setminus \mathcal{X}_{0}, \mathcal{L}^{m})$ is the $\mathbf{G}_{m}$-equivariant section defined by $\sigma \in H^{0}(X, L^{m})$. 
By setting $F^{\lambda}R_{m} = F^{\lceil \lambda \rceil}R_{m}$ for each $\lambda \in \mathbf{R}$, we obtain a $\mathbf{Z}$-filtration of $R$.  
We denote the filtration by $\mathcal{F}(\mathcal{X}, \mathcal{L})$. 
\end{example}

\begin{example}[Translations and scalings]\label{Filt_induced_by_tc}
Let $\mathcal{F} = \{F^{\lambda}R_{m}\}$ be a filtration of $R$, and set $\mathcal{R} \coloneqq \mathrm{Rees}(\mathcal{F})$. 
\begin{enumerate}[(1)]
\item Let $c \in \mathbf{R}$. 
A translation $\mathcal{F}(c) = \{F(c)^{\lambda}R_{m}\}$ of $\mathcal{F}$ is defined by 
\begin{align*}
F(c)^{\lambda}R_{m} \coloneqq F^{\lambda - mc}R_{m}. 
\end{align*}
If $c \in \mathbf{Z}$ and $\mathcal{F}$ is a $\mathbf{Z}$-filtraion, then $\mathcal{F}(c)$ is also a $\mathbf{Z}$-filtraion. 
Suppose $c \in \mathbf{Z}$, and $\mathcal{F}$ is a finitely generated $\mathbf{Z}$-filtration. 
Let $(\mathcal{X}, \mathcal{L})$ be a test configuration associated to $\mathcal{F}$. 
Then $\mathcal{X} = \Proj \mathcal{R}$ and $\mathcal{L} = \mathcal{O}_{\mathcal{X}}(1) = \widetilde{\mathcal{R}(1)}$. 
By setting $\mathcal{I}_{\mathcal{X}_{0}}$ to be the ideal sheaf of $\mathcal{X}_{0}$ in $\mathcal{X}$, we have 
\begin{align*}
\mathcal{O}_{\mathcal{X}}(c\mathcal{X}_{0}) = \mathcal{I}_{\mathcal{X}_{0}}^{-c} = \widetilde{t^{-c}\mathcal{R}}. 
\end{align*}
Hence we obtain 
\begin{align*}
(\mathcal{L}+c\mathcal{X}_{0})^{m} 
&= \mathcal{L}^{m} \otimes_{\mathcal{O}_{\mathcal{X}}}\mathcal{I}_{\mathcal{X}_{0}}^{-mc} \\
&= (\mathcal{R}(1)^{m} \otimes_{\mathcal{R}}t^{-mc}\mathcal{R})^{\widetilde{}} \\
&= (t^{-mc}\mathcal{R}(m))^{\widetilde{}}
\end{align*}
and 
\begin{align*}
H^{0}(\mathcal{X}, (\mathcal{L}+c\mathcal{X}_{0})^{m}) 
&= (t^{-mc}\mathcal{R}(m))_{0} 
= t^{-mc}\mathcal{R}_{m} \\
&= t^{-mc}\bigoplus_{\lambda \in \mathbf{Z}}t^{-\lambda}F^{\lambda}R_{m} \\
&= \bigoplus_{\lambda \in \mathbf{Z}}t^{-(\lambda + mc)}F^{\lambda}R_{m} \\
&= \bigoplus_{\lambda \in \mathbf{Z}}t^{-\lambda}F^{\lambda - mc}R_{m} \\
&= \bigoplus_{\lambda \in \mathbf{Z}}t^{-\lambda}F(c)^{\lambda}R_{m}. 
\end{align*}
This shows that 
\begin{align*}
\bigoplus_{m=0}^{\infty}H^{0}(\mathcal{X}, (\mathcal{L}+c\mathcal{X}_{0})^{m}) 
= \bigoplus_{m=0}^{\infty}\bigoplus_{\lambda \in \mathbf{Z}}t^{-\lambda}F(c)^{\lambda}R_{m} 
= \mathrm{Rees}(\mathcal{F}(c))
\end{align*}
and that the translation $(\mathcal{X}, \mathcal{L}+c\mathcal{X}_{0})$ is the test configuration associated to $\mathcal{F}(c)$. 
Note that 
\begin{align*}
\mathcal{R}(c) 
\coloneqq \mathrm{Rees}(\mathcal{F}(c)) 
\cong \bigoplus_{m=0}^{\infty}\bigoplus_{\lambda \in \mathbf{Z}}t^{-\lambda}F^{\lambda}R_{m} 
= \mathcal{R} 
\end{align*}
as a graded ring. 
Hence we have $\Proj \mathcal{R}(c) = \Proj \mathcal{R} = \mathcal{X}$. 
\item Let $d \in \mathbf{Z}_{>0}$. 
A scaling $\mathcal{F}_{d} = \{F_{d}^{\lambda}R_{m}\}$ of $\mathcal{F}$ is defined by 
\begin{align*}
F_{d}^{\lambda}R_{m} \coloneqq F^{\lceil \lambda/d \rceil}R_{m}. 
\end{align*}
If $\mathcal{F}$ is a $\mathbf{Z}$-filtraion, then $\mathcal{F}_{d}$ is also a $\mathbf{Z}$-filtraion. 
Suppose $\mathcal{F}$ is a finitely generated $\mathbf{Z}$-filtration. 
Let $(\mathcal{X}, \mathcal{L})$ be a test configuration associated to $\mathcal{F}$. 
Then the base change $\mathcal{X}_{d}$ of $\mathcal{X}$ by the homomorphism $\mathbf{C}[t] \ni t \mapsto s^{d} \in \mathbf{C}[s]$ is given by 
\begin{align*}
&\mathcal{X}_{d} 
= \Proj\mathcal{R} \times_{\Spec \mathbf{C}[t]}\Spec\mathbf{C}[s] 
= \Proj(\mathcal{R} \otimes_{\mathbf{C}[t]}\mathbf{C}[s]), \\ 
&\mathcal{L}_{d} = p^{\ast}\mathcal{L} = p^{\ast}\mathcal{O}_{\mathcal{X}}(1) = p^{\ast}\widetilde{\mathcal{R}(1)}= ((\mathcal{R} \otimes_{\mathbf{C}[t]}\mathbf{C}[s])(1))\,\tilde{} = \mathcal{O}_{\mathcal{X}_{d}}(1) ,  
\end{align*}
where $p \colon \mathcal{X}_{d} \to \Proj\mathcal{R} = \mathcal{X}$ is the natural projection. 
As a $\mathbf{C}[t]$-algebra, we have
\begin{align*}
\mathcal{R} \otimes_{\mathbf{C}[t]}\mathbf{C}[s] 
&= \left(\bigoplus_{m=0}^{\infty}\bigoplus_{\lambda \in \mathbf{Z}}t^{-\lambda}F^{\lambda}R_{m}\right) \otimes_{\mathbf{C}[t]}\mathbf{C}[s] \\
&\cong \bigoplus_{m=0}^{\infty}\bigoplus_{\lambda \in \mathbf{Z}}s^{-\lambda}F^{\lceil \frac{\lambda}{d} \rceil}R_{m} \\
&= \mathrm{Rees}(\mathcal{F}_{d}), 
\end{align*}
which is also an isomprphism as $\mathbf{C}[s]$-algebras. 
Hence the scaling $(\mathcal{X}_{d}, \mathcal{L}_{d})$ is the test configuration associated to $\mathcal{F}_{d}$. 
\end{enumerate}
\end{example}

\subsection{Limit measures}
Let $(X, L)$ be a polarized manifold and $\mathcal{F}$ a filtration of $R = R(X, L)$. 
For each $m \in \mathbf{Z}_{> 0}$, we define a Borel probability measure $\nu_{m}$ on $\mathbf{R}$ by 
\begin{align*}
\nu_{m} \coloneqq -\frac{d}{d\lambda}\left(\frac{1}{N_{m}}\dim F^{m\lambda}R_{m}\right), 
\end{align*}
where the derivative is taken in the sense of distributions. 
We call it \emph{the normalized weight measure}. 
By the linearly boundedness of $\mathcal{F}$, $\nu_{m}$ has uniformly bounded support. 
To describe its limit, to each $\lambda \in \mathbf{R}$, we define a graded subalgebra $R^{(\lambda)}$ of $R(X, L)$ by 
\begin{align*}
R^{(\lambda)} \coloneqq \bigoplus_{m=0}^{\infty}F^{m\lambda}R_{m}. 
\end{align*}
\emph{The volume} of $R^{(\lambda)}$ is defined by 
\begin{align*}
\mathrm{vol}(R^{(\lambda)}) \coloneqq \limsup_{m \to \infty}\frac{1}{N_{m}}\dim F^{m\lambda}R_{m}. 
\end{align*}

\begin{theorem}[{\cite[Theorem 5.3]{BHJ17}}]
The sequence of normalized weight measures $\{\nu_{m}\}_{m = 0}^{\infty}$ converges weakly to the probability measure 
\begin{align*}
\mathrm{DH}(\mathcal{F}) \coloneqq -\frac{d}{d\lambda}\mathrm{vol}(R^{(\lambda)}). 
\end{align*}
\end{theorem}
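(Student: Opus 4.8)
The plan is to transfer the problem from the sequence of measures $\nu_{m}$ to the sequence of their distribution functions, and then to feed the resulting analytic question into the Okounkov body theory of filtered linear series.

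First I would record the elementary structure of the data. Put $G_{m}(\lambda)\coloneqq\frac{1}{N_{m}}\dim F^{m\lambda}R_{m}$. By the decreasing property of $\mathcal{F}$ each $G_{m}$ is non-increasing in $\lambda$, takes values in $[0,1]$, and by linear boundedness satisfies $G_{m}(\lambda)=1$ for $\lambda\le e_{-}$ and $G_{m}(\lambda)=0$ for $\lambda>e_{+}$; hence $\nu_{m}=-dG_{m}$ is a Borel probability measure supported in the fixed compact interval $[e_{-},e_{+}]$. Likewise $G(\lambda)\coloneqq\mathrm{vol}(R^{(\lambda)})=\limsup_{m}G_{m}(\lambda)$ is non-increasing with the same boundary behaviour, so $\mathrm{DH}(\mathcal{F})=-dG$ is a probability measure on $[e_{-},e_{+}]$. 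Since all of these measures live on one compact interval, it suffices to prove $G_{m}(\lambda)\to G(\lambda)$ at every continuity point $\lambda$ of $G$: testing against a $C^{1}$ function $\phi$ and integrating by parts turns $\int\phi\,d\nu_{m}$ into a boundary term plus $\int\phi'(\lambda)G_{m}(\lambda)\,d\lambda$, and, since the discontinuity set of the monotone function $G$ is countable hence Lebesgue-null, dominated convergence yields $\int\phi\,d\nu_{m}\to\int\phi\,d\mathrm{DH}(\mathcal{F})$.

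The heart of the matter is therefore to upgrade the $\limsup$ defining $\mathrm{vol}(R^{(\lambda)})$ to an honest limit and to compute it. For this I would use Okounkov bodies. Fix a valuation $v\colon R\setminus\{0\}\to\mathbf{Z}^{n}$ with one-dimensional leaves, giving an Okounkov body $\Delta\subset\mathbf{R}^{n}$ of $(X,L)$ with $\mathrm{leb}(\Delta)=(L^{n})/n!$. Multiplicativity of $\mathcal{F}$ makes $R^{(\lambda)}=\bigoplus_{m}F^{m\lambda}R_{m}$ a graded subalgebra of $R$ for each $\lambda$, and, letting $\lambda$ vary, $\mathcal{F}$ induces a concave, upper semicontinuous ``transform'' function $g_{\mathcal{F}}\colon\Delta\to\mathbf{R}\cup\{-\infty\}$. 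By the theory of Okounkov bodies of filtered linear series (Kaveh--Khovanskii, Lazarsfeld--Musta\c{t}\u{a}, Boucksom--Chen), for each $\lambda$ the limit $\lim_{m}G_{m}(\lambda)$ exists and equals $\limsup_{m}G_{m}(\lambda)=\mathrm{vol}(R^{(\lambda)})$, with
\[
\mathrm{vol}(R^{(\lambda)})=\frac{\mathrm{leb}\big(\{\alpha\in\Delta : g_{\mathcal{F}}(\alpha)\ge\lambda\}\big)}{\mathrm{leb}(\Delta)};
\]
equivalently $\nu_{m}$ converges weakly to the pushforward $(g_{\mathcal{F}})_{*}\big(\mathrm{leb}|_{\Delta}/\mathrm{leb}(\Delta)\big)$. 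The tail function of that pushforward at $\lambda$ is precisely the right-hand side above, i.e. $G(\lambda)$, so the limit measure is $-dG=\mathrm{DH}(\mathcal{F})$. Together with the reduction of the previous step, this proves the theorem.

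I expect the main obstacle to be the equidistribution statement inside the Okounkov body: that the normalized point configurations $\{(v(s)/m,\ \mathrm{wt}_{\mathcal{F}}(s)/m)\}$ attached to an $\mathcal{F}$-adapted basis of $R_{m}$ fill out, in the weak-$*$ limit, the region under the graph of $g_{\mathcal{F}}$. This is where one needs the Fekete-type sub/superadditivity arguments for the bigraded semigroup indexed by $(m,\lambda)$ together with uniform boundedness of the approximating convex bodies; by contrast, the reduction to distribution functions, the concavity of $g_{\mathcal{F}}$, and the bookkeeping identifying the tail function with $\mathrm{vol}(R^{(\lambda)})$ are comparatively routine. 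If one prefers to avoid Okounkov bodies altogether, an alternative is to pass to a Veronese subring $R(X,L^{r})$ with $L^{r}$ very ample and argue directly with near-superadditivity of $\dim F^{m\lambda}R_{rm}$, but the combinatorial cost is essentially the same.
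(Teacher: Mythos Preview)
The paper does not give its own proof of this statement: it is quoted verbatim from \cite[Theorem 5.3]{BHJ17} as background, with no argument supplied. So there is nothing in the present paper to compare your proposal against.

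That said, your outline is correct and is in fact the approach of the cited source. The reduction to pointwise convergence of the tail functions $G_{m}(\lambda)=N_{m}^{-1}\dim F^{m\lambda}R_{m}$ at continuity points of $G$, followed by the Okounkov body computation via the concave transform $g_{\mathcal{F}}$ of the filtration, is exactly how Boucksom--Hisamoto--Jonsson (building on Boucksom--Chen) prove Theorem~5.3. Your identification of the limit measure as $(g_{\mathcal{F}})_{*}(\mathrm{leb}|_{\Delta}/\mathrm{leb}(\Delta))$ and your remark that the substantive step is the equidistribution of the rescaled valuation-weight points under the graph of $g_{\mathcal{F}}$ are both on target. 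One small caveat: to guarantee that the graded subalgebras $R^{(\lambda)}$ satisfy the hypotheses under which the Okounkov body volume formula applies (in particular, that they contain an ample series for $\lambda<\lambda_{\max}$), one uses the multiplicativity and linear boundedness of $\mathcal{F}$ together with the finite generation of $R$; you gesture at this but it is worth making explicit.
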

We call $\mathrm{DH}(\mathcal{F})$ \emph{the Duistermaat-Heckman measure} of the filtration $\mathcal{F}$. 
\begin{example}[Test configurations]
Let $(\mathcal{X}, \mathcal{L})$ be a test configuration for $(X, L)$ of exponent $1$ and $\mathcal{F}$ the filtration of $R$ as in Example \ref{Filt_induced_by_tc}. 
By \cite[Lemma 8.5]{RW14}, the normalized weight measure is given by 
\begin{align*}
\nu_{m} 
&= -\frac{d}{d\lambda}\left(\frac{1}{N_{m}}\dim F^{m\lambda}R_{m}\right) \\
&= \frac{1}{N_{m}}\sum_{\lambda \in \mathbf{Z}}\dim H^{0}(\mathcal{X}_{0}, \mathcal{L}_{0}^{m})_{\lambda}\delta_{\lambda/m}
\end{align*}
for any $m \in \mathbf{Z}_{\geq 0}$. 
Hence the limit coincides with the Duistermaat-Heckman measure: 
\begin{align*}
\mathrm{DH}(\mathcal{F}) = \mathrm{DH}(\mathcal{X}, \mathcal{L}). 
\end{align*}
\end{example}

\subsection{Reduced non-Archimedean $J$-functional}
Let $(X, L)$ be a polarized manifold, and set $R = R(X, L)$. 
\begin{definition}
Let $\mathcal{F} =\{F^{\lambda}R_{m}\}$ be a filtration of $R$. 
\begin{enumerate}[(1)]
\item The non-Archimedean Monge-Amp\`ere energy of $\mathcal{F}$ is defined by 
\begin{align*}
E^{NA}(\mathcal{F}) \coloneqq \int_{\mathbf{R}}\lambda\,d\mathrm{DH}(\mathcal{F}). 
\end{align*}
\item The non-Archimedean J-functional of $\mathcal{F}$ is defined by 
\begin{align*}
J^{NA}(\mathcal{F}) \coloneqq \sup\mathrm{supp}(\mathrm{DH}(\mathcal{F})) - E^{NA}(\mathcal{F}). 
\end{align*}
\end{enumerate}
\end{definition}

\begin{definition}
Let $\mathcal{F}$ be a $T$-equivariant filtration of $R$. 
We denote the weight decomposition of $T$-action on each $F^{\lambda}R_{m}$ by 
\begin{align*}
F^{\lambda}R_{m} = \bigoplus_{\alpha \in M}(F^{\lambda}R_{m})_{\alpha}. 
\end{align*}
Here $(F^{\lambda}R_{m})_{\alpha}$ is the $\alpha$-weight subspace of $F^{\lambda}R_{m}$. 
For each $\xi \in N_{\mathbf{R}}$, we define the $\xi$-twist $\mathcal{F}_{\xi} = \{F_{\xi}^{\lambda}R_{m}\}$ of $\mathcal{F}$ by 
\begin{align*}
F_{\xi}^{\lambda}R_{m} \coloneqq \bigoplus_{\alpha \in M}(F_{\xi}^{\lambda}R_{m})_{\alpha},\quad (F_{\xi}^{\lambda}R_{m})_{\alpha} = (F^{\lambda - \langle \alpha, \xi \rangle}R_{m})_{\alpha} 
\end{align*}
for each $\lambda \in \mathbf{R}$ and $m \in \mathbf{Z}_{\geq 0}$. 
\end{definition}

Note that the $\xi$-twist of a filtration $\mathcal{F}$ may be an $\mathbf{R}$-filtration even if $\mathcal{F}$ is a $\mathbf{Z}$-filtration. 

\begin{definition}
Let $\mathcal{F}$ be a $T$-equivariant filtration of $R$. 
The reduced non-Archimedean $J$-functional of $\mathcal{F}$ is defined by 
\begin{align*}
J_{T}^{NA}(\mathcal{F}) \coloneqq \inf_{\xi \in N_{\mathbf{R}}}J^{NA}(\mathcal{F}_{\xi}). 
\end{align*}
If $(\mathcal{X}, \mathcal{L})$ is a test configuration for $(X, L)$, the reduced non-Archimedean $J$-functional of $(\mathcal{X}, \mathcal{L})$ is defined by $J_{T}^{NA}(\mathcal{X}, \mathcal{L}) \coloneqq J_{T}^{NA}(\mathcal{F}(\mathcal{X}, \mathcal{L}))$. 
\end{definition}

\begin{theorem}[{\cite[Theorem B]{Hisa16}, \cite[Theorem 3.14]{Li19}}]\label{slope_J_T}
Let $(\mathcal{X}, \mathcal{L})$ be a $T$-equivariant test configuration for $(X, L)$. 
Then 
\begin{align*}
\lim_{t \to \infty}\frac{J_{T}(\varphi^{t})}{t} = J_{T}^{NA}(\mathcal{X}, \mathcal{L}). 
\end{align*}
\end{theorem}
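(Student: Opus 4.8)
The plan is to interchange the infimum with the slope limit, using that both sides are infima: $J_T(\varphi^t)=\inf_{\tau\in T}J((\varphi^t)_\tau)$ and $J_T^{NA}(\mathcal{X},\mathcal{L})=\inf_{\xi\in N_{\mathbf{R}}}J^{NA}(\mathcal{F}_\xi)$, where $\mathcal{F}=\mathcal{F}(\mathcal{X},\mathcal{L})$. First I would note that each $\varphi^t$ and each $\omega_{\varphi^t}$ is $S$-invariant, so $(\varphi^t)_s=\varphi^t$ modulo constants for $s\in S$; hence the infimum defining $J_T(\varphi^t)$ runs only over the real torus, and parametrizing $\tau=\exp(tb)$ with $b\in N_{\mathbf{R}}$ gives $\tfrac1t J_T(\varphi^t)=\inf_{b\in N_{\mathbf{R}}}g_t(b)$, where $g_t(b)\coloneqq\tfrac1t J((\varphi^t)_{\exp(tb)})$. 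For integral $b\in N$, composing the $\mathbf{G}_m$-action of $(\mathcal{X},\mathcal{L})$ with the cocharacter $b\colon\mathbf{G}_m\to T$ gives a $T$-equivariant test configuration whose associated filtration is $\mathcal{F}_b$ and whose normalized Fubini--Study ray is, up to reparametrization, $t\mapsto(\varphi^t)_{\exp(tb)}$; so the slope formula (Theorem \ref{slope_formula}) for $J$ gives $g_t(b)\to J^{NA}(\mathcal{F}_b)$ as $t\to\infty$, and by scaling the same holds for rational $b$.

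The inequality $\limsup_{t\to\infty}\tfrac1t J_T(\varphi^t)\le J_T^{NA}(\mathcal{X},\mathcal{L})$ is immediate: for each rational $b$, $g_t(b)\ge\tfrac1t J_T(\varphi^t)$ forces $\limsup_t\tfrac1t J_T(\varphi^t)\le J^{NA}(\mathcal{F}_b)$, and taking the infimum over $b\in N_{\mathbf{Q}}$ --- which equals $J_T^{NA}(\mathcal{X},\mathcal{L})$ because $\xi\mapsto J^{NA}(\mathcal{F}_\xi)$ is continuous on $N_{\mathbf{R}}$ (convex and locally Lipschitz, as is visible from the Duistermaat--Heckman description of $\mathrm{DH}(\mathcal{F}_\xi)$ as the pushforward of the weight data of $\mathcal{F}$ under the affine map determined by $\xi$) --- yields the bound.

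For the reverse inequality, the key is that the infimum over $b$ is effectively restricted to a fixed compact set on which $g_t$ converges uniformly. For the restriction: since the $T$-action on $(\mathcal{H}_0^S,d_1)$ is isometric (Proposition \ref{T-action_isom}), $d_1((\varphi^t)_{\exp(tb)},0)\ge d_1(0_{\exp(tb)},0)-d_1(0,\varphi^t)$, where $d_1(0_{\exp(tb)},0)=t\,v(b)$ with $v(b)\to\infty$ as $|b|\to\infty$ (it is the $d_1$-length of a product geodesic ray), while $d_1(0,\varphi^t)\le Ct+C$ because $\tfrac1t J(\varphi^t)\to J^{NA}(\mathcal{X},\mathcal{L})$ is bounded and $d_1\sim J$ (Proposition \ref{equiv_L1_J}); hence there is $R>0$ with $g_t(b)\ge J_T^{NA}(\mathcal{X},\mathcal{L})$ for $|b|\ge R$ and $t$ large. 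For the uniform convergence on $\{|b|\le R\}$: by the isometry $d_1((\varphi^t)_{\exp(tb)},(\varphi^t)_{\exp(tb')})=d_1((\varphi^t)_{\exp(t(b-b'))},\varphi^t)\le tD|b-b'|$, where $D$ bounds the $d_1$-speeds of the orbit maps (the $T$-moment polytope being independent of the $S$-invariant potential); combined with the fact that $J$ is Lipschitz with respect to $d_1$ (with constant depending only on $\dim X$), the family $\{g_t\}$ is equi-Lipschitz on $N_{\mathbf{R}}$, so its pointwise convergence on $N_{\mathbf{Q}}$ extends to all of $N_{\mathbf{R}}$ and is uniform on $\{|b|\le R\}$. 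Therefore, for $t$ large and all such $b$, $g_t(b)\ge J^{NA}(\mathcal{F}_b)-\varepsilon\ge J_T^{NA}(\mathcal{X},\mathcal{L})-\varepsilon$; combining with the $|b|\ge R$ estimate gives $\tfrac1t J_T(\varphi^t)\ge J_T^{NA}(\mathcal{X},\mathcal{L})-\varepsilon$ for $t$ large, so $\liminf_t\tfrac1t J_T(\varphi^t)\ge J_T^{NA}(\mathcal{X},\mathcal{L})$.

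The main obstacle is the reverse inequality, and within it the two uniform estimates: the ``no escape to infinity'' bound, which requires careful bookkeeping of the additive constants in the $J\leftrightarrow d_1$ comparison together with the genuinely coercive lower bound $v(b)\gtrsim|b|$ --- equivalently, properness of $\xi\mapsto J^{NA}(\mathcal{F}_\xi)$, which comes from the fact that twisting moves the support of the Duistermaat--Heckman measure; and the equi-Lipschitz estimate, which rests on a global Lipschitz property of $J$ along $d_1$ (from the theory of finite-energy geodesics). An equivalent route replaces the Fubini--Study rays by the weak $C^{1,1}$-geodesic rays attached to the same filtrations, along which $J$ is convex so that $g_t(b)$ is nondecreasing in $t$ and Dini's theorem applies. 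Both points are carried out in \cite{Hisa16} and \cite{Li19}; see also \cite{BHJ19} for the underlying slope formulas.
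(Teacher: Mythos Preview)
The paper does not give its own proof of this statement: Theorem~\ref{slope_J_T} is quoted from \cite[Theorem~B]{Hisa16} and \cite[Theorem~3.14]{Li19} and used as a black box (it appears only in the proof of Theorem~\ref{rel_K_coer}). So there is no ``paper's proof'' to compare against; your sketch is in fact a summary of the argument in those references.

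That said, your outline is correct in spirit and captures the two genuine difficulties in Hisamoto's and Li's proofs: properness of $\xi\mapsto J^{NA}(\mathcal{F}_\xi)$ (so the infimum is attained on a compact set, ruling out escape to infinity) and uniform convergence of $g_t$ on that compact set. A couple of points deserve tightening. First, the claim ``$J$ is Lipschitz with respect to $d_1$'' is not literally what you want on $\mathcal{H}^S$; what one uses is the bound $|J(\varphi)-J(\psi)|\le C\,d_1(\varphi,\psi)$ valid on $\mathcal{H}_0^S$ (or, equivalently, the convexity of $t\mapsto J$ along weak geodesic rays so that the slopes $g_t(b)$ are monotone in $t$, which is the cleaner route you mention at the end). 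Second, your identification of $(\varphi^t)_{\exp(tb)}$ with the Fubini--Study ray of the $b$-twisted test configuration is only exact for $b\in N$; for $b\in N_{\mathbf{Q}}$ one passes through a base change, and for irrational $b$ one needs the continuity/equi-Lipschitz step rather than a direct identification --- you say this, but the phrase ``by scaling the same holds for rational $b$'' could be made more precise (the twist $\mathcal{F}_b$ is an $\mathbf{R}$-filtration in general, not a test configuration). These are refinements rather than gaps, and they are handled in the cited works exactly along the lines you indicate.
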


\begin{definition}[see also \cite{BHJ17}]
Let $p \in [1, \infty]$. 
\begin{enumerate}[(1)] 
\item Let $\mathcal{F}$ be a filtration of $R$. 
The $L^{p}$-norm $\|\mathcal{F}\|_{p}$ of $\mathcal{F}$ is defined as the $L^{p}$-norm of $\lambda - \overline{\lambda}$ with respect to $\mathrm{DH}(\mathcal{F})$, where 
\begin{align*}
\overline{\lambda} \coloneqq \int_{\mathbf{R}}\lambda\,d\mathrm{DH}(\mathcal{F}) 
\end{align*}
is the barycenter of $\mathrm{DH}(\mathcal{F})$. 
If $(\mathcal{X}, \mathcal{L})$ is a test configuration for $(X, L)$, the $L^{p}$-norm of $(\mathcal{X}, \mathcal{L})$ is defined by $\|(\mathcal{X}, \mathcal{L})\|_{p} \coloneqq \|\mathcal{F}(\mathcal{X}, \mathcal{L})\|_{p}$. 
\item Let $\mathcal{F}$ be a $T$-equivariant filtration of $R$. 
Let $\mathcal{F}$ be a filtration of $R$. 
The reduced $L^{p}$-norm $\|\mathcal{F}\|_{p}$ of $\mathcal{F}$ is defined by 
\begin{align*}
\|\mathcal{F}\|_{p, T} \coloneqq \inf_{\xi \in N_{\mathbf{R}}}\|\mathcal{F}_{\xi}\|_{p}. 
\end{align*}
If $(\mathcal{X}, \mathcal{L})$ is a $T$-equivariant test configuration for $(X, L)$, the reduced $L^{p}$-norm of $(\mathcal{X}, \mathcal{L})$ is defined by $\|(\mathcal{X}, \mathcal{L})\|_{p, T} \coloneqq \|\mathcal{F}(\mathcal{X}, \mathcal{L})\|_{p, T}$. 
\end{enumerate}
\end{definition}

According to \cite[Lemma 7.10]{BHJ17}, we have the following equivalence between the (reduced) $L^{1}$-norm and the (reduced) non-Archimedean $J$-functional. 

\begin{theorem}[{see also \cite[Theorem 7.9]{BHJ17}}]
Let $c_{n} \coloneqq 2n^{2}/(n+1)^{n+1}$. 
\begin{enumerate}[(1)]
\item For each filtration $\mathcal{F}$, we have 
\begin{align*}
c_{n}J^{NA}(\mathcal{F}) \leq \|\mathcal{F}\|_{1} \leq 2J^{NA}(\mathcal{F}). 
\end{align*}
\item For each $T$-equivariant filtration $\mathcal{F}$, we have 
\begin{align*}
c_{n}J_{T}^{NA}(\mathcal{F}) \leq \|\mathcal{F}\|_{1, T} \leq 2J_{T}^{NA}(\mathcal{F}). 
\end{align*}
\end{enumerate}
\end{theorem}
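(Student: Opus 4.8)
The plan is to obtain part (1) from the known Archimedean estimate \cite[Theorem 7.9]{BHJ17} (equivalently \cite[Lemma 7.10]{BHJ17}) and then to deduce part (2) from part (1) by passing to the infimum over the twisting parameter $\xi \in N_{\mathbf{R}}$.

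For part (1), write $\overline{\lambda} = E^{NA}(\mathcal{F}) = \int_{\mathbf{R}}\lambda\,d\mathrm{DH}(\mathcal{F})$ for the barycenter of the compactly supported probability measure $\mathrm{DH}(\mathcal{F})$ and $\lambda_{\max} = \sup\mathrm{supp}(\mathrm{DH}(\mathcal{F}))$, so that $J^{NA}(\mathcal{F}) = \lambda_{\max} - \overline{\lambda}$. The upper bound $\|\mathcal{F}\|_{1} \le 2J^{NA}(\mathcal{F})$ is immediate and uses no positivity input: since $\int_{\mathbf{R}}(\lambda - \overline{\lambda})\,d\mathrm{DH}(\mathcal{F}) = 0$, the positive and negative parts of $\lambda - \overline{\lambda}$ carry equal mass, hence $\|\mathcal{F}\|_{1} = 2\int_{\mathbf{R}}(\lambda - \overline{\lambda})_{+}\,d\mathrm{DH}(\mathcal{F}) \le 2(\lambda_{\max} - \overline{\lambda}) = 2J^{NA}(\mathcal{F})$, using $(\lambda - \overline{\lambda})_{+} \le \lambda_{\max} - \overline{\lambda}$ on $\mathrm{supp}(\mathrm{DH}(\mathcal{F}))$. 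For the lower bound $c_{n}J^{NA}(\mathcal{F}) \le \|\mathcal{F}\|_{1}$ I would invoke the one essential positivity ingredient: the volume function $G(\lambda) = \mathrm{vol}(R^{(\lambda)})$, which satisfies $\mathrm{DH}(\mathcal{F}) = -dG$ with $G = 1$ near $-\infty$ and $G = 0$ near $+\infty$, has the property that $G^{1/n}$ is concave on the interval where $0 < G < 1$ (the Okounkov-body concavity underlying \cite{BHJ17}). Since $\|\cdot\|_{1}$ and $J^{NA}$ are both invariant under translations of the $\lambda$-axis and positively homogeneous of degree one under dilations, one may normalize so that $G$ decreases from $1$ to $0$ on $[0,1]$; an integration by parts then rewrites $\overline{\lambda} = \int_{0}^{1}G$ and $\|\mathcal{F}\|_{1} = 2\int_{\overline{\lambda}}^{1}G$, and concavity of $G^{1/n}$ constrains the profile of $G$ enough to reduce the inequality to a one-variable extremal problem whose extremizer is the measure proportional to $(\lambda_{\max} - \lambda)^{n-1}\,d\lambda$; carrying out that optimization produces precisely $c_{n} = 2n^{2}/(n+1)^{n+1}$. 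I expect this optimization to be the only genuinely non-routine point, and since it is \cite[Lemma 7.10]{BHJ17} it may simply be quoted.

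For part (2), let $\mathcal{F}$ be a $T$-equivariant filtration. For each $\xi \in N_{\mathbf{R}}$ the twist $\mathcal{F}_{\xi}$ is again a filtration of $R$ --- possibly only an $\mathbf{R}$-filtration, which is immaterial because (1) is proved for arbitrary filtrations --- so applying (1) to $\mathcal{F}_{\xi}$ gives $c_{n}J^{NA}(\mathcal{F}_{\xi}) \le \|\mathcal{F}_{\xi}\|_{1} \le 2J^{NA}(\mathcal{F}_{\xi})$ for every $\xi$. Taking $\inf_{\xi \in N_{\mathbf{R}}}$ in all three terms and using that multiplication by the positive constants $c_{n}$ and $2$ commutes with $\inf$, together with the definitions $\|\mathcal{F}\|_{1,T} = \inf_{\xi}\|\mathcal{F}_{\xi}\|_{1}$ and $J_{T}^{NA}(\mathcal{F}) = \inf_{\xi}J^{NA}(\mathcal{F}_{\xi})$, one obtains $c_{n}J_{T}^{NA}(\mathcal{F}) \le \|\mathcal{F}\|_{1,T} \le 2J_{T}^{NA}(\mathcal{F})$, which is (2). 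The one point to verify along the way is that each twist $\mathcal{F}_{\xi}$ is a bona fide filtration --- decreasing, left-continuous, multiplicative and linearly bounded --- and this follows directly from the definition of the twist together with the boundedness (scaling linearly in $m$) of the $T$-weights occurring in $R_{m}$. Thus the whole argument rests on the single substantial fact \cite[Lemma 7.10]{BHJ17}, everything else being either an elementary mass-balancing estimate or a formal manipulation of infima.
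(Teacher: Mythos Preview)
Your proposal is correct and matches the paper's approach: the paper does not give a detailed proof either, simply prefacing the statement with ``According to \cite[Lemma 7.10]{BHJ17}'' and relying on that reference for part (1), with part (2) following formally by taking the infimum over $\xi \in N_{\mathbf{R}}$. Your write-up in fact supplies more detail than the paper itself, including the elementary mass-balancing argument for the upper bound and the reduction-to-twists argument for part (2).
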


\begin{corollary}
Let $(\mathcal{X}, \mathcal{L})$ be a normal test configuration for $(X, L)$. 
\begin{enumerate}[(1)]
\item (\cite[Theorem 6.8 and Theorem 7.9]{BHJ17}) The following conditions are equivalent: 
\begin{enumerate}[(i)]
\item $(\mathcal{X}, \mathcal{L})$ is trivial. 
\item $\|(\mathcal{X}, \mathcal{L})\|_{p} = 0$ for some $p \in [1, \infty]$. 
\item $J^{NA}(\mathcal{X}, \mathcal{L}) = 0$. 
\end{enumerate}
\item (\cite[Theorem B]{Hisa17}) Suppose $(\mathcal{X}, \mathcal{L})$ is $T$-equivariant. 
Then the following conditions are equivalent: 
\begin{enumerate}[(i)]
\item $(\mathcal{X}, \mathcal{L})$ is product. 
\item $\|(\mathcal{X}, \mathcal{L})\|_{p, T} = 0$ for some $p \in [1, \infty]$. 
\item $J_{T}^{NA}(\mathcal{X}, \mathcal{L}) = 0$. 
\end{enumerate}
\end{enumerate}
\end{corollary}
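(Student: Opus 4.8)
The plan is to deduce the equivalence (ii)$\,\Leftrightarrow\,$(iii) in both parts from the relation between the $L^{p}$-norms and the Duistermaat-Heckman measure, and then to obtain the equivalence of these with (i) from the intersection-theoretic triviality criterion of \cite{BHJ17} and its $T$-equivariant refinement in \cite{Hisa17}. For part (1): since $\mathrm{DH}(\mathcal{X},\mathcal{L})$ is a compactly supported probability measure with barycenter $\overline{\lambda}=E^{NA}(\mathcal{X},\mathcal{L})$, the identity $J^{NA}(\mathcal{X},\mathcal{L})=\sup\mathrm{supp}(\mathrm{DH}(\mathcal{X},\mathcal{L}))-\overline{\lambda}$ shows that $J^{NA}(\mathcal{X},\mathcal{L})=0$ if and only if $\mathrm{DH}(\mathcal{X},\mathcal{L})=\delta_{\overline{\lambda}}$; and $\|(\mathcal{X},\mathcal{L})\|_{p}=0$ for some $p\in[1,\infty]$ is likewise equivalent to $\mathrm{DH}(\mathcal{X},\mathcal{L})=\delta_{\overline{\lambda}}$ (for finite $p$ because $\int|\lambda-\overline{\lambda}|^{p}\,d\mathrm{DH}(\mathcal{X},\mathcal{L})=0$, and for $p=\infty$ because the support then reduces to $\{\overline{\lambda}\}$), so (ii)$\,\Leftrightarrow\,$(iii). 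For part (2) one instead uses the reduced comparison $c_{n}J_{T}^{NA}(\mathcal{F})\le\|\mathcal{F}\|_{1,T}\le 2J_{T}^{NA}(\mathcal{F})$ with $\mathcal{F}=\mathcal{F}(\mathcal{X},\mathcal{L})$, together with $\|\mathcal{F}\|_{1,T}\le\|\mathcal{F}\|_{p,T}$ and $J_{T}^{NA}(\mathcal{F})\ge 0$: vanishing of $\|(\mathcal{X},\mathcal{L})\|_{p,T}$ for some $p$ forces $\|(\mathcal{X},\mathcal{L})\|_{1,T}=0$, hence $J_{T}^{NA}(\mathcal{X},\mathcal{L})=0$, and conversely $J_{T}^{NA}(\mathcal{X},\mathcal{L})=0$ gives $\|(\mathcal{X},\mathcal{L})\|_{1,T}=0$.

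For (i)$\,\Rightarrow\,$(iii): if $(\mathcal{X},\mathcal{L})$ is trivial then $\mathcal{X}\cong X\times\mathbf{A}^{1}$ with $\mathbf{G}_{m}$ acting only on the second factor and $\mathcal{L}\cong p_{1}^{\ast}L^{r}$ as a line bundle, so the induced $\mathbf{G}_{m}$-action on $H^{0}(\mathcal{X}_{0},\mathcal{L}_{0}^{m})=H^{0}(X,L^{rm})$ has a single weight; thus $\mathrm{DH}(\mathcal{X},\mathcal{L})$ is a Dirac mass and $J^{NA}(\mathcal{X},\mathcal{L})=0$. In part (2), a $T$-equivariant product test configuration becomes, after an equivariant isomorphism $\mathcal{X}\cong X\times\mathbf{A}^{1}$, the $\xi_{0}$-twist of a trivial test configuration for some one-parameter subgroup $\xi_{0}\in N$; by the case just treated $J^{NA}(\mathcal{F}(\mathcal{X},\mathcal{L})_{-\xi_{0}})=0$, whence $J_{T}^{NA}(\mathcal{X},\mathcal{L})=\inf_{\xi\in N_{\mathbf{R}}}J^{NA}(\mathcal{F}(\mathcal{X},\mathcal{L})_{\xi})=0$.

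The substantive direction is (iii)$\,\Rightarrow\,$(i). Replacing $(\mathcal{X},\mathcal{L})$ by an equivariant modification $\mathcal{X}'\to\mathcal{X}$ with $\mathcal{X}'$ normal and dominating $X\times\mathbf{P}^{1}$ via $\rho$, equipped with the pulled-back polarization, changes neither $\mathrm{DH}$ nor $J^{NA}$ (by normality of $\mathcal{X}$ and the projection formula), so I may assume $\mathcal{X}$ dominates $X\times\mathbf{P}^{1}$ and use $J^{NA}(\mathcal{X},\mathcal{L})=(L^{n})^{-1}(\overline{\mathcal{L}}\cdot\rho^{\ast}p_{1}^{\ast}L^{n})-(\overline{\mathcal{L}}^{n+1})/(r(n+1)(L^{n}))$. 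Writing $\overline{\mathcal{L}}=r\,\rho^{\ast}p_{1}^{\ast}L+D$ with $D$ a $\mathbf{Q}$-Cartier divisor supported on $\overline{\mathcal{X}}_{0}$ (the two classes agree over $\mathbf{P}^{1}\setminus\{0\}$), the vanishing $J^{NA}(\mathcal{X},\mathcal{L})=0$ becomes a polynomial relation in $D$ against the nef class $P\coloneqq\rho^{\ast}p_{1}^{\ast}L$, which satisfies $(P^{n+1})=0$; a Hodge-index/Teissier-type inequality on the $(n+1)$-fold $\overline{\mathcal{X}}$ then shows $J^{NA}(\mathcal{X},\mathcal{L})\ge 0$ with equality forcing $D$ to be numerically a pullback from $\mathbf{P}^{1}$, after which normality together with Zariski's main theorem forces $\rho$ to be an isomorphism with reduced, irreducible central fiber, i.e. $(\mathcal{X},\mathcal{L})$ trivial; this is \cite[Theorem 6.8 and Theorem 7.9]{BHJ17}. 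For part (2), $J_{T}^{NA}(\mathcal{X},\mathcal{L})=0$ means $\inf_{\xi\in N_{\mathbf{R}}}J^{NA}(\mathcal{F}(\mathcal{X},\mathcal{L})_{\xi})=0$; the properness of $\xi\mapsto J^{NA}(\mathcal{F}(\mathcal{X},\mathcal{L})_{\xi})$ modulo its linear invariance, established in \cite{Hisa17}, yields a minimizer which for a normal test configuration may be taken in $N$, so that part (1) applied to the corresponding twisted (still normal) test configuration identifies $(\mathcal{X},\mathcal{L})$ with a product test configuration.

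I expect the main obstacle to be this last step: the sharp positivity of $J^{NA}$ requires the Hodge-index inequality on $\overline{\mathcal{X}}$ and a careful use of normality (via Zariski's main theorem) to exclude reducible or non-reduced central fibers, while the equivariant case additionally needs the properness of the twist functional so that the infimum defining $J_{T}^{NA}$ is attained at an integral $\xi_{0}$. Since these are exactly the contents of \cite[Theorem 6.8]{BHJ17} and \cite[Theorem B]{Hisa17}, for the present purposes it suffices to invoke those results.
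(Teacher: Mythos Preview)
The paper does not supply a proof of this corollary at all: it is stated as a direct consequence of the preceding norm comparison theorem together with the cited results \cite[Theorems~6.8 and~7.9]{BHJ17} and \cite[Theorem~B]{Hisa17}, and no argument is written out. Your proposal is therefore not so much a comparison target as an expansion of what the paper leaves implicit.

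That said, your sketch is correct and matches the logic behind those citations. The equivalence (ii)\,$\Leftrightarrow$\,(iii) in both parts follows exactly as you say from the preceding theorem (the two-sided bound $c_{n}J^{NA}\le\|\cdot\|_{1}\le 2J^{NA}$ and its reduced analogue), together with the elementary observation that a compactly supported probability measure has $L^{p}$-moment zero about its barycenter if and only if it is a Dirac mass. Your treatment of (i)\,$\Rightarrow$\,(iii) is also fine. For (iii)\,$\Rightarrow$\,(i) you correctly identify that the real content lies in the cited papers: the Hodge-index/Khovanskii--Teissier positivity of $J^{NA}$ and the Zariski main theorem step in \cite{BHJ17}, and the properness/attainment of the twist infimum in \cite{Hisa17}. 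One small caveat: your claim that the minimizing $\xi$ ``may be taken in $N$'' is not quite how Hisamoto proceeds; the argument there works with real $\xi$ and a limiting/approximation step rather than landing directly on an integral one-parameter subgroup. Since you ultimately defer to \cite[Theorem~B]{Hisa17} anyway, this does not affect the validity of your outline.
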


\subsection{Uniform relative K-stability}
\begin{definition}
Let $(X, L)$ be a polarized algebraic manifold and $T$ a maximal algebraic torus of $\mathrm{Aut}^{0}(X)$. 
\begin{enumerate}[(1)]
\item $(X, L)$ is relatively K-semistable if $M_{V}^{NA}(\mathcal{X}, \mathcal{L}) \geq 0$ for any $T$-equivariant normal test configuration $(\mathcal{X}, \mathcal{L})$ for $(X, L)$. 
\item $(X, L)$ is relatively K-polystable if $(X, L)$ is relatively K-semistable and $M_{V}^{NA}(\mathcal{X}, \mathcal{L}) = 0$ if and only if $(\mathcal{X}, \mathcal{L})$ is product. 
\item $(X, L)$ is relatively K-unstable if $(X, L)$ is not relatively K-semistable. 
\item $(X, L)$ is uniformly relatively K-polystable if there exists a $\delta > 0$ such that 
\begin{align*}
M_{V}^{NA}(\mathcal{X}, \mathcal{L}) \geq \delta J_{T}^{NA}(\mathcal{X}, \mathcal{L}) 
\end{align*}
for any $T$-equivariant normal test configuration $(\mathcal{X}, \mathcal{L})$ for $(X, L)$. 
\end{enumerate}
\end{definition}

By the slope formula, we have the following 
\begin{theorem}\label{rel_K_coer}
A polarized manifold $(X, L)$ is uniformly relatively K-polystable if the relative K-energy is $T$-coercive. 
\end{theorem}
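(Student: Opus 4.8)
The plan is to test the coercivity inequality along the Fubini--Study ray associated to a test configuration and pass to the limit, using the slope formulas of Theorems~\ref{slope_formula} and \ref{slope_J_T}; the single additive constant appearing in $T$-coercivity is harmless because it is killed by dividing by $t$, so no uniformity is lost in the passage from the metric side to the test-configuration side.

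First I would unwind the hypothesis. Since the relative K-energy is $T$-coercive, condition~(3) of Theorem~\ref{ext_coer_equiv} provides constants $\delta, C > 0$ with
\begin{align*}
M_V(\varphi) \geq \delta\, J_T(\varphi) - C \qquad \text{for every } \varphi \in \mathcal{H}_0^S.
\end{align*}
I claim that $(X, L)$ is uniformly relatively K-polystable with this same $\delta$. So fix an arbitrary $T$-equivariant normal test configuration $(\mathcal{X}, \mathcal{L})$ for $(X, L)$; the goal is to prove $M_V^{NA}(\mathcal{X}, \mathcal{L}) \geq \delta\, J_T^{NA}(\mathcal{X}, \mathcal{L})$.

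Next I would attach to $(\mathcal{X}, \mathcal{L})$ its Fubini--Study ray exactly as in the construction preceding Theorem~\ref{slope_formula}: for $m$ sufficiently divisible, embed $\mathcal{X} \hookrightarrow \mathbf{P}H^{0}(\mathcal{X}_{0}, \mathcal{L}_{0}^{m})^{\vee} \times \mathbf{A}^{1}$, let $h_{\mathrm{FS}}$ be the pulled-back Fubini--Study metric, set $\omega_{0} \coloneqq c_{1}(L, h_{0})$ and $h_{t} \coloneqq \lambda(e^{-t/2})^{*} h_{\mathrm{FS}}|_{e^{-t/2}}$, and let $\{\varphi^{t}\}_{t \in [0,\infty)} \subset \mathcal{H}(X, \omega_{0})_{0}^{S}$ be the ray determined by $c_{1}(L, h_{t}) - \omega_{0} = dd^{c}\varphi^{t}$. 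Because $(\mathcal{X}, \mathcal{L})$ is $T$-equivariant, the generating $\mathbf{G}_{m}$-action commutes with $T$, so each $\varphi^{t}$ is $S$-invariant and the functionals $M_V$ and $J_T$ are defined all along the ray.

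Finally I would restrict the coercivity inequality to the ray and let $t \to \infty$. For each $t > 0$ one has $M_V(\varphi^{t})/t \geq \delta\, J_T(\varphi^{t})/t - C/t$. By Theorem~\ref{slope_formula} applied with $F = M_V$ the left-hand side converges to $M_V^{NA}(\mathcal{X}, \mathcal{L})$; by Theorem~\ref{slope_J_T} one has $J_T(\varphi^{t})/t \to J_T^{NA}(\mathcal{X}, \mathcal{L})$; and $C/t \to 0$. Hence
\begin{align*}
M_V^{NA}(\mathcal{X}, \mathcal{L}) \geq \delta\, J_T^{NA}(\mathcal{X}, \mathcal{L}),
\end{align*}
and since $(\mathcal{X}, \mathcal{L})$ was arbitrary, $(X, L)$ is uniformly relatively K-polystable. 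The analytic and algebro-geometric substance is entirely carried by the two slope formulas (imported from \cite{BHJ19}, \cite{Y19}, \cite{Hisa16}, \cite{Li19}); I do not expect a genuine obstacle, the only points requiring mild care being that one applies the slope formulas to the Fubini--Study ray of the test configuration (for which the functionals in Theorem~\ref{slope_formula} are set up), that the left-hand limit is an honest limit rather than merely a $\liminf$ (so the one-line argument closes), and that the \emph{same} $\delta$ transfers to the non-Archimedean side, which it does precisely because the subleading $-C$ term vanishes after division by $t$.
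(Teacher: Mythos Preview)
Your proposal is correct and follows essentially the same approach as the paper's own proof: apply the $T$-coercivity inequality along the ray associated to a given $T$-equivariant test configuration, divide by $t$, and invoke the slope formulas of Theorems~\ref{slope_formula} and~\ref{slope_J_T} to pass to the non-Archimedean inequality. Your write-up is more detailed (spelling out the Fubini--Study ray and the handling of the constant $C$), but the argument is the same.
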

\begin{proof}
Assume that the relative K-energy is $T$-coercive. 
Then there exists $\delta, C >0$ such that 
\begin{align*}
M_{V}(\varphi) \geq \delta J_{T}(\varphi) - C 
\end{align*}
for any $\varphi \in \mathcal{H}_{0}^{S}$. 
Let $\varphi = (\mathcal{X}, \mathcal{L})$ be a test configuration for $(X, L)$ of exponent $r$, and $\{\varphi^{t}\}_{t \in [0, +\infty)}$ the subgeodesic ray corresponding to $\varphi$. 
Then, Theorem \ref{slope_formula} and Theorem \ref{slope_J_T} allow us to conclude 
\begin{align*}
M_{V}^{NA}(\mathcal{X}, \mathcal{L}) \geq \delta J_{T}^{NA}(\mathcal{X}, \mathcal{L}), 
\end{align*}
as required. 
\end{proof}

Combining Theorems \ref{ext_coer_equiv} and \ref{rel_K_coer}, we further obtain the following corollary. 
\begin{corollary}
A polarized manifold $(X, L)$ is uniformly relatively K-polystable if $(X, L)$ admits an $S$-invariant extremal K\"ahler metric. 
\end{corollary}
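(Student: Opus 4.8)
The plan is to chain together the two results already established just above, since the corollary is essentially a restatement of their composition. First I would assume that $(X, L)$ admits an $S$-invariant extremal K\"ahler metric, which is precisely condition (1) of Theorem \ref{ext_coer_equiv}. Applying the implication (1) $\Rightarrow$ (3) of that theorem yields constants $\delta, C > 0$ with
\begin{align*}
M_{V}(\varphi) \geq \delta J_{T}(\varphi) - C
\end{align*}
for every $\varphi \in \mathcal{H}_{0}^{S}$. By the terminology fixed immediately after Theorem \ref{ext_coer_equiv}, this is exactly the statement that the relative K-energy of $(X, L)$ is $T$-coercive.

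Next I would invoke Theorem \ref{rel_K_coer}, whose hypothesis is precisely $T$-coercivity of the relative K-energy and whose conclusion is that $(X, L)$ is uniformly relatively K-polystable, i.e.\ that there is a $\delta > 0$ with $M_{V}^{NA}(\mathcal{X}, \mathcal{L}) \geq \delta J_{T}^{NA}(\mathcal{X}, \mathcal{L})$ for every $T$-equivariant normal test configuration $(\mathcal{X}, \mathcal{L})$ for $(X, L)$. This finishes the proof.

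There is no genuine obstacle here beyond checking that the two cited statements match up verbatim, which they do by the definition of $T$-coercivity. The actual mathematical weight is carried inside the two inputs: Theorem \ref{ext_coer_equiv} rests on the Chen--Cheng and He properness theory combined with the Darvas--Rubinstein existence/properness principle, and Theorem \ref{rel_K_coer} rests on the slope formulas of Theorem \ref{slope_formula} and Theorem \ref{slope_J_T}, which transport the Archimedean coercivity estimate along the subgeodesic ray attached to a test configuration and produce the non-Archimedean inequality. Since both are available as stated, the corollary is immediate.
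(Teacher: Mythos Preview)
Your proof is correct and follows exactly the approach the paper indicates: the corollary is stated immediately after the sentence ``Combining Theorems \ref{ext_coer_equiv} and \ref{rel_K_coer}, we further obtain the following corollary,'' and your argument is precisely this composition. There is nothing to add.
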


\section{Convex Preliminaries}\label{sec:CvxPre}
\subsection{Polarized toric manifolds}\label{sec:PolToric}
In this subsection, we briefly review the definition and construction of polarized toric manifolds. 
See \cite{G94a} for details. 

A $2n$-dimensional symplectic toric manifold is a $2n$-dimensional compact connected symplectic manifold $(X, \omega)$ endowed with an effective Hamiltonian action of an $n$-dimensional compact torus $S = (S^{1})^{n}$ and the moment map $\mu  \colon X \to M_{\mathbf{R}}$. 
Let $(X, \omega)$ be a $2n$-dimensional symplectic toric manifold. 
According to the convexity theorem of Atiyah \cite{At82} and Guillemin-Sternberg \cite{GS82a}, the image of the moment map $P \coloneqq \mu(X) \subset M_{\mathbf{R}}$ is a convex polytope obtained as the convex hull of the image of the fixed points of the $S$-action. 
We call $P$ the \emph{moment polytope}. 
Further, since $(X, \omega)$ is toric, $P$ satisfies the following special conditions, usually called the \emph{Delzant condition}: 
\begin{enumerate}[(1)]
\item There are exactly $n$ edges meeting at each vertex. 
\item Each edge meeting at the vertex $v \in P$ is of the form $v + tu_{i}$, $t \geq 0$, where $u_{i} \in N$ is primitive. 
\item For each vertex, the corresponding $u_{1}, \ldots, u_{n}$ generate the lattice of $N_{\mathbf{R}}$ over $\mathbf{Z}$. 
\end{enumerate}
A convex polytope satisfiying the conditions above is called a \emph{Delzant polytope}. 
In \cite{Del88}, Delzant showed that there is a one to one correspondence between isomorphism classes of symplectic toric manifolds and Delzant polytopes. 
More precisely, for each Delzant polytope $P$, one can construct in a canonical way a symplectic toric manifold in such a way that $P$ may be identified with the moment polytope. 
This construction is known as the \emph{Delzant construction}. 
Let $P$ be a Delzant polytope and $(X_{P}, \omega_{P})$ the corresponding symplectic toric manifold with the moment map $\mu_{P} \colon X_{P} \to M_{\mathbf{R}}$. 
By the Delzant construction, there is an $S$-invariant complex structure $J_{P}$ compatible with $\omega_{P}$. 
In particular, $(X_{P}, \omega_{P}, J_{P})$ is a K\"ahler manifold. 
Since the $S$-action preserves the complex structure $J_{P}$, it can be canonically extended to an action of $T \coloneqq S^{\mathbf{C}} = (\mathbf{G}_{m})^{n}$ preserving $J_{P}$ (\cite[Theorem 4.4]{GS82b}). 
Then $X_{P}$ contains $T$ as an open dense orbit, and hence $X_{P}$ has a structure of a nonsingular toric variety with the fan associated to the polytope $P$ (\cite[Lemma 9.2]{LT97}). 

Let 
\begin{align}\label{facet_Rep}
P = \{x \in M_{\mathbf{R}} \mid \langle \lambda_{j}, x \rangle + d_{j} \geq 0\ (j=1, \ldots, r)\} 
\end{align}
be the facet representaion of $P$. 
Here $\langle \cdot, \cdot \rangle$ is the natural pairing between $N_{\mathbf{R}}$ and $M_{\mathbf{R}}$, $r$ is the number of facets of $P$, $\lambda_{j} \in N$, $d_{j} \in \mathbf{R}$, and each $\lambda_{j}$ is primitive. 
Let $P^{\circ}$ denote the interior of $P$. 
Then $X_{P}^{\circ} \coloneqq \mu_{P}^{-1}(P^{\circ})$ is a dense open subset of $X_{P}$ where the $S$-action is free. 
This coincides with the open dense orbit of the $T$-action described above. 
Also, for any $k$-dimensional face $F$ of $P$, $\mu_{P}^{-1}(F)$ is an $T$-invariant $k$-dimensional connected complex submanifold of $X_{P}$. 
In particular, if $F$ is a facet of $P$ then $\mu_{P}^{-1}(F)$ is a $T$-invariant prime divisor of $X_{P}$. 
For each $j = 1, \ldots, r$, let $F_{j}$ denote the facet of $P$ defined by 
\begin{align*}
F_{j} = \{x \in P \mid \langle \lambda_{j}, x \rangle + d_{j} = 0\},  
\end{align*}
and $D_{j} \coloneqq \mu_{P}^{-1}(F_{j})$. 
By setting $c_{j}$ the Poincare dual of $D_{j}$, we have 
\begin{align*}
[\omega_{P}] = \sum_{j=1}^{r}d_{j}c_{j} 
\end{align*}
as a de Rham cohomology class. 
If the Delzant polytope $P$ is \emph{integral}, that is, each vertex of $P$ belongs to $\mathbf{Z}^{n}$, then $d_{j} \in \mathbf{Z}$ ($j = 1, \ldots, r$) and hence $[\omega_{P}] \in H^{2}(X, \mathbf{Z})$. 
Hence, in this case there exists a $T$-equivariant holomorphic line bundle $L_{P}$ over $X_{P}$ which satisfies $c_{1}(L_{P}) = [\omega_{P}]$. 
By the Kodaira embedding theorem, $L_{P}$ is an ample line bundle over $X_{P}$. 
We call $(X_{P}, L_{P})$ a \emph{polarized toric manifold associated to $P$}. 

\subsection{Symplectic potentials}\label{sec:SympPot}
In this subsection, we quickly review differential-geometric aspects of toric K\"ahler manifolds. 
For details, see \cite{Ab03} and \cite{Ab98}, \cite{G94a}, \cite{G94b}. 

Let $P \subset M_{\mathbf{R}}$ be an $n$-dimensional integral Delzant polytope, and $(X_{P}, L_{P})$ the polarized toric manifold corresponding to $P$. 
We choose an $S$-invariant K\"ahler metric $\omega \in c_{1}(L_{P})$. 
Since $X_{P}^{\circ}$ is holomorphically isomorphic to $T$, there is an $S$-invariant smooth function $\phi \colon T \to \mathbf{R}$ so that 
\begin{align*}
\omega = dd^{c}\phi
\end{align*}
on $X_{P}^{\circ}$ (\cite[Theorem 4.3]{G94b}). 
Through the identification $X_{P}^{\circ} \cong (\mathbf{C}^{\times})^{n} \cong \mathbf{R}^{n} \times S$, we can regard $\phi$ as a smooth function defined on $\mathbf{R}^{n}$. 

\begin{remark}
Let $\tau = (\tau^{1}, \ldots, \tau^{n})$, $\tau^{i} = \exp((-1/2)y^{i} + 2\pi \sqrt{-1}\theta^{i})$ ($i=1, \ldots, n$) denote the coordinate of $T$. 
Then, for each $i=1, \ldots, n$ we have 
\begin{align*}
\frac{d\tau^{i}}{\tau^{i}} = -\frac{1}{2}dy^{i} + 2\pi\sqrt{-1}d\theta^{i},\quad \frac{d\bar{\tau}^{i}}{\bar{\tau}^{i}} = -\frac{1}{2}dy^{i} - 2\pi\sqrt{-1}d\theta^{i}
\end{align*}
and 
\begin{align*}
\frac{\sqrt{-1}}{2\pi}\frac{d\tau^{i} \wedge d\bar{\tau}^{i}}{|\tau^{i}|^{2}} = dy^{i} \wedge d\theta^{i}. 
\end{align*}
Further, since 
\begin{align*}
\tau^{i}\frac{\partial}{\partial \tau^{i}} = -\frac{\partial}{\partial y^{i}} -\frac{\sqrt{-1}}{4\pi}\frac{\partial}{\partial \theta^{i}},\quad \bar{\tau}^{i}\frac{\partial}{\partial \bar{\tau}^{i}} = -\frac{\partial}{\partial y^{i}} +\frac{\sqrt{-1}}{4\pi}\frac{\partial}{\partial \theta^{i}}, 
\end{align*}
we obtain 
\begin{align*}
dd^{c}\phi 
= \frac{\sqrt{-1}}{2\pi}\frac{\partial^{2}\phi}{\partial \tau^{i}\partial \bar{\tau}^{j}}d\tau^{i} \wedge d\bar{\tau}^{i} 
= \frac{\sqrt{-1}}{2\pi}\frac{\partial^{2}\phi}{\partial y^{i}\partial y^{j}}\frac{d\tau^{i}}{\tau^{i}} \wedge \frac{d\bar{\tau}^{i}}{\bar{\tau}^{j}} 
\end{align*}
for any $\phi \in C^{\infty}(T, \mathbf{R})^{S}$. 
In particular, $dd^{c}\phi$ is a K\"ahler form on $T$ if and only if $\phi$ is a strictly convex function on $\mathbf{R}^{n}$. 
\end{remark}

By using the moment map, one can describe $\phi$ in terms of a convex function on $P$, which is called a symplectic potential. 
Indeed, up to an additive constant, the derivative $\nabla \phi$ coincides with the moment map for the K\"ahler metric $\omega$. 
Hence we may assume that $\nabla \phi$ precisely coincides with the moment map by adding a suitable linear function. 
Since $\phi$ is strictly convex, the moment map $\nabla \phi$ is a diffeomorphism from $\mathbf{R}^{n}$ onto $P^{\circ}$. 
Now we introduce the coordinate
\begin{align*}
x = (\nabla \phi)(y), 
\end{align*}
and define the function $u \colon P^{\circ} \to \mathbf{R}$ as the Legendre dual of $\phi$, i.e., 
\begin{align*}
u(x) + \phi(y) = \langle x, y \rangle. 
\end{align*}
$u$ is called the \emph{symplectic potential} of the K\"ahler metric $\omega$. 

\begin{example}[\cite{G94b}]
Let $\ell_{j}(x) \coloneqq \langle \lambda_{j}, x \rangle + d_{j}$ ($j=1, \ldots, r$) and $\ell_{\infty} \coloneqq \sum_{j=1}^{r}\ell_{j}$. 
Then, by setting $\phi_{P} \coloneqq \mu_{P}^{\ast}(\ell_{\infty} - \sum_{j=1}^{r}d_{j}\ell_{j})$, we have
\begin{align*}
\omega_{P} = dd^{c}\phi_{P}
\end{align*}
on $X_{P}^{\circ}$. 
The symplectic potential of the K\"ahler metric $\omega_{P}$ is given by 
\begin{align*}
u_{P} \coloneqq \sum_{j=1}^{r}\ell_{j}\log \ell_{j}, 
\end{align*}
usually called the Guillemin potential. 
\end{example}

By the theory of Guillemin \cite{G94b} and Abreu \cite{Ab03}, there is the following correspondence for $S$-invariant K\"ahler metrics in $c_{1}(L_{P})$ and symplectic potentials. 
Let $u$ be a strictly convex function on $P^{\circ}$. 
The function $u$ is said to satisfy the {\it Guillemin's boundary condition} if it satisfies the following conditions: 
\begin{enumerate}[\upshape(1)]
\item $u-u_{P} \in C^{\infty}(P)$; 
\item For any face $F$ of $P$, $u|_{F^{\circ}}$ is a strictly convex function on $F^{\circ}$. 
\end{enumerate}
Let $\mathcal{S}$ denote the set of all strictly convex functions on $P^{\circ}$ which satisfy the Guillemin's boundary condition. 

\begin{theorem}[\cite{Ab03}, \cite{Apo19}, \cite{Don05}, \cite{G94b}]\label{GuiAb}
Every $S$-invariant K\"ahler metric in the class $c_{1}(L_{P})$ has a symplectic potential $u$ in $\mathcal{S}$. 
Conversely, every function belongs to $\mathcal{S}$ is a symplectic potential for some $S$-invariant K\"ahler metric in $c_{1}(L_{P})$. 
\end{theorem}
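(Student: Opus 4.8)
The plan is to prove the two implications separately, in each case reducing to a local computation near the toric strata $\mu_P^{-1}(F)$ attached to the faces $F$ of $P$; over the open orbit $X_P^{\circ}$ the correspondence is the classical Legendre duality between strictly convex functions and poses no difficulty. Throughout I use the reference data: the Guillemin potential $u_P=\sum_{j=1}^{r}\ell_j\log\ell_j$ with $\ell_j=\langle\lambda_j,\cdot\rangle+d_j$, the K\"ahler form $\omega_P=dd^{c}\phi_P$ on $X_P^{\circ}$, and the fact that $\nabla\phi_P=\mu_P$ in the $y$-coordinate, so that $u_P$ is the symplectic potential of $\omega_P$.

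\emph{From metrics to potentials.} Let $\omega\in c_1(L_P)$ be $S$-invariant. By the $dd^{c}$-lemma, after averaging over $S$, write $\omega=\omega_P+dd^{c}\psi$ with $\psi\in C^{\infty}(X_P,\mathbf{R})^{S}$. Using the local normal form of the moment map near a toric stratum — in suitable holomorphic coordinates $\mu_P$ is, modulo an affine automorphism of $M_{\mathbf{R}}$ and a smooth term, the map $(z_1,\dots,z_n)\mapsto(|z_1|^{2},\dots,|z_n|^{2})$ — one identifies $C^{\infty}(X_P)^{S}$ with $C^{\infty}(P)$, so $\psi=\mu_P^{\ast}v$ for some $v\in C^{\infty}(P)$. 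Hence on $X_P^{\circ}$ we have $\omega=dd^{c}\phi$ with $\phi$ strictly convex in $y$, and its Legendre dual $u$ is strictly convex on $P^{\circ}$; the same normal form turns Guillemin's transversal description of a smooth toric K\"ahler form along each divisor $D_j=\mu_P^{-1}(F_j)$ into the statement that $u-u_P$ extends smoothly across $F_j$, and iterating over faces of all codimensions gives $u-u_P\in C^{\infty}(P)$. Finally $\omega$ restricts to a K\"ahler metric on each invariant submanifold $\mu_P^{-1}(F)$, whose moment polytope is $F$ and whose symplectic potential is precisely $u|_{F^{\circ}}$; hence each $u|_{F^{\circ}}$ is strictly convex and $u\in\mathcal{S}$.

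\emph{From potentials to metrics.} Conversely, let $u\in\mathcal{S}$, say $u=u_P+f$ with $f\in C^{\infty}(P)$. Strict convexity makes $\nabla u\colon P^{\circ}\to N_{\mathbf{R}}$ injective, and the logarithmic blow-up of $u_P$ along $\partial P$ forces $|\nabla u|\to\infty$ at $\partial P$, so $\nabla u$ is a diffeomorphism of $P^{\circ}$ onto $N_{\mathbf{R}}\cong\mathbf{R}^{n}$; let $\phi$ be the inverse Legendre transform and put $\omega:=dd^{c}\phi$ on $X_P^{\circ}$, a K\"ahler form there. It remains to extend $\omega$ to a smooth K\"ahler form on $X_P$ in $c_1(L_P)$, which is a local question near the strata. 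At a generic point of a facet $D_j$, choose coordinates in which $\lambda_j=e_n$ and $d_j=0$, so that locally $P=\{x_n\ge 0\}$, $F_j=\{x_n=0\}$, the holomorphic coordinate $z_n$ cuts out $D_j$, and $x_n$ equals $c\,|z_n|^{2}$ modulo smooth terms; since only the summand $\ell_j\log\ell_j=x_n\log x_n$ of $u_P$ is singular along $F_j$, we may write $u=\tfrac12 x_n\log x_n+g$ with $g=\sum_{k\ne j}\ell_k\log\ell_k+f$ smooth and strictly convex up to $x_n=0$, the convexity coming from condition (2) for the facet $F_j$. A direct computation — Guillemin's, perturbed by the smooth term $g$ — then shows $\phi$ extends to a smooth $\Phi$ in the $z$-coordinates with $dd^{c}\Phi>0$ across $\{z_n=0\}$; a face of codimension $c$ is treated by peeling off $c$ logarithmic terms. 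Patching over the finitely many strata produces a smooth K\"ahler form $\omega$ on $X_P$ whose moment map image is $P$; since the integral of $\omega$ over each invariant curve equals the lattice length of the corresponding edge of $P$, independent of $u$, we get $[\omega]=[\omega_P]=c_1(L_P)$, as desired.

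\emph{Main obstacle.} The crux in both directions is the local extension analysis near the toric strata: matching the $x_n\log x_n$-type singularity of the symplectic potential to genuine smoothness, and positivity, of the K\"ahler potential in the holomorphic chart — equivalently, Abreu's boundary conditions — together with the bookkeeping over faces of all codimensions. The two convexity hypotheses defining $\mathcal{S}$ (strict convexity on $P^{\circ}$ and strict convexity of every facial restriction) are exactly what force the extended form to be positive definite along every stratum; dropping the facial condition would allow the metric to degenerate on some $\mu_P^{-1}(F)$.
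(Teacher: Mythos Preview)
The paper does not prove this theorem at all: it is stated as a background result with citations to \cite{Ab03}, \cite{Apo19}, \cite{Don05}, \cite{G94b}, and no argument is given. So there is no ``paper's own proof'' to compare against; your sketch is essentially an outline of the standard Guillemin--Abreu argument from those references.

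As a sketch it is broadly correct in spirit: Legendre duality over the open orbit, plus a local analysis near the toric strata matching the $\ell_j\log\ell_j$ singularities of $u_P$ to smoothness of the K\"ahler potential in holomorphic charts, with the facial strict-convexity condition ensuring positivity along each stratum. Two small points: (i) with the paper's normalization $u_P=\sum_j\ell_j\log\ell_j$ and your choice $\lambda_j=e_n$, $d_j=0$, the local singular term is $x_n\log x_n$, not $\tfrac12 x_n\log x_n$; (ii) the identification $C^{\infty}(X_P)^{S}\cong C^{\infty}(P)$ and the claim that $u|_{F^{\circ}}$ is the symplectic potential of the restricted metric both deserve a line of justification (or a pointer to Abreu/Apostolov) rather than being asserted. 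None of this affects the overall strategy, which is the standard one.
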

For later use, for each $u \in \mathcal{S}$ we denote $\varphi_{u} \in \mathcal{H}(X_{P}, \omega_{P})^{S}$ by the K\"ahler potential corresponding to $u$, which is determined by $\varphi_{u}|_{X_{P}^{\circ}} = \phi - \phi_{P}$, where $\phi$ is the Legendre dual of $u$. 
By using the correspodence in Theorem \ref{GuiAb}, the scalar curvature of an $S$-invariant K\"ahler metric $\omega \in c_{1}(L_{P})$ is given by derivatives of the corresponding symplectic potential $u$ with respect to the symplectic coordinates $(x_{1}, \ldots, x_{n})$. 

\begin{theorem}[Abreu \cite{Ab98}]\label{AbScal}
Let $\omega$ be an $S$-invariant K\"ahler metric in the class $c_{1}(L_{P})$ and $u$ the symplectic potential of $\omega$. 
Then the scalar curvature $s(\omega)$ of $\omega$ is given by 
\begin{align*}
s(\omega) = s(u) \coloneqq -\sum_{i,j=1}^{n} \frac{\partial^2 u^{ij}}{\partial x_{i} \partial x_{j}}, 
\end{align*}
where $u^{ij}$ is the inverse of the Hessian $(u_{ij})$ of $u$: 
\begin{align*}
(u_{ij})=\left( \frac{\partial ^2 u}{\partial x_i \partial x_j }    \right). 
\end{align*}
\end{theorem}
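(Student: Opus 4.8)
The plan is to pass to the \emph{action--angle} coordinates on the open dense orbit $X_{P}^{\circ} \cong P^{\circ} \times (S^{1})^{n}$, in which $\omega = \sum_{i} dx_{i} \wedge d\theta_{i}$ and, by the Guillemin--Abreu dictionary recalled above (\cite{Ab03}, \cite{G94b}), the $\omega$-compatible toric complex structure and the associated Riemannian metric $g$ are expressed through the Hessian $(u_{ij})$ of the symplectic potential $u$ by
\begin{align*}
J = \begin{pmatrix} 0 & -(u^{ij}) \\ (u_{ij}) & 0 \end{pmatrix}, \qquad g = \sum_{i,j} u_{ij}\, dx_{i}\, dx_{j} + \sum_{i,j} u^{ij}\, d\theta_{i}\, d\theta_{j},
\end{align*}
where $(u^{ij}) = (u_{ij})^{-1}$. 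A first useful observation is that $\det g = \det(u_{ij}) \det(u^{ij}) = 1$, so the Riemannian volume form equals the Lebesgue measure $dx\, d\theta$ in this chart, independently of $u$.

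First I would compute the Ricci form of $\omega$ on $X_{P}^{\circ}$. Writing $\phi$ for the Legendre dual of $u$, so that $x = \nabla\phi(y)$ in the logarithmic holomorphic coordinate $y$, Legendre duality gives $\bigl(\partial^{2}\phi/\partial y_{i}\partial y_{j}\bigr)(y) = (u^{ij})(x)$, whence the density of $\omega^{n}/n!$ relative to a holomorphic volume form on $X_{P}^{\circ} \cong (\mathbf{C}^{\times})^{n}$ is a constant multiple of $\det(u^{ij}) = \det(u_{ij})^{-1}$. Consequently, up to the normalizations fixed in Section 2.1,
\begin{align*}
\mathrm{Ric}(\omega) = dd^{c} H \quad \text{on } X_{P}^{\circ}, \qquad H := \log\det(u_{ij}),
\end{align*}
with $H$ regarded as an $S$-invariant function on $X_{P}^{\circ}$ via the moment map. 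Taking the trace with respect to $\omega$, and using that $H$ depends only on $x$, that $\sqrt{\det g} = 1$, and that the $x$-block of $g^{-1}$ is $(u^{ij})$, one finds that $s(\omega)$, which is the appropriate trace of $\mathrm{Ric}(\omega)$, equals the weighted Laplacian
\begin{align*}
\Delta_{g} H = \sum_{i,j} \frac{\partial}{\partial x_{i}}\Bigl( u^{ij} \frac{\partial H}{\partial x_{j}} \Bigr).
\end{align*}

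It then remains to prove the pointwise identity $\Delta_{g} H = -\sum_{i,j} \partial^{2} u^{ij}/\partial x_{i}\partial x_{j}$ on $P^{\circ}$. I would first establish the first-order identity
\begin{align*}
\sum_{i} \frac{\partial u^{ij}}{\partial x_{i}} = -\sum_{m} u^{mj} \frac{\partial H}{\partial x_{m}},
\end{align*}
which follows by differentiating $\sum_{k} u^{ik} u_{kj} = \delta^{i}_{j}$ to obtain $\partial u^{ij}/\partial x_{m} = -\sum_{k,l} u^{ik} u_{klm} u^{lj}$, summing the case $m = i$ over $i$, and using $\partial H/\partial x_{q} = \sum_{k,l} u^{kl} u_{klq}$. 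Differentiating this first-order identity once more with respect to $x_{j}$ and summing over $j$ immediately yields $\sum_{i,j} \partial^{2} u^{ij}/\partial x_{i}\partial x_{j} = -\Delta_{g} H$, which is exactly the claim. Combining the three displays gives $s(\omega) = \Delta_{g} H = -\sum_{i,j} \partial^{2} u^{ij}/\partial x_{i}\partial x_{j} = s(u)$ on $X_{P}^{\circ}$, that is, as functions on $P^{\circ}$ via the moment map, which is the assertion.

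I expect the main difficulty to be bookkeeping rather than anything conceptual: making the change of variables between the logarithmic holomorphic and the symplectic coordinates precise via the Legendre transform, and pinning down the normalization constants (the conventions $\omega = (1/2\pi) g(J\cdot, \cdot)$, $\Delta = 2\Delta_{\bar\partial}$, and the moment-map normalization of $P$ from Section 2.1) so that $s(\omega)$ equals $\mathrm{tr}_{\omega}\mathrm{Ric}(\omega)$ and hence $\Delta_{g} H$ with coefficient exactly $1$. Once these are settled, the curvature computation collapses to the short algebraic identity above.
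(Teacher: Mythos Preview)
The paper does not give a proof of this theorem; it is quoted as a known result of Abreu with a citation to \cite{Ab98} and used without argument. Your outline is the standard one from Abreu's original paper: pass to action--angle coordinates, use the Legendre identity $(\phi_{ij})(y) = (u^{ij})(x)$ to express the Ricci potential as $\log\det(u_{ij})$, and then reduce the trace to the divergence-form identity $\sum_{i,j}\partial_{i}\partial_{j}u^{ij} = -\sum_{j}\partial_{j}\bigl(u^{jm}\partial_{m}H\bigr)$ via the matrix calculus you wrote down. The algebraic steps you give are correct, and your caveat about normalizations is exactly the right place to be careful given the paper's conventions in Section~2.1.
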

Furthermore, by using Donaldson's integration by parts formula (\cite{Don02}), the average of the scalar curvature $\bar{s}$ is obtained as follows. 
Let us recall the Borel measure $\sigma$ on $\partial P$ defined by 
\begin{align*}
\sigma(A) = \sum_{j=1}^{r}\frac{|A \cap F_{j}|_{n-1}}{|\lambda_{j}|}, 
\end{align*}
where $|\cdot|_{n-1}$ be the $(n-1)$-dimensional Lebesgue measure. 
This measure arises naturally as the subleading coefficient of the Ehrhart polynomial of an integral polytope 
\begin{align*}
E_{P}(m) \coloneqq \#(mP \cap M) = \mathrm{vol}(P)m^{n} + \frac{\sigma(\partial P)}{2}m^{n-1} + O(m^{n-2}). 
\end{align*}

Let $\mathcal{C}_{\infty}$ denote the set of all continuous convex functions on $P$ which are smooth in the interior. 

\begin{theorem}[{\cite[Lemma 3.3.5]{Don02}}]\label{integration_by_parts}
Let $u \in \mathcal{S}$ and $f \in \mathcal{C}_{\infty}$. 
Then $u^{ij}f_{ij}$ is integrable on $P$ and 
\begin{align*}
\int_{P}u^{ij}f_{ij}\,dx 
&= \int_{\partial P}fd\sigma + \int_{P}f(u^{ij})_{ij}\,dx \\
&= \int_{\partial P}fd\sigma - \int_{P}fs(u)\,dx. 
\end{align*}
\end{theorem}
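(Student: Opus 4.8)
The plan is to prove the identity first for $f$ smooth up to $\partial P$, and then recover the general case $f\in\mathcal{C}_{\infty}$ by approximation. For smooth $f$ the argument is a double integration by parts on $P$, the whole point being to keep track of the boundary contributions; these are controlled by the behaviour of the inverse Hessian $(u^{ij})$ near $\partial P$ imposed by the Guillemin boundary condition $u\in\mathcal{S}$.

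First I would record the boundary asymptotics of $(u^{ij})$. Near the relative interior of a facet $F_{j}$, since $\lambda_{j}$ is primitive one may choose integral-affine coordinates $x=(x^{1},\dots,x^{n})$ on $M_{\mathbf{R}}$ in which $\ell_{j}=x^{n}$ and $P$ is locally $\{x^{n}\ge 0\}$; by the Delzant condition, near a vertex one may moreover arrange that the $n$ facets through it become $\{x^{1}=0\},\dots,\{x^{n}=0\}$. In such coordinates $u=x^{n}\log x^{n}+g$ near $F_{j}$ (resp.\ $u=\sum_{i}x^{i}\log x^{i}+g$ near a vertex) with $g$ smooth up to the boundary, and a Schur-complement computation with the Hessian $u_{nn}=1/x^{n}+O(1)$, $u_{nk}=O(1)$ and $(u_{kl})_{k,l<n}$ positive definite up to $\{x^{n}=0\}$ (strict convexity of $u|_{F_{j}^{\circ}}$) shows that all $u^{ij}$ and their first derivatives $\partial_{k}u^{ij}$ extend continuously to $\partial P$, with the two facts actually used: $u^{nj}|_{\{x^{n}=0\}}=0$ for every $j$, and $\sum_{j}\partial_{j}(u^{nj})|_{\{x^{n}=0\}}=1$ (the latter from $u^{nn}=x^{n}+O((x^{n})^{2})$ together with $\partial_{k}(u^{nk})|_{\{x^{n}=0\}}=0$ for $k<n$, since $u^{nk}$ vanishes on the facet). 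Note also that $(u^{ij})_{ij}=-s(u)$ is bounded on $P^{\circ}$, being the scalar curvature of a smooth K\"ahler metric on the compact manifold $X_{P}$ (Theorem \ref{AbScal}), so all three integrals in the statement converge absolutely for $f$ smooth up to the boundary.

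Granting this, I would integrate by parts twice, working with a partition of unity subordinate to a cover of $P$ by the coordinate charts above and excising $\varepsilon$-neighbourhoods of the faces of codimension $\ge 2$; those faces are $\sigma$-null and all integrands remain bounded across them, so the excised pieces vanish as $\varepsilon\to 0$. The first integration by parts gives $\int_{P}u^{ij}f_{ij}\,dx=-\int_{P}(u^{ij})_{j}f_{i}\,dx$ with no boundary term, because the $\lambda_{j}$-component (inward conormal of $F_{j}$) of the vector field $j\mapsto\sum_{i}u^{ij}f_{i}$ restricts on $F_{j}$ to $u^{nj}f_{j}|_{\{x^{n}=0\}}=0$. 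The second gives $-\int_{P}(u^{ij})_{j}f_{i}\,dx=\int_{P}(u^{ij})_{ij}f\,dx+(\text{boundary})$, and on each facet $F_{j}$ the boundary integrand is $\bigl(\sum_{j}\partial_{j}(u^{nj})\bigr)f|_{\{x^{n}=0\}}=f|_{F_{j}}$, integrated against the measure induced on $F_{j}$ by the lattice $M$; this lattice measure is precisely $|\lambda_{j}|^{-1}$ times $(n-1)$-dimensional Lebesgue measure, i.e.\ $d\sigma$ restricted to $F_{j}$ — here the primitivity of $\lambda_{j}$ and the normalization in the definition of $\sigma$ are exactly what makes the identification valid. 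Summing over facets yields $\int_{P}u^{ij}f_{ij}\,dx=\int_{\partial P}f\,d\sigma+\int_{P}f\,(u^{ij})_{ij}\,dx$, and the second displayed equality follows immediately from Abreu's formula $(u^{ij})_{ij}=-s(u)$.

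Finally, for general $f\in\mathcal{C}_{\infty}$ I would approximate by smooth $f_{k}$ on $P$ with $f_{k}\to f$ uniformly on $P$ and in $C^{2}_{\mathrm{loc}}(P^{\circ})$. The right-hand side converges by dominated convergence; since $f$ is convex, $u^{ij}f_{ij}\ge 0$, so Fatou gives $\int_{P}u^{ij}f_{ij}\,dx\le\liminf_{k}\int_{P}u^{ij}(f_{k})_{ij}\,dx=\int_{\partial P}f\,d\sigma+\int_{P}f\,(u^{ij})_{ij}\,dx<\infty$, yielding one inequality and finiteness; the reverse inequality requires a little more care (choosing the $f_{k}$ convex and controlling the mass of $u^{ij}(f_{k})_{ij}$ on a collar of $\partial P$), as carried out in \cite{Don02}. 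I expect the verification of the boundary/corner asymptotics of $(u^{ij})$ and this last limiting step to be the main technical obstacles; the two integrations by parts themselves are bookkeeping once the asymptotics are in hand.
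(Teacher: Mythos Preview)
The paper does not supply its own proof of this statement; it is simply quoted from Donaldson \cite[Lemma 3.3.5]{Don02}. Your outline is essentially Donaldson's argument: the two boundary facts you isolate, namely $u^{ij}\nu_{j}|_{F}=0$ and $\sum_{j}\partial_{j}(u^{ij})\nu_{i}|_{F}=1$ in adapted lattice coordinates (where $\nu$ is the inward conormal), are precisely the boundary conditions Donaldson formulates for the inverse Hessian, and the double integration by parts with these boundary data is his computation. The identification of the induced facet measure with $d\sigma$ via primitivity of $\lambda_{j}$ is also as in \cite{Don02}. For the passage from $f$ smooth up to $\partial P$ to general $f\in\mathcal{C}_{\infty}$, Donaldson's device is slightly different from a generic mollification: he rescales toward an interior point, setting $f_{\delta}(x)=f(\delta x_{0}+(1-\delta)x)$ so that $f_{\delta}$ is smooth on a neighbourhood of $P$ and increases to $f$ as $\delta\downarrow 0$; monotone convergence on the left (since $u^{ij}(f_{\delta})_{ij}\ge 0$) and dominated convergence on the right then give both inequalities at once, avoiding the separate Fatou/reverse-inequality split you sketch. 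Either route works, but the rescaling trick is cleaner for the limiting step you flagged as the main obstacle.
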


\begin{corollary}
The average of the scalar curvature $\overline{s}$ is given by 
\begin{align*}
\overline{s}=\frac{\sigma(\partial P)}{\mathrm{vol}(P)}. 
\end{align*}
\end{corollary}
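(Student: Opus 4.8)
The plan is to apply Donaldson's integration-by-parts formula (Theorem \ref{integration_by_parts}) to the constant function and then translate the resulting identity into a statement about $\overline{s}$ via the toric change of variables. Fix an $S$-invariant K\"ahler metric $\omega\in c_{1}(L_{P})$; by Theorem \ref{GuiAb} its symplectic potential $u$ lies in $\mathcal{S}$. The constant function $f\equiv 1$ belongs to $\mathcal{C}_{\infty}$ and satisfies $f_{ij}\equiv 0$, so Theorem \ref{integration_by_parts} gives
\begin{align*}
0=\int_{P}u^{ij}f_{ij}\,dx=\int_{\partial P}f\,d\sigma-\int_{P}fs(u)\,dx=\sigma(\partial P)-\int_{P}s(u)\,dx,
\end{align*}
that is, $\int_{P}s(u)\,dx=\sigma(\partial P)$.

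Next I would invoke the standard normalization of toric K\"ahler geometry fixed in Sections \ref{sec:PolToric}--\ref{sec:SympPot}: on the open dense orbit $X_{P}^{\circ}$ the moment map $\mu_{P}=\nabla\phi$ is a diffeomorphism onto $P^{\circ}$, it pushes the Liouville volume $\omega^{n}/n!$ forward to the Lebesgue measure $dx$ on $P$, and by Abreu's formula (Theorem \ref{AbScal}) the function $s(\omega)$ on $X_{P}^{\circ}$ is exactly $s(u)\circ\mu_{P}$. Since $X_{P}\setminus X_{P}^{\circ}=\mu_{P}^{-1}(\partial P)$ and $\partial P$ both have measure zero, we may integrate over the open orbit, and the change of variables yields
\begin{align*}
\int_{X}s(\omega)\,\frac{\omega^{n}}{n!}=\int_{P}s(u)\,dx=\sigma(\partial P),\qquad \int_{X}\frac{\omega^{n}}{n!}=\int_{P}dx=\mathrm{vol}(P).
\end{align*}
Dividing these, the factors $n!$ cancel and we obtain
\begin{align*}
\overline{s}=\dashint_{X}s(\omega)\,\omega^{n}=\frac{\int_{X}s(\omega)\,\omega^{n}}{\int_{X}\omega^{n}}=\frac{\int_{P}s(u)\,dx}{\mathrm{vol}(P)}=\frac{\sigma(\partial P)}{\mathrm{vol}(P)},
\end{align*}
which is the claimed formula.

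There is no substantial obstacle in this argument; the only point requiring attention is the bookkeeping of constants in the change of variables, i.e. that $(\mu_{P})_{*}(\omega^{n}/n!)=dx$ with no extraneous factor, which is precisely the compatibility of the conventions fixed earlier. As an independent check, the same identity can be derived purely cohomologically, without Theorem \ref{integration_by_parts}: comparing the Ehrhart expansion $E_{P}(m)=\mathrm{vol}(P)m^{n}+\tfrac{1}{2}\sigma(\partial P)m^{n-1}+O(m^{n-2})$ with $E_{P}(m)=\dim H^{0}(X,L_{P}^{m})=a_{0}m^{n}+a_{1}m^{n-1}+O(m^{n-2})$, where $a_{0}=(L^{n})/n!$ and $a_{1}=-\tfrac{1}{2}(K_{X}\cdot L^{n-1})/(n-1)!$, gives $\mathrm{vol}(P)=(L^{n})/n!$ and $\sigma(\partial P)=-(K_{X}\cdot L^{n-1})/(n-1)!$, hence $\sigma(\partial P)/\mathrm{vol}(P)=-n(K_{X}\cdot L^{n-1})/(L^{n})=\overline{s}$.
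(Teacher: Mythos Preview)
Your proof is correct and follows exactly the approach implied by the paper: the corollary is stated immediately after Theorem \ref{integration_by_parts} with no separate proof, and the intended argument is precisely to plug $f\equiv 1$ into the integration-by-parts formula and then pass from $\int_{P}s(u)\,dx$ to $\overline{s}$ via the moment-map change of variables. Your alternative Ehrhart-polynomial derivation is a nice independent check, and it is in fact hinted at by the paper's discussion of $E_{P}(m)$ just before the theorem.
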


\subsection{Toric test configurations}\label{sec:ToricTc}
In this subsection we explain the construction of toric test configurations. 
For this, we first recall the algebraic construction of toric varieties following \cite[Chapter 7]{CLS11} (see also \cite{Zie95}). 
Let $P \subset \mathbf{R}^{n}$ be an $n$-dimensional integral (possibly unbounded) polyhedron. 
Then $P$ has a unique facet representaion 
\begin{align*}
P =\{x \in M_{\mathbf{R}} \mid \langle\lambda_{j}, x\rangle + d_{j} \geq 0\ (j=1, \ldots, r) \}, 
\end{align*}
where $\lambda_{j} \in N$, $d_{j} \in \mathbf{Z}$ and $\lambda_{j}$ is primitive. 
Also, it can be written as a Minkowski sum 
\begin{align*}
P = Q + C, 
\end{align*} 
where  $Q$ is an integral polytope and $C$ is a strongly convex rational polyhedral cone (\cite[Theorem 1.2]{Zie95}). 
The cone part of $P$ is determined by 
\begin{align*}
C = \{x \in M_{\mathbf{R}} \mid \langle\lambda_{j}, x\rangle \geq 0\ (j=1, \ldots, r) \}. 
\end{align*}
(\cite[Proposition 1.12]{Zie95}). 
Following \cite{Zie95}, we call $C$ the \emph{recession cone} of $P$. 
Let $C(P)$ denote the \emph{cone of} $P$ defined by 
\begin{align*}
C(P) = \{(x, \rho) \in M_{\mathbf{R}} \times \mathbf{R} \mid \rho \geq 0, \langle \lambda_{j}, x \rangle + \rho d_{j} \geq 0\ (j=1, \ldots, r)\}. 
\end{align*}
Then it is easy to check that 
\begin{align*}
C(P) \cap (M_{\mathbf{R}} \times \{0\}) = C \times \{0\}, 
\end{align*}
and $\rho P$ is the slice of $C(P)$ at height $\rho$ for any $\rho > 0$. 
Let $S_{P} \coloneqq C(P)\cap (M \times \mathbf{Z})$. 
Also, we let $\sigma \coloneqq C \times \{0\}$ and $S_{\sigma} \coloneqq \sigma \cap (M \times \mathbf{Z})$. 
From Gordan's lemma, $S_{P}$ and $S_{\sigma}$ are affine semigroups. 
Let $R \coloneqq \mathbf{C}[S_{P}]$ be the semigroup ring of $S_{P}$. 
Then $R$ is a finitely generated $\mathbf{C}$-algebra. 
The character associated to $(\alpha, m) \in M \times \mathbf{Z}$ is written $\chi^{\alpha}s^{m}$, and $R$ is graded by height, i.e. $\deg(\chi^{\alpha}s^{m}) = m$. 
Consequently, we obtain a graded $\mathbf{C}$-algebra 
\begin{align*}
R = \bigoplus_{m=0}^{\infty}R_{m},\quad R_{0} = \mathbf{C}[S_{\sigma}]. 
\end{align*}
The inclusion $R_{0} \hookrightarrow R$ gives a structure of finitely generated $R_{0}$-algebra on $R$. 
Let $U_{P} \coloneqq \Spec R_{0}$, $X_{P} \coloneqq \Proj R$, and $\pi_{P} \colon X_{P} \to U_{P}$ be the structure morphism. 
Then $U_{P}$ is an affine toric variety, $X_{P}$ is a projective toric variety, and $\pi_{P}$ is a projective toric morphism. 
For each $j \in \{1, \ldots, r\}$, let $F_{j}$ denote the facet of $P$ defined by $F_{j} = \{x \in P \mid \langle \lambda_{j}, x \rangle + d_{j} = 0\}$. 
Associated to $F_{j}$, we have a $T$-invariant prime divisor $D_{j}$ of $X_{P}$. 
Then, 
\begin{align*}
D_{P} \coloneqq \sum_{j=1}^{r}d_{j}D_{j}
\end{align*}
is a $\pi_{P}$-ample Cartier divisor of $X_{P}$ (\cite[Example 7.2.8]{CLS11}). 
An important fact is that if $P$ is an integral Delzant polytope then $(X_{P}, L_{P}) \cong (X_{P}, \mathcal{O}_{X_{P}}(D_{P}))$.  

Now let $P \subset M_{\mathbf{R}}$ be an integral Delzant polytope, and identify $(X_{P}, L_{P})$ and $(X_{P}, \mathcal{O}_{X_{P}}(D_{P}))$ constructed above. 
Recall that a function $f \colon P \to \mathbf{R}$ is called \emph{rational piecewise affine convex} if $f$ is a convex function of the form 
\begin{align}\label{PLconv}
f = \max\{\ell_{1}, \ldots, \ell_{m}\} 
\end{align}
with each $\ell_{j}$ an affine function having rational coefficients. 
Given a rational piecewise affine convex function $f$, we can construct a $T$-equivariant test configuration $(\mathcal{X}_{f}, \mathcal{L}_{f})$ for $(X_{P}, L_{P})$ as follows. 
Choose an integer $L$ so that $L > \max_{P}f$, we define a rational convex polyhedron $\mathcal{P} \subset M_{\mathbf{R}} \times \mathbf{R}$ by 
\begin{align*}
\mathcal{P} = \{(x, y) \in M_{\mathbf{R}} \times \mathbf{R} \mid x \in P,\ f(x)-L \leq y\}. 
\end{align*}
By replacing $f$, $L$, $\mathcal{P}$ by $kf$, $kL$, $k\mathcal{P}$ for suitable $k \in \mathbf{Z}_{>0}$ if necessary, we may assume that each $\ell_{j}$ has integral coefficients and $\mathcal{P}$ is an integral polyhedron. 
Then the recession cone of $\mathcal{P}$ is given by $\mathcal{C} \coloneqq \{0\} \times \mathbf{R}_{\geq 0} \subset M_{\mathbf{R}} \times \mathbf{R}$, and the cone of $\mathcal{P}$ is given by 
\begin{align*}
C(\mathcal{P}) = (\mathcal{C} \times \{0\}) \cup \left(\bigcup_{\rho > 0}\rho\mathcal{P} \times \{\rho\}\right). 
\end{align*}
Let $S_{\mathcal{P}} \coloneqq C(\mathcal{P}) \cap (M \times \mathbf{Z}^{2})$, $\sigma \coloneqq \mathcal{C} \times \{0\}$, and $S_{\sigma} \coloneqq (\mathcal{C} \times \{0\}) \cap (M \times \mathbf{Z}^{2})$. 
Then $S_{\mathcal{P}}$ and $S_{\sigma}$ are affine semigroups. 
Let $\mathcal{R} \coloneqq \mathbf{C}[S_{\mathcal{P}}]$ be the semigroup ring of $S_{\mathcal{P}}$. 
Then $\mathcal{R}$ is a finitely generated $\mathbf{C}$-algebra. 
The character associated to $(\alpha, \lambda, m) \in M \times \mathbf{Z}^{2}$ is written $\chi^{\alpha}t^{\lambda}s^{m}$, and the grading of $R_{\mathcal{P}}$ is given by $\deg(\chi^{\alpha}t^{\lambda}s^{m}) = m$. 
Consequently, we have the graded $\mathbf{C}$-algebra 
\begin{align*}
&\mathcal{R} = \bigoplus_{m=0}^{\infty}\mathcal{R}_{m}, \\
&\mathcal{R}_{m} = 
\begin{cases}
\mathbf{C}[S_{\sigma}] = \mathbf{C}[t] & m = 0, \\
\mathrm{Vect}_{\mathbf{C}}\left\{\chi^{\alpha}t^{\lambda}s^{m} \left| 
\begin{array}{l}
\alpha \in mP \cap M, \lambda \in \mathbf{Z}, \\
m(f(\alpha/m)-L) \leq \lambda 
\end{array}\right.\right\}
& m > 0. 
\end{cases}
\end{align*}
The inclusion $\mathbf{C}[t] = \mathcal{R}_{0} \hookrightarrow \mathcal{R}$ gives a structure of $\mathbf{C}[t]$-algebra on $\mathcal{R}$. 
Note also that $\mathcal{R}$ is finitely generated as a $\mathbf{C}[t]$-algebra. 
By setting $\mathcal{D}_{\mathcal{P}}$ the divisor of $\Proj \mathcal{R}$ defined above, we have a polarized scheme  
\begin{align*}
(\mathcal{X}_{f}, \mathcal{L}_{f}) = (\Proj \mathcal{R}, \mathcal{O}_{\Proj \mathcal{R}}(\mathcal{D}_{\mathcal{P}})). 
\end{align*}
Then one can check that $\mathcal{O}_{\Proj \mathcal{R}}(\mathcal{D}_{\mathcal{P}}) = \mathcal{O}_{\Proj \mathcal{R}}(1)$. 
Let $\pi_{f} \colon \mathcal{X}_{f} \to \mathbf{A}^{1}$ be the structure morphism. 
Then $\pi_{f}$ is a projective morphism and hence proper. 
By construction, $\mathcal{X}_{f}$ is a normal toric variety and $\mathcal{L}_{f}$ is $\pi_{f}$-ample line bundle over $\mathcal{X}_{f}$. 
There is a $\mathbf{G}^{m+1}$-action on $\mathcal{R}$, which has weight $(\alpha, \lambda) \in M \times \mathbf{Z}$ for any $\chi^{\alpha}t^{\lambda} \in S_{\mathcal{P}}$. 
This induces a $T_{f} \coloneqq T \times \mathbf{G}_{m}$-action on $\mathcal{X}_{f}$ over $\mathbf{A}^{1}$. 
We claim $(\mathcal{X}_{f}, \mathcal{L}_{f})$ is a test configuration for $(X_{P}, L_{P})$. 
To see this, it is sufficient to show that $\mathcal{R}$ is the Rees algebra of a finitely generated $\mathbf{Z}$-filtration of $R$ (see Section 3.4). 
Let $\mathcal{F} = \{F^{\lambda}R_{m}\}$ be a $\mathbf{Z}$-filtration  of $R$ defined by 
\begin{align*}
F^{\lambda}R_{0} = 
\begin{cases}
\mathbf{C} & \lambda \leq 0, \\
\{0\}  & \lambda > 0
\end{cases}
\end{align*}
and 
\begin{align*}
F^{\lambda}R_{m} = \mathrm{Vect}_{\mathbf{C}}\{\chi^{\alpha}s^{m} \mid \alpha \in mP \cap M,\ \lambda \leq m(L-f(\alpha/m)) \} 
\end{align*}
for any $\lambda \in \mathbf{R}$ and $m \in \mathbf{Z}_{>0}$. 
Then the Rees algebra of $\mathcal{F}$ is given by
\begin{align*}
\mathrm{Rees}(\mathcal{F}) 
&= \bigoplus_{m=0}^{\infty}\bigoplus_{\lambda \in \mathbf{Z}}t^{-\lambda}F^{\lambda}R_{m} \\
&= \bigoplus_{m=0}^{\infty}\bigoplus_{\lambda \in \mathbf{Z}}\mathrm{Vect}_{\mathbf{C}}\left\{\chi^{\alpha}t^{-\lambda}s^{m} \left|\begin{array}{c}
\alpha \in mP \cap M, \\
\lambda \leq m(L-f(\alpha/m)) 
\end{array}\right. \right\} \\
&= \bigoplus_{m=0}^{\infty}\bigoplus_{\lambda \in \mathbf{Z}}\mathrm{Vect}_{\mathbf{C}}\left\{\chi^{\alpha}t^{\lambda}s^{m} \left|\begin{array}{c}
\alpha \in mP \cap M, \\
m(f(\alpha/m)-L) \leq \lambda 
\end{array}\right. \right\} \\
&= \bigoplus_{m=0}^{\infty}\mathcal{R}_{m} = \mathcal{R}. 
\end{align*}
Furthermore, $\mathcal{F}$ is finitely generated since $\mathcal{R}$ is a finitely generated $\mathbf{C}[t]$-algebra. 
This shows the claim. 
We call $(\mathcal{X}_{f}, \mathcal{L}_{f})$ the \emph{toric test configuration} associated to $f$. 
The induced $\mathbf{G}_{m}$-coaction $\mu_{m}$ on the central fiber $H^{0}((\mathcal{X}_{f})_{0}, (\mathcal{L}_{f}^{m})_{0}) = \mathrm{Gr}_{m}^{\mathcal{F}}\mathcal{R}$ is given by
\begin{align}\label{dual_action}
\mu_{m}(\chi^{\alpha}s^{m}) = t^{m(L-f(\alpha/m))} \otimes \chi^{\alpha}s^{m}. 
\end{align}

Easy computation shows that the canonical compactification of $(\mathcal{X}_{f}, \mathcal{L}_{f})$ is given as a polarized toric variety $(\mathcal{X}_{\mathcal{Q}}, \mathcal{L}_{\mathcal{Q}})$ corresponding to a rational convex polytope 
\begin{align}\label{compactification_polytope}
\mathcal{Q} = \{(x, y) \in M_{\mathbf{R}} \times \mathbf{R} \mid x \in P,\ f(x)-L \leq y \leq 0\}. 
\end{align}

\begin{example}[Translations and scalings: toric cases]\label{toric_tc_scaling_base_change}
Let $f$ be a rational piecewise affine convex function on an integral Delzant polytope $P$. 
Let us describe translations and scalings of $(\mathcal{X}_{f}, \mathcal{L}_{f})$ in terms of the convex function $f$. 
for simplicity, we assume $f$ is integral, i.e. $f$ is of the form \eqref{PLconv} with each $\ell_{j}$ an affine function having integral coefficients. 
\begin{enumerate}[(1)]
\item Let $c \in \mathbf{Z}$, and choose $L \in \mathbf{Z}_{>0}$ so that $\max\{f, f-c\} < L$ on $P$. 
Then the filtration $\mathcal{F}_{c}$ corresponding to the translation $(\mathcal{X}_{f}, (\mathcal{L}_{f})_{c})$ is given by 
\begin{align*}
F_{c}^{\lambda}R_{m} = F^{\lambda - mc}R_{m}. 
\end{align*}
Claerly we have $F_{c}^{\lambda}R_{0} = F^{\lambda}R_{0}$. 
If $m > 0$, then we have 
\begin{align*}
F_{c}^{\lambda}R_{m} 
&= \mathrm{Vect}_{\mathbf{C}}\{\chi^{\alpha}s^{m} \mid \alpha \in mP \cap M,\ \lambda - mc \leq m(L-f(\alpha/m)) \} \\
&= \mathrm{Vect}_{\mathbf{C}}\{\chi^{\alpha}s^{m} \mid \alpha \in mP \cap M,\ \lambda  \leq m(L-f(\alpha/m)+c) \}. 
\end{align*} 
Hence $\mathcal{F}_{c}$ coincides with the filtration corresponding to the toric test configuration $(\mathcal{X}_{f-c}, \mathcal{L}_{f-c})$. 
\item Let $d \in \mathbf{Z}_{>0}$. 
Then the filtration $\mathcal{F}_{d}$ corresponding to the scaling $((\mathcal{X}_{f})_{d}, (\mathcal{L}_{f})_{d})$ is given by 
\begin{align*}
F_{d}^{\lambda}R_{m} = F^{\lceil \frac{\lambda}{d} \rceil}R_{m}. 
\end{align*}
Claerly we have $F_{d}^{\lambda}R_{0} = F^{\lambda}R_{0}$. 
If $m > 0$, then we have 
\begin{align*}
F_{d}^{\lambda}R_{m} 
&= \mathrm{Vect}_{\mathbf{C}}\{\chi^{\alpha}s^{m} \mid \alpha \in mP \cap M,\ \lceil \lambda/d \rceil \leq m(L-f(\alpha/m)) \} \\
&= \mathrm{Vect}_{\mathbf{C}}\{\chi^{\alpha}s^{m} \mid \alpha \in mP \cap M,\ \lambda/d \leq m(L-f(\alpha/m)) \} \\
&= \mathrm{Vect}_{\mathbf{C}}\{\chi^{\alpha}s^{m} \mid \alpha \in mP \cap M,\ \lambda \leq m(dL-df(\alpha/m)) \}. 
\end{align*}
Hence $\mathcal{F}_{d}$ coincides with the filtration corresponding to the toric test configuration $(\mathcal{X}_{df}, \mathcal{L}_{df})$. 
\end{enumerate}
\end{example}

\subsection{Energy functionals on polarized toric manifolds}\label{sec:ToricEnergy}
By using Duistermaat-Heckman theorem and Legendre dual functions, we can write down energy functionals in terms of convex functions. 

\begin{lemma}
Let $\{u^{t}\} \subset \mathcal{S}$ be a smooth path, and $\{\phi^{t}\}$ be their Legendre duals. 
Then 
\begin{align*}
\dot{u}^{t}(x) = -\dot{\phi}^{t}(\nabla u^{t}(x))
\end{align*}
 for any $x \in P^{\circ}$. 
\end{lemma}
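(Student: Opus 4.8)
The plan is to differentiate the defining Legendre identity in $t$ and exploit the fact that $\nabla\phi^{t}$ is the inverse diffeomorphism of $\nabla u^{t}$, so that the cross terms cancel.

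First I would fix $x \in P^{\circ}$ and set $y^{t} = y^{t}(x) \coloneqq \nabla u^{t}(x)$. Since each $u^{t}$ is strictly convex on $P^{\circ}$, its gradient $\nabla u^{t} \colon P^{\circ} \to \mathbf{R}^{n}$ is a diffeomorphism onto its image with inverse $\nabla \phi^{t}$, and the Legendre duality $u^{t}(x) + \phi^{t}(y) = \langle x, y\rangle$ holds precisely when $y = \nabla u^{t}(x)$, equivalently $x = \nabla \phi^{t}(y)$. In particular we have the two identities
\begin{align}\label{eq:leg-dual}
u^{t}(x) + \phi^{t}(y^{t}) = \langle x, y^{t}\rangle, \qquad \nabla\phi^{t}(y^{t}) = x.
\end{align}
Because $\{u^{t}\}$ is a smooth path in $\mathcal{S}$ and each Hessian $(u^{t}_{ij})$ is invertible, the implicit function theorem shows that $(t, x) \mapsto y^{t}(x)$ is smooth; hence all the quantities below may be differentiated in $t$ with $x$ held fixed.

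Next I would differentiate the first identity in \eqref{eq:leg-dual} with respect to $t$. Applying the chain rule to the term $\phi^{t}(y^{t})$ gives
\begin{align*}
\dot{u}^{t}(x) + \dot{\phi}^{t}(y^{t}) + \langle \nabla\phi^{t}(y^{t}), \dot{y}^{t}\rangle = \langle x, \dot{y}^{t}\rangle,
\end{align*}
where $\dot{\phi}^{t}$ denotes the $t$-derivative of $\phi^{t}$ evaluated at $y^{t}$. Substituting the second identity $\nabla\phi^{t}(y^{t}) = x$ from \eqref{eq:leg-dual}, the term $\langle \nabla\phi^{t}(y^{t}), \dot{y}^{t}\rangle$ cancels exactly against the right-hand side, leaving $\dot{u}^{t}(x) + \dot{\phi}^{t}(y^{t}) = 0$. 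Recalling $y^{t} = \nabla u^{t}(x)$, this is precisely the asserted formula $\dot{u}^{t}(x) = -\dot{\phi}^{t}(\nabla u^{t}(x))$ for every $x \in P^{\circ}$.

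There is no real obstacle here: the only point requiring a word of care is the smoothness of the auxiliary map $(t,x) \mapsto \nabla u^{t}(x)$, which licenses differentiation under the chain rule; this is immediate from strict convexity of $u^{t}$ and the smoothness of the path. The identity is the infinitesimal form of the involutivity of the Legendre transform.
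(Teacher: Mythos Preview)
Your proof is correct and follows essentially the same approach as the paper: both differentiate the Legendre identity $u^{t}(x)+\phi^{t}(y^{t})=\langle x,y^{t}\rangle$ along the path $y^{t}=\nabla u^{t}(x)$ and use the inverse relation $\nabla\phi^{t}(y^{t})=x$ (equivalently, the first-order optimality condition for $y\mapsto \phi^{t}(y)-\langle y,x\rangle$) to cancel the cross terms. The paper organizes the same cancellation by adding and subtracting $\phi^{t}(y^{s})$ before differentiating, but the substance is identical.
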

\begin{proof}
Since $\phi^{s}$ is a strictly convex function, $\nabla u^{s}(x)$ is a unique minimizer of $y \mapsto \phi^{s}(y) - \langle y, x\rangle$. 
By setting $y^{s} = \nabla u^{s}(x)$, we have 
\begin{align*}
0 
= \left. \frac{d}{ds}\right|_{s=t}(\phi^{t}(y^{s})-\langle y^{s}, x\rangle)
= \left. \frac{d}{ds}\right|_{s=t}\nabla u^{s}(x). 
\end{align*}
Hence we obtain 
\begin{align*}
\dot{u}^{t}(x) 
&= \left. \frac{d}{ds}\right|_{s=t}u^{s}(x) \\
&= \left. \frac{d}{ds}\right|_{s=t}(\langle y^{s}, x\rangle - \phi^{s}(y^{s})) \\
&= \left. \frac{d}{ds}\right|_{s=t}(\langle y^{s}, x\rangle - \phi^{t}(y^{s})) + \left. \frac{d}{ds}\right|_{s=t}(\phi^{t}(y^{s}) - \phi^{s}(y^{s})) \\
&= \langle \nabla \phi^{t},\dot{y}^{t} \rangle - \dot{\phi}^{t}(y^{t}) -\langle \nabla \phi^{t},\dot{y}^{t} \rangle \\
&= - \dot{\phi}^{t}(y^{t}), 
\end{align*}
as required. 
\end{proof}

\begin{proposition}\label{MA-energy_Legendre_dual}
Let $\varphi \in \mathcal{H}(X_{P}, \omega_{P})^{S}$, and $u \in \mathcal{S}$ be the symplectic potential of $\omega_{\varphi}$. 
Then 
\begin{align*}
E(\varphi) = -\dashint_{P}(u - u_{P})\,dx. 
\end{align*}
\end{proposition}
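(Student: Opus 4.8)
The plan is to compute $E(\varphi)$ by integrating its first variation along a path of symplectic potentials joining $u_{P}$ to $u$, and to transfer the resulting integral on $X_{P}$ to an integral on $P$ via the moment map together with the preceding lemma.

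First I would use that $\mathcal{S}$ is convex: for $u \in \mathcal{S}$ the affine path $u^{t} \coloneqq (1-t)u_{P} + t u$, $t \in [0,1]$, again lies in $\mathcal{S}$, since a convex combination of strictly convex functions is strictly convex, $u^{t} - u_{P} = t(u - u_{P}) \in C^{\infty}(P)$, and the restriction of $u^{t}$ to the relative interior of any face of $P$ is still strictly convex. By Theorem \ref{GuiAb}, each $u^{t}$ is the symplectic potential of an $S$-invariant K\"ahler metric in $c_{1}(L_{P})$; setting $\varphi^{t} \coloneqq \varphi_{u^{t}} \in \mathcal{H}(X_{P},\omega_{P})^{S}$ we obtain a smooth path with $\varphi^{1} = \varphi_{u} = \varphi$ and, since the Legendre dual of $u_{P}$ is $\phi_{P}$, with $\varphi^{0} = \varphi_{u_{P}} = 0$.

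Next I would differentiate along this path. By the first-variation formula for $E$ recalled above, $\frac{d}{dt}E(\varphi^{t}) = \dashint_{X}\dot{\varphi}^{t}\,\omega_{\varphi^{t}}^{n}$. Because $X_{P}\setminus X_{P}^{\circ}$ is a null set for $\omega_{\varphi^{t}}^{n}$ and on $X_{P}^{\circ}$ we have $\varphi^{t} = \phi^{t} - \phi_{P}$ with $\phi^{t}$ the Legendre dual of $u^{t}$, so that $\dot{\varphi}^{t} = \dot{\phi}^{t}$ there, I would push the normalized Liouville measure $\omega_{\varphi^{t}}^{n}/\mathrm{vol}_{\omega_{\varphi^{t}}}(X)$ forward by the moment map $\mu_{\varphi^{t}} = \nabla\phi^{t}$, which identifies it with $dx/\mathrm{vol}(P)$, and change variables by $x = \nabla\phi^{t}(y)$, i.e.\ $y = \nabla u^{t}(x)$:
\[
\dashint_{X}\dot{\varphi}^{t}\,\omega_{\varphi^{t}}^{n} = \dashint_{P}\dot{\phi}^{t}(\nabla u^{t}(x))\,dx = -\dashint_{P}\dot{u}^{t}\,dx,
\]
the last step being precisely the lemma preceding this proposition. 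Since $u^{t}$ is affine in $t$, $\dot{u}^{t} = u - u_{P}$ does not depend on $t$, so $\frac{d}{dt}E(\varphi^{t}) = -\dashint_{P}(u - u_{P})\,dx$ is constant in $t$; integrating over $[0,1]$ and using $E(\varphi^{0}) = E(0) = 0$ gives $E(\varphi) = -\dashint_{P}(u - u_{P})\,dx$.

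The part needing the most care is the push-forward identity --- that the moment map of any toric K\"ahler metric in $c_{1}(L_{P})$ sends the normalized Liouville measure to the normalized Lebesgue measure on $P$ --- which is the toric Duistermaat--Heckman identity, already implicit in the formula $\overline{s} = \sigma(\partial P)/\mathrm{vol}(P)$, together with the smooth dependence of $\varphi^{t}$ on $t$ up to $\partial P$, which is guaranteed by the Guillemin--Abreu correspondence of Theorem \ref{GuiAb}. The remaining interchanges (in particular differentiation under the $P$-integral) are immediate here because the chosen path is affine in $t$.
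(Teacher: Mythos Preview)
Your proof is correct and follows essentially the same approach as the paper: differentiate $E$ along a path of symplectic potentials, transfer the integral to $P$ via the moment map, and apply the preceding lemma to rewrite $\dot{\phi}^{t}$ in terms of $\dot{u}^{t}$. The only cosmetic difference is that you fix the affine path $u^{t}=(1-t)u_{P}+tu$ and justify that it stays in $\mathcal{S}$, whereas the paper simply takes an arbitrary smooth path and concludes by the fundamental theorem of calculus; both lead to the same computation.
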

\begin{proof}
Let $\{u^{t}\}_{t \in [0, 1]} \subset \mathcal{S}$ be a smooth path so that $u^{0} = u_{P}$ and $u^{1} = u$. 
Let $\varphi^{t} \coloneqq \varphi_{u^{t}} \in \mathcal{H}(X_{P}, \omega_{P})^{S}$ for each $t \in [0, 1]$. 
Note that $\varphi^{0} = 0$ and $\varphi^{1} = \varphi$. 
Then we have 
\begin{align*}
E(\varphi) 
&= \int_{0}^{1}dt \dashint_{X_{P}}\dot{\varphi}^{t}\omega_{\varphi^{t}}^{n} \\
&= \int_{0}^{1}dt \dashint_{X_{P}^{\circ}}\dot{\phi}^{t}(dd^{c}\phi^{t})^{n} \\
&= -\int_{0}^{1}dt \dashint_{P}\dot{u}^{t}\,dx \\
&= -\dashint_{P}(u-u_{P})\,dx. 
\end{align*} 
\end{proof}
We define a subset $\mathcal{S}_{0}$ of $\mathcal{S}$ by 
\begin{align*}
\mathcal{S}_{0} \coloneqq \left\{ u \in \mathcal{S} \left| -\dashint_{P}(u-u_{P})\,dx = 0 \right. \right\}
\end{align*}

\begin{proposition}\label{K-energy_Legendre_dual}
Let $\varphi \in \mathcal{H}(X_{P}, \omega_{P})^{S}$ and $u \in \mathcal{S}$ be the symplectic potential of $\omega_{\varphi}$. 
\begin{enumerate}[(1)]
\item 
Let $\phi$ be the Legendere dual of $u$. 
Then 
\begin{align*}
J(\varphi) = \dashint_{X_{P}^{\circ}}(\phi-\phi_{P})\omega_{P}^{n} + \dashint_{P}(u - u_{P})\,dx. 
\end{align*} 
\item Let ${\displaystyle F(u) \coloneqq -\dashint_{P}\log\det(u_{ij})\,dx + \frac{1}{\mathrm{vol}(P)}\int_{\partial P}u\,d\sigma - \overline{s}\dashint_{P}u\,dx}$. 
Then 
\begin{align*}
M(\varphi) = F(u) -F(u_{P}). 
\end{align*} 
\end{enumerate}
\end{proposition}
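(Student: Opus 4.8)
The plan is to handle the two parts separately: part (1) falls straight out of the definitions once $E(\varphi)$ is known from Proposition \ref{MA-energy_Legendre_dual}, and part (2) is obtained by integrating, along a path of symplectic potentials, the infinitesimal identity $\frac{d}{dt}F(u^t) = \dashint_P \dot u^t(s(u^t)-\overline s)\,dx$. For part (1): by definition $J(\varphi) = \dashint_{X_P}\varphi\,\omega_P^n - E(\varphi)$. Since $X_P\setminus X_P^\circ$ is a null set for $\omega_P^n$ and $\varphi_u$ restricts to $\phi-\phi_P$ on $X_P^\circ$ (the notation fixed right after Theorem \ref{GuiAb}), the first term equals $\dashint_{X_P^\circ}(\phi-\phi_P)\,\omega_P^n$; substituting $E(\varphi) = -\dashint_P(u-u_P)\,dx$ from Proposition \ref{MA-energy_Legendre_dual} gives the asserted formula with no further work.

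For part (2), I would first rewrite $M(\varphi)$ as an integral over $P$. Fix a smooth path $\{u^t\}_{t\in[0,1]}$ in $\mathcal S$ with $u^0=u_P$, $u^1=u$, and set $\varphi^t\coloneqq\varphi_{u^t}$, so $\{\varphi^t\}$ is a smooth path in $\mathcal H(X_P,\omega_P)^S$ from $0$ to $\varphi$. By path-independence of the K-energy, $M(\varphi) = -\int_0^1 dt\,\dashint_{X_P}\dot\varphi^t(s(\omega_{\varphi^t})-\overline s)\,\omega_{\varphi^t}^n$. Restricting to $X_P^\circ$ and using the symplectic coordinates $x=\nabla\phi^t$ exactly as in the proof of Proposition \ref{MA-energy_Legendre_dual}, three inputs turn this into an integral over $P$: the Duistermaat--Heckman change of variables ($\dashint_{X_P}(\cdot)\,\omega_{\varphi^t}^n = \dashint_P(\cdot)\,dx$ on $S$-invariant integrands), the lemma above giving that $\dot\varphi^t=\dot\phi^t$ corresponds to $-\dot u^t$ under the Legendre transform, and Abreu's formula (Theorem \ref{AbScal}) giving $s(\omega_{\varphi^t}) = s(u^t)$. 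This yields
\[
M(\varphi) \;=\; \int_0^1 dt\,\dashint_P \dot u^t\,\bigl(s(u^t)-\overline s\bigr)\,dx .
\]

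It then remains to check that $\frac{d}{dt}F(u^t)$ equals the integrand in $t$ above, after which part (2) follows from the fundamental theorem of calculus together with $F(u^1)-F(u^0) = F(u)-F(u_P)$. Differentiating $F$ term by term: the boundary term contributes $\frac{1}{\mathrm{vol}(P)}\int_{\partial P}\dot u^t\,d\sigma$, the linear term contributes $-\overline s\dashint_P\dot u^t\,dx$, and, via $\frac{d}{dt}\log\det(u^t_{ij}) = (u^t)^{ij}(\dot u^t)_{ij}$ (using that $\partial_t$ commutes with the $x$-derivatives), the $\log\det$ term contributes $-\dashint_P(u^t)^{ij}(\dot u^t)_{ij}\,dx$. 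Applying Donaldson's integration-by-parts formula (Theorem \ref{integration_by_parts}) with $u=u^t$ and $f=\dot u^t$ rewrites the last quantity as $-\frac{1}{\mathrm{vol}(P)}\int_{\partial P}\dot u^t\,d\sigma + \dashint_P\dot u^t s(u^t)\,dx$; adding the three contributions, the boundary integrals cancel and one is left with precisely $\dashint_P\dot u^t(s(u^t)-\overline s)\,dx$, as needed.

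The delicate steps are the two invocations of the toric dictionary. The passage from $X_P$ to $P$ rests on $X_P\setminus X_P^\circ$ being negligible and on the moment map pushing the Liouville measure to Lebesgue measure on $P$; this is standard for toric K\"ahler manifolds and is already used for the Monge--Amp\`ere energy. The boundary integration by parts, legitimate despite the degeneracy of $(u^t_{ij})$ at $\partial P$, is exactly the content of Theorem \ref{integration_by_parts}. One extra wrinkle is that $\dot u^t$ lies in $C^\infty(P)$ but need not be convex, so strictly speaking it is not in $\mathcal C_\infty$; I would dispose of this by writing $\dot u^t = (\dot u^t + C|x|^2) - C|x|^2$ with $C$ large enough that $\dot u^t + C|x|^2$ is convex on the compact polytope $P$, and invoking linearity of Theorem \ref{integration_by_parts} in $f$. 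I expect this boundary bookkeeping to be the only genuine obstacle; everything else is routine manipulation of Legendre transforms.
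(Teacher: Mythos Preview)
Your proof is correct and follows essentially the same route as the paper's: part (1) is identical, and for part (2) both you and the paper pass to a path of symplectic potentials, convert $\frac{d}{dt}M(\varphi^t)$ to an integral over $P$ via the Legendre dictionary and Abreu's formula, and then use Donaldson's integration-by-parts (Theorem \ref{integration_by_parts}) to identify it with $\frac{d}{dt}F(u^t)$. The only organizational difference is that the paper manipulates $\frac{d}{dt}M(\varphi^t)$ directly into the form $\frac{d}{dt}F(u^t)$, whereas you compute the two sides separately and match them; this is cosmetic. Your observation that $\dot u^t$ need not lie in $\mathcal{C}_\infty$ (so Theorem \ref{integration_by_parts} does not literally apply) and your fix via a convex decomposition is a genuine point of care that the paper passes over silently.
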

\begin{proof}
For (1), since $\varphi = \phi - \phi_{P}$ on $X_{P}^{\circ}$ we have
\begin{align*}
\dashint_{X_{P}}\varphi\omega_{P}^{n} = \dashint_{X_{P}^{\circ}}(\phi-\phi_{P})\,\omega_{P}^{n}. 
\end{align*}
Hence we have 
\begin{align*}
J(\varphi) 
&= \dashint_{X_{P}}\varphi\omega_{P}^{n} - E(\varphi) \\
&= \dashint_{X_{P}^{\circ}}(\phi-\phi_{P})\omega_{P}^{n} + \dashint_{P}(u - u_{P})\,dx 
\end{align*}
by Proposition \ref{MA-energy_Legendre_dual}. 

Next consider (2). 
Let $\{\varphi^{t}\}_{t \in [0, 1]}$ be a smooth path in $\mathcal{H}_{0}^{S}$ satisfying $\varphi^{0} = 0$ and $\varphi^{1} = \varphi$, 
and $\{u^{t}\}_{t \in [0, 1]}$ be the coressponding symplectic potentials. 
Note that $u^{0} = u_{P}$ and $u^{1} = u$. 
Then 
\begin{align*}
\frac{d}{dt}M(\varphi^{t}) 
&= -\dashint_{X_{P}}\dot{\varphi}^{t}(s(\varphi^{t})-\overline{s})\omega_{\varphi^{t}}^{n} \\
&= -\dashint_{X_{P}}(-\dot{u}^{t})(-{(u^{t})^{ij}}_{ij}-\overline{s})\,dx \\
&= -\dashint_{X_{P}}(\dot{u}^{t}){(u^{t})^{ij}}_{ij}\,dx -\overline{s}\dashint_{P}\dot{u}^{t}\,dx. 
\end{align*}
By Theorem \ref{integration_by_parts}, we obtain 
\begin{align*}
-\dashint_{X_{P}}(\dot{u}^{t}){(u^{t})^{ij}}_{ij}\,dx 
&= \frac{1}{\mathrm{vol}(P)}\int_{\partial P}\dot{u}^{t}\,d\sigma - \dashint_{P}(u^{t})^{ij}(\dot{u}^{t})_{ij}\,dx \\
&= \frac{1}{\mathrm{vol}(P)}\int_{\partial P}\dot{u}^{t}\,d\sigma - \dashint_{P}\frac{d}{dt}\log\det((u^{t})_{ij})\,dx 
\end{align*}
and hence 
\begin{align*}
\frac{d}{dt}M(\varphi^{t}) 
&= \frac{1}{\mathrm{vol}(P)}\int_{\partial P}\dot{u}^{t}\,d\sigma - \dashint_{P}\frac{d}{dt}\log\det((u^{t})_{ij})\,dx - \overline{s}\dashint_{P}\dot{u}^{t}\,dx \\
&= \frac{d}{dt}\left(-\dashint_{P}\log\det((u^{t})_{ij})\,dx + \frac{1}{\mathrm{vol}(P)}\int_{\partial P}u^{t}\,d\sigma  - \overline{s}\dashint_{P}u^{t}\,dx\right), 
\end{align*}
as required. 
\end{proof}

\begin{proposition}\label{rel_K-energy_Legendre_dual}
Let $\varphi \in \mathcal{H}(X_{P}, \omega_{P})^{S}$, and $u \in \mathcal{S}$ be the symplectic potential of $\omega_{\varphi}$. 
\begin{enumerate}[(1)]
\item ${\displaystyle H_{V}(\varphi) = -\dashint_{P}(u-u_{P})V\,dx}$. 
\item Let ${\displaystyle F_{V}(u) \coloneqq F(u)+H_{V}(\varphi)}$. 
Then $M_{V}(\varphi) = F_{V}(u) -F_{V}(u_{P})$.  
\end{enumerate}
\end{proposition}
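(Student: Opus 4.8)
The plan is to prove both parts exactly in the style already used for Propositions \ref{MA-energy_Legendre_dual} and \ref{K-energy_Legendre_dual}: pass to a smooth path of symplectic potentials and push everything forward by the moment map.

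\emph{Part (1).} I would choose a smooth path $\{u^{t}\}_{t\in[0,1]}\subset\mathcal{S}$ with $u^{0}=u_{P}$ and $u^{1}=u$, let $\{\phi^{t}\}$ be the Legendre duals, and set $\varphi^{t}\coloneqq\varphi_{u^{t}}\in\mathcal{H}(X_{P},\omega_{P})^{S}$, so that $\varphi^{0}=0$, $\varphi^{1}=\varphi$ and $\varphi^{t}=\phi^{t}-\phi_{P}$ on $X_{P}^{\circ}$. By the definition of $H_{V}$ we have $H_{V}(\varphi)=\int_{0}^{1}dt\,\dashint_{X_{P}}\dot{\varphi}^{t}\theta_{\varphi^{t}}\omega_{\varphi^{t}}^{n}$, and on $X_{P}^{\circ}$ the preceding lemma gives $\dot{\varphi}^{t}=\dot{\phi}^{t}=-\dot{u}^{t}\circ\nabla\phi^{t}$. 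The one new ingredient is the identification of $\theta_{\varphi^{t}}$, under the moment map $\nabla\phi^{t}\colon X_{P}^{\circ}\xrightarrow{\ \sim\ }P^{\circ}$, with the affine function denoted $V$ on $P$: since $\nabla\phi^{t}$ is the moment map of $\omega_{\varphi^{t}}$ and the Hamiltonian of the torus vector field $V$ with respect to $\omega_{\varphi^{t}}$ is $x\mapsto\langle V,x\rangle$ pulled back by $\nabla\phi^{t}$, its $\omega_{\varphi^{t}}^{n}$-normalized version equals the pullback of $V\coloneqq\langle V,\cdot\rangle-\frac{1}{\mathrm{vol}(P)}\int_{P}\langle V,x\rangle\,dx$; this affine function is \emph{independent of $t$} because the moment polytope is $P$ for every metric in $c_{1}(L_{P})$ and the extremal vector field is metric-independent. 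Then the same moment-map change of variables as in the proof of Proposition \ref{MA-energy_Legendre_dual} (pushing $\tfrac{1}{n!}\omega_{\varphi^{t}}^{n}$ forward to $dx$) yields $\dashint_{X_{P}}\dot{\varphi}^{t}\theta_{\varphi^{t}}\omega_{\varphi^{t}}^{n}=-\dashint_{P}\dot{u}^{t}V\,dx$, and integrating over $t\in[0,1]$ gives $H_{V}(\varphi)=-\dashint_{P}(u-u_{P})V\,dx$ (note the right-hand side depends only on the endpoints, so the choice of path is harmless).

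\emph{Part (2).} This is immediate from part (1), Proposition \ref{K-energy_Legendre_dual}(2), and the definition of $M_{V}$. Indeed $M_{V}(\varphi)=M(\varphi)+H_{V}(\varphi)$ and $M(\varphi)=F(u)-F(u_{P})$; since $\varphi_{u_{P}}=0$, part (1) applied with $u=u_{P}$ gives $H_{V}(\varphi_{u_{P}})=0$, hence $F_{V}(u_{P})=F(u_{P})$, and therefore $M_{V}(\varphi)=(F(u)-F(u_{P}))+H_{V}(\varphi)=(F(u)+H_{V}(\varphi))-F(u_{P})=F_{V}(u)-F_{V}(u_{P})$.

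The only real obstacle is the identification step in part (1): checking that $\theta_{\varphi^{t}}$ corresponds, under $\nabla\phi^{t}$, to one and the same affine function on $P$ along the whole path, i.e.\ that both its linear part (the vector field $V$) and its additive normalizing constant are $t$-independent. Everything else reduces to the Legendre-transform lemma and the moment-map push-forward already employed in the proofs of Propositions \ref{MA-energy_Legendre_dual} and \ref{K-energy_Legendre_dual}.
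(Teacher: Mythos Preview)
Your proposal is correct and follows essentially the same approach as the paper: differentiate $H_{V}$ along a smooth path of potentials, identify $\theta_{\varphi^{t}}$ via the moment map with the fixed affine function $V$ on $P$, push the integral forward to $P$, and integrate in $t$; part (2) is then immediate from part (1) and Proposition \ref{K-energy_Legendre_dual}. The only cosmetic difference is that the paper writes the derivative $\tfrac{d}{dt}H_{V}(\varphi^{t})$ and then integrates, whereas you start from the integral definition of $H_{V}$, but the content is identical.
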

\begin{proof}
For (1), let $\{\varphi^{t}\}_{t \in [0, 1]}$ be a smooth path in $\mathcal{H}_{0}^{S}$ satisfying $\varphi^{0} = 0$ and $\varphi^{1} = \varphi$, and $\{u^{t}\}_{t \in [0, 1]}$ be the corresponding symplectic potentials. 
Note that $u^{0} = u_{P}$ and $u^{1} = u$. 
Then 
\begin{align*}
\frac{d}{dt}H_{V}(\varphi^{t}) 
&= \dashint_{X_{P}}\dot{\varphi}^{t}\theta_{\varphi^{t}}\omega_{\varphi^{t}}^{n} \\
&= \dashint_{X_{P}}\dot{\varphi}^{t}(-\left\langle \mu_{\varphi^{t}}, -\frac{1}{2}\theta \right\rangle)\omega_{\varphi^{t}}^{n} \\
&= -\dashint_{P}\dot{u}^{t}\frac{1}{2}\theta\,dx \\
&= \frac{d}{dt}\left( -\dashint_{P}u^{t}\frac{1}{2}\theta\,dx \right), 
\end{align*}
as required. 

The claim (2) is easily proved from (1) and Proposition \ref{K-energy_Legendre_dual}.
\end{proof}

As shown by Guan in \cite{Gua00}, any pair of K\"ahler potentials in $\mathcal{H}_{0}^{S}$ can be joined by a unique geodesic segment, which are obtained as a line segment of symplectic potentials. 
Thus, we have the following; 

\begin{proposition}
Let $\varphi \in \mathcal{H}_{0}^{S}$, and $u \in \mathcal{S}$ be the symplectic potential of $\omega_{\varphi}$. 
Then 
\begin{align*}
d_{1}(\varphi, 0) &= \dashint_{P}|u-u_{P}|\,dx. 
\end{align*}
\end{proposition}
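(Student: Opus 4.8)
The plan is to reduce the infimum defining $d_{1}(\varphi,0)$ to an elementary one-dimensional estimate on the moment polytope, using the dictionary between $S$-invariant K\"ahler potentials and symplectic potentials.

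\emph{Step 1: the $L^{1}$-length of a path in terms of symplectic potentials.} Let $\{\varphi^{t}\}_{t\in[0,1]}$ be any smooth path in $\mathcal{H}_{0}^{S}$ with $\varphi^{0}=0$, $\varphi^{1}=\varphi$, let $\{u^{t}\}_{t\in[0,1]}\subset\mathcal{S}$ be the corresponding symplectic potentials (so $u^{0}=u_{P}$, $u^{1}=u$), and let $\{\phi^{t}\}$ be their Legendre duals. The computation in the proof of Proposition \ref{MA-energy_Legendre_dual} shows $\dashint_{X_{P}}\dot{\varphi}^{t}\,\omega_{\varphi^{t}}^{n}=\dashint_{X_{P}^{\circ}}\dot{\phi}^{t}(dd^{c}\phi^{t})^{n}=-\dashint_{P}\dot{u}^{t}\,dx$, where the first equality uses that $\dot{\varphi}^{t}=\dot{\phi}^{t}$ on the full-measure open set $X_{P}^{\circ}$ and the second combines the change of variables $x=\nabla\phi^{t}(y)$ with the identity $\dot{u}^{t}(x)=-\dot{\phi}^{t}(\nabla u^{t}(x))$ of the lemma preceding Proposition \ref{MA-energy_Legendre_dual}. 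Running the same computation with $|\dot{\varphi}^{t}|$ in place of $\dot{\varphi}^{t}$ (the sign in $\dot{u}^{t}=-\dot{\phi}^{t}\circ\nabla u^{t}$ being killed by the absolute value) gives
\[
\dashint_{X_{P}}|\dot{\varphi}^{t}|\,\omega_{\varphi^{t}}^{n}=\dashint_{P}|\dot{u}^{t}|\,dx,\qquad\text{hence}\qquad \ell_{1}(\{\varphi^{t}\}_{t\in[0,1]})=\int_{0}^{1}dt\,\dashint_{P}|\dot{u}^{t}|\,dx .
\]

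\emph{Step 2: the upper bound.} Take the straight segment $u^{t}\coloneqq(1-t)u_{P}+tu$. Being a convex combination of strictly convex functions it is strictly convex, it satisfies $u^{t}-u_{P}=t(u-u_{P})\in C^{\infty}(P)$, and it restricts to a strictly convex function on every face of $P$, so $u^{t}\in\mathcal{S}$; moreover $E(\varphi_{u^{t}})=-\dashint_{P}(u^{t}-u_{P})\,dx=t\,E(\varphi)=0$ by Proposition \ref{MA-energy_Legendre_dual}, so $\{\varphi_{u^{t}}\}_{t\in[0,1]}$ is an admissible path in $\mathcal{H}_{0}^{S}$ from $0$ to $\varphi$ (and is in fact the geodesic, by Guan). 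Since $\dot{u}^{t}=u-u_{P}$ is independent of $t$, the formula of Step 1 gives $\ell_{1}=\dashint_{P}|u-u_{P}|\,dx$, and therefore $d_{1}(\varphi,0)\leq\dashint_{P}|u-u_{P}|\,dx$.

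\emph{Step 3: the lower bound.} For any admissible smooth path $\{\varphi^{t}\}$ as in Step 1 and any fixed $x\in P^{\circ}$, the map $t\mapsto u^{t}(x)$ is $C^{1}$ with endpoints $u_{P}(x)$ and $u(x)$, so $|u(x)-u_{P}(x)|\leq\int_{0}^{1}|\dot{u}^{t}(x)|\,dt$ by the fundamental theorem of calculus and the triangle inequality. Integrating over $P$ and interchanging the $t$- and $x$-integrations by Tonelli's theorem (the integrand being nonnegative) yields
\[
\dashint_{P}|u-u_{P}|\,dx\leq\dashint_{P}\Bigl(\int_{0}^{1}|\dot{u}^{t}(x)|\,dt\Bigr)dx=\int_{0}^{1}dt\,\dashint_{P}|\dot{u}^{t}|\,dx=\ell_{1}(\{\varphi^{t}\}_{t\in[0,1]}).
\]
Passing to the infimum over admissible paths gives $\dashint_{P}|u-u_{P}|\,dx\leq d_{1}(\varphi,0)$, which together with Step 2 finishes the proof.

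\emph{Main obstacle.} The only delicate point is Step 1: one must know that along a smooth path in $\mathcal{H}_{0}^{S}$ the symplectic potentials $u^{t}$ depend smoothly on $t$ (so that $\dot{u}^{t}$ is defined and the Legendre-duality lemma applies), and that the measure-theoretic bookkeeping — the full measure of $X_{P}^{\circ}$, the push-forward of $\omega_{\varphi^{t}}^{n}$ to a constant multiple of Lebesgue measure on $P$, and the interchange of integrations — is legitimate. Everything else is the elementary observation that the $L^{1}$ Finsler length of a path of symplectic potentials dominates, pointwise in $x$, the quantity $|u-u_{P}|(x)$, with equality realized by the straight segment.
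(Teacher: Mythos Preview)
Your proof is correct and takes a genuinely different route from the paper's. The paper invokes Guan's result that the linear segment $u^{t}=(1-t)u_{P}+tu$ corresponds to the (unique) smooth geodesic in $\mathcal{H}_{0}^{S}$, and then equates $d_{1}(\varphi,0)$ with the $L^{1}$-length of that geodesic --- which implicitly uses the nontrivial fact (Darvas) that $d_{1}$-geodesics are length-minimizing. You bypass this entirely: your Step~2 gives the upper bound from the same linear segment, but your Step~3 supplies the matching lower bound by the elementary pointwise inequality $|u(x)-u_{P}(x)|\leq\int_{0}^{1}|\dot u^{t}(x)|\,dt$ followed by Tonelli. This makes your argument self-contained and independent of any geodesic minimization statement; the parenthetical remark in Step~2 that the segment ``is in fact the geodesic'' is true but unused. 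The trade-off is that the paper's proof is a one-liner once the machinery is in place, while yours is longer but relies only on the Legendre-duality lemma already established in the section.
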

\begin{proof}
Let $u^{t} = (1-t)u_{P} + tu$ ($t \in [0, 1]$), and $\varphi^{t} \coloneqq \varphi_{u^{t}}$. 
Then $\{\varphi^{t}\}_{t \in[0, 1]}$ is a unique geodesic segment joining $0$ and $\varphi$. 
Hence we have 
\begin{align*}
d_{1}(\varphi, 0) 
&= \ell_{1}(\{\varphi^{t}\}_{t \in[0, 1]}) 
= \int_{0}^{1}dt\dashint_{X}|\dot{\varphi}^{t}|\omega_{\varphi^{t}}^{n} \\ 
&= \int_{0}^{1}dt\dashint_{P}|-\dot{u}^{t}|\,dx 
= \dashint_{P}|u-u_{P}|\,dx. 
\end{align*}
\end{proof}

Consider the action of $T$ on $\mathcal{S}_{0}$. 
Let $u \in \mathcal{S}_{0}$ and $\tau \in T$. 
Then $(\varphi_{u})_{\tau} \in \mathcal{H}_{0}^{S}$ can be written as 
\begin{align*}
\tau^{*}\omega_{\varphi_{u}} = \omega_{P} + dd^{c}(\varphi_{u})_{\tau}. 
\end{align*}
On $X_{P}^{\circ} \cong T$, we can further express 
\begin{align*}
\tau^{*}\omega_{\varphi_{u}} = \tau^{*}dd^{c}u^{\vee} = dd^{c}(\tau^{*}u^{\vee}). 
\end{align*}
If we write $\tau = \exp(-\xi/2)$ for some $\xi \in N_{\mathbf{R}}$, we have 
\begin{align*}
\tau \cdot \exp(-y/2+2\pi\sqrt{-1}\theta) = \exp(-(y+\xi)/2+2\pi\sqrt{-1}\theta)
\end{align*}
and 
\begin{align*}
\tau^{*}u^{\vee}(y) 
&= u^{\vee}(y+\xi) \\
&= \sup_{x \in P^{\circ}}(\langle x, y+\xi \rangle - u) \\
&= (u-\xi)^{\vee}(y). 
\end{align*}
Therefore we obtain 
\begin{align*}
\tau^{*}\omega_{u} = dd^{c}(\tau^{*}u^{\vee}) = dd^{c}(u-\xi) = \omega_{u-\xi}
\end{align*}
and hence 
\begin{align*}
dd^{c}(\varphi_{u})_{\tau} 
= \tau^{*}\omega_{u} - \omega_{P}
= \omega_{u-\xi} - \omega_{P} 
= dd^{c}\varphi_{u-\xi}
\end{align*}
on $X_{P}^{\circ}$. 
By replacing the coordinate $(x_{1}, \ldots, x_{n})$ on $M_{\mathbf{R}}$ so that 
\begin{align*}
\int_{P}x_{i}\,dx = 0,\quad i=1, \ldots, n, 
\end{align*}
we obtain $(\varphi_{u})_{\tau} = \varphi_{u-\xi}$. 
This shows the following 
\begin{proposition}
Let $\varphi \in \mathcal{H}_{0}^{S}$, and $u \in \mathcal{S}$ be the symplectic potential of $\omega_{\varphi}$. 
Then 
\begin{align*}
d_{1, T}(\varphi, 0) &= \inf_{\xi \in N_{\mathbf{R}}}\dashint_{P}|(u-\xi)-u_{P}|\,dx. 
\end{align*}
\end{proposition}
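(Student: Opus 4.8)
The plan is to unwind the definition of the reduced $L^{1}$-distance and feed it into the formula for $d_{1}$ from the previous proposition, using the computation of the $T$-action on symplectic potentials carried out just before the statement.

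First I would recall that, by definition, $d_{1, T}(\varphi, 0) = \inf_{\tau \in T} d_{1}(\varphi_{\tau}, 0)$, where $\varphi_{\tau}$ is the $T$-twist of $\varphi$. Since $\omega_{\varphi}$ is $S$-invariant, the maximal compact subgroup $S \subset T$ fixes $\omega_{\varphi}$, so $\varphi_{\tau}$ depends only on the coset $\tau S$; using the polar decomposition $T = S \cdot \exp((-1/2) N_{\mathbf{R}})$, the infimum therefore reduces to one over $\tau = \exp(-\xi/2)$ with $\xi$ ranging over $N_{\mathbf{R}}$. For such $\tau$, the displayed computation preceding the statement gives $\omega_{\varphi_{\tau}} = \tau^{*}\omega_{\varphi} = \omega_{u - \xi}$ on $X_{P}^{\circ}$, hence everywhere, where $u - \xi$ denotes the function $x \mapsto u(x) - \langle \xi, x \rangle$; in particular the symplectic potential of $\omega_{\varphi_{\tau}}$ is $u - \xi$. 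I would then note that $u - \xi \in \mathcal{S}$ --- the Guillemin boundary condition is preserved under adding an affine function --- and, because the coordinates are normalized so that $\int_{P} x_{i}\, dx = 0$, also $u - \xi \in \mathcal{S}_{0}$, since $\dashint_{P}((u - \xi) - u_{P})\, dx = \dashint_{P}(u - u_{P})\, dx - \langle \xi, \dashint_{P} x\, dx \rangle = 0$; thus $\varphi_{\tau} = \varphi_{u - \xi} \in \mathcal{H}_{0}^{S}$ and the previous proposition applies to it.

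That proposition then gives $d_{1}(\varphi_{\tau}, 0) = \dashint_{P} |(u - \xi) - u_{P}|\, dx$, and taking the infimum over $\xi \in N_{\mathbf{R}}$ finishes the proof. The only point demanding care is the bookkeeping around the normalization $\int_{P} x_{i}\, dx = 0$: it is exactly what keeps the twist $u - \xi$ inside $\mathcal{S}_{0}$, equivalently what makes the identification $\varphi_{\tau} = \varphi_{u - \xi}$ hold on the nose rather than up to an additive constant, and hence what lets the previous proposition --- whose proof runs along the geodesic segment $\{(1 - t) u_{P} + t(u - \xi)\}$ inside $\mathcal{S}_{0}$ --- be invoked. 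I do not expect any serious analytic obstacle beyond this; everything else is a direct substitution.
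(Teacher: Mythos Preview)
Your proposal is correct and follows exactly the approach the paper takes: the paper's ``proof'' is the displayed computation of the $T$-action on symplectic potentials immediately preceding the statement, followed by the sentence ``This shows the following,'' and your argument simply spells out that substitution --- reduce the infimum over $T$ to one over $\exp(-\xi/2)$ by $S$-invariance, identify $(\varphi_{u})_{\tau}$ with $\varphi_{u-\xi}$ via the coordinate normalization $\int_{P} x_{i}\,dx = 0$, and apply the formula $d_{1}(\varphi_{u-\xi},0) = \dashint_{P}|(u-\xi)-u_{P}|\,dx$ from the previous proposition.
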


We define the \emph{reduced $L^{1}$-norm} by 
\begin{align*}
\|u\|_{1, T} \coloneqq \inf_{\xi \in N_{\mathbf{R}}}\dashint_{P}|(u-\xi) - (\overline{u-\xi})|\,dx,  
\end{align*}
where
\begin{align*}
(\overline{u-\xi}) \coloneqq \dashint_{P}(u-\xi)\,dx.  
\end{align*}
Then 
\begin{corollary}
Let $\varphi \in \mathcal{H}_{0}^{S}$, and $u \in \mathcal{S}$ be the symplectic potential of $\omega_{\varphi}$. 
Then 
\begin{align*}
\|u\|_{1, T} - \|u_{P}\|_{1} \leq d_{1, T}(\varphi, 0) \leq \|u\|_{1, T} + \|u_{P}\|_{1}. 
\end{align*}
\end{corollary}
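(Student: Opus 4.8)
The plan is to derive the estimate directly from the two results immediately preceding the statement: the formula $d_{1,T}(\varphi,0)=\inf_{\xi\in N_{\mathbf{R}}}\dashint_{P}|(u-\xi)-u_{P}|\,dx$, and the definition $\|u\|_{1,T}=\inf_{\xi\in N_{\mathbf{R}}}\dashint_{P}|(u-\xi)-(\overline{u-\xi})|\,dx$, together with $\|u_{P}\|_{1}=\dashint_{P}|u_{P}-\overline{u_{P}}|\,dx$. Since both infima run over the same set $N_{\mathbf{R}}$, it suffices to compare the two integrands for each fixed $\xi$ and then pass to the infimum on all sides.

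First I would pin down the mean. As $\varphi\in\mathcal{H}_{0}^{S}$, Proposition \ref{MA-energy_Legendre_dual} gives $E(\varphi)=-\dashint_{P}(u-u_{P})\,dx=0$, hence $\dashint_{P}u\,dx=\dashint_{P}u_{P}\,dx=\overline{u_{P}}$. With the coordinate normalization $\int_{P}x_{i}\,dx=0$ $(i=1,\dots,n)$ adopted just above, every linear function $x\mapsto\langle\xi,x\rangle$ has vanishing average over $P$, so $\overline{u-\xi}=\overline{u}=\overline{u_{P}}$ for all $\xi\in N_{\mathbf{R}}$; consequently $\|u\|_{1,T}=\inf_{\xi}\dashint_{P}|(u-\xi)-\overline{u_{P}}|\,dx$. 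Since $u$ is continuous on the compact polytope $P$, all the integrals involved are finite real numbers, so no cancellation issue arises.

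Next, for a fixed $\xi\in N_{\mathbf{R}}$ I would apply the pointwise triangle inequality $\big||a-b|-|a-c|\big|\le|b-c|$ with $a=(u-\xi)(x)$, $b=u_{P}(x)$ and $c=\overline{u_{P}}$, and integrate over $P$ against $dx/\mathrm{vol}(P)$ to obtain
\begin{align*}
\Big|\dashint_{P}|(u-\xi)-u_{P}|\,dx-\dashint_{P}|(u-\xi)-\overline{u_{P}}|\,dx\Big|\le\dashint_{P}|u_{P}-\overline{u_{P}}|\,dx=\|u_{P}\|_{1}.
\end{align*}
Thus $\dashint_{P}|(u-\xi)-\overline{u_{P}}|\,dx-\|u_{P}\|_{1}\le\dashint_{P}|(u-\xi)-u_{P}|\,dx\le\dashint_{P}|(u-\xi)-\overline{u_{P}}|\,dx+\|u_{P}\|_{1}$ for every $\xi$. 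Taking the infimum over $\xi\in N_{\mathbf{R}}$ through all three sides and inserting the formulas for $d_{1,T}(\varphi,0)$ and $\|u\|_{1,T}$ recalled above yields exactly $\|u\|_{1,T}-\|u_{P}\|_{1}\le d_{1,T}(\varphi,0)\le\|u\|_{1,T}+\|u_{P}\|_{1}$.

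The whole argument is triangle-inequality bookkeeping; the only place demanding attention is the identification $\overline{u-\xi}=\overline{u_{P}}$, which is where the hypothesis $\varphi\in\mathcal{H}_{0}^{S}$ (so that $E(\varphi)=0$) and the barycenter normalization of the coordinates on $M_{\mathbf{R}}$ are used. I do not expect any substantive obstacle.
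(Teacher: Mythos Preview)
Your proof is correct and is exactly the straightforward triangle-inequality argument the paper intends; the corollary is stated without proof there, precisely because it follows immediately from the preceding formula for $d_{1,T}(\varphi,0)$ and the definition of $\|\cdot\|_{1,T}$ once one notes $\overline{u-\xi}=\overline{u_{P}}$. Your identification of that last equality via $E(\varphi)=0$ and the barycenter normalization of coordinates is the only nontrivial point, and you have handled it correctly.
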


\begin{corollary}
For a polarized toric manifold $(X_{P}, L_{P})$, the followings are equivalent. 
\begin{enumerate}[(1)]
\item The relative K-energy is $T$-coercive. 
\item There exists $\delta, C >0$ such that 
\begin{align*}
F_{V}(u) \geq \delta \|u\|_{1, T} - C 
\end{align*}
for any $u \in \mathcal{S}_{0}$. 
\item There exists $\delta, C >0$ such that 
\begin{align*}
F_{V}(u) \geq \delta J_{T}(u) - C 
\end{align*}
for any $u \in \mathcal{S}_{0}$. 
\end{enumerate}
\end{corollary}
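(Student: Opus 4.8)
The plan is to transport the equivalences packaged in Theorem~\ref{ext_coer_equiv} — specifically the definition of $T$-coercivity together with the equivalence of its $d_{1,T}$- and $J_T$-formulations — through the Legendre-dual dictionary developed in this section, so that no new analysis is needed. First I would note that the correspondence $u \mapsto \varphi_u$ of Theorem~\ref{GuiAb} restricts to a bijection $\mathcal{S}_0 \xrightarrow{\ \sim\ } \mathcal{H}_0^S$: indeed $E(\varphi_u) = -\dashint_P (u - u_P)\,dx$ by Proposition~\ref{MA-energy_Legendre_dual}, so $\varphi_u \in \mathcal{H}_0^S$ precisely when $u \in \mathcal{S}_0$. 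Under this bijection Proposition~\ref{rel_K-energy_Legendre_dual}(2) gives $M_V(\varphi_u) = F_V(u) - F_V(u_P)$ with $F_V(u_P)$ a fixed finite constant, so the coercivity bound $M_V(\varphi) \ge \delta\, d_{1,T}(\varphi,0) - C$ over $\mathcal{H}_0^S$ is equivalent to $F_V(u) \ge \delta\, d_{1,T}(\varphi_u,0) - C'$ over $\mathcal{S}_0$ with $C' = C + F_V(u_P)$; and by the Corollary immediately preceding the statement, $d_{1,T}(\varphi_u,0)$ differs from $\|u\|_{1,T}$ by at most the fixed constant $\|u_P\|_1$, so this is in turn equivalent to $F_V(u) \ge \delta\,\|u\|_{1,T} - C''$ over $\mathcal{S}_0$. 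Together with the definition of $T$-coercivity this establishes (1) $\Leftrightarrow$ (2).

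For (2) $\Leftrightarrow$ (3) the only additional ingredient is the two-sided comparison of $\|u\|_{1,T}$ with $J_T(u)$ up to an affine change of variable. Identifying $J_T(u)$ with $J_T(\varphi_u) = \inf_{\tau \in T} J((\varphi_u)_\tau)$, Proposition~\ref{equiv_redL1_redJ} gives $\tfrac1C J_T(\varphi_u) - C \le d_{1,T}(\varphi_u,0) \le C J_T(\varphi_u) + C$, and combining this once more with the preceding Corollary produces constants $A,B > 0$ with $A^{-1} J_T(u) - B \le \|u\|_{1,T} \le A\,J_T(u) + B$ for all $u \in \mathcal{S}_0$. A linear estimate $F_V(u) \ge \delta\,\|u\|_{1,T} - C$ then converts into one of the form $F_V(u) \ge \delta'\,J_T(u) - C'$ and back, with the constants adjusted via $A, B$; this is routine.

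There is no essential difficulty here — all the substance (the Darvas--Rubinstein properness principle, lower semicontinuity and convexity of $M_V$, the $L^1$--$J$ comparisons) is already absorbed in Theorem~\ref{ext_coer_equiv}, Propositions~\ref{equiv_L1_J}--\ref{equiv_redL1_redJ}, and the Legendre-dual formulae of this section. The two points to watch are: (i) matching the normalizations, i.e.\ confirming that $u \mapsto \varphi_u$ really carries $\mathcal{S}_0$ onto $\mathcal{H}_0^S$ and that $F_V$, $\|\cdot\|_{1,T}$ and $J_T$ are unchanged by adding a constant to $u$ (using $\overline{s} = \sigma(\partial P)/\mathrm{vol}(P)$ for $F_V$), so that restricting to representatives in $\mathcal{S}_0$ loses nothing; and (ii) bookkeeping the additive and multiplicative constants so that every implication is genuinely reversible. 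If one prefers to bypass $d_{1,T}$ in (2) $\Leftrightarrow$ (3), one can obtain $\|u\|_{1,T} \asymp J_T(u)$ directly from the formula $J(\varphi_u) = \dashint_{X_P^\circ}(\phi - \phi_P)\,\omega_P^n + \dashint_P (u - u_P)\,dx$ of Proposition~\ref{K-energy_Legendre_dual}(1) together with Proposition~\ref{equiv_L1_J}, but this offers no real saving.
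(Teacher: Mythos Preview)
Your proposal is correct and is essentially the approach the paper intends: the Corollary is stated without proof, as an immediate consequence of transporting the $T$-coercivity conditions of Theorem~\ref{ext_coer_equiv} through the Legendre-dual dictionary (Propositions~\ref{MA-energy_Legendre_dual}, \ref{rel_K-energy_Legendre_dual}, the formula for $d_{1,T}$, and the preceding Corollary comparing $d_{1,T}(\varphi_u,0)$ with $\|u\|_{1,T}$). Your bookkeeping of the constants and the bijection $\mathcal{S}_0 \leftrightarrow \mathcal{H}_0^S$ is exactly what is needed.
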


\subsection{Non-Archimedean functionals on polarized toric manifolds}\label{sec:NAToricEnergy}
In this subsection, we give convex analytic expression of non-Archimedean functionals for toric test configurations. 
Let $f \in \mathcal{C}_{PL}^{\mathbf{Q}}$. 
For simplicity we assume $f$ is integral. 
Choose $L \in \mathbf{Z}_{>0}$ so that $L - f > 0$ on $P$. 
Then the corresponding $(\mathcal{X}_{f}, \mathcal{L}_{f})$ is a toric test configuration of exponent 1. 

\begin{proposition}\label{ToricDH}
The Duistermaat-Heckmann measureof $(\mathcal{X}_{f}, \mathcal{L}_{f})$ s given by 
\begin{align*}
\mathrm{DH}(\mathcal{X}_{f}, \mathcal{L}_{f}) = (L-f)_{\#}\frac{dx}{\mathrm{vol}(P)}. 
\end{align*} 
In particular, $\mathrm{supp}(\mathrm{DH}(\mathcal{X}_{f}, \mathcal{L}_{f})) = [L-\max_{P}f, L-\min_{P}f]$. 
\end{proposition}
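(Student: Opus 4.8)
The plan is to compute the normalized weight measures of the toric test configuration $(\mathcal{X}_f, \mathcal{L}_f)$ explicitly and pass to the weak limit. Recall from the construction in Section~\ref{sec:ToricTc} that the central fiber satisfies $H^0((\mathcal{X}_f)_0, (\mathcal{L}_f^m)_0) = \mathrm{Gr}_m^{\mathcal{F}}\mathcal{R}$, which carries the monomial basis $\{[\chi^{\alpha}s^m] : \alpha \in mP \cap M\}$, and by \eqref{dual_action} the vector $[\chi^{\alpha}s^m]$ lies in the $\mathbf{G}_m$-weight space of weight $m(L - f(\alpha/m))$; note that $mf(\alpha/m)\in\mathbf{Z}$ because $f$ is integral and $\alpha\in M$, so these weights are genuine integers. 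Since $(\mathcal{X}_f,\mathcal{L}_f)$ has exponent $1$, the normalized weight measure is therefore
\[
\nu_m \;=\; \frac{1}{N_m}\sum_{\alpha\in mP\cap M}\delta_{L-f(\alpha/m)},\qquad N_m=\#(mP\cap M).
\]

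Next I would fix $g\in C(\mathbf{R})$ and rewrite the integral against $\nu_m$ as a normalized sum over the refined lattice: substituting $\beta=\alpha/m$ and using $\#(P\cap\tfrac1m M)=\#(mP\cap M)$ gives
\[
\int_{\mathbf{R}} g\,d\nu_m \;=\; \frac{1}{\#(P\cap\frac1m M)}\sum_{\beta\in P\cap\frac1m M} g(L-f(\beta)).
\]
Since $x\mapsto g(L-f(x))$ is continuous on the compact Jordan-measurable set $P$, the quantity $\frac{1}{m^n}\sum_{\beta\in P\cap\frac1m M}g(L-f(\beta))$ is a Riemann sum converging to $\int_P g(L-f(x))\,dx$. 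Combining this with the Ehrhart asymptotics $\#(mP\cap M)=\mathrm{vol}(P)m^n+O(m^{n-1})$ recalled above, one obtains
\[
\lim_{m\to\infty}\int_{\mathbf{R}} g\,d\nu_m \;=\; \frac{1}{\mathrm{vol}(P)}\int_P g(L-f(x))\,dx \;=\; \int_{\mathbf{R}} g\,d\Big((L-f)_{\#}\tfrac{dx}{\mathrm{vol}(P)}\Big).
\]
Because the $\nu_m$ converge weakly to $\mathrm{DH}(\mathcal{X}_f,\mathcal{L}_f)$ by \cite[Theorem~5.3]{BHJ17} and the example thereafter, this identifies $\mathrm{DH}(\mathcal{X}_f,\mathcal{L}_f)=(L-f)_{\#}\tfrac{dx}{\mathrm{vol}(P)}$. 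For the support statement, since Lebesgue measure on $P$ has full support $P$ and $L-f$ is continuous, $\mathrm{supp}(\mathrm{DH}(\mathcal{X}_f,\mathcal{L}_f))$ is the closure of $(L-f)(P)$; as $P$ is connected and compact this image is the interval $[\,\min_P(L-f),\max_P(L-f)\,]=[\,L-\max_P f,\ L-\min_P f\,]$.

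The only step requiring care is the Riemann-sum convergence, which I expect to be the main (though routine) obstacle: it is handled by observing that $g\circ(L-f)$ is uniformly continuous on $P$ and that $\partial P$ is Lebesgue-null, so the contribution of lattice points near $\partial P$ is negligible. Testing against continuous functions $g$, rather than trying to compute the cumulative distribution $\lambda\mapsto \mathrm{vol}(R^{(\lambda)})$ directly, deliberately sidesteps any delicacy coming from atoms of $(L-f)_{\#}\,dx$ at values where an affine piece of $f$ is constant on a full-dimensional subregion of $P$.
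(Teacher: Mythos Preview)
Your proof is correct and follows essentially the same approach as the paper: both test the Duistermaat--Heckman measure against continuous functions, use the explicit description of the $\mathbf{G}_m$-weights from \eqref{dual_action}, and recognize the resulting normalized lattice sum as a Riemann sum converging to $\dashint_{P}\rho(L-f)\,dx$. Your write-up is simply more detailed than the paper's two-line version, in particular supplying the justification for the Riemann-sum convergence and for the support statement (which the paper leaves implicit).
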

\begin{proof}
Let $\rho$ be a bounded continuous function on $\mathbf{R}$. 
Then we have 
\begin{align*}
\int_{\mathbf{R}}\rho\,d\mathrm{DH}(\mathcal{X}_{f}, \mathcal{L}_{f}) 
&= \lim_{m \to \infty}\frac{1}{E_{P}(m)}\sum_{\alpha \in mP \cap M}\rho\left(L-f\left(\frac{\alpha}{m}\right)\right) \\
&= \dashint_{P}\rho(L-f)\,dx. 
\end{align*} 
\end{proof}

\begin{proposition}\label{ToricNA_E}
For each $f \in \mathcal{C}_{PL}^{\mathbf{Q}}$ we have 
\begin{align*}
E^{NA}(\mathcal{X}_{f}, \mathcal{L}_{f}) = L -\dashint_{P}f\,dx. 
\end{align*} 
\end{proposition}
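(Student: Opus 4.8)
The plan is to read off the formula directly from Proposition \ref{ToricDH} and the definition of $E^{NA}$. By definition, $E^{NA}(\mathcal{X}_f, \mathcal{L}_f)$ is the barycenter $\int_{\mathbf{R}}\lambda\,d\mathrm{DH}(\mathcal{X}_f, \mathcal{L}_f)$ of the Duistermaat--Heckman measure, and Proposition \ref{ToricDH} identifies this measure with the pushforward $(L-f)_{\#}\bigl(dx/\mathrm{vol}(P)\bigr)$ of normalized Lebesgue measure on $P$ along $x \mapsto L - f(x)$. The only step is then the elementary change-of-variables identity $\int_{\mathbf{R}}\lambda\,d(g_{\#}\mu) = \int g\,d\mu$, valid for a finite Borel measure $\mu$ and a $\mu$-integrable $g$, applied with $\mu = dx/\mathrm{vol}(P)$ on $P$ and $g = L - f$. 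This gives
\begin{align*}
E^{NA}(\mathcal{X}_f, \mathcal{L}_f) = \dashint_{P}(L - f)\,dx = L - \dashint_{P}f\,dx,
\end{align*}
since $L$ is constant.

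There is a minor bookkeeping point: Proposition \ref{ToricDH} and the construction in Section \ref{sec:ToricTc} were stated under the simplifying assumption that $f$ is integral and $L \in \mathbf{Z}_{>0}$. For general $f \in \mathcal{C}_{PL}^{\mathbf{Q}}$, I would pick $k \in \mathbf{Z}_{>0}$ with $kf$ integral, note via Example \ref{toric_tc_scaling_base_change} that $(\mathcal{X}_{kf}, \mathcal{L}_{kf})$ is the base change of $(\mathcal{X}_f, \mathcal{L}_f)$ along $z \mapsto z^{k}$, and use that base change rescales $\mathrm{DH}$ (hence $E^{NA}$) by the factor $k$; as the right-hand side $L - \dashint_P f\,dx$ likewise rescales by $k$ when $(f, L)$ is replaced by $(kf, kL)$, the integral case suffices.

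I do not expect any real obstacle. If one prefers to avoid invoking Proposition \ref{ToricDH}, an equally short alternative uses the intersection formula $E^{NA}(\mathcal{X}, \mathcal{L}) = (\overline{\mathcal{L}}^{n+1})/\bigl((n+1)(L^{n})\bigr)$ together with the fact (stated just before \eqref{compactification_polytope}) that the canonical compactification of $(\mathcal{X}_f, \mathcal{L}_f)$ is the polarized toric variety associated to $\mathcal{Q} = \{(x, y) : x \in P,\ f(x) - L \leq y \leq 0\}$. Since the top self-intersection of the polarization of a polarized toric variety equals $(n+1)!$ times the Euclidean volume of its polytope, this gives $E^{NA}(\mathcal{X}_f, \mathcal{L}_f) = \mathrm{vol}(\mathcal{Q})/\mathrm{vol}(P)$, and Fubini's theorem yields $\mathrm{vol}(\mathcal{Q}) = \int_{P}(L - f(x))\,dx$, recovering the same value. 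The only care needed here is checking that the compactifying polytope is precisely $\mathcal{Q}$, which is the content of the remark before \eqref{compactification_polytope}.
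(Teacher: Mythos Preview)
Your proposal is correct and follows essentially the same approach as the paper: both compute $E^{NA}$ as the barycenter of the Duistermaat--Heckman measure and reduce it to $\dashint_P(L-f)\,dx$. The paper redoes the discrete Riemann-sum limit inline, whereas you invoke Proposition~\ref{ToricDH} and the pushforward identity; the content is the same.
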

\begin{proof}
Let $f \in \mathcal{C}_{PL}^{\mathbf{Q}}$. 
Then we have 
\begin{align*}
E^{NA}(\mathcal{X}_{f}, \mathcal{L}_{f}) 
&= \int_{\mathbf{R}}\lambda\,d\mathrm{DH}(\mathcal{X}_{f}, \mathcal{L}_{f}) \\
&= \lim_{m \to \infty}\frac{1}{E_{P}(m)}\sum_{\alpha \in mP \cap M}\left(L-f\left(\frac{\alpha}{m}\right)\right) \\
&= \dashint_{P}(L-f)\,dx \\
&= L -\dashint_{P}f\,dx, 
\end{align*}
as required. 
\end{proof}

\begin{proposition}\label{ToricNA_J}
For each $f \in \mathcal{C}_{PL}^{\mathbf{Q}}$ we have 
\begin{align*}
&J^{NA}(\mathcal{X}_{f}, \mathcal{L}_{f}) = \|f\|_{J} \coloneqq \dashint_{P}f\,dx - \min_{P}f, \\
&J_{T}^{NA}(\mathcal{X}_{f}, \mathcal{L}_{f}) = \|f\|_{J} \coloneqq \inf_{\text{$\xi \colon$affine}}\left(\dashint_{P}(f+\xi)\,dx - \min_{P}(f+\xi)\right). 
\end{align*} 
\end{proposition}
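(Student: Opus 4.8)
The plan is to derive both equalities directly from the explicit Duistermaat--Heckman measure of Proposition~\ref{ToricDH} and the energy formula of Proposition~\ref{ToricNA_E}, together with the description of the filtration $\mathcal{F}(\mathcal{X}_{f},\mathcal{L}_{f})$ obtained in Section~\ref{sec:ToricTc}.

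For the unreduced $J$-functional I would simply unwind the definition $J^{NA}(\mathcal{X}_{f},\mathcal{L}_{f}) = \sup\mathrm{supp}(\mathrm{DH}(\mathcal{X}_{f},\mathcal{L}_{f})) - E^{NA}(\mathcal{X}_{f},\mathcal{L}_{f})$. By the last assertion of Proposition~\ref{ToricDH} the support of $\mathrm{DH}(\mathcal{X}_{f},\mathcal{L}_{f})$ is the interval $[L-\max_{P}f,\ L-\min_{P}f]$, so its supremum equals $L-\min_{P}f$; and by Proposition~\ref{ToricNA_E} we have $E^{NA}(\mathcal{X}_{f},\mathcal{L}_{f}) = L-\dashint_{P}f\,dx$. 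Subtracting gives $J^{NA}(\mathcal{X}_{f},\mathcal{L}_{f}) = \dashint_{P}f\,dx-\min_{P}f = \|f\|_{J}$, which is the first identity.

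For the reduced $J$-functional, recall $J_{T}^{NA}(\mathcal{X}_{f},\mathcal{L}_{f}) = \inf_{\xi\in N_{\mathbf{R}}}J^{NA}(\mathcal{F}_{\xi})$, where $\mathcal{F} = \mathcal{F}(\mathcal{X}_{f},\mathcal{L}_{f})$ is the $T$-equivariant filtration described in Section~\ref{sec:ToricTc}: one has $F^{\lambda}R_{m} = \mathrm{Vect}_{\mathbf{C}}\{\chi^{\alpha}s^{m}\mid\alpha\in mP\cap M,\ \lambda\le m(L-f(\alpha/m))\}$, the $T$-weight decomposition being $R_{m}=\bigoplus_{\alpha\in mP\cap M}\mathbf{C}\chi^{\alpha}s^{m}$ with $\mathbf{C}\chi^{\alpha}s^{m}$ the $\alpha$-weight space. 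Hence, by the definition of the twist, $(F_{\xi}^{\lambda}R_{m})_{\alpha} = (F^{\lambda-\langle\alpha,\xi\rangle}R_{m})_{\alpha}$ equals $\mathbf{C}\chi^{\alpha}s^{m}$ precisely when $\lambda\le m\bigl(L-(f-\langle\,\cdot\,,\xi\rangle)(\alpha/m)\bigr)$. Repeating the lattice-point count in the proof of Proposition~\ref{ToricDH} with $L-f$ replaced by the continuous function $g_{\xi}\coloneqq L-f+\langle\,\cdot\,,\xi\rangle$ on $P$ (and using $N_{m}=E_{P}(m)=\mathrm{vol}(P)m^{n}+O(m^{n-1})$) yields $\mathrm{DH}(\mathcal{F}_{\xi}) = (g_{\xi})_{\#}\bigl(dx/\mathrm{vol}(P)\bigr)$, so $\sup\mathrm{supp}(\mathrm{DH}(\mathcal{F}_{\xi})) = \max_{P}g_{\xi} = L-\min_{P}(f-\langle\,\cdot\,,\xi\rangle)$ and $E^{NA}(\mathcal{F}_{\xi}) = \dashint_{P}g_{\xi}\,dx = L-\dashint_{P}(f-\langle\,\cdot\,,\xi\rangle)\,dx$, whence $J^{NA}(\mathcal{F}_{\xi}) = \dashint_{P}(f-\langle\,\cdot\,,\xi\rangle)\,dx-\min_{P}(f-\langle\,\cdot\,,\xi\rangle)$. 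Since this expression is unchanged when a constant is added to $f$, and since $\{-\langle\,\cdot\,,\xi\rangle\mid\xi\in N_{\mathbf{R}}\}$ exhausts all linear functions on $M_{\mathbf{R}}$, taking the infimum over $\xi\in N_{\mathbf{R}}$ turns it into the infimum over all affine functions, which is the second identity (the first being recovered from $\xi=0$).

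The only genuinely non-formal point, and the step I expect to require the most care, is checking that $\mathcal{F}_{\xi}$ still lies within the scope of \cite[Theorem~5.3]{BHJ17}: it is linearly bounded because $f$ and $\langle\,\cdot\,,\xi\rangle$ are bounded on the compact set $P$, and its normalized weight measures still converge to the asserted pushforward. One must be slightly careful because $\mathcal{F}_{\xi}$ is in general only an $\mathbf{R}$-filtration (not a $\mathbf{Z}$-filtration) when $\xi$ is irrational, and because $f-\langle\,\cdot\,,\xi\rangle$ need not be convex; however, convexity of the defining function plays no role in the Duistermaat--Heckman and $E^{NA}$ computations above (only dimensions of filtered pieces enter), so the argument goes through unchanged.
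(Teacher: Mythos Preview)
Your proposal is correct and follows essentially the same route as the paper: both compute $J^{NA}$ directly from Propositions~\ref{ToricDH} and~\ref{ToricNA_E}, then for $J_{T}^{NA}$ write down the twisted filtration $\mathcal{F}_{\xi}$ explicitly from the toric description in Section~\ref{sec:ToricTc}, identify its Duistermaat--Heckman measure as the pushforward of normalized Lebesgue measure by $L-f$ shifted by the linear form associated to $\xi$, and read off $J^{NA}(\mathcal{F}_{\xi})$ before taking the infimum. Your extra caveat about $\mathcal{F}_{\xi}$ being only an $\mathbf{R}$-filtration for irrational $\xi$ is a worthwhile observation that the paper passes over silently; it does not affect the limit computation, as you note.
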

\begin{proof}
The formula for $J^{NA}(\mathcal{X}_{f}, \mathcal{L}_{f})$ is a direct consequence of Propositions \ref{ToricDH} and \ref{ToricNA_E}. 
Let us consider $J_{T}^{NA}(\mathcal{X}_{f}, \mathcal{L}_{f})$. 
For the graded ring $R(X_{P}, L_{P})$, $R_{m}$ can be identified with 
\begin{align*}
R_{m} = H^{0}(X_{P}, L_{P}^{m}) \cong \mathrm{Vect}_{\mathbf{C}}\{\chi^{\alpha}t^{m} \mid \alpha \in mP \cap M\}.  
\end{align*}
The induced $T$-equivariant filtration $\mathcal{F}(\mathcal{X}_{f}, \mathcal{L}_{f})$ is then given by 
\begin{align*}
&F^{\lambda}R_{m} = \mathrm{Vect}_{\mathbf{C}}\{\chi^{\alpha}t^{m} \mid \alpha \in mP \cap M, m(f(\alpha/m)-L) \leq \lambda \}, \\
&(F^{\lambda}R_{m})_{\alpha} = \mathrm{Vect}_{\mathbf{C}}\{\chi^{\alpha}t^{m} \mid m(f(\alpha/m)-L) \leq \lambda \}. 
\end{align*}
Let $\xi \in N_{\mathbf{R}}$. 
If necessary, replace $L \in \mathbf{Z}_{>0}$ so that $L-f-\xi > 0$. 
Then the $\xi$-twist $\mathcal{F}_{\xi}$ of $\mathcal{F}(\mathcal{X}_{f}, \mathcal{L}_{f})$ is given by 
\begin{align*}
(F_{\xi}^{\lambda}R_{m})_{\alpha} 
&= (F^{\lambda - \langle \alpha, \xi \rangle}R)_{\alpha} \\
&= \mathrm{Vect}_{\mathbf{C}}\{\chi^{\alpha}t^{m} \mid m(f(\alpha/m)-L) \leq \lambda - \langle \alpha, \xi \rangle \} \\
&= \mathrm{Vect}_{\mathbf{C}}\{\chi^{\alpha}t^{m} \mid m(f(\alpha/m)+\xi(\alpha/m)-L) \leq \lambda\}, \\
F_{\xi}^{\lambda}R_{m} 
&= \bigoplus_{\alpha \in M}(F_{\xi}^{\lambda}R_{m})_{\alpha}. 
\end{align*}
Hence we have 
\begin{align*}
\nu_{m} 
&= -\frac{d}{d\lambda}\frac{1}{E_{P}(m)}\dim F_{\xi}^{m\lambda}H^{0}(X, L^{m}) \\
&= \frac{1}{E_{P}(m)}\sum_{\alpha \in mP \cap M}\delta_{L-f(\alpha/m)-\xi(\alpha/m)}
\end{align*}
and 
\begin{align*}
\int_{\mathbf{R}}\rho\,d\nu  
&= \lim_{m \to \infty}\frac{1}{E_{P}(m)}\sum_{\alpha \in mP \cap M}\rho(L-f(\alpha/m)-\xi(\alpha/m)) \\
&= \dashint_{P}\rho(L-f-\xi)\,dx 
\end{align*}
for any bounded continuous function $\rho$ on $\mathbf{R}$. 
Therefore,  for each Borel measurable set $A$ in $P$ we have 
\begin{align*}
\nu(A) = \frac{1}{\mathrm{vol}(P)}\mathrm{vol}(P)\{x \in P \mid L-f-\xi \in A\}
\end{align*}
and 
\begin{align*}
&\int_{\mathbf{R}}\lambda\,d\nu = L-\dashint_{P}(f+\xi)\,dx, \\
&\mathrm{supp}(\nu) = [L-\max_{P}(f+\xi), L-\min_{P}(f+\xi)]. 
\end{align*}
Hence, we obtain 
\begin{align*}
J^{NA}(\mathcal{F}_{\xi}) 
&= \sup \mathrm{supp}(\nu) - \int_{\mathbf{R}}\lambda\,\nu = \dashint_{P}(f+\xi)\,dx - \min_{P}(f+\xi), \\
J_{T}^{NA}(\mathcal{X}_{f}, \mathcal{L}_{f}) 
&= \inf_{\text{$\xi \colon$affine}}\left(\dashint_{P}(f+\xi)\,dx - \min_{P}(f+\xi)\right). 
\end{align*}
\end{proof}

\begin{proposition}\label{ToricNA_M}
For each $f \in \mathcal{C}_{PL}^{\mathbf{Q}}$ we have 
\begin{align*}
M^{NA}(\mathcal{X}_{f}, \mathcal{L}_{f}) 
&= \frac{1}{\mathrm{vol}(P)}L(f) \\
&\coloneqq \frac{1}{\mathrm{vol}(P)}\left(\int_{\partial P}f\,d\sigma - \overline{s}\int_{P}f\,dx\right). 
\end{align*} 
\end{proposition}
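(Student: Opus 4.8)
The plan is to compute $M^{NA}(\mathcal X_f,\mathcal L_f)$ directly from the intersection-number formula for the non-Archimedean $K$-energy,
\[
M^{NA}(\mathcal X,\mathcal L)=\overline s\,\frac{(\overline{\mathcal L}^{n+1})}{(n+1)(L^n)}+\frac{1}{(L^n)}\bigl(K^{\mathrm{log}}_{\overline{\mathcal X}/\mathbf P^1}\cdot\overline{\mathcal L}^n\bigr),
\]
using the explicit description of the canonical compactification of $(\mathcal X_f,\mathcal L_f)$ as the polarized toric variety $(\mathcal X_{\mathcal Q},\mathcal L_{\mathcal Q})$ attached to the polytope $\mathcal Q$ of \eqref{compactification_polytope}, so that here $\overline{\mathcal X}=\mathcal X_{\mathcal Q}$ and $\overline{\mathcal L}=\mathcal L_{\mathcal Q}$. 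First I would reduce to the integral case: replacing $f$ by a positive integer multiple multiplies the left-hand side by that factor (Proposition \ref{homogenuity_NA_K-energy}, since the corresponding operation on test configurations is a base change, cf. Example \ref{toric_tc_scaling_base_change}) and multiplies $L(f)$ by the same factor, so we may assume $f=\max_k\ell_k$ with each $\ell_k$ affine with integral coefficients, $L\in\mathbf Z_{>0}$ with $L>\max_P f$, and $\mathcal Q$ an integral polytope with lattice vertices. Then $(L^n)=n!\,\mathrm{vol}(P)$, $(\overline{\mathcal L}^{n+1})=(n+1)!\,\mathrm{vol}(\mathcal Q)$ and $\mathrm{vol}(\mathcal Q)=\int_P(L-f)\,dx$, so the first term of $M^{NA}$ equals $\overline s\bigl(L-\dashint_P f\,dx\bigr)$, which is precisely $\overline s\,E^{NA}(\mathcal X_f,\mathcal L_f)$ by Proposition \ref{ToricNA_E}.

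The core of the argument is to identify $K^{\mathrm{log}}_{\overline{\mathcal X}/\mathbf P^1}$ toric-combinatorially. Since $K^{\mathrm{log}}_{\mathbf P^1}=K_{\mathbf P^1}+[0]+[\infty]$ is trivial, one has $K^{\mathrm{log}}_{\overline{\mathcal X}/\mathbf P^1}=K_{\overline{\mathcal X}}+\overline{\mathcal X}_{0,\mathrm{red}}+\overline{\mathcal X}_\infty$. The facets of $\mathcal Q$ fall into three families: the \emph{top} facet $\{y=0\}\cong P$, with primitive inward normal $(0,-1)$; the \emph{lateral} facets $\widetilde F_j=\{(x,y):x\in F_j,\ f(x)-L\le y\le 0\}$ over the facets $F_j$ of $P$, with primitive inward normal $(\lambda_j,0)$ $(j=1,\dots,r)$; and the \emph{bottom} facets lying on the graph $y=f(x)-L$, each with a primitive inward normal whose last coordinate is $1$. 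Because $\overline\pi$ is the height ($y$-)projection, it contracts exactly the top and bottom facets, so (up to relabelling $0$ and $\infty$) $\overline{\mathcal X}_\infty$ is the prime divisor $D_\infty$ attached to the top facet and $\overline{\mathcal X}_{0,\mathrm{red}}=\sum_k D_{0,k}$ is the reduced sum of the bottom divisors, while no $\widetilde D_j$ is contained in a fibre. Combining this with $K_{\overline{\mathcal X}}=-\bigl(D_\infty+\sum_k D_{0,k}+\sum_{j=1}^r\widetilde D_j\bigr)$ gives the clean identity $K^{\mathrm{log}}_{\overline{\mathcal X}/\mathbf P^1}=-\sum_{j=1}^r\widetilde D_j$.

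It then remains to evaluate $(\widetilde D_j\cdot\overline{\mathcal L}^n)$. By the toric intersection formula this is $n!$ times the lattice-normalized $n$-volume of the facet $\widetilde F_j$. Now $\widetilde F_j$ is the prism over $F_j$ of height $L-f(x)$ sitting in the affine hyperplane $\langle\lambda_j,x\rangle+d_j=0$ of $M_{\mathbf R}\times\mathbf R$, whose parallel lattice $(\lambda_j^\perp\cap M)\times\mathbf Z$ has Euclidean covolume $|\lambda_j|$; hence its lattice volume equals $|\lambda_j|^{-1}\int_{F_j}(L-f)\,d|\cdot|_{n-1}$, and summing over $j$ and invoking the definition of $\sigma$ yields $\bigl(K^{\mathrm{log}}_{\overline{\mathcal X}/\mathbf P^1}\cdot\overline{\mathcal L}^n\bigr)=-n!\int_{\partial P}(L-f)\,d\sigma$. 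Feeding the two terms back into $M^{NA}$ and using $\overline s=\sigma(\partial P)/\mathrm{vol}(P)$, the parts proportional to $L$ cancel and one is left with
\[
M^{NA}(\mathcal X_f,\mathcal L_f)=\frac{1}{\mathrm{vol}(P)}\Bigl(\int_{\partial P}f\,d\sigma-\overline s\int_P f\,dx\Bigr)=\frac{1}{\mathrm{vol}(P)}L(f),
\]
as claimed.

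The step I expect to require the most care is the normalization bookkeeping in the last paragraph — checking that the lattice (rather than Euclidean) volume of $\widetilde F_j$ carries exactly the factor $|\lambda_j|^{-1}$ that assembles the $\widetilde F_j$-contributions into the measure $\sigma$ on $\partial P$ — together with the minor point that $\mathcal X_{\mathcal Q}$ need not be smooth, so that $K_{\overline{\mathcal X}}$ and the $\widetilde D_j$ are a priori only Weil divisor classes; this causes no trouble, since $\overline{\mathcal L}=\mathcal L_{\mathcal Q}$ is an honest ample Cartier divisor ($\mathcal Q$ has lattice vertices) and the intersection pairing of a Weil divisor with $n$ Cartier divisors on a normal $(n+1)$-fold is well defined. (Alternatively, one could run the whole computation with $\mathbf Q$-polytopes and $\mathbf Q$-divisors and skip the rescaling to the integral case, or deduce the identity from the slope formula of Theorem \ref{slope_formula} together with Proposition \ref{K-energy_Legendre_dual}; the intersection-theoretic route above seems the most transparent.)
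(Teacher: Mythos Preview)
Your proof is correct and takes a genuinely different route from the paper's. The paper argues in two steps: first, assuming the central fibre of $(\mathcal X_f,\mathcal L_f)$ is generically reduced so that $M^{NA}=DF$, it computes $DF(\mathcal X_f,\mathcal L_f)$ by expanding the total weight $w_m=\sum_{\alpha\in mP\cap M}m\bigl(L-f(\alpha/m)\bigr)$ as an Ehrhart-type polynomial in $m$ (quoting \cite[Lemma 3.3]{ZZ08}) and reading off $b_0,b_1$; then, for general $f$, it invokes semistable reduction (Proposition \ref{equiv_NA_K_DF}) together with the homogeneity of $M^{NA}$ under base change (Proposition \ref{homogenuity_NA_K-energy}) and of $L(\cdot)$ under scaling to reduce to the first case. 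Your argument bypasses $DF$ entirely: you work directly with the intersection-theoretic definition of $M^{NA}$ on the toric compactification $\mathcal X_{\mathcal Q}$, identify $K^{\mathrm{log}}_{\overline{\mathcal X}/\mathbf P^1}=-\sum_j\widetilde D_j$ via the explicit facet description of $\mathcal Q$ (this is the key toric observation), and compute each term as a lattice volume. This is arguably cleaner---it needs no appeal to semistable reduction and no comparison of $M^{NA}$ with $DF$---at the cost of requiring the toric intersection formula on a possibly singular normal variety as input (which, as you note, is unproblematic since $\overline{\mathcal L}$ is Cartier). The paper's route, in turn, stays closer to the classical Donaldson--Futaki set-up and uses only lattice-point counting, which some readers may find more elementary.
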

\begin{proof}
Let $f \in \mathcal{C}_{PL}^{\mathbf{Q}}$. 
First we assume the central fiber of $(\mathcal{X}_{f}, \mathcal{L}_{f})$ is generically reduced. 
Then $M^{NA}(\mathcal{X}_{f}, \mathcal{L}_{f})$ coincides with $DF(\mathcal{X}_{f}, \mathcal{L}_{f})$. 
Further, by \eqref{dual_action} and \cite[Lemma 3.3]{ZZ08} we have 
\begin{align*}
w_{m} 
&= \sum_{\alpha \in mP \cap M}m\left(L-f\left(\frac{\alpha}{m}\right)\right) \\
&= m\left(m^{n}\int_{P}(L-f)\,dx + \frac{m^{n-1}}{2}\int_{\partial P}(L-f)\,d\sigma +O(m^{n-2})\right) \\
&= m^{n+1}\int_{P}(L-f)\,dx + \frac{m^{n}}{2}\int_{\partial P}(L-f)\,d\sigma +O(m^{n-1}) 
\end{align*}
for any $m \in \mathbf{Z}_{>0}$. 
By noting $\mathrm{vol}(P) = (L^{n})/n!$ and $\overline{s} = \sigma(\partial P)/\mathrm{vol}(P)$, we obtain 
\begin{align*}
M^{NA}(\mathcal{X}_{f}, \mathcal{L}_{f}) 
&= DF(\mathcal{X}_{f}, \mathcal{L}_{f}) \\
&= \frac{n!}{(L^{n})}\left(-\int_{\partial P}(L-f)\,d\sigma+\overline{s}\int_{P}(L-f)\,dx \right) \\
&= \frac{1}{\mathrm{vol}(P)}\left(\int_{\partial P}f\,d\sigma-\overline{s}\int_{P}f\,dx \right). 
\end{align*}

For general cases, we consider $df$ for $d \in \mathbf{Z}_{>0}$. 
This corresponds to replacing $(\mathcal{X}_{f}, \mathcal{L}_{f})$ with its base change $(\mathcal{X}_{df}, \mathcal{L}_{df})$ (see Example \ref{toric_tc_scaling_base_change} (2)). 
Then, by Proposition \ref{equiv_NA_K_DF} there exists $d_{0} \in \mathbf{Z}_{>0}$ such that $DF(\mathcal{X}_{df}, \mathcal{L}_{df}) = M^{NA}(\mathcal{X}_{df}, \mathcal{L}_{df}) = dM^{NA}(\mathcal{X}_{f}, \mathcal{L}_{f})$ for all $d \in \mathbf{Z}_{>0}$ divisible by $d_{0}$. 
Further, by Proposition \ref{homogenuity_NA_K-energy} we have 
\begin{align*}
M^{NA}(\mathcal{X}_{f}, \mathcal{L}_{f}) 
&= \frac{1}{d}M^{NA}(\mathcal{X}_{df}, \mathcal{L}_{df}) \\
&= \frac{1}{d}\left[\frac{1}{\mathrm{vol}(P)}\left(\int_{\partial P}df\,d\sigma-\overline{s}\int_{P}df\,dx \right)\right] \\
&= \frac{1}{\mathrm{vol}(P)}\left(\int_{\partial P}f\,d\sigma-\overline{s}\int_{P}f\,dx \right), 
\end{align*}
as required. 
\end{proof}

\begin{proposition}\label{ToricNA_MV}
For each $f \in \mathcal{C}_{PL}^{\mathbf{Q}}$ we have 
\begin{align*}
H_{V}^{NA}(\mathcal{X}_{f}, \mathcal{L}_{f}) 
&= -\dashint_{P}fV\,dx, \\
M_{V}^{NA}(\mathcal{X}_{f}, \mathcal{L}_{f}) 
&\coloneqq \frac{1}{\mathrm{vol}(P)}\left(\int_{\partial P}f\,d\sigma - \int_{P}(\overline{s} + V)f\,dx\right). 
\end{align*}
\end{proposition}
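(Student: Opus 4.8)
The plan is to compute $H_V^{NA}(\mathcal{X}_f,\mathcal{L}_f)$ directly from its definition as an asymptotic trace, in the same spirit as Propositions \ref{ToricNA_E}--\ref{ToricNA_M}, and then read off the formula for $M_V^{NA}$ from $M_V^{NA}=M^{NA}+H_V^{NA}$ together with Proposition \ref{ToricNA_M}. As in the proof of Proposition \ref{ToricNA_M} it is enough to treat the case where $f$ is integral: both sides of the asserted identity for $H_V^{NA}$ scale linearly under $f\mapsto df$ (on the right this is immediate; on the left, replacing $(\mathcal{X}_f,\mathcal{L}_f)$ by $(\mathcal{X}_{df},\mathcal{L}_{df})$ multiplies the generator of the fibre $\mathbf{G}_m$-action by $d$ and leaves the $T_V$-action and $N_m$ unchanged, cf. Example \ref{toric_tc_scaling_base_change}(2)). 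Note also that, unlike for $DF$, no reducedness hypothesis on the central fibre is needed here, since $H_V^{NA}$ is defined directly through traces on the finite-dimensional spaces $H^0((\mathcal{X}_f)_0,(\mathcal{L}_f^m)_0)$.

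So fix $f\in\mathcal{C}_{PL}^{\mathbf{Q}}$ integral and $L\in\mathbf{Z}_{>0}$ with $L-f>0$ on $P$, so $(\mathcal{X}_f,\mathcal{L}_f)$ has exponent $1$. The central fibre cohomology $\mathrm{Gr}_m^{\mathcal{F}}\mathcal{R}$ has the monomial basis $\{\chi^\alpha s^m\mid\alpha\in mP\cap M\}$, and since $(\mathcal{X}_f,\mathcal{L}_f)$ is $T$-equivariant the fibre $\mathbf{G}_m$-action and the $T_V$-action are simultaneously diagonal on it. By \eqref{dual_action} the generator $\Lambda_m$ of the former acts on $\chi^\alpha s^m$ by $m(L-f(\alpha/m))$. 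The generator $\Theta_m$ of the latter acts on $\chi^\alpha s^m$ by $m\,V(\alpha/m)$ up to an additive constant independent of $\alpha$, where $V$ is the affine function on $P$ attached to the extremal K\"ahler vector field, with the normalization used throughout the paper --- that is, $\overline{s}+V$ is the $L^2(P,dx)$-projection of the scalar curvature $s(u)$, equivalently $\int_{\partial P}\xi\,d\sigma=\int_P(\overline{s}+V)\xi\,dx$ for every affine $\xi$; taking $\xi=1$ gives in particular $\int_P V\,dx=0$. Identifying the $T_V$-weights is the heart of the matter: it follows from the toric description of the $T$-action on $R_m$ (the weight of $\chi^\alpha$ being $\alpha\in M$) together with the description of $\Theta_m=\theta_{m*}(1)$ via the representation $\theta_{m*}$ recalled in Section \ref{sec:ToricTc}, and is essentially \cite[Lemma 3.3 and the surrounding discussion]{ZZ08}. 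Because $H_V^{NA}$ involves only the traceless parts $\Lambda_m^\circ,\Theta_m^\circ$, the undetermined additive constant in the $\Theta_m$-weights plays no role.

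By commutativity,
\[
\mathrm{tr}(\Lambda_m^\circ\Theta_m^\circ)=\sum_{\alpha\in mP\cap M}\lambda_\alpha\theta_\alpha-\frac{1}{N_m}\Bigl(\sum_\alpha\lambda_\alpha\Bigr)\Bigl(\sum_\alpha\theta_\alpha\Bigr),
\]
with $\lambda_\alpha=m(L-f(\alpha/m))$ and $\theta_\alpha=m\,V(\alpha/m)$. Expanding each sum by the Euler--Maclaurin/Ehrhart asymptotics already used in Proposition \ref{ToricNA_M} and \cite{ZZ08} --- for $H_V^{NA}$ only the leading (volume) term $\int_P(\,\cdot\,)\,dx$ survives into the limit, the $\sigma$-boundary terms being of lower order --- gives
\[
\mathrm{tr}(\Lambda_m^\circ\Theta_m^\circ)=m^{n+2}\Bigl(\int_P(L-f)V\,dx-\frac{1}{\mathrm{vol}(P)}\Bigl(\int_P(L-f)\,dx\Bigr)\Bigl(\int_P V\,dx\Bigr)\Bigr)+O(m^{n+1}).
\]
Dividing by $m^2N_m=\mathrm{vol}(P)\,m^{n+2}+O(m^{n+1})$, letting $m\to\infty$, and using $\int_P V\,dx=0$, we obtain $H_V^{NA}(\mathcal{X}_f,\mathcal{L}_f)=\dashint_P(L-f)V\,dx=-\dashint_P fV\,dx$. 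Finally $M_V^{NA}(\mathcal{X}_f,\mathcal{L}_f)=M^{NA}(\mathcal{X}_f,\mathcal{L}_f)+H_V^{NA}(\mathcal{X}_f,\mathcal{L}_f)$ and Proposition \ref{ToricNA_M} yield the stated expression for $M_V^{NA}$.

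The step I expect to require the most care is the identification of the $T_V$-weights $\theta_\alpha$ on the monomial basis --- in particular fixing the normalization of $V$ so that $\int_P V\,dx=0$ and $\overline{s}+V$ is the relevant projection; everything after that is the routine asymptotic bookkeeping of this subsection. An alternative, if one prefers to avoid the weight computation, is to combine the slope formula of Theorem \ref{slope_formula} with the toric expression $H_V(\varphi)=-\dashint_P(u-u_P)V\,dx$ of Proposition \ref{rel_K-energy_Legendre_dual}, after checking that the geodesic ray associated with $(\mathcal{X}_f,\mathcal{L}_f)$ has symplectic potentials $u^t$ with $(u^t-u_P)/t\to f-L$; but the direct trace computation is more self-contained and parallels the rest of the section.
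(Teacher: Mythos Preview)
Your proof is correct and follows essentially the same route as the paper: both compute $H_V^{NA}$ directly from the trace definition by identifying the diagonal weights $\lambda_\alpha=m(L-f(\alpha/m))$ and $\theta_\alpha=mV(\alpha/m)$ on the monomial basis, expand $\mathrm{tr}(\Lambda_m^\circ\Theta_m^\circ)$ via the traceless formula, pass to the Riemann-sum limit, and use $\int_P V\,dx=0$ to kill the cross term. Your write-up is somewhat more explicit about the reduction to integral $f$ and the normalization of $V$, and your closing remark about the alternative slope-formula argument is a reasonable aside, but none of this departs from the paper's approach.
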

\begin{proof}
It is enough to show the formula for $H_{V}^{NA}$. 
Since $\int_{P}V\,dx = 0$, we have 
\begin{align*}
H_{V}^{NA}(\mathcal{X}_{f}, \mathcal{L}_{f}) 
&= \lim_{m \to \infty}\frac{1}{m^{2}}\left\{\frac{1}{E_{P}(m)}\sum_{\alpha \in mP \cap M}m\left(L-f\left(\frac{\alpha}{m}\right)\right)V(\alpha) \right. \\
&\quad \left. -\frac{1}{(E_{P}(m))^{2}}\sum_{\alpha \in mP \cap M}m\left(L-f\left(\frac{\alpha}{m}\right)\right)\sum_{\alpha \in mP \cap M}V(\alpha) \right\} \\
&= \lim_{m \to \infty}\frac{1}{E_{P}(m)}\sum_{\alpha \in mP \cap M}\left(L-f\left(\frac{\alpha}{m}\right)\right)V\left(\frac{\alpha}{m}\right) \\
&= \frac{1}{\mathrm{vol}(P)}\int_{P}(L-f)V\,dx \\
&= -\dashint_{P}fV\,dx, 
\end{align*}
as required. 
\end{proof}

By Proposition \ref{ToricNA_MV}, we have the following 

\begin{corollary}
Let $(X_{P}, L_{P})$ be a polarized toric manifold. 
If $(X_{P}, L_{P})$ is uniformly relatively K-polystable, then there exists $\delta > 0$ such that 
\begin{align}\label{Toric_relK}
L_{V}(f) \geq \delta \|f\|_{J} 
\end{align}
for any $f \in \mathcal{C}_{PL}^{\mathbf{Q}}$. 
\end{corollary}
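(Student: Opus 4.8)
The plan is to deduce the inequality by specialising the defining inequality of uniform relative K-polystability to toric test configurations and then translating it through the convex-analytic dictionary of Section~\ref{sec:NAToricEnergy}.

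First I would take an arbitrary $f\in\mathcal{C}_{PL}^{\mathbf{Q}}$. Rescaling $f$ by a suitable positive integer multiplies both $L_{V}(f)$ and $\|f\|_{J}$ by the same positive factor, so the desired inequality is scale-invariant and $f$ may be assumed integral; the construction of Section~\ref{sec:ToricTc} then attaches to $f$ the toric test configuration $(\mathcal{X}_{f},\mathcal{L}_{f})$ for $(X_{P},L_{P})$. By that construction $\mathcal{X}_{f}$ is a \emph{normal} toric variety carrying a $T_{f}=T\times\mathbf{G}_{m}$-action, so $(\mathcal{X}_{f},\mathcal{L}_{f})$ is a $T$-equivariant normal test configuration, i.e.\ precisely one of the objects appearing in the definition of uniform relative K-polystability. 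Hence the hypothesis provides a $\delta_{0}>0$, independent of $f$, with $M_{V}^{NA}(\mathcal{X}_{f},\mathcal{L}_{f})\ge\delta_{0}\,J_{T}^{NA}(\mathcal{X}_{f},\mathcal{L}_{f})$. Now Proposition~\ref{ToricNA_MV} identifies the left-hand side with $\tfrac{1}{\mathrm{vol}(P)}L_{V}(f)=\tfrac{1}{\mathrm{vol}(P)}\bigl(\int_{\partial P}f\,d\sigma-\int_{P}(\overline{s}+V)f\,dx\bigr)$, while Proposition~\ref{ToricNA_J} identifies the right-hand side with $\|f\|_{J}$; combining these gives $L_{V}(f)\ge\delta_{0}\,\mathrm{vol}(P)\,\|f\|_{J}$, which is the assertion with $\delta\coloneqq\delta_{0}\,\mathrm{vol}(P)$.

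I do not expect a serious obstacle, since the statement is in essence a dictionary translation. The one point demanding care is the bookkeeping of normalisations: one must check that the exponent of $(\mathcal{X}_{f},\mathcal{L}_{f})$ and the normalisations of $M_{V}^{NA}$ and $J_{T}^{NA}$ are exactly those used in Propositions~\ref{ToricNA_M}, \ref{ToricNA_MV} and \ref{ToricNA_J}, so that no spurious $f$-dependent constant intervenes — this is already absorbed into the proofs of those propositions through the homogeneity of $M^{NA}$ under base change (Proposition~\ref{homogenuity_NA_K-energy}) and its coincidence with $DF$ after semistable reduction (Proposition~\ref{equiv_NA_K_DF}). Finally, I would note that the toric test configuration construction applies only to rational piecewise-affine \emph{convex} functions, so $\mathcal{C}_{PL}^{\mathbf{Q}}$ is to be read as that set, and the argument above then covers every one of its elements.
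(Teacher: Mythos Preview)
Your proposal is correct and follows exactly the approach the paper intends: the corollary is stated immediately after Proposition~\ref{ToricNA_MV} with the phrase ``By Proposition~\ref{ToricNA_MV}, we have the following,'' so the paper's proof is precisely the dictionary translation you spell out, using Propositions~\ref{ToricNA_J} and~\ref{ToricNA_MV} to identify $J_{T}^{NA}(\mathcal{X}_{f},\mathcal{L}_{f})=\|f\|_{J}$ and $M_{V}^{NA}(\mathcal{X}_{f},\mathcal{L}_{f})=\tfrac{1}{\mathrm{vol}(P)}L_{V}(f)$. Your care over normalisation, the scaling reduction to integral $f$, and the remark that $\mathcal{C}_{PL}^{\mathbf{Q}}$ consists of \emph{convex} rational piecewise-affine functions are all appropriate and fill in details the paper leaves implicit.
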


Our main theorem guarantees that the converse is also true, that is, a polarized toric manifold $(X_{P}, L_{P})$ satisfying \eqref{Toric_relK} for any $f \in \mathcal{C}_{PL}^{\mathbf{Q}}$ is uniformly relatively K-polystable. 

\section{Proof of Theorem \ref{equiv_strengthenings}}
\subsection{Spaces of convex functions}\label{sec:CvxFct}
Let $P \subset M_{\mathbf{R}}$ be an $n$-dimensional integral Delzant polytope defined by 
\begin{align*}
P = \{x \in M_{\mathbf{R}} \mid \langle \lambda_{j}, x \rangle + d_{j} \geq 0\ (j=1, \ldots, r)\}, 
\end{align*}
where $\lambda_{j} \in N$, $d_{j} \in \mathbf{Z}$, $r$ is the number of the facets, and each $\lambda_{j}$ is primitive. 
Throughout this section, we assume the origin 0 lies in the interior of $P$. 
Then $d_{j} > 0$ for any $j = 1, \ldots, r$. 
Let $\mathrm{CVX}(P)$ denote the set of all lower semicontinuous convex functions $f \colon P \to (-\infty, +\infty]$ such that $f \not \equiv+\infty$. 
Note that every function in $\mathrm{CVX}(P)$ is bounded from below since $P$ is compact. 
For each subset $\mathcal{F}$ of $\mathrm{CVX}(P)$, set 
\begin{align*}
\widetilde{\mathcal{F}} \coloneqq \{f \in \mathcal{F} \mid \inf_{P}f = f(0) = 0\}. 
\end{align*}
If $\mathcal{F}$ is closed under addition of affine functions, then for any function $f \in \mathcal{F}$ there exists an affine function $\ell$ such that $f + \ell \in \widetilde{\mathcal{F}}$ by the supporting hyperplane theorem. 

Let $\mathcal{E}_{1} \coloneqq \mathrm{CVX}(P) \cap L^{1}(P)$. 
By the toric pluripotential theory \cite[Proposition 3.2, Theorem 3.6, Proposition 3.9]{CGSZ19}, $\mathcal{E}_{1}$ can be identified with the metric completion of the space $(\mathcal{S}, d_{1})$. 
For each $j = 1, \ldots, r$, let $F_{j}$ denote the facet of $P$ defined by 
\begin{align*}
F_{j} = \{x \in P \mid \langle \lambda_{j}, x \rangle + d_{j} = 0\},  
\end{align*}
and define a subset $P^{\ast}$ of $P$ by 
\begin{align*}
P^{\ast} = P^{\circ} \cup \left(\bigcup_{j=1}^{r}F_{j}^{\circ}\right). 
\end{align*}
Here $F_{j}^{\circ}$ denotes the relative interior of $F_{j}$. 
Let $\mathcal{C}_{\ast}$ be the set of all elements of $\mathrm{CVX}(P)$ which are continuous on $P^{\ast}$ and integrable on $\partial P$. 
Note that the integral of any function in $\mathcal{C}_{\ast}$ over the boundary makes sense since the measure $\sigma$ is supported on the facets of $P$. 
A relation between $\mathcal{C}_{\ast}$ and $\mathcal{E}_{1}$ is given by the following proposition. 
\begin{proposition}\label{E1_bdry_int}
$\mathcal{C}_{\ast} = \mathcal{E}_{1} \cap L^{1}(\partial P)$. 
\end{proposition}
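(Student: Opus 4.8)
The plan is to prove the two set inclusions $\mathcal{C}_{\ast} \subset \mathcal{E}_{1} \cap L^{1}(\partial P)$ and $\mathcal{E}_{1} \cap L^{1}(\partial P) \subset \mathcal{C}_{\ast}$ separately. For the first, note that any $f \in \mathcal{C}_{\ast}$ is by definition an element of $\mathrm{CVX}(P)$ that is integrable on $\partial P$, so the only thing to check is that $f \in L^{1}(P)$, i.e.\ $f \in \mathcal{E}_{1}$. Since $f$ is a convex function on the compact polytope $P$, it is bounded above on $P$ (by the maximum of its values on the vertices, once one knows the relevant boundedness), and bounded below because $P$ is compact; the only possible failure of $L^1(P)$-integrability would come from $f \equiv +\infty$ on some face, but the continuity of $f$ on $P^{\ast} \supset P^{\circ}$ already forces $f$ to be finite and continuous on the open dense set $P^{\circ}$, hence locally bounded there, and convexity then gives an affine lower bound while the values near the boundary are controlled. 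The point is that a convex function finite on $P^{\circ}$ is automatically in $L^1(P)$ because $P^{\circ}$ has full measure and a finite convex function on an open set is locally integrable, with the only integrability issue being near $\partial P$ where convexity forces $f$ to be dominated by an $L^1$ function (e.g.\ a suitable multiple of $-\log \ell_j$ is not even needed — a finite convex function on $P^\circ$ extended lower-semicontinuously is automatically bounded, since its supremum on a polytope is attained at a vertex after taking the l.s.c.\ regularization; more carefully one uses that the restriction to each segment is convex hence bounded).

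**The reverse inclusion.** For $\mathcal{E}_{1} \cap L^{1}(\partial P) \subset \mathcal{C}_{\ast}$, take $f \in \mathrm{CVX}(P) \cap L^1(P)$ with $f \in L^1(\partial P)$. Again $f$ is l.s.c.\ convex and integrable on $\partial P$ by hypothesis, so the only thing to verify is that $f$ is continuous on $P^{\ast} = P^{\circ} \cup \bigcup_j F_j^{\circ}$. Continuity on $P^{\circ}$ is automatic: a convex function on an open convex set that is finite somewhere (and it is finite a.e.\ since $f \in L^1$) is finite and continuous on the whole interior — this is a standard fact from convex analysis, combined with l.s.c.\ to rule out the value $+\infty$. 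The genuine content is continuity at points of the relative interior $F_j^{\circ}$ of a facet. Here I would argue as follows: the restriction $f|_{F_j^{\circ}}$ is a convex function on the $(n-1)$-dimensional open set $F_j^{\circ}$; since $f \in L^1(\partial P)$, this restriction is in $L^1(F_j)$, hence finite a.e.\ on $F_j^{\circ}$, hence finite and continuous on all of $F_j^{\circ}$ by the same convex-analysis fact applied in dimension $n-1$. It then remains to upgrade continuity along $F_j^{\circ}$ to continuity in $P$ at points of $F_j^{\circ}$, i.e.\ to show $f$ is continuous as a function on $P$ (with its subspace topology) at each $x_0 \in F_j^{\circ}$.

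**The main obstacle.** I expect the delicate point to be precisely this last step: continuity of $f$ at a boundary point $x_0 \in F_j^{\circ}$ when approached from the interior $P^{\circ}$. Lower semicontinuity gives $\liminf_{x \to x_0} f(x) \ge f(x_0)$, so one needs the matching upper bound $\limsup_{x \to x_0} f(x) \le f(x_0)$. The standard trick is to use convexity: pick an interior point $p \in P^{\circ}$, and for $x$ near $x_0$ write $x$ as a convex combination of $x_0'$ (a point of $F_j^{\circ}$ near $x_0$) and a point on the segment through $p$; since $F_j^{\circ}$ is relatively open and $x_0 \in F_j^{\circ}$, for $x$ close to $x_0$ one can express $x = (1-t) y + t p$ with $y \in F_j^{\circ}$ close to $x_0$ and $t \to 0$, giving $f(x) \le (1-t) f(y) + t f(p)$; letting $x \to x_0$ forces $y \to x_0$ within $F_j^{\circ}$ where $f$ is continuous, and $t \to 0$, so $\limsup f(x) \le f(x_0)$. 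One must check the geometry of the Delzant polytope makes this decomposition valid near $x_0 \in F_j^{\circ}$ — this is where the hypothesis $x_0$ lies in the \emph{relative interior} of the facet (not on a lower-dimensional face) is used, since near such a point $P$ looks locally like a half-space. Combining l.s.c.\ with this upper bound yields continuity at $x_0$, completing $f \in \mathcal{C}_{\ast}$. I would also remark that the hypothesis $f \in L^1(\partial P)$ is exactly what is needed to guarantee $f|_{F_j^{\circ}}$ is not identically $+\infty$, which is essential for the convex-analysis continuity statement in dimension $n-1$.
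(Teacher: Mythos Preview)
Your argument for the easy inclusion $\mathcal{C}_\ast \subset \mathcal{E}_1 \cap L^1(\partial P)$ contains a false claim: a function in $\mathcal{C}_\ast$ need \emph{not} be bounded on $P$. For instance, near a vertex of $P$ (say the origin, with adjacent facets along the coordinate hyperplanes) the function $f(x) = (x_1 + \cdots + x_n)^{-1/2}$ is convex, continuous on $P^\ast$, lies in $L^1(\partial P)$, yet blows up at the vertex. So the claim that ``the supremum is attained at a vertex after l.s.c.\ regularization'' fails, and your reduction to boundedness does not go through. The paper handles this inclusion via the cone decomposition $P = \bigcup_j C(F_j)$ (Proposition~\ref{integral_bound_C*}): for nonnegative $f \in \widetilde{\mathcal{C}}_\ast$ one integrates radially over each cone $C(F_j)$ and uses convexity along radii to obtain $\int_P f\,dx \le \frac{d}{n+1}\int_{\partial P} f\,d\sigma$, which is exactly the quantitative bound needed.

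Your argument for the hard inclusion $\mathcal{E}_1 \cap L^1(\partial P) \subset \mathcal{C}_\ast$, on the other hand, is correct and genuinely simpler than the paper's. The paper proves continuity at $\zeta \in F_j^\circ$ by first establishing the radial limit $f(\zeta) = \lim_{t\to1} f(t\zeta)$, then invoking Donaldson's subdifferential estimates (Propositions~\ref{upper_bdd_subdiff} and~\ref{Lip_bdd_subdiff}) to get uniform Lipschitz bounds for $f$ on scaled compact subsets $\eta K \subset \eta F_j^\circ$, and finally combining these to control $|f(x)-f(\zeta)|$. Your route bypasses the Lipschitz machinery entirely: once you know $f|_{F_j^\circ}$ is finite (here you should note that the convex domain $\{f|_{F_j} < \infty\}$ has full measure in $F_j$, hence contains $F_j^\circ$ since a proper convex subset missing a relative-interior point would have positive-measure complement) and hence continuous, the single convexity inequality $f(x) \le (1-t)f(y) + tf(p)$ along the ray from a fixed $p \in P^\circ$ gives the upper semicontinuity at $x_0$ directly. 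The paper's approach yields a uniform modulus of continuity on compact subsets of $P^\ast$, which may be useful elsewhere; yours gives only pointwise continuity but with less input.
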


We explain a proof of Proposition \ref{E1_bdry_int} below. 
The inclusion $\mathcal{C}_{\ast} \subset \mathcal{E}_{1} \cap L^{1}(\partial P)$ is obtained from the following proposition. 

\begin{proposition}\label{integral_bound_C*}
Let $d = \max\{d_{1}, \ldots, d_{r}\}$. 
If $f$ is a nonnegative function in $\mathcal{C}_{\ast}$, then 
\begin{align*}
\int_{P}f\,dx \leq \frac{d}{n+1}\left(\frac{\sigma(\partial P)}{n}f(0) + \int_{\partial P}f\,d\sigma\right). 
\end{align*}
In particular, for any $f \in \widetilde{\mathcal{C}}_{\ast}$ we have 
\begin{align*}
\int_{P}f\,dx \leq \frac{d}{n+1}\int_{\partial P}f\,d\sigma. 
\end{align*}
\end{proposition}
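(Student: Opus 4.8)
The plan is to slice $P$ into the pyramids over its facets with common apex at the origin, and to bound $\int_P f\,dx$ by estimating the integral over each slice via convexity.

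For $j = 1, \dots, r$ put $P_j \coloneqq \operatorname{conv}(\{0\} \cup F_j)$. Since $0 \in P^\circ$, a generic ray from $0$ meets $\partial P$ in the relative interior of a single facet, so $P = \bigcup_{j=1}^r P_j$ with pairwise disjoint interiors, and $\int_P f\,dx = \sum_j \int_{P_j} f\,dx$. The first step is the change-of-variables formula for a pyramid: parametrizing $P_j \setminus \{0\}$ by $(t,y) \in (0,1] \times F_j^{\circ} \mapsto ty$, the Jacobian of $x = ty$ equals $t^{n-1}$ times the distance $d_j/|\lambda_j|$ from $0$ to the affine hyperplane spanned by $F_j$, times the $(n-1)$-dimensional area element on $F_j$; since $|\lambda_j|^{-1}$ times that area element is exactly $d\sigma$ on $F_j$, one obtains, for every nonnegative Borel function $g$ on $P$,
\begin{align*}
\int_{P_j} g\,dx = d_j \int_0^1 t^{n-1}\left(\int_{F_j} g(ty)\,d\sigma(y)\right) dt .
\end{align*}
Taking $g \equiv 1$ recovers $\operatorname{vol}(P_j) = \tfrac{d_j}{n}\sigma(F_j)$, a convenient check. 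Note $ty \in P^\circ$ for $t \in [0,1)$ and $F_j^{\circ} \subset P^{\ast}$, so $f$ is finite and continuous along these segments and the formula applies to $g = f$.

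Next I apply convexity: since $ty = t\,y + (1-t)\cdot 0$ with $y, 0 \in P$, we have $f(ty) \le t f(y) + (1-t) f(0)$. Substituting into the pyramid formula and using $\int_0^1 t^n\,dt = \tfrac{1}{n+1}$ and $\int_0^1 t^{n-1}(1-t)\,dt = \tfrac{1}{n(n+1)}$ gives
\begin{align*}
\int_{P_j} f\,dx \le \frac{d_j}{n+1}\left( \int_{F_j} f\,d\sigma + \frac{\sigma(F_j)}{n} f(0) \right).
\end{align*}
As $f \ge 0$ the parenthesis is nonnegative, so $d_j$ may be replaced by $d$; summing over $j$ and using $\sum_j \int_{F_j} f\,d\sigma = \int_{\partial P} f\,d\sigma$ and $\sum_j \sigma(F_j) = \sigma(\partial P)$ yields the claimed bound. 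The final assertion is immediate, since $f(0) = 0$ for $f \in \widetilde{\mathcal{C}}_{\ast}$; incidentally, finiteness of the right-hand side for $f \in \mathcal{C}_{\ast}$ re-proves the inclusion $\mathcal{C}_{\ast} \subset \mathcal{E}_1$ needed in Proposition \ref{E1_bdry_int}.

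The only genuinely delicate point is justifying the pyramid change-of-variables formula together with the fact that the $P_j$ overlap only in a Lebesgue-null set; the remaining steps are an elementary one-variable computation. A minor additional care is that $f$, a priori $(-\infty,+\infty]$-valued, is finite along the segments $\{ty\}$, which follows from its continuity on $P^{\ast}$, and that the change-of-variables identity — valid for nonnegative measurable $g$ by Tonelli — yields the inequality even before integrability of $f$ over $P$ is known.
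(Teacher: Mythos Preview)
Your proof is correct and follows essentially the same route as the paper's: both decompose $P$ into the cones $C(F_j) = \{t\zeta \mid t \in [0,1],\ \zeta \in F_j\}$ over the facets with apex at the origin, use the change-of-variables $dx = d_j\, t^{n-1}\, d\sigma(y)\, dt$, apply the convexity bound $f(ty) \le tf(y)+(1-t)f(0)$, and sum. Your write-up is in fact a bit more careful than the paper's in justifying the Jacobian and noting that the replacement $d_j \mapsto d$ uses nonnegativity of the parenthesis.
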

\begin{proof}
For each $j=1, \ldots, r$, let $C(F_{j}) = \{t\zeta \mid t \in [0, 1],\ \zeta \in F_{j}\}$. 
Then we have 
\begin{align*}
P = \bigcup_{j=1}^{r}C(F_{j}). 
\end{align*}
Also, by convexity of $f$ we have 
\begin{align*}
\int_{C(F_{j})}f\,dx 
&\leq d_{j}\int_{0}^{1}t^{n-1}\,dt\int_{F_{j}}((1-t)f(0) + tf)\,d\sigma \\
&= d_{j}\int_{0}^{1}(1-t)t^{n-1}\,dt\int_{F_{j}}f(0)\,d\sigma 
+ d_{j}\int_{0}^{1}t^{n}\,dt\int_{F_{j}}f\,d\sigma \\
&= \frac{d_{j}}{n+1}\left(\frac{\sigma(F_{j})}{n}f(0) + \int_{F_{j}}f\,d\sigma\right). 
\end{align*}
Hence we obtain 
\begin{align*}
\int_{P}f\,dx 
&= \sum_{j=1}^{r}\int_{C(F_{j})}f\,dx \\
&\leq \sum_{j=1}^{r}\frac{d_{j}}{n+1}\left(\frac{\sigma(F_{j})}{n}f(0) + \int_{F_{j}}f\,d\sigma\right) \\
&\leq \frac{d}{n+1}\left(\frac{\sigma(\partial P)}{n}f(0) + \int_{\partial P}f\,d\sigma\right). 
\end{align*}
The latter claim is now obvious. 
\end{proof}
\begin{corollary}\label{integrable_C*}
Every function in $\mathcal{C}_{\ast}$ is integrable on $P$. 
In particular, we have $\mathcal{C}_{\ast} \subset \mathcal{E}_{1} \cap L^{1}(\partial P)$. 
\end{corollary}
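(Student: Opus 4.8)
The plan is to deduce the corollary from Proposition~\ref{integral_bound_C*} by a routine normalization: that proposition already controls $\int_{P}f\,dx$ for \emph{nonnegative} $f\in\mathcal{C}_{\ast}$, so it suffices to reduce the general case to the nonnegative one by subtracting a suitable affine function.

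First I would take an arbitrary $f\in\mathcal{C}_{\ast}$ and normalize it. Since $0$ lies in $P^{\circ}$ by the standing assumption of this section and $f$ is finite and continuous on $P^{\ast}\supset P^{\circ}$, the convex function $f$ admits a subgradient at $0$; that is, there is an affine function $\ell$ on $M_{\mathbf{R}}$ with $\ell(0)=f(0)$ and $\ell\le f$ on $P$. Put $g\coloneqq f-\ell$. Then $g\ge 0$, and $g\in\mathcal{C}_{\ast}$: indeed $g$ is again lower semicontinuous and convex on $P$, it is continuous on $P^{\ast}$ because $\ell$ is affine hence continuous on all of $M_{\mathbf{R}}$, and it is integrable on $\partial P$ since $f$ is integrable on $\partial P$ and $\ell$ is bounded on the compact set $\partial P$.

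Next I would apply Proposition~\ref{integral_bound_C*} to the nonnegative function $g\in\mathcal{C}_{\ast}$, obtaining
\[
\int_{P}g\,dx\le\frac{d}{n+1}\left(\frac{\sigma(\partial P)}{n}g(0)+\int_{\partial P}g\,d\sigma\right)<\infty.
\]
Since $\ell$ is affine it is bounded on the compact polytope $P$, hence $\int_{P}|\ell|\,dx<\infty$; combined with $|f|\le g+|\ell|$ (using $f=g+\ell$ and $g\ge 0$) this gives $\int_{P}|f|\,dx<\infty$. Therefore $f\in\mathrm{CVX}(P)\cap L^{1}(P)=\mathcal{E}_{1}$, and since $f\in L^{1}(\partial P)$ by the very definition of $\mathcal{C}_{\ast}$, we conclude $\mathcal{C}_{\ast}\subset\mathcal{E}_{1}\cap L^{1}(\partial P)$.

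I do not expect a genuine obstacle here: the only inputs beyond Proposition~\ref{integral_bound_C*} are the existence of a supporting affine function for a convex function that is finite on an open subset of $P$, and the immediate stability of $\mathcal{C}_{\ast}$ under addition of affine functions. The substantive work — the decomposition $P=\bigcup_{j}C(F_{j})$ around the origin and the one-dimensional integral estimate — has already been carried out in the proof of Proposition~\ref{integral_bound_C*}, so the present argument is purely a reduction.
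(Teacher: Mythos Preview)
Your proof is correct and essentially identical to the paper's own argument: both subtract a supporting affine function at the origin to reduce to the nonnegative (normalized) case, invoke Proposition~\ref{integral_bound_C*}, and then add the affine function back. The only cosmetic difference is that the paper phrases the normalization as choosing $\ell$ with $f+\ell\in\widetilde{\mathcal{C}}_{\ast}$ and applies the second inequality of Proposition~\ref{integral_bound_C*} directly, whereas you write the subgradient explicitly; since your $g$ satisfies $g(0)=0$ anyway, this comes to the same thing.
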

\begin{proof}
Let $f \in \mathcal{C}_{\ast}$. 
Then there is an affine function $\ell$ so that $\tilde{f} \coloneqq f + \ell \in \widetilde{\mathcal{C}}_{*}$. 
By Proposition \ref{integral_bound_C*}, we have 
\begin{align*}
\int_{P}\tilde{f}\,dx \leq \frac{d}{n+1}\int_{\partial P}\tilde{f}\,d\sigma < \infty
\end{align*}
and hence $\tilde{f}$ is integrable on $P$. 
Since $\ell$ is integrable on $P$, $f = \tilde{f} - \ell$ is also integrable on $P$. 
\end{proof}

Let us show the converse inclusion. 
For the purpose, we use the following facts from the Donldson's work \cite{Don02}. 
Let $\Omega$ be a bounded convex open set of $\mathbf{R}^{n}$, and $f \colon \Omega \to \mathbf{R}$ be a convex function. 
For each $x \in \Omega$, set 
\begin{align*}
D_{x}(f) \coloneqq \sup\{|\lambda| \mid f(y) \geq \langle \lambda, x\rangle + f(y)\ (y \in \Omega)\}. 
\end{align*}
\begin{proposition}[{\cite[Lemma 5.2.3]{Don02}}]\label{upper_bdd_subdiff}
There is a constant $\kappa > 0$ such that if a nonnegative convex function $f$ on $\Omega$ is integrable, then 
\begin{align*}
D_{x}(f) \leq \kappa d_{x}^{-(n+1)}\int_{\Omega}f\,dy
\end{align*}
for any $x \in \Omega$. 
Here $d_{x}$ is the distance from $x$ to the boundary of $\Omega$. 
\end{proposition}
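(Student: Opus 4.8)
The plan is to convert a large subgradient of $f$ at $x$ into a definite amount of ``mass'' of $f$ on a ball sitting near $x$, and then compare that mass with $\int_{\Omega}f$. First I would fix $x\in\Omega$, noting that we may assume $D_x(f)>0$, since otherwise the asserted inequality is trivial. Recall that $d_x=\mathrm{dist}(x,\partial\Omega)$, so the open ball $B(x,d_x)$ is contained in $\Omega$. Choose $\lambda$ with $|\lambda|$ as close as we like to $D_x(f)$ and with $f(y)\ge f(x)+\langle\lambda,y-x\rangle$ for every $y\in\Omega$ (that is, $\lambda$ is a subgradient of $f$ at $x$), and set $e\coloneqq\lambda/|\lambda|$.

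Next I would localize to a carefully chosen ball. Put $p\coloneqq x+\tfrac{d_x}{2}e$ and $B\coloneqq B\!\left(p,\tfrac{d_x}{4}\right)$. A triangle-inequality estimate gives $B\subset B\!\left(x,\tfrac{3d_x}{4}\right)\subset B(x,d_x)\subset\Omega$. Moreover, for any $z=x+s\in B$ one has $\langle e,s\rangle\ge\tfrac{d_x}{2}-\tfrac{d_x}{4}=\tfrac{d_x}{4}$, so the subgradient inequality together with $f(x)\ge 0$ gives $f(z)\ge f(x)+|\lambda|\langle e,s\rangle\ge|\lambda|\,\tfrac{d_x}{4}$ throughout $B$.

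Finally I would integrate this pointwise bound. Writing $\omega_n$ for the volume of the Euclidean unit ball in $\mathbf{R}^n$, we obtain
\[
\int_{\Omega}f\,dy\ \ge\ \int_{B}f\,dy\ \ge\ |\lambda|\,\frac{d_x}{4}\cdot\omega_n\left(\frac{d_x}{4}\right)^{\!n}\ =\ \frac{\omega_n}{4^{\,n+1}}\,|\lambda|\,d_x^{\,n+1}.
\]
Rearranging and letting $|\lambda|\uparrow D_x(f)$ yields the claim with $\kappa=4^{\,n+1}/\omega_n$, which depends only on $n$.

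There is no genuinely hard step here; the only point requiring care is the choice of the auxiliary ball $B$, whose centre $x+\tfrac{d_x}{2}e$ and radius $\tfrac{d_x}{4}$ are picked so that simultaneously $B\subset\Omega$ and the linear functional $s\mapsto\langle e,s\rangle$ is bounded below by a fixed positive multiple of $d_x$ on $B$ — any other fixed fractions in $(0,1)$ with this property would serve equally well. The hypothesis that $f$ is nonnegative is precisely what lets us discard the term $f(x)$, and integrability of $f$ is needed only to ensure the right-hand side is finite (otherwise the inequality holds vacuously).
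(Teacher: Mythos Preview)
Your argument is correct. The paper does not supply its own proof of this proposition; it simply quotes the statement from \cite[Lemma~5.2.3]{Don02} and uses it as a black box. Your ball-localization argument is the standard elementary proof: pick a near-maximal subgradient $\lambda$, push off from $x$ in the direction $e=\lambda/|\lambda|$ to find a ball $B\subset\Omega$ on which the supporting affine function (hence $f$, by nonnegativity) is bounded below by a fixed multiple of $|\lambda|\,d_x$, and integrate. All the inclusions and lower bounds you claim are easily verified, and the resulting constant $\kappa=4^{n+1}/\omega_n$ depends only on the dimension~$n$ --- which is precisely what the paper needs later when it applies the proposition uniformly over the rescaled facets $\eta F^{\circ}$.

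One cosmetic remark: the paper's displayed definition of $D_x(f)$ contains an evident typo (the inequality $f(y)\ge\langle\lambda,x\rangle+f(y)$ is vacuous); you have correctly read it as the subgradient condition $f(y)\ge f(x)+\langle\lambda,y-x\rangle$.
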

\begin{proposition}[{\cite[Lemma 5.2.4]{Don02}}]\label{Lip_bdd_subdiff}
For any convex function $f \colon \Omega \to \mathbf{R}$ and any two points $x, y \in P$, 
\begin{align*}
|f(x) - f(y)| \leq \max\{D_{x}(f), D_{y}(f)\}|x-y|. 
\end{align*}
\end{proposition}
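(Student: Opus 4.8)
The plan is to read the inequality off directly from the subgradient inequality. Recall that $D_{x}(f)$ is the supremum of the Euclidean norms $|\lambda|$ over all $\lambda$ in the subdifferential $\partial f(x) = \{\lambda \in \mathbf{R}^{n} \mid f(z) \geq f(x) + \langle \lambda, z - x\rangle \text{ for all } z \in \Omega\}$. Since $f$ is a finite-valued convex function on the open convex set $\Omega$, it is locally Lipschitz, and by the supporting hyperplane theorem $\partial f(x) \neq \emptyset$ for every $x \in \Omega$; this is the same fact that underlies Proposition \ref{upper_bdd_subdiff}.

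Now fix $x, y \in \Omega$. Since the right-hand side $\max\{D_{x}(f), D_{y}(f)\}\,|x-y|$ is symmetric in $x$ and $y$, we may assume $f(x) \geq f(y)$. Choose any $\lambda \in \partial f(x)$ and apply the defining inequality of the subdifferential with $z = y$; this gives $f(y) \geq f(x) + \langle \lambda, y - x\rangle$, and hence
\begin{align*}
|f(x) - f(y)| = f(x) - f(y) \leq \langle \lambda, x - y\rangle \leq |\lambda|\,|x - y| \leq D_{x}(f)\,|x-y| \leq \max\{D_{x}(f), D_{y}(f)\}\,|x-y|.
\end{align*}
This is the asserted estimate.

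There is essentially no obstacle in this argument: the only point deserving a word of care is the nonemptiness of the subdifferential at each point of $\Omega$, which holds precisely because $\Omega$ is open and $f$ is finite-valued and convex. If $D_{x}(f) = +\infty$ the estimate is vacuous, so the content lies entirely in the finite case, consistent with the local Lipschitz bound recorded in Proposition \ref{upper_bdd_subdiff}.
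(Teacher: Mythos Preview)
Your argument is correct and is the standard one-line subgradient proof. The paper itself does not supply a proof of this proposition; it merely cites \cite[Lemma 5.2.4]{Don02}, whose proof is precisely the subgradient inequality you wrote down.
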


Let $f \in \mathcal{E}_{1} \cap L^{1}(\partial P)$. 
By adding a suitable affine function, we may assume $f \in \widetilde{\mathcal{E}_{1}}$. 
Since $f$ is integrable over $P$, $f < +\infty$ on $P^{\circ}$. 
Continuity of $f$ on $P^{\circ}$ is obvious by the convexity. 
Let $F \in \{F_{1}, \ldots, F_{r}\}$. 

\begin{lemma}\label{Rad_lim}
For each $\zeta \in F$ we have ${\displaystyle f(\zeta) = \lim_{t \to 1}f(t\zeta)}$. 
\end{lemma}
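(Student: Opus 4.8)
The plan is to extract the statement from two elementary properties of $f$: its convexity along the segment from the interior point $0$ to the boundary point $\zeta$, and its lower semicontinuity at $\zeta$; the one quantitative input is that $f(0)=0$ is \emph{finite}, which is available because we have reduced (by subtracting an affine function) to $f \in \widetilde{\mathcal{E}_1}$, so that $f \geq 0$ on $P$ and $f(0) = 0$.

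First I would fix $\zeta \in F$. Since $P$ is convex and $0, \zeta \in P$, the segment $\{t\zeta : t \in [0,1]\}$ lies in $P$, so $f(t\zeta)$ is defined for all $t \in [0,1]$, and convexity of $f$ along this segment gives $f(t\zeta) \leq t\,f(\zeta) + (1-t)\,f(0) = t\,f(\zeta)$ for every $t \in [0,1]$ (when $f(\zeta) = +\infty$ this carries no information, but that case is handled separately below). Letting $t \to 1^-$ and using $f(0)=0$, this yields $\limsup_{t\to1^-} f(t\zeta) \leq f(\zeta)$. In the opposite direction, since $t\zeta \to \zeta$ in $P$ as $t \to 1^-$, lower semicontinuity of $f$ at $\zeta$ gives $\liminf_{t\to1^-} f(t\zeta) \geq f(\zeta)$; this inequality is valid also when $f(\zeta)=+\infty$, which disposes of that case at once.

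Combining the two bounds, $f(\zeta) \leq \liminf_{t\to1^-} f(t\zeta) \leq \limsup_{t\to1^-} f(t\zeta) \leq f(\zeta)$, so the radial limit $\lim_{t\to1^-} f(t\zeta)$ exists in $(-\infty,+\infty]$ and equals $f(\zeta)$, which is the assertion. There is no genuine obstacle here: the only points needing a little care are the bookkeeping around the possibly infinite value $f(\zeta)$ and the explicit appeal to the finiteness of $f$ at the interior point $0$. Note that the argument uses only that $f \in \mathrm{CVX}(P)$ is finite at some interior point, not the full hypothesis $f \in \mathcal{E}_1 \cap L^1(\partial P)$; the latter is what legitimizes the preliminary reduction to $\widetilde{\mathcal{E}_1}$ and will be needed in the subsequent steps (continuity of $f$ along the facets and joint continuity on $P^\ast$), where Propositions \ref{upper_bdd_subdiff} and \ref{Lip_bdd_subdiff} enter.
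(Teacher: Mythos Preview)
Your proof is correct and follows essentially the same approach as the paper: both use convexity along the segment from $0$ to $\zeta$ (with $f(0)=0$) to bound the radial limit from above by $f(\zeta)$, and lower semicontinuity to bound it from below. Your version is in fact slightly more economical: the paper first establishes monotonicity of $t\mapsto f(t\zeta)$ to guarantee existence of the limit, then passes through the full $\liminf_{\xi\to\zeta} f(\xi)$ before arguing that this is bounded above by the radial limit, whereas you simply apply lower semicontinuity directly along the radial path and sandwich $\liminf$ and $\limsup$.
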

\begin{proof}
Let $\zeta \in F$ and $s, t \in [0, 1]$. 
If $s < t$, then we have 
\begin{align*}
\frac{f(t\zeta) - f(s\zeta)}{t-s} \geq \frac{f(t\zeta) - f(0)}{t - 0} \geq 0 
\end{align*}
by the convexity of $f$. 
Hence the function $[0, 1] \ni t \mapsto f(t\zeta) \in \mathbf{R}$ is monotonically non-decreasing. 
Further, since 
\begin{align*}
f(t\zeta) \leq (1-t)f(0) + tf(\zeta) = tf(\zeta)
\end{align*}
for any $t \in [0, 1]$, we obtain 
\begin{align*}
\lim_{t \to 1}f(t\zeta) \leq f(\zeta) \leq \liminf_{\xi \to \zeta}f(\xi)
\end{align*}
by the lower semicontinuity of $f$. 
We claim that $\liminf_{\xi \to \zeta}f(\xi) \leq \lim_{t \to 1}f(t\zeta)$. 
Let $\delta > 0$ and $t \in [0, 1]$, and suppose $|t-1|< \delta/|\zeta|$. 
Then $t\zeta \in B(\zeta, \delta)$ and 
\begin{align*}
\inf_{B(\zeta, \delta) \cap P}f \leq f(t\zeta). 
\end{align*}
Hence we have 
\begin{align*}
\inf_{B(\zeta, \delta) \cap P}f \leq \lim_{t \to 1}f(t\zeta) 
\end{align*}
and 
\begin{align*}
\liminf_{\xi \to \zeta}f(\xi) = \sup_{\delta > 0}\inf_{B(\zeta, \delta) \cap P}f \leq \lim_{t \to 1}f(t\zeta), 
\end{align*}
as required. 
\end{proof}

For each $\eta \in (0, 1]$, define a Borel measure $\sigma_{\eta}$ on $\eta(\partial P)$ by 
\begin{align*}
\sigma_{\eta}(A) \coloneqq \eta^{n-1}\sigma(\eta^{-1}A). 
\end{align*}
By setting $C \coloneqq \int_{\partial P}f(\xi)\,d\sigma$, we have 
\begin{align*}
\int_{\eta(\partial P)}f(\xi)\,d\sigma_{\eta} = \eta^{n-1}\int_{\partial P}f(\eta\xi)\,d\sigma \leq \eta^{n}\int_{\partial P}f(\xi)\,d\sigma = \eta^{n}C. 
\end{align*}
Let $K$ be a compact subset of $F^{\circ}$. 
We write $F$ as 
\begin{align*}
F = \{x \in P \mid \langle \lambda, x \rangle + d = 0\}. 
\end{align*}
Regarding $f$ as a nonnegative convex function on $\eta F^{\circ}$, by Proposition \ref{upper_bdd_subdiff} there is a constant $\kappa > 0$ such that 
\begin{align*}
D_{\zeta}(f) \leq |\lambda|\kappa d_{\zeta}^{-n}\int_{\eta F}f(\xi)\,d\sigma_{\eta} \leq |\lambda|\kappa d_{\zeta}^{-n}\eta^{n}C 
\end{align*}
for any $\zeta \in \eta K$. 
Here $d_{\zeta}$ is the distance from $\zeta$ to the boundary $\partial(\eta F^{\circ})$. 
Also, by setting $d_{\eta K}$ with the distance between $\eta K$ and $\partial(\eta F^{\circ})$, we have 
\begin{align*}
D_{\zeta}(f) \leq  |\lambda|\kappa d_{\zeta}^{-n}\eta^{n}C \leq |\lambda|\kappa d_{\eta K}^{-n}\eta^{n}C = |\lambda|\kappa d_{K}^{-n}C. 
\end{align*}
Hence, by Proposition \ref{Lip_bdd_subdiff} we obtain 
\begin{align*}
|f(\zeta') - f(\zeta'')| \leq \max\{D_{\zeta'}(f), D_{\zeta''}(f)\}|\zeta'-\zeta''| \leq  |\lambda|\kappa d_{K}^{-n}C|\zeta'-\zeta''| 
\end{align*}
for any $\zeta', \zeta'' \in \eta K$. 
Let $\zeta \in F^{\circ}$, and 
\begin{align*}
K \coloneqq \overline{B(\zeta, \delta|\zeta|)} \cap \partial P 
\end{align*}
for each $\delta > 0$. 
By choosing $\delta$ to be sufficiently small, we may assume that $K \subset F^{\circ}$. 
Set $C(K) \coloneqq \{t\xi \mid t \geq 0, \xi \in K\}$. 
Then $C(K)$ is a closed neighborhood of $\zeta$ in $M_{\mathbf{R}}$. 
Let $x \in C(K) \cap P^{\circ}$, and $H_{x}$ be the hyperplane of $M_{\mathbf{R}}$ which contains $x$ and is parallel to the hyperplane containing $F$. 
Then $H_{x}$ intersects the line $\mathbf{R}\zeta$ at a single point $\{y\}$. 
Define $t(x) \in [0, 1]$ by $y = t(x)\zeta$. 
Then the function $x \mapsto t(x)$ is continuous and $t(x) \to 1$ (as $x \to \zeta$). 
Also, since $x, y \in t(x)K$ we have 
\begin{align*}
|f(x) - f(y)| \leq |\lambda|\kappa d_{K}^{-n}C|x - y|. 
\end{align*}
Hence we obtain 
\begin{align*}
|f(\zeta) - f(x)| 
&\leq |f(\zeta) - f(y)| + |f(y) - f(x)| \\
&\leq |f(\zeta) - f(t(x)\zeta)| + |\lambda|\kappa d_{K}^{-n}C|y - x| \\
&\leq |f(\zeta) - f(t(x)\zeta)| + |\lambda|\kappa d_{K}^{-n}C|t(x)\zeta - x| \\
&\to 0\quad \text{(as $x \to \zeta$)} 
\end{align*}
by Lemma \ref{Rad_lim}. 
This shows that $f$ is continuous at $\zeta$, and completes the proof of Proposition \ref{E1_bdry_int}. 

\subsection{Approximations and compactness of convex functions}
We collect various approximation and compactness results for convex functions, which is crucial for our proof of Theorem \ref{equiv_strengthenings}. 

\begin{proposition}\label{convergence_L1_unif} 
Let $\{f_{i}\}_{i=1}^{\infty}$ be a sequence of nonnegative functions in $\mathcal{C}_{*}$, and $f \in \mathcal{C}_{*}$. 
Suppose that $\{f_{i}\}_{i=1}^{\infty}$ and $f$ satisfy the following conditions: 
\begin{enumerate}[\upshape(i)]
\item $\{f_{i}\}_{i=1}^{\infty}$ converges locally uniformly to $f$ on $P^{\circ}$. 
\item ${\displaystyle \sup_{i \in \mathbf{Z}_{> 0}}\int_{\partial P}f_{i}\,d\sigma < \infty}$. 
\end{enumerate}
Then ${\displaystyle \lim_{i \to \infty}\|f - f_{i}\|_{L^{1}(P)} = 0}$. 
\end{proposition}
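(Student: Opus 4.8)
The plan is to deduce $L^{1}(P)$ convergence from the locally uniform convergence on $P^{\circ}$ together with a uniform integrability estimate near $\partial P$ supplied by hypothesis (ii) and convexity. First observe two consequences of the hypotheses: since $0 \in P^{\circ}$, the locally uniform convergence gives $f_{i}(0) \to f(0)$, so $C_{0} \coloneqq \sup_{i} f_{i}(0) < \infty$; and we set $C_{1} \coloneqq \sup_{i}\int_{\partial P} f_{i}\,d\sigma < \infty$. Note also that $f \ge 0$ a.e.\ on $P$, being the pointwise limit of the $f_{i} \ge 0$, and that $f \in L^{1}(P)$ because $f \in \mathcal{C}_{*} \subset \mathcal{E}_{1}$.

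The heart of the argument is a collar estimate. For $\eta \in (0,1)$ the scaled polytope $\eta P$ is a compact subset of $P^{\circ}$ (this uses $0 \in P^{\circ}$ and convexity), and with the cone decomposition $P = \bigcup_{j=1}^{r} C(F_{j})$, $C(F_{j}) = \{t\zeta : t \in [0,1],\ \zeta \in F_{j}\}$ used in the proof of Proposition \ref{integral_bound_C*}, one has $C(F_{j}) \cap \eta P = \{t\zeta : 0 \le t \le \eta,\ \zeta \in F_{j}\}$. Applying the identity $\int_{C(F_{j})} g\,dx = d_{j}\int_{0}^{1} t^{n-1}\,dt\int_{F_{j}} g(t\zeta)\,d\sigma(\zeta)$ for $g \ge 0$ and the convexity bound $f_{i}(t\zeta) \le (1-t)f_{i}(0) + t f_{i}(\zeta) \le f_{i}(0) + f_{i}(\zeta)$, I would obtain
\[
\int_{P \setminus \eta P} f_{i}\,dx \;\le\; \frac{d\,(1-\eta^{n})}{n}\,\big(C_{0}\,\sigma(\partial P) + C_{1}\big), \qquad d \coloneqq \max\{d_{1},\dots,d_{r}\},
\]
which tends to $0$ as $\eta \to 1^{-}$, uniformly in $i$. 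Since $f \in L^{1}(P)$, absolute continuity of the Lebesgue integral gives likewise $\int_{P\setminus\eta P} f\,dx \to 0$ as $\eta \to 1^{-}$.

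The proof then concludes with a three-$\varepsilon$ argument. Given $\varepsilon > 0$, fix $\eta$ close enough to $1$ that $\int_{P\setminus\eta P} f_{i}\,dx < \varepsilon/3$ for all $i$ and $\int_{P\setminus\eta P} f\,dx < \varepsilon/3$. On the compact set $\eta P \subset P^{\circ}$ the convergence $f_{i} \to f$ is uniform, so there is $N$ with $\int_{\eta P}|f_{i}-f|\,dx < \varepsilon/3$ for $i \ge N$. Summing the three bounds yields $\|f - f_{i}\|_{L^{1}(P)} < \varepsilon$ for all $i \ge N$, which is the claim.

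The only step requiring care is the collar estimate; the rest is routine. Its two essential inputs are that convexity dominates $f_{i}$ on the collar by the explicit barrier $(1-t)f_{i}(0)+tf_{i}(\zeta)$, and that both $f_{i}(0)$ and $\int_{\partial P} f_{i}\,d\sigma$ are bounded uniformly in $i$ --- the former from the locally uniform convergence, the latter by hypothesis (ii). A minor point worth verifying along the way is the identity $C(F_{j}) \cap \eta P = \eta\, C(F_{j})$, which is immediate once one notes that the ray from $0$ through a point $\zeta \in F_{j} \subset \partial P$ leaves $P$ exactly at $\zeta$.
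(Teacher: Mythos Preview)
Your proof is correct and follows essentially the same route as the paper: split $P$ into $\eta P$ and the collar $P\setminus\eta P$, control the collar via the cone decomposition $P=\bigcup_j C(F_j)$ together with the convexity barrier $f_i(t\zeta)\le(1-t)f_i(0)+tf_i(\zeta)$, and use uniform convergence on the compact set $\eta P$. The only cosmetic difference is that the paper handles $f$ on the collar by the same convexity estimate (bounding $|f-f_i|\le f+f_i$ and applying the barrier to both), whereas you invoke absolute continuity of the Lebesgue integral for $f\in L^1(P)$; both are valid and yield the same $\varepsilon$-splitting argument.
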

\begin{proof}
By assumption, there is a constant $C > 0$ such that 
\begin{align}\label{lem_1}
\begin{split}
f(0) + \sup_{i \in \mathbf{Z}_{>0}}f_{i}(0) + \int_{\partial P}f\,d\sigma + \sup_{i \in \mathbf{Z}_{>0}}\int_{\partial P}f_{i}\,d\sigma \leq C. 
\end{split}
\end{align}
Let $\eta \in (0, 1)$. 
Then for each $i \in \mathbf{Z}_{>0}$ we have 
\begin{align*}
\|f-f_{i}\|_{L^{1}(P)} 
= \int_{P \setminus \eta P}|f-f_{i}|\,dx + \int_{\eta P}|f-f_{i}|\,dx. 
\end{align*}
By the convexity of $f_{i}$ and $f$, we have 
\begin{align*}
\int_{P \setminus \eta P}|f-f_{i}|\,dx 
&\leq \int_{P \setminus \eta P}(f+f_{i})\,dx \\
&\leq \sum_{j=1}^{r}d_{j}\int_{\eta}^{1}(1-t)t^{n-1}\,dt\int_{F_{j}}(f(0) + f_{i}(0))\,d\sigma \\
&\quad + \sum_{j=1}^{r}d_{j}\int_{\eta}^{1}t^{n}\,dt\int_{F_{j}}(f+ f_{i})\,d\sigma \\
&\leq 2Cd\sigma(\partial P)\int_{\eta}^{1}(1-t)t^{n-1}\,dt + 2Cd\int_{\eta}^{1}t^{n}\,dt \\
&\leq 2Cd(\sigma(\partial P) + 1)\int_{\eta}^{1}t^{n-1}\,dt \\
&= 2Cd(\sigma(\partial P) + 1)\frac{1-\eta^{n}}{n}. 
\end{align*}
For each $\varepsilon > 0$, choose $\eta \in (0, 1)$ so that 
\begin{align*}
2Cd(\sigma(\partial P) + 1)\frac{1-\eta^{n}}{n} < \frac{\varepsilon}{2}. 
\end{align*}
Since $\{f_{i}\}_{i=1}^{\infty}$ converges uniformly to $f$ on $\eta P$, there exists $N \in \mathbf{Z}_{>0}$ such that for any integer $i \geq N$ 
\begin{align*}
\int_{\eta P}|f-f_{i}|\,dx < \frac{\varepsilon}{2}. 
\end{align*}
Hence, for any integer $i \geq N$ we have 
\begin{align*}
\|f-f_{i}\|_{L^{1}(P)}
&= \int_{P \setminus \eta P}|f-f_{i}|\,dx + \int_{\eta P}|f-f_{i}|\,dx < \varepsilon, 
\end{align*}
as required. 
\end{proof}

By using Proposition \ref{convergence_L1_unif}, we obtain the following improvements of \cite[Lemma 3.1]{CLS14} and \cite[Proposition 5.2.6]{Don02}. 

\begin{proposition}[{cf. \cite[Lemma 3.1]{CLS14}}]\label{appr_smooth}
Let $f \in \widetilde{\mathcal{C}}_{\ast}$. 
Then there exists a sequence $\{f_{i}\}_{i=1}^{\infty}$ in $\mathcal{C}_{\infty} \cap C^{\infty}(P)$ such that 
\begin{enumerate}[\upshape(i)]
\item $0 \leq f_{i}$ for any $i \in \mathbf{Z}_{>0}$, 
\item $\{f_{i}\}_{i =1}^{\infty}$ converges locally uniformly to $f$ on $P^{\circ}$, and 
\item ${\displaystyle  \int_{P}|f - f_{i}|\,dx + \int_{\partial P}|f - f_{i}|\,d\sigma \to 0}$ (as $i \to \infty$). 
\end{enumerate}
In particular, we have ${\displaystyle \lim_{i \to \infty}L_{V}(f_{i}) = L_{V}(f)}$. 
\end{proposition}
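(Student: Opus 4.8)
The plan is to combine a \emph{dilation} with a \emph{mollification} and then pass to a diagonal subsequence. Since $0\in P^{\circ}$ and $f\in\widetilde{\mathcal{C}}_{\ast}$, we have $f\ge 0$ on $P$, $f(0)=0$, and (by Corollary \ref{integrable_C*} and Proposition \ref{E1_bdry_int}) $f\in L^{1}(P)\cap L^{1}(\partial P)$. For $s\in(0,1)$ put $f^{(s)}(x):=f(sx)$. Because $0\in P^{\circ}$ one has $sP\subset P^{\circ}$, so $f^{(s)}$ is a finite, continuous, nonnegative convex function on the open convex neighbourhood $U_{s}:=\{x: sx\in P^{\circ}\}$ of $P$, and convexity together with $f(0)=0$ gives the two-sided bound $0\le f^{(s)}\le sf\le f$ on $P$. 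Using uniform continuity of $f$ on compact subsets of $P^{\circ}$, one checks $f^{(s)}\to f$ locally uniformly on $P^{\circ}$ as $s\to 1$; combined with $\sup_{s}\int_{\partial P}f^{(s)}\,d\sigma\le\int_{\partial P}f\,d\sigma<\infty$, Proposition \ref{convergence_L1_unif} yields $f^{(s)}\to f$ in $L^{1}(P)$. On the boundary, Lemma \ref{Rad_lim} gives $f^{(s)}(\zeta)=f(s\zeta)\to f(\zeta)$ for every $\zeta$ in a facet, hence $\sigma$-a.e.\ on $\partial P$, and since $0\le f^{(s)}\le f\in L^{1}(\partial P)$, dominated convergence gives $f^{(s)}\to f$ in $L^{1}(\partial P)$ as well.

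Next I would smooth each $f^{(s)}$. Fix $s$ and choose $r_{s}>0$ so that the closed $r_{s}$-neighbourhood $\overline{V}$ of $P$ is contained in $U_{s}$; let $\rho_{\varepsilon}$ be a standard mollifier supported in the $\varepsilon$-ball. For $\varepsilon<r_{s}$ the convolution $f^{(s)}_{\varepsilon}:=f^{(s)}\ast\rho_{\varepsilon}$ is defined and smooth on an open neighbourhood of $P$, is convex (convolution of a convex function with a nonnegative kernel) and nonnegative (convolution of a nonnegative function), so $f^{(s)}_{\varepsilon}\in\mathcal{C}_{\infty}\cap C^{\infty}(P)$. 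Since $f^{(s)}$ is uniformly continuous on the compact set $\overline{V}$, we get $\|f^{(s)}_{\varepsilon}-f^{(s)}\|_{L^{\infty}(P)}\to 0$ as $\varepsilon\to 0$.

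Now pick $s_{i}\uparrow 1$ and then, for each $i$, choose $\varepsilon_{i}>0$ small enough that $\|f^{(s_{i})}_{\varepsilon_{i}}-f^{(s_{i})}\|_{L^{\infty}(P)}<1/i$, and set $f_{i}:=f^{(s_{i})}_{\varepsilon_{i}}$. Then $f_{i}\ge 0$ and $f_{i}\in\mathcal{C}_{\infty}\cap C^{\infty}(P)$, and because $\|f_{i}-f^{(s_{i})}\|_{L^{\infty}(P)}\to 0$ — which dominates the $L^{1}(P)$-error and, $\sigma$ being a finite measure, the $L^{1}(\partial P)$-error of $f_{i}$ relative to $f^{(s_{i})}$ — the three convergences established for $\{f^{(s_{i})}\}$ transfer to $\{f_{i}\}$, giving (ii) locally uniform convergence on $P^{\circ}$ and (iii) convergence in $L^{1}(P)$ and in $L^{1}(\partial P)$. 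Finally, recalling $L_{V}(g)=\int_{\partial P}g\,d\sigma-\int_{P}(\overline{s}+V)g\,dx$ with $\overline{s}+V$ affine and hence bounded on the compact set $P$, statement (iii) yields $|L_{V}(f)-L_{V}(f_{i})|\le\int_{\partial P}|f-f_{i}|\,d\sigma+\|\overline{s}+V\|_{L^{\infty}(P)}\int_{P}|f-f_{i}|\,dx\to 0$.

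The interior estimates are routine; the delicate point is the boundary, i.e.\ ensuring $\int_{\partial P}f_{i}\,d\sigma\to\int_{\partial P}f\,d\sigma$. This is exactly where the radial-limit Lemma \ref{Rad_lim} and the a priori membership $f\in\mathcal{C}_{\ast}\subset L^{1}(\partial P)$ (Proposition \ref{E1_bdry_int}) are needed to run dominated convergence for the dilations, and where one must verify that the mollification — controlled uniformly on all of $P\supset\partial P$ — does not spoil the boundary integral. Controlling the dilations rather than $f$ directly is essential precisely because $f$ itself need not be defined (or finite) outside $P$, so it cannot be mollified as it stands.
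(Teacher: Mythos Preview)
Your proof is correct. The paper's own proof simply cites \cite[Lemma 3.1]{CLS14} to obtain a sequence in $\mathcal{C}_{\infty}\cap C^{\infty}(P)$ satisfying (i), (ii), and the boundary $L^{1}$-convergence, and then invokes Proposition~\ref{convergence_L1_unif} to upgrade to interior $L^{1}$-convergence; you instead construct the approximants explicitly via dilation plus mollification, which is almost certainly what the CLS14 construction amounts to. The main difference is that you give a self-contained argument using tools already developed in the paper: the domination $0\le f^{(s)}\le f$ together with Lemma~\ref{Rad_lim} and dominated convergence handles the boundary term, and Proposition~\ref{convergence_L1_unif} handles the interior term, exactly as in the paper's last step. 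So the two approaches coincide in substance; yours simply unpacks the black-box citation.
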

\begin{proof}
By \cite[Lemma 3.1]{CLS14}, there exists a sequence $\{f_{i}\}_{i=1}^{\infty}$ in $\mathcal{C}_{\infty}$ which satisfies (i), (ii), and 
\begin{align}\label{bdrconv_L1}
\lim_{i \to \infty}\int_{\partial P}|f - f_{i}|\,d\sigma = 0. 
\end{align}
Moreover, a careful reading of the proof in \cite{CLS14} shows that we can choose $\{f_{i}\}_{i=1}^{\infty}$ so that $f_{i} \in \mathcal{C}_{\infty} \cap C^{\infty}(P)$ for any $i \in \mathbf{Z}_{>0}$. 
Then the sequence $\{f_{i}\}_{i=1}^{\infty}$ satisfies the conditions (i), (ii) of Proposition \ref{convergence_L1_unif}, and hence it converges to $f$ in $L^{1}$. 
\end{proof}

\begin{proposition}[{cf. \cite[Proposition 5.2.6]{Don02}}]\label{cptness}
Let $\{f_{i}\}_{i =1}^{\infty}$ be a sequence in $\widetilde{\mathcal{C}}_{*}$ with 
\begin{align*}
\sup_{i \in \mathbf{Z}_{>0}}\int_{\partial P}f_{i}\,d\sigma < \infty. 
\end{align*}
Then there exists a subsequence $\{f_{i_{k}}\}_{k =1}^{\infty}$ of $\{f_{i}\}_{i =1}^{\infty}$ and $f \in \widetilde{\mathcal{C}}_{*}$ such that 
\begin{enumerate}[\upshape(i)]
\item $\{f_{i_{k}}\}_{k =1}^{\infty}$ converges locally uniformly to $f$ on $P^{\circ}$, and 
\item ${\displaystyle \int_{P}|f - f_{i_{k}}|\,dx \to 0\ (\text{as $k \to \infty$})}$, \\
${\displaystyle \int_{\partial P}f\,d\sigma \leq \liminf_{k \to \infty}\int_{\partial P}f_{i_{k}}\,d\sigma}$. 
\end{enumerate}
In particular, we have ${\displaystyle L_{V}(f) \leq \liminf_{k \to \infty}L_{V}(f_{i_{k}})}$. 
\end{proposition}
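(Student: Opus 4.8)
The plan is to run a standard compactness scheme: bound the sequence in $L^{1}(P)$, upgrade this to local equi-Lipschitz estimates on $P^{\circ}$ via Donaldson's pointwise subdifferential bounds, extract a locally uniform limit by Arzel\`a--Ascoli, and then check carefully that the limit lies in $\widetilde{\mathcal{C}}_{\ast}$ and that each of the two pieces of $L_{V}$ behaves suitably under the limit.

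First I would use that each $f_{i}\in\widetilde{\mathcal{C}}_{\ast}$ is nonnegative with $f_{i}(0)=0$, so Proposition \ref{integral_bound_C*} gives
\begin{align*}
\int_{P}f_{i}\,dx \le \frac{d}{n+1}\int_{\partial P}f_{i}\,d\sigma \le \frac{d}{n+1}\sup_{j\in\mathbf{Z}_{>0}}\int_{\partial P}f_{j}\,d\sigma =: C_{0} < \infty
\end{align*}
for every $i$. Applying Proposition \ref{upper_bdd_subdiff} on $\Omega=P^{\circ}$ yields $D_{x}(f_{i})\le \kappa\,d_{x}^{-(n+1)}C_{0}$, so on any compact convex $K\subset P^{\circ}$ (e.g. $K=\eta\overline{P}$, $0<\eta<1$) the $f_{i}$ are Lipschitz with a constant $L_{K}$ independent of $i$, by Proposition \ref{Lip_bdd_subdiff}; together with $f_{i}(0)=0$ this makes $\{f_{i}\}$ uniformly bounded and equicontinuous on $K$. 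Exhausting $P^{\circ}$ by such compacta and combining Arzel\`a--Ascoli with a diagonal argument, I extract a subsequence $\{f_{i_{k}}\}$ converging locally uniformly on $P^{\circ}$ to a convex, nonnegative function $f$ with $f(0)=0$; I then extend $f$ to $P$ as its lower semicontinuous hull, so $f\in\mathrm{CVX}(P)$ with $\inf_{P}f=f(0)=0$.

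Next I would prove $f\in\widetilde{\mathcal{C}}_{\ast}$. Fatou's lemma on $P$ gives $\int_{P}f\,dx\le\liminf_{k}\int_{P}f_{i_{k}}\,dx\le C_{0}$, hence $f\in\mathcal{E}_{1}$. For the boundary, fix a facet $F_{j}$ and $\zeta\in F_{j}^{\circ}$; for each $t\in(0,1)$ one has $f_{i_{k}}(t\zeta)\le f_{i_{k}}(\zeta)$ by monotonicity of $s\mapsto f_{i_{k}}(s\zeta)$ (as in the proof of Lemma \ref{Rad_lim}, using $f_{i_{k}}(0)=0$), so letting $k\to\infty$ and then $t\uparrow 1$ and invoking Lemma \ref{Rad_lim} gives
\begin{align*}
f(\zeta)=\lim_{t\to 1}f(t\zeta)\le\liminf_{k\to\infty}f_{i_{k}}(\zeta).
\end{align*}
Since $\sigma$ is concentrated on $\bigcup_{j}F_{j}^{\circ}$, Fatou's lemma then yields $\int_{\partial P}f\,d\sigma\le\liminf_{k}\int_{\partial P}f_{i_{k}}\,d\sigma<\infty$, so $f\in\mathcal{E}_{1}\cap L^{1}(\partial P)=\mathcal{C}_{\ast}$ by Proposition \ref{E1_bdry_int}; in particular $f$ is continuous on $P^{\ast}$, and with $\inf_{P}f=f(0)=0$ we conclude $f\in\widetilde{\mathcal{C}}_{\ast}$. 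I expect this step to be the main obstacle: one must ensure the interior limit does not "lose mass" on $\partial P$, which is precisely why the radial-limit description of boundary values (Lemma \ref{Rad_lim}) and the identification in Proposition \ref{E1_bdry_int} are needed; the inequality $\int_{\partial P}f\,d\sigma\le\liminf_{k}\int_{\partial P}f_{i_{k}}\,d\sigma$ is also exactly the second assertion of (ii).

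Finally, Proposition \ref{convergence_L1_unif}, applied to $\{f_{i_{k}}\}$ and $f$ (whose hypotheses have now been verified), gives $\int_{P}|f-f_{i_{k}}|\,dx\to 0$. Because $\overline{s}+V$ is affine, hence bounded on $P$, this forces $\int_{P}(\overline{s}+V)f\,dx=\lim_{k}\int_{P}(\overline{s}+V)f_{i_{k}}\,dx$; combining with the lower semicontinuity of the boundary integral already established,
\begin{align*}
L_{V}(f)=\int_{\partial P}f\,d\sigma-\int_{P}(\overline{s}+V)f\,dx\le\liminf_{k\to\infty}\left(\int_{\partial P}f_{i_{k}}\,d\sigma-\int_{P}(\overline{s}+V)f_{i_{k}}\,dx\right)=\liminf_{k\to\infty}L_{V}(f_{i_{k}}),
\end{align*}
which would complete the argument.
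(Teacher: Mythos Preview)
Your argument is correct and follows the same scheme as the paper. The paper's own proof is shorter only because it directly invokes \cite[Proposition~5.2.6]{Don02} to obtain the subsequence, the limit $f\in\widetilde{\mathcal{C}}_{\ast}$, local uniform convergence, and the boundary lower semicontinuity, and then applies Proposition~\ref{convergence_L1_unif} for the $L^{1}(P)$ convergence; you have essentially unpacked Donaldson's compactness argument (using the subdifferential bounds of Propositions~\ref{upper_bdd_subdiff} and~\ref{Lip_bdd_subdiff}, Arzel\`a--Ascoli, and the radial-limit description from Lemma~\ref{Rad_lim}) rather than citing it, and then finished with the same application of Proposition~\ref{convergence_L1_unif}.
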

\begin{proof}
By \cite[Proposition 5.2.6]{Don02}, there exist a subsequence $\{f_{i_{k}}\}_{k =1}^{\infty}$ of $\{f_{i}\}_{i =1}^{\infty}$ and $f \in \widetilde{\mathcal{C}}_{*}$ which satisfy (i) and 
\begin{align*}
\int_{\partial P}f\,d\sigma \leq \liminf_{k \to \infty}\int_{\partial P}f_{i_{k}}\,d\sigma. 
\end{align*}
Then Proposition \ref{convergence_L1_unif} shows that the sequence $\{f_{i_{k}}\}_{k=1}^{\infty}$ converges to $f$ in $L^{1}$. 
The proposition is proved. 
\end{proof}

We also use the following approximation result due to Donaldson \cite{Don02}. 

\begin{proposition}[{\cite[Proposition 5.2.8]{Don02}}]\label{appr_PL}
Let $f$ be a nonnegative function in $\mathcal{C}_{\ast}$. 
Then there exists a sequence $\{f_{i}\}_{i=1}^{\infty}$ in $\mathcal{C}_{PL}$ such that 
\begin{enumerate}[\upshape(i)]
\item $0 \leq f_{i} \leq f$ for any $i \in \mathbf{Z}_{>0}$, 
\item $\{f_{i}\}_{i =1}^{\infty}$ converges locally uniformly to $f$ on $P^{\circ}$, and 
\item ${\displaystyle \int_{P}|f-f_{i}|\,dx + \int_{\partial P}|f-f_{i}|\,d\sigma \to 0}$ (\text{as $i \to \infty$}). 
\end{enumerate}
In particular, we have ${\displaystyle \lim_{i \to \infty}L_{V}(f_{i}) = L_{V}(f)}$. 
\end{proposition}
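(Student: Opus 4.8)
The plan is to follow the same template as the proofs of Propositions \ref{appr_smooth} and \ref{cptness}: extract the piecewise-affine approximants from Donaldson's work and then upgrade the mode of convergence using Proposition \ref{convergence_L1_unif}. First I would invoke \cite[Proposition 5.2.8]{Don02}, which, applied to the nonnegative function $f \in \mathcal{C}_\ast$, produces a sequence $\{f_i\}_{i=1}^\infty$ of convex piecewise-affine functions on $P$ — hence elements of $\mathcal{C}_{PL}$ — with $0 \le f_i \le f$ on $P$, converging to $f$ locally uniformly on $P^\circ$, and satisfying $\int_{\partial P}|f - f_i|\,d\sigma \to 0$. (The locally uniform convergence on $P^\circ$ is automatic once one has pointwise convergence of convex functions on an open convex set; Donaldson's construction — successive maxima of an increasing family of affine minorants of $f$ — is what simultaneously gives $f_i \le f$ and the boundary convergence.) This already yields (i) and (ii).

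For (iii) I would observe that, since $0 \le f_i \le f$ and $f \in L^1(\partial P)$ (because $f \in \mathcal{C}_\ast$, cf.\ Proposition \ref{E1_bdry_int}), one has $\sup_{i \in \mathbf{Z}_{>0}}\int_{\partial P} f_i\,d\sigma \le \int_{\partial P} f\,d\sigma < \infty$. Thus $\{f_i\}$ and $f$ satisfy the hypotheses of Proposition \ref{convergence_L1_unif}, and that proposition gives $\|f - f_i\|_{L^1(P)} \to 0$; combined with the boundary convergence above, this is exactly (iii). The ``in particular'' clause then follows from the closed formula $L_V(g) = \int_{\partial P} g\,d\sigma - \int_P (\overline{s}+V)\,g\,dx$ and the boundedness of $\overline{s} + V$ on the compact set $P$ ($V$ being affine): indeed
\begin{align*}
|L_V(f) - L_V(f_i)| \le \int_{\partial P}|f - f_i|\,d\sigma + \Bigl(\sup_{P}|\overline{s}+V|\Bigr)\,\|f - f_i\|_{L^1(P)} \longrightarrow 0 .
\end{align*}

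The only point requiring genuine care is the boundary convergence $\int_{\partial P}|f - f_i|\,d\sigma \to 0$, which is not a formal consequence of locally uniform convergence on $P^\circ$; it has to be read off from Donaldson's construction, the mechanism being that the affine minorants can be arranged so that $f_i \to f$ pointwise $\sigma$-almost everywhere on each facet $F_j^\circ$, after which the sandwich $0 \le f_i \le f \in L^1(\partial P)$ makes dominated convergence applicable. Everything else is bookkeeping around Proposition \ref{convergence_L1_unif}, and no estimate beyond those already established in this section is needed.
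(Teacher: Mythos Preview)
Your proposal is correct and follows exactly the template the paper uses for the neighboring Propositions \ref{appr_smooth} and \ref{cptness}: invoke the cited result for the boundary convergence and the pointwise/locally uniform control, then feed the uniform boundary bound into Proposition \ref{convergence_L1_unif} to upgrade to $L^1(P)$ convergence. The paper itself gives no proof of Proposition \ref{appr_PL} at all --- it is stated as a direct citation of \cite[Proposition 5.2.8]{Don02} without argument (note the absence of the ``cf.'' that appears in the citations for Propositions \ref{appr_smooth} and \ref{cptness}, which are explicitly called ``improvements'' of the cited results). So there is nothing to compare against; your write-up would serve perfectly well as the missing proof, and indeed is the natural one given the surrounding material.
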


The following proposition is easily proved from the density of $\mathbf{Q}$ in $\mathbf{R}$ and convexity. 

\begin{proposition}\label{apprQPL}
Let $f \in \mathcal{C}_{PL}$ (possibly irrational). 
Then there exists a sequence $\{f_{i}\}_{i=1}^{\infty}$ in $\mathcal{C}_{PL}^{\mathbf{Q}}$ which converges uniformly to $f$ in $P$. 
\end{proposition}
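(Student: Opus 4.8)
The plan is to reduce the statement to rational approximation of the finitely many affine functions that cut out $f$. First I would invoke the structural fact that a convex piecewise affine function on the polytope $P$ can be written globally as a finite maximum: there are affine functions $\ell_1,\dots,\ell_m$ on $M_{\mathbf{R}}$ with $f=\max_{1\le j\le m}\ell_j$ on $P$. This follows from the definition of piecewise affine by taking a finite polyhedral subdivision $P=\bigcup_k\Delta_k$ on whose top-dimensional cells $f$ agrees with affine functions $\ell_k$, and then checking, using convexity of $f$, that each such $\ell_k$ is a global minorant of $f$ on $P$: given $x\in P$ and an interior point $y_0$ of $\Delta_k$, the point $y_0$ is a convex combination of $x$ and a nearby point $y_t\in\Delta_k$ obtained by pushing slightly past $y_0$ away from $x$; applying convexity of $f$ together with affineness of $\ell_k$ on the segment $[x,y_t]$ yields $\ell_k(x)\le f(x)$. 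Since every $x\in P$ lies in some $\Delta_k$, we conclude $f=\max_k\ell_k$ on $P$.

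Next, writing $\ell_j(x)=\langle\lambda_j,x\rangle+c_j$ with $\lambda_j\in N_{\mathbf{R}}$ and $c_j\in\mathbf{R}$, I would use compactness of $P$: set $R:=\sup_{x\in P}|x|<\infty$. For each $i\in\mathbf{Z}_{>0}$, density of $\mathbf{Q}$ in $\mathbf{R}$ lets me choose $\lambda_j^{(i)}\in N_{\mathbf{Q}}$ and $c_j^{(i)}\in\mathbf{Q}$ with $|\lambda_j^{(i)}-\lambda_j|(1+R)<1/i$ and $|c_j^{(i)}-c_j|<1/i$, so that the rational affine function $\ell_j^{(i)}(x):=\langle\lambda_j^{(i)},x\rangle+c_j^{(i)}$ satisfies $\sup_{x\in P}|\ell_j^{(i)}(x)-\ell_j(x)|\le 1/i$. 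Setting $f_i:=\max_{1\le j\le m}\ell_j^{(i)}$, each $f_i$ lies in $\mathcal{C}_{PL}^{\mathbf{Q}}$ by \eqref{PLconv}, and from the elementary inequality $|\max_j s_j-\max_j t_j|\le\max_j|s_j-t_j|$ one obtains $\sup_{x\in P}|f_i(x)-f(x)|\le 1/i$. Hence $f_i\to f$ uniformly on $P$, as required.

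I do not anticipate a serious obstacle. The only step requiring a genuine (though entirely standard) argument is the global ``maximum of affine functions'' representation of a convex piecewise affine function, and even that is immediate once convexity is exploited as above. Everything else — simultaneous rational approximation of a finite list of real coefficients, the resulting uniform bound over the compact polytope $P$, and the $1$-Lipschitz behaviour of the finite-maximum operation — is routine.
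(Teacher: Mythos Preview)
Your proposal is correct and follows essentially the same route as the paper: write $f=\max_j\ell_j$, approximate each $\ell_j$ by rational affine functions, and take the corresponding maxima. The only difference is cosmetic --- you supply explicit uniform bounds via the inequality $|\max_j s_j-\max_j t_j|\le\max_j|s_j-t_j|$, whereas the paper appeals to the general fact that pointwise convergence of convex functions upgrades to locally uniform convergence on $\mathbf{R}^n$ and then restricts to the compact $P$.
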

\begin{proof}
Choose affine functions $\ell_{1}, \ldots, \ell_{r}$ so that 
\begin{align*}
f(x) = \max\{\ell_{1}(x), \ldots, \ell_{m}(x)\} 
\end{align*}
for any $x \in P$, and we regard $f$ as a function defined on $\mathbf{R}^{n}$. 
For any $j\in\{1, ,\ldots, m\}$, choose a sequence $\{\ell_{j}^{(i)}\}_{i=1}^{\infty}$ of rational affine functions which converges pointwise to $\ell_{j}$ on $\mathbf{R}^{n}$, and define a rational piecewise affine function $f^{(i)}$ by 
\begin{align*}
f^{(i)}(x) = \max\{\ell_{1}^{(i)}(x), \ldots, \ell_{m}^{(i)}(x)\}. 
\end{align*}
Then $\{f^{(i)}\}_{i=1}^{\infty}$ converges locally uniformly to $f$ on $\mathbf{R}^{n}$. 
In particular, $\{f^{(i)}\}_{i=1}^{\infty}$ converges uniformly to $f$ on $P$. 
\end{proof}

\subsection{The non-Archimedean K-energy}\label{sec:NAK}
We can define $L(f)$ and $L_{V}(f)$ for any $f \in \mathcal{E}_{1}$, taking value in $(-\infty, \infty]$.  
By using these, we can get the following characterization of $\mathcal{C}_{\ast}$. 

\begin{proposition}\label{C*_NA_K_energy}
\begin{align*}
\mathcal{C}_{\ast} 
= \{f \in \mathcal{E}_{1} \mid L(f) < \infty\} 
= \{f \in \mathcal{E}_{1} \mid L_{V}(f) < \infty\}. 
\end{align*}
\end{proposition}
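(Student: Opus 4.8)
The plan is to prove the two set equalities
\[
\mathcal{C}_{\ast} = \{f \in \mathcal{E}_{1} \mid L(f) < \infty\} = \{f \in \mathcal{E}_{1} \mid L_{V}(f) < \infty\}
\]
by first showing that the finiteness of $L$ and of $L_V$ are equivalent on $\mathcal{E}_1$, and then identifying that common class with $\mathcal{C}_\ast$. Recall $L(f) = \int_{\partial P} f\,d\sigma - \overline{s}\int_P f\,dx$ and $L_V(f) = \int_{\partial P} f\,d\sigma - \int_P (\overline{s}+V)f\,dx$; by Corollary~\ref{integrable_C*} and the identification $\mathcal{E}_1 = \mathrm{CVX}(P)\cap L^1(P)$, every $f \in \mathcal{E}_1$ has $\int_P f\,dx$ finite, and since $V$ is a bounded (affine) function on the compact polytope $P$, the term $\int_P (\overline{s}+V)f\,dx$ is also finite for $f \in \mathcal{E}_1$. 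Hence for $f \in \mathcal{E}_1$ the quantity $\int_{\partial P} f\,d\sigma \in (-\infty,+\infty]$ is well-defined (it is bounded below because $f$ is bounded below on $P$, being lsc convex on a compact set), and $L(f) < \infty \iff \int_{\partial P} f\,d\sigma < \infty \iff L_V(f) < \infty$. This disposes of the second equality and reduces everything to showing
\[
\mathcal{C}_{\ast} = \{f \in \mathcal{E}_1 \mid \textstyle\int_{\partial P} f\,d\sigma < \infty\} = \mathcal{E}_1 \cap L^1(\partial P).
\]

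The inclusion $\mathcal{C}_\ast \subseteq \mathcal{E}_1 \cap L^1(\partial P)$ is exactly Corollary~\ref{integrable_C*} (together with the definition of $\mathcal{C}_\ast$, whose elements are integrable on $\partial P$ by fiat). For the reverse inclusion, I would invoke Proposition~\ref{E1_bdry_int}, which states precisely $\mathcal{C}_\ast = \mathcal{E}_1 \cap L^1(\partial P)$; the substantive content — that an $f \in \mathcal{E}_1$ with $f \in L^1(\partial P)$ is automatically continuous on $P^\ast = P^\circ \cup \bigcup_j F_j^\circ$ — was already established in Section~\ref{sec:CvxFct} via the radial-limit Lemma~\ref{Rad_lim} and Donaldson's Lipschitz estimates (Propositions~\ref{upper_bdd_subdiff} and~\ref{Lip_bdd_subdiff}). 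So the proof is essentially an assembly: combine Proposition~\ref{E1_bdry_int} with the observation of the first paragraph.

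One subtlety worth spelling out: when $f \in \mathcal{E}_1$ has $\int_{\partial P} f\,d\sigma = +\infty$, I must check $L(f) = +\infty$ rather than an indeterminate $\infty - \infty$; this is where it matters that $\int_P f\,dx$ is genuinely finite (Corollary~\ref{integrable_C*} applied after normalizing $f$ by an affine function to lie in $\widetilde{\mathcal{E}_1}$, or directly from $\mathcal{E}_1 \subset L^1(P)$), and similarly that $V$ is bounded so $\int_P V f\,dx$ is finite. The main — and only real — obstacle is making sure the boundary integral $\int_{\partial P} f\,d\sigma$ is unambiguously defined in $(-\infty,+\infty]$ for every $f \in \mathcal{E}_1$; once the sign is controlled (lower semicontinuity plus boundedness below of $f$ on $P$, restricted to each facet via radial approach as in Lemma~\ref{Rad_lim}), the rest is bookkeeping. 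I expect the write-up to be short, citing Proposition~\ref{E1_bdry_int} and Corollary~\ref{integrable_C*} and doing the elementary finiteness argument for the three terms of $L$ and $L_V$.
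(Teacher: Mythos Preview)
Your proposal is correct and follows essentially the same approach as the paper: both reduce to Proposition~\ref{E1_bdry_int} ($\mathcal{C}_{\ast} = \mathcal{E}_{1} \cap L^{1}(\partial P)$) combined with the observation that for $f \in \mathcal{E}_1$ the terms $\int_P f\,dx$ and $\int_P Vf\,dx$ are finite, so $L(f) < \infty \iff \int_{\partial P} f\,d\sigma < \infty \iff L_V(f) < \infty$. Your extra care about the boundary integral being well-defined in $(-\infty,+\infty]$ is a reasonable elaboration the paper leaves implicit.
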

\begin{proof}
By Proposition \ref{E1_bdry_int}, for each $f \in \mathcal{E}_{1}$ we have 
\begin{align*}
f \in \mathcal{C}_{\ast} 
\iff \int_{\partial P}f\,d\sigma < \infty 
\iff L(f) < \infty
\iff L_{V}(f) < \infty, 
\end{align*}
as required. 
\end{proof}

\subsection{The J-norm}\label{sec:RedJ}
For any convex function $f$ in $\mathcal{E}_{1}$, define 
\begin{align*}
\|f\|_{J} &= \inf_{\text{$\xi \colon$affine}}\left(\dashint_{P}(f+\xi)\,dx - \inf_{P}(f+\xi)\right). 
\end{align*}
We call $\|f\|_{J}$ the \emph{J-norm} of $f$. 
The followings are the basic properties on the J-norm. 

\begin{proposition}\label{red_J_estimate}
\begin{enumerate}[\upshape(1)]
\item $\|f + \xi\|_{J} = \|f\|_{J}$ for any $f \in \mathcal{E}_{1}$ and $\xi \in N_{\mathbf{R}}$. 
\item $\|f\|_{J} \geq 0$ for any $f \in \mathcal{E}_{1}$, and $\|f\|_{J} = 0$ if and only if $f$ is affine. 
\item There exists a constant $C_{1} > 0$ such that 
\begin{align*}
\|f\|_{J} \leq \|f\|_{L^{1}(P)} \leq C_{1}\|f\|_{J}
\end{align*}
for any $f \in \widetilde{\mathcal{E}}_{1}$. 
\item If a sequence $\{f_{i}\}_{i=1}^{\infty}$ in $\mathcal{E}_{1}$ converges uniformly to $f \in \mathcal{E}_{1}$, then 
\begin{align*}
\lim_{i \to \infty}\|f_{i}\|_{J} = \|f\|_{J}. 
\end{align*}
\end{enumerate}
\end{proposition}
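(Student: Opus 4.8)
The plan is to handle the four assertions separately, with (1), (2) and (4) following essentially from the definition and only (3) requiring real input, namely Donaldson's a priori subgradient bound (Proposition \ref{upper_bdd_subdiff}). For (1), adding $\xi \in N_{\mathbf{R}}$ merely reparametrizes the family of affine functions over which the infimum defining $\|\cdot\|_{J}$ runs, since $\xi + \eta$ ranges over all affine functions as $\eta$ does; hence $\|f+\xi\|_{J} = \|f\|_{J}$ at once. For (2), each term $\dashint_{P}(f+\eta)\,dx - \inf_{P}(f+\eta)$ is nonnegative because an average dominates an infimum, giving $\|f\|_{J} \ge 0$; if $f$ is affine then $\eta = -f$ kills the corresponding term, so $\|f\|_{J}=0$. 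Conversely, if $\|f\|_{J} = 0$ I would pick affine minimizers $\eta_{i}$, subtract the constants $\inf_{P}(f+\eta_{i})$ to arrange $g_{i} \coloneqq f+\eta_{i} \ge 0$ with $\dashint_{P}g_{i}\,dx \to 0$; then $g_{i} \to 0$ and $\eta_{i} \to -f$ in $L^{1}(P)$, and since the affine functions form a finite-dimensional (hence closed) subspace of $L^{1}(P)$, $-f$ coincides almost everywhere with an affine function, so $f$ is affine (being convex and lower semicontinuous).

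For (3), the left-hand inequality is the case $\xi = 0$ of the infimum defining $\|f\|_{J}$. For the right-hand inequality I would fix an arbitrary affine $\xi$, put $m = \inf_{P}(f+\xi)$, $\tilde g \coloneqq f+\xi-m \ge 0$, $h \coloneqq \xi-m$, and $J_{\xi} \coloneqq \dashint_{P}(f+\xi)\,dx - \inf_{P}(f+\xi)$, so that $f = \tilde g - h$ and $\int_{P}\tilde g\,dx = \mathrm{vol}(P)\,J_{\xi}$. Using $f \ge 0$ and $f(0)=0$ — here is where $f \in \widetilde{\mathcal{E}}_{1}$ enters — the affine function $h$ is an affine minorant of $\tilde g$ touching $\tilde g$ at the interior point $0$, so $h(x) = \tilde g(0) + \langle v, x\rangle$ for some $v \in \partial\tilde g(0)$, and Proposition \ref{upper_bdd_subdiff} applied to $\tilde g$ on $P^{\circ}$ gives $|v| \le D_{0}(\tilde g) \le \kappa\, d_{0}^{-(n+1)}\mathrm{vol}(P)\,J_{\xi}$, where $d_{0}$ is the distance from $0$ to $\partial P$. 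Then $\int_{P}f\,dx = \int_{P}\tilde g\,dx - \mathrm{vol}(P)\tilde g(0) - \langle v, \int_{P}x\,dx\rangle \le \mathrm{vol}(P)\,J_{\xi} + |v|\,|\int_{P}x\,dx| \le C_{1}J_{\xi}$ with $C_{1}$ depending only on $n$ and $P$; taking the infimum over $\xi$ yields $\|f\|_{L^{1}(P)} \le C_{1}\|f\|_{J}$.

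For (4), I would observe that for each fixed affine $\eta$, replacing $g$ by $f$ changes $\dashint_{P}(g+\eta)\,dx - \inf_{P}(g+\eta)$ by at most $2\|g-f\|_{\infty}$, and taking the infimum over $\eta$ preserves this Lipschitz bound, so $|\|f_{i}\|_{J} - \|f\|_{J}| \le 2\|f_{i} - f\|_{\infty} \to 0$. The main obstacle is the reverse estimate in (3): one must rule out that a strongly non-affine $f \in \widetilde{\mathcal{E}}_{1}$ nonetheless has small $J$-norm, and this is exactly what the normalization at an interior point, combined with Donaldson's bound on subgradients in terms of $L^{1}$-mass, prevents; the remaining parts are routine.
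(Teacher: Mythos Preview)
Your proposal is correct. Parts (1), (2) and (4) match the paper's treatment in spirit (the paper is even terser, deducing (2) from (1) and (3) rather than arguing directly as you do), and your Lipschitz estimate in (4) is exactly the paper's argument.

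The genuine difference is in (3). The paper proves the nontrivial inequality $\|f\|_{L^{1}(P)} \le C_{1}\|f\|_{J}$ by contradiction: assuming a sequence $f_{i} \in \widetilde{\mathcal{E}}_{1}$ with $\|f_{i}\|_{L^{1}(P)} = 1$ and $\|f_{i}\|_{J} \to 0$, it picks near-optimal affine $\ell_{i}$, invokes Donaldson's compactness result \cite[Corollary~5.2.5]{Don02} to pass to a locally uniform limit, and shows the limit must be identically zero, contradicting $\|f_{i}\|_{L^{1}(P)} = 1$. Your route is instead direct and constructive: you bound the slope of the near-optimal affine corrector via Proposition~\ref{upper_bdd_subdiff} (Donaldson's subgradient estimate), which yields an explicit constant $C_{1}$ depending only on $P$. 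Both arguments rest on the same circle of Donaldson estimates, but yours avoids the compactness/subsequence machinery and gives a quantitative bound; the paper's version is shorter to write once the compactness lemma is in hand and is the template reused later in the proof of Theorem~\ref{equiv_strengthenings}.
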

\begin{proof}
The claim (1) is clear, and (2) is easily deduced from (1) and (3). 
Let us show the claim (3). 
The first inequality is obvious. 
For the latter inequality, it is sufficient to show that there exists a constant $\kappa > 0$ such that 
\begin{align*}
\|f\|_{J} \geq \kappa
\end{align*}
for any $f \in \widetilde{\mathcal{E}}_{1}$ with $\|f\|_{L^{1}(P)} = 1$. 
For contradiction, suppose that the conclusion does not hold. 
Then there is a sequence $\{f_{i}\}_{i=1}^{\infty}$ in $\widetilde{\mathcal{E}}_{1}$ such that 
\begin{enumerate}[\upshape(i)]
\item $\|f_{i}\|_{L^{1}(P)} = 1$ for any $i \in \mathbf{Z}_{> 0}$, and 
\item ${\displaystyle \lim_{i \to \infty}\|f_{i}\|_{J} = 0}$. 
\end{enumerate}
By condition (ii), there is a sequence of affine functions $\{\ell_{i}\}_{i=1}^{\infty}$ such that 
\begin{enumerate}[\upshape(i)]
\setcounter{enumi}{2}
\item ${\displaystyle \inf_{P}(f_{i}+\ell_{i}) = 0}$ for any $i \in \mathbf{Z}_{> 0}$, and 
\item ${\displaystyle \int_{P}(f_{i}+\ell_{i})\,dx = 0}$. 
\end{enumerate}
By conditions (i), (ii), (iii), and (iv) and \cite[Corollary 5.2.5]{Don02}, we may assume both $\{f_{i}\}_{i=1}^{\infty}$ and $\{f_{i}+\ell_{i}\}_{i=1}^{\infty}$ converge locally uniformly on $P^{\circ}$. 
Then the sequence $\{\ell_{i}\}_{i=1}^{\infty}$ converges locally uniformly on $P^{\circ}$. 
Moreover, since every $\ell_{i}$ is affine, the limit $\ell$ is also affine and the convergence $\ell_{i} \to \ell$ is uniform. 
Let 
\begin{align*}
f(x) \coloneqq \lim_{i \to \infty}f_{i}(x),\quad \ell(x) \coloneqq \lim_{i \to \infty}\ell_{i}(x)
\end{align*}
for each $x \in P^{\circ}$. 
By (iv) above, $f + \ell = 0$ on $P^{\circ}$ and hence $f$ is affine on $P^{\circ}$. 
Moreover, since $\inf_{P^{\circ}}f = f(0) = 0$ we obtain 
\begin{align*}
f = -\ell = 0. 
\end{align*}
Therefore, we have 
\begin{align*}
1 = \|f_{i}\|_{L^{1}(P)} \leq \|f_{i} + \ell_{i}\|_{L^{1}(P)} + \|-\ell_{i}\|_{L^{1}(P)} \to 0\quad (\text{as $i \to \infty$}), 
\end{align*}
which is contradiction. 

Finally, we show the claim (4). 
For each $\varepsilon > 0$, choose $N \in \mathbf{Z}_{> 0}$ so that 
\begin{align*}
\sup_{P}|f - f_{i}| < \frac{\varepsilon}{4\mathrm{vol}(P)}
\end{align*}
for any $i \in \mathbf{Z}_{\geq N}$. 
Then for any $i \in \mathbf{Z}_{\geq N}$ and an affine function $\ell$ we have
\begin{align*}
&\left|\left(\int_{P}(f + \ell)\,dx - \mathrm{vol}(P)\inf_{P}(f+\ell)\right) - \left(\int_{P}(f_{i} + \ell)\,dx - \mathrm{vol}(P)\inf_{P}(f_{i}+\ell)\right)\right| \\
&\leq \left|\int_{P}(f + \ell)\,dx - \int_{P}(f + \ell)\,dx\right| + \mathrm{vol}(P)\left|\inf_{P}(f+\ell) - \inf_{P}(f_{i}+\ell)\Big)\right| \\
&\leq \left|\int_{P}(f -f_{i})\,dx\right| + \mathrm{vol}(P)\sup_{P}|f-f_{i}| \\
&< \frac{\varepsilon}{4} + \frac{\varepsilon}{4} = \frac{\varepsilon}{2}. 
\end{align*}
By choosing $\ell$ so that 
\begin{align*}
\int_{P}(f + \ell)\,dx - \mathrm{vol}(P)\inf_{P}(f+\ell) - \|f\|_{J} < \frac{\varepsilon}{2}, 
\end{align*}
we obtain 
\begin{align}\label{red_J_est_1}
\begin{split}
\|f_{i}\|_{J} 
&\leq \int_{P}(f_{i} + \ell)\,dx - \mathrm{vol}(P)\inf_{P}(f_{i}+\ell) \\
&< \int_{P}(f + \ell)\,dx - \mathrm{vol}(P)\inf_{P}(f+\ell) + \frac{\varepsilon}{2} \\
&< \|f\|_{J} + \varepsilon. 
\end{split}
\end{align}
Similarly, we have 
\begin{align}\label{red_J_est_2}
\|f\|_{J} 
< \|f_{i}\|_{J} + \varepsilon. 
\end{align}
By \eqref{red_J_est_1} and \eqref{red_J_est_2}, we obtain $|\|f\|_{J} - \|f_{i}\|_{J}| < \varepsilon$, as required. 
\end{proof}

Similar to the J-norm, \emph{the reduced $L^{1}$-norm} of a convex function $f \in \mathcal{E}_{1}$ is defined by 
\begin{align*}
\|f\|_{1, T} \coloneqq \inf_{\text{$\xi \colon$affine}}\int_{P}|(f+ \xi) - (\overline{f+\xi})|\,dx. 
\end{align*}
The similar claims as (1), (2), (3) of Proposition \ref{red_J_estimate} are valid for the $J$-norm. 
In particular, the reduced $L^{1}$-norm and the $J$-norm are equivalent in the following sense. 
\begin{proposition}\label{equiv_L1J}
There are constants $C_{2}, C_{3} > 0$ such that 
\begin{align*}
C_{2}\|f\|_{1, T} \leq \|f\|_{J} \leq C_{3}\|f\|_{1, T}
\end{align*}
for any $f \in \mathcal{E}_{1}$. 
\end{proposition}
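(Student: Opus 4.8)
The plan is to deduce Proposition~\ref{equiv_L1J} from the following two-sided \emph{pointwise} comparison, valid for every convex $g\in\mathcal{E}_{1}$: writing $\overline{g}:=\dashint_{P}g\,dx$ for its mean,
\begin{align}\label{ptwise_cmp}
\frac{1}{2\cdot 4^{n}}\Bigl(\overline{g}-\inf_{P}g\Bigr)\ \le\ \dashint_{P}\bigl|g-\overline{g}\bigr|\,dx\ \le\ 2\Bigl(\overline{g}-\inf_{P}g\Bigr).
\end{align}
Granting \eqref{ptwise_cmp}, the Proposition follows immediately: applying it to $g=f+\xi$ for an arbitrary affine function $\xi$, multiplying through by $\mathrm{vol}(P)$, and taking the infimum over all affine $\xi$, the left inequality of \eqref{ptwise_cmp} becomes $\|f\|_{J}\le \tfrac{2\cdot 4^{n}}{\mathrm{vol}(P)}\|f\|_{1,T}$ and the right one becomes $\|f\|_{1,T}\le 2\,\mathrm{vol}(P)\,\|f\|_{J}$. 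Thus $C_{2}=\tfrac{1}{2\,\mathrm{vol}(P)}$ and $C_{3}=\tfrac{2\cdot 4^{n}}{\mathrm{vol}(P)}$ work. No preliminary normalization of $f$ is needed, since \eqref{ptwise_cmp} is applied directly to $f+\xi$ and both sides of \eqref{ptwise_cmp} are unchanged upon adding a constant.

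To prove \eqref{ptwise_cmp}, put $h:=g-\inf_{P}g$, so that $h$ is convex and nonnegative on $P$, vanishes at some $p^{\ast}\in P$ (the minimum, attained since $h$ is lower semicontinuous on the compact set $P$), and $\overline{h}=\overline{g}-\inf_{P}g$. The upper bound is immediate: $|h-\overline{h}|\le h+\overline{h}$ pointwise because $h\ge 0$, whence $\dashint_{P}|h-\overline{h}|\,dx\le\overline{h}+\overline{h}=2\overline{h}$. For the lower bound we may assume $\overline{h}>0$, as otherwise $g$ is constant and \eqref{ptwise_cmp} is trivial. Chebyshev's inequality gives $2\overline{h}\cdot\mathrm{vol}\bigl(\{h\ge 2\overline{h}\}\bigr)\le\int_{P}h\,dx=\mathrm{vol}(P)\,\overline{h}$, hence $\mathrm{vol}\bigl(\{h<2\overline{h}\}\bigr)\ge\tfrac12\mathrm{vol}(P)$. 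Now the homothety $x\mapsto\tfrac34 p^{\ast}+\tfrac14 x$ maps $P$ into $P$ and carries $\{h<2\overline{h}\}$ onto a set $R\subset P$ with $\mathrm{vol}(R)=4^{-n}\mathrm{vol}\bigl(\{h<2\overline{h}\}\bigr)\ge 4^{-n}\cdot\tfrac12\mathrm{vol}(P)$; moreover, for $x$ with $h(x)<2\overline{h}$, convexity together with $h(p^{\ast})=0$ gives $h\bigl(\tfrac34 p^{\ast}+\tfrac14 x\bigr)\le\tfrac14 h(x)<\tfrac12\overline{h}$, so $\overline{h}-h>\tfrac12\overline{h}$ on $R$. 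Since $\int_{P}(h-\overline{h})\,dx=0$ we have $\dashint_{P}|h-\overline{h}|\,dx=2\dashint_{P}(\overline{h}-h)^{+}\,dx\ge\tfrac{2}{\mathrm{vol}(P)}\cdot\tfrac{\overline{h}}{2}\cdot\mathrm{vol}(R)\ge\tfrac{\overline{h}}{2\cdot 4^{n}}$, which is the left inequality of \eqref{ptwise_cmp}.

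Everything is elementary, so there is no serious obstacle once the right argument for the nontrivial direction of \eqref{ptwise_cmp} is found; the key device is the dilation centred at the minimum point $p^{\ast}$ (where $h$ vanishes), combined with Chebyshev's inequality, which forces $h$ to lie substantially below its mean on a region of definite volume. An alternative, less quantitative route would combine Proposition~\ref{red_J_estimate}(3) with its counterpart for $\|\cdot\|_{1,T}$ stated just before Proposition~\ref{equiv_L1J} (both assert equivalence with $\|\cdot\|_{L^{1}(P)}$ on $\widetilde{\mathcal{E}}_{1}$), using that both functionals are invariant under addition of affine functions and that any $f\in\mathcal{E}_{1}$ can be brought into $\widetilde{\mathcal{E}}_{1}$ via the supporting hyperplane theorem at $0\in P^{\circ}$; but this relies on the compactness result \cite[Corollary 5.2.5]{Don02}, whereas the direct argument above is self-contained and produces explicit constants. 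I would present the direct one.
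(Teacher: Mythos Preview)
Your proof is correct and takes a genuinely different route from the paper. The paper does not give an explicit argument for Proposition~\ref{equiv_L1J}; it simply asserts that the analogues of Proposition~\ref{red_J_estimate}(1)--(3) hold for $\|\cdot\|_{1,T}$ and deduces the equivalence ``in particular''. Unpacking this, the paper's implicit route is exactly the alternative you describe at the end: prove (by the same compactness--contradiction argument as in Proposition~\ref{red_J_estimate}(3), relying on \cite[Corollary 5.2.5]{Don02}) that $\|\cdot\|_{1,T}$ is also two-sidedly comparable to $\|\cdot\|_{L^{1}(P)}$ on $\widetilde{\mathcal{E}}_{1}$, combine the two comparisons there, and extend to all of $\mathcal{E}_{1}$ by affine invariance.

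Your direct approach is a genuine improvement. By establishing the pointwise inequality \eqref{ptwise_cmp} for the pre-infimum quantities (Chebyshev plus the dilation at the minimizer $p^{\ast}$ for the nontrivial direction), you avoid any compactness input, work directly on $\mathcal{E}_{1}$ without first normalizing, and obtain explicit dimensional constants. The paper's argument is softer and nonconstructive: each of the two contradiction arguments yields an unspecified constant, and one must then chain them. Your proof would be preferable for the paper; note only that the explicit values of $C_{2},C_{3}$ you record depend on whether $\|\cdot\|_{1,T}$ is defined with $\int_{P}$ or $\dashint_{P}$ (the paper is inconsistent on this point between Sections~\ref{sec:ToricEnergy} and~\ref{sec:RedJ}), though of course the equivalence itself is unaffected.
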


An advantage of the reduced $L^{1}$-norm is the following $L^{1}$-continuity. 
\begin{proposition}\label{L^1conti}
If a sequence $\{f_{i}\}_{i=1}^{\infty}$ in $\mathcal{E}_{1}$ converges to $f \in \mathcal{E}_{1}$ in $L^{1}$, then 
\begin{align*}
\lim_{i \to \infty}\|f_{i}\|_{1, T} = \|f\|_{1, T}. 
\end{align*}
\end{proposition}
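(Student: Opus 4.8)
The plan is to derive the statement from a single uniform estimate: the functional $g \mapsto \int_P |g - \bar g|\,dx$ on $L^1(P)$ is Lipschitz for the $L^1$-norm with a constant that does not see the affine shift, after which passing to the infimum over affine functions is immediate.

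First I would record the elementary inequality that for any $g, h \in L^1(P)$, writing $\bar g = \dashint_P g\,dx$,
\begin{align*}
\left| \int_P |g - \bar g|\,dx - \int_P |h - \bar h|\,dx \right| \le \int_P \bigl| (g - \bar g) - (h - \bar h) \bigr|\,dx \le 2\|g - h\|_{L^1(P)},
\end{align*}
where the first step is the pointwise reverse triangle inequality and the last uses $\int_P |\bar g - \bar h|\,dx = \mathrm{vol}(P)\,|\bar g - \bar h| \le \|g - h\|_{L^1(P)}$. The point to emphasize is that the right-hand side involves $g$ and $h$ only through their $L^1$-distance.

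Next, for each affine function $\xi$ I would set $A_i(\xi) := \int_P |(f_i + \xi) - \overline{(f_i + \xi)}|\,dx$ and $A(\xi) := \int_P |(f + \xi) - \overline{(f + \xi)}|\,dx$; these are finite since $f_i, f \in \mathcal{E}_1 = \mathrm{CVX}(P) \cap L^1(P)$ and $\xi$ is bounded on the compact polytope $P$. Applying the inequality above with $g = f_i + \xi$ and $h = f + \xi$, and using $g - h = f_i - f$, gives
\begin{align*}
|A_i(\xi) - A(\xi)| \le 2\|f_i - f\|_{L^1(P)} \quad \text{for every affine } \xi,
\end{align*}
a bound that is uniform in $\xi$. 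Since $\|f_i\|_{1, T} = \inf_\xi A_i(\xi)$ and $\|f\|_{1, T} = \inf_\xi A(\xi)$, this yields $A_i(\xi) \le A(\xi) + 2\|f_i - f\|_{L^1(P)}$ for all $\xi$, hence $\|f_i\|_{1, T} \le \|f\|_{1, T} + 2\|f_i - f\|_{L^1(P)}$; the symmetric argument gives the reverse inequality, so $|\|f_i\|_{1, T} - \|f\|_{1, T}| \le 2\|f_i - f\|_{L^1(P)} \to 0$.

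There is no serious obstacle in this argument; the one point that must be handled carefully is that the comparison constant in the middle step is independent of $\xi$ — which is exactly what makes it legitimate to take the infimum over the noncompact family of affine functions on both sides. Everything else is routine.
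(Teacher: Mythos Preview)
Your proof is correct and follows essentially the same approach as the paper's: both show that $\xi \mapsto \int_P|(f+\xi)-\overline{(f+\xi)}|\,dx$ depends on $f$ in a way that is Lipschitz in $\|f\|_{L^1(P)}$ uniformly in $\xi$, and then pass to the infimum. The only cosmetic difference is that the paper absorbs the constant $\overline{(f+\xi)}$ into the affine function (so that $\|f\|_{1,T}=\inf_{\ell\text{ affine}}\|f+\ell\|_{L^1(P)}$) and thereby obtains Lipschitz constant $1$ via the ordinary triangle inequality, whereas you keep the centering explicit and pick up a harmless factor of $2$.
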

\begin{proof}
For each $\varepsilon > 0$, choose $N \in \mathbf{Z}_{> 0}$ so that 
\begin{align*}
\|f - f_{i}\|_{L^{1}(P)} < \frac{\varepsilon}{2}
\end{align*}
for any $i \in \mathbf{Z}_{\geq N}$. 
Then for any $i \in \mathbf{Z}_{\geq N}$ and an affine function $\ell$ we have
\begin{align*}
|\|f + \ell\|_{L^{1}(P)} - \|f_{i} + \ell\|_{L^{1}(P)}|
\leq \|f - f_{i}\|_{L^{1}(P)} 
< \frac{\varepsilon}{2}. 
\end{align*}
By choosing $\ell$ so that 
\begin{align*}
|\|f + \ell\|_{L^{1}(P)} - \|f\|_{1, T} < \frac{\varepsilon}{2}, 
\end{align*}
we obtain 
\begin{align}\label{red_L^1_est_1}
\|f_{i}\|_{1, T} 
\leq \|f_{i} + \ell\|_{L^{1}(P)} 
< \|f + \ell\|_{L^{1}(P)} + \frac{\varepsilon}{2} 
< \|f\|_{1, T} + \varepsilon. 
\end{align}
Similarly, we have 
\begin{align}\label{red_L^1_est_2}
\|f\|_{1, T} 
< \|f_{i}\|_{1, T} + \varepsilon. 
\end{align}
By \eqref{red_L^1_est_1} and \eqref{red_L^1_est_2}, we obtain $|\|f\|_{T, 1} - \|f_{i}\|_{T, 1}| < \varepsilon$, as required. 
\end{proof}
It is not known whether the similar result as Proposition \ref{L^1conti} are valid for the $J$-norm or not. 

\subsection{Proof of Theorem \ref{equiv_strengthenings}}
In this subsection, we give a proof of Theorem \ref{equiv_strengthenings}. 
For $(b)_{\mathcal{F}} \Rightarrow (J)_{\mathcal{F}}$, let $\delta > 0$ satisfying 
\begin{align*}
L_{V}(f) \geq \delta\int_{\partial P}f\,d\sigma
\end{align*}
for any $f \in \widetilde{\mathcal{F}}$. 
Let $f \in \mathcal{F}$, and choose an affine function $\ell$ so that $f + \ell \in \widetilde{\mathcal{F}}$. 
Then, by Propositions \ref{integral_bound_C*} and \ref{red_J_estimate} we have 
\begin{align*}
L_{V}(f) 
&= L_{V}(f + \ell) 
\geq \delta \int_{\partial P}(f + \ell)\,d\sigma \\
&\geq \left(\frac{n+1}{d}\right)\delta\int_{P}(f + \ell)\,dx \\
&\geq \left(\frac{n+1}{2d}\right)\delta\|f + \ell\|_{J} \\ 
&= \left(\frac{n+1}{2d}\right)\delta\|f\|_{J}, 
\end{align*}
as required. 

For $(\mathrm{K})_{\mathcal{C}_{\ast}} \Rightarrow (b)_{\mathcal{F}}$, first note that 
\begin{align*}
L_{V}(f) \geq 0 
\end{align*}
for any $f \in \widetilde{\mathcal{C}}_{*}$. 
For contradiction, suppose that $P$ does not satisfy $(b)_{\mathcal{F}}$. 
Then there exists a sequence $\{f_{i}\}_{i=1}^{\infty}$ in $\widetilde{\mathcal{F}}$ such that 
\begin{enumerate}[\upshape(a)]
\item ${\displaystyle \int_{\partial P}f_{i}\,d\sigma = 1}$ for any $i \in \mathbf{Z}_{>0}$, and  
\item ${\displaystyle \lim_{i \to \infty}L_{V}(f_{i}) = 0}$. 
\end{enumerate}
By the condition (a), we may further assume that there exists $f \in \widetilde{\mathcal{C}}_{*}$ which satisfies the conditions (i), (ii) of Proposition \ref{cptness}. 
Since 
\begin{align*}
0 
\leq L_{V}(f) 
\leq \liminf_{i \to \infty}L_{V}(f_{i}) = 0, 
\end{align*}
we have $L_{V}(f) = 0$ and that $f$ is affine. 
Since $f \in \widetilde{\mathcal{C}}_{*}$, $f$ must be $0$, which contradicts the choice of $f$. 

The equivalence $(\mathrm{K})_{\mathcal{C}_{\ast}} \Leftrightarrow (\mathrm{K})_{\mathcal{E}_{1}}$ can be obtained as follows. 
The implication $(\mathrm{K})_{\mathcal{E}_{1}} \Rightarrow (\mathrm{K})_{\mathcal{C}_{\ast}}$ is clear. 
Let us assume $(\mathrm{K})_{\mathcal{C}_{\ast}}$ and $f \in \mathcal{E}_{1}$. 
If $f \in \mathcal{C}_{\ast}$, then $L_{V}(f) \geq 0$ and $L_{V}(f) = 0$ if and only if $f$ is affine by the assumption $(\mathrm{K})_{\mathcal{C}_{\ast}}$. 
If $f \in \mathcal{E}_{1} \setminus \mathcal{C}_{\ast}$, then $L_{V}(f) = +\infty > 0$ by Proposition \ref{C*_NA_K_energy}. 

Finally, let us show $(J)_{\mathcal{F}} \Rightarrow (\mathrm{K})_{\mathcal{C}_{\ast}}$ in each case. 

Case 1: For the case $\mathcal{F} = \mathcal{E}_{1}$ or $\mathcal{F} = \mathcal{C}_{\ast}$, it is obvious from Propositions \ref{C*_NA_K_energy} and \ref{red_J_estimate}. 

Case 2: Let $\mathcal{F} = \mathcal{C}_{PL}$. 
It is sufficient to show that $L_{V}(f) \geq 0$ for any $f \in \widetilde{\mathcal{C}}_{*}$ and $L_{V}(f) = 0$ if and only if $f$ is affine. 
Choose a constant $\delta > 0$ so that 
\begin{align*}
L_{V}(f) \geq \delta\|f\|_{J}
\end{align*}
for any $f \in \mathcal{C}_{PL}$. 
Then by Proposition \ref{equiv_L1J} we have 
\begin{align*}
L_{V}(f) \geq \delta C_{2}\|f\|_{1, T}. 
\end{align*}
Let $f \in \widetilde{\mathcal{C}}_{*}$, and choose a sequence $\{f_{i}\}_{i=1}^{\infty}$ in $\mathcal{C}_{PL}$ satisfying the conditions (i), (ii), (iii) of Proposition \ref{appr_PL}. 
By conditions (i) and (iii), we have $f_{i} \in \widetilde{\mathcal{C}}_{PL}$ for any $i \in \mathbf{Z}_{> 0}$ and ${\displaystyle \lim_{i \to \infty}L_{V}(f_{i}) = L_{V}(f)}$. 
Since 
\begin{align*}
L_{V}(f_{i}) \geq \delta C_{2}\|f_{i}\|_{1, T} \geq 0
\end{align*}
for any $i \in \mathbf{Z}_{>0}$, we have
\begin{align*}
L_{V}(f) \geq \delta C_{2}\|f\|_{1, T} \geq 0 
\end{align*}
by Proposition \ref{L^1conti}. 
Suppose $L_{V}(f) = 0$. 
Then $\|f\|_{1, T} = 0$ and hence $f$ is affine. 

Case 3: Let $\mathcal{F} = \mathcal{C}_{PL}^{\mathbf{Q}}$. 
Then then we can reduce to the Case 2 as follows. 
Choose a constant $\delta > 0$ so that 
\begin{align*}
L_{V}(f) \geq \delta\|f\|_{J}
\end{align*}
for any $f \in \mathcal{C}_{PL}^{\mathbf{Q}}$. 
Let $f \in \mathcal{C}_{PL}$. 
By Proposition \ref{apprQPL}, there is a sequence $\{f_{i}\}_{i=1}^{\infty}$ in $\mathcal{C}_{PL}^{\mathbf{Q}}$ which converges uniformly to $f$ on $P$. 
Since 
\begin{align*}
L_{V}(f_{i}) \geq \delta \|f_{i}\|_{J}
\end{align*}
for any $i \in \mathbf{Z}_{>0}$, we have
\begin{align*}
L_{V}(f) \geq \delta\|f\|_{J}
\end{align*}
by Proposition \ref{red_J_estimate}. 
Hence $P$ satisfies the condition $(J)_{\mathcal{C}_{PL}}$. 

Case 4: Let $\mathcal{F} = \mathcal{C}_{\infty}$. 
Then, by using Proposition \ref{appr_smooth} in place of Proposition \ref{appr_PL}, we can also prove $(J)_{\mathcal{C}_{\infty}} \Rightarrow (\mathrm{K})_{\mathcal{C}_{\ast}}$ by the similar argument as Case 2. 

Case 5: Finally, let $\mathcal{F} = \mathcal{S}$. 
As well as Case 2, it is sufficient to show that $L_{V}(f) \geq 0$ for any $f \in \widetilde{\mathcal{C}}_{*}$ and $L_{V}(f) = 0$ if and only if $f$ is affine. 
Choose $\delta > 0$ so that 
\begin{align*}
L_{V}(u) \geq \delta\|u\|_{J} 
\end{align*}
for any $u \in \mathcal{S}$. 
Then by Proposition \ref{equiv_L1J} we have 
\begin{align*}
L_{V}(u) \geq \delta C_{2}\|u\|_{1, T}. 
\end{align*}
Let $f \in \widetilde{\mathcal{C}}_{*}$, and choose a sequence $\{f_{i}\}_{i=1}^{\infty}$ in $\mathcal{C}_{\infty} \cap C^{\infty}(P)$ satisfying the conditions (i), (ii), (iii) of Proposition \ref{appr_smooth}. 
Fix $u_{0} \in \widetilde{\mathcal{S}}$ arbitrarily. 
Then, for each $i \in \mathbf{Z}_{> 0}$ and $t \in (0, \infty)$, $u_{0}+tf_{i} \in \mathcal{S}$ and 
\begin{align*}
L_{V}\left(\frac{u_{0}+tf_{i}}{t}\right) 
\geq \delta C_{2}\left\|\frac{u_{0}+tf_{i}}{t}\right\|_{1, T} 
\geq 0. 
\end{align*}
By taking the limit as $t \to \infty$ and $i \to \infty$, we obtain 
\begin{align}\label{section3_DF_upbd}
L_{V}(f) \geq \delta C_{2}\|f\|_{1, T} \geq 0. 
\end{align}
Now suppose $L_{V}(f) = 0$. 
Then $\|f\|_{1, T} = 0$ and hence $f$ is affine. 

\begin{remark}
Let $\mathcal{B}$ be the set of all bounded lower semicontinuous convex functions defined on the whole of $P$. 
Then $\mathcal{B}$ is contained in $\mathcal{C}_{\ast}$ and consequently $(\mathrm{K})_{\mathcal{C}_{\ast}}$ implies $(\mathrm{K})_{\mathcal{B}}$. 
The latter condtion $(\mathrm{K})_{\mathcal{B}}$ can be found in \cite{LLS}.
\end{remark}

\section{Proof of Theorem \ref{suff}}
Finally in this section, we prove Theorem \ref{suff}. 
By the implication $(b)_{\mathcal{C}_{PL}^{\mathbf{Q}}} \Rightarrow (J)_{\mathcal{C}_{PL}^{\mathbf{Q}}}$ in Theorem \ref{equiv_strengthenings}, it is enough to show that $P$ satisfies the condition $(b)_{\mathcal{C}_{PL}^{\mathbf{Q}}}$. 
First suppose that $P$ satisfies 
\begin{align*}
\overline{s} + \max_{P}V \leq \frac{n+1}{d}. 
\end{align*}
Define a constant $\delta\ge 0$ by
\begin{align*}
\delta = 1 - \frac{d}{n+1}(\overline{s}+\max_{P}V). 
\end{align*}
Then, by Proposition \ref{integral_bound_C*} we have 
\begin{align*}
L_{V}(f) 
&= \int_{\partial P}f\,d\sigma - \int_{P}(\overline{s}+V)f\,dx \\
&\geq \int_{\partial P}f\,d\sigma - (\overline{s}+\max_{P}V)\int_{P}f\,dx \\
&\geq \int_{\partial P}f\,d\sigma - \frac{d}{n+1}(\overline{s}+\max_{P}V)\int_{\partial P}f\,d\sigma \\
&= \delta \int_{\partial P}f\,d\sigma
\end{align*}
 for any $f \in \widetilde{\mathcal{C}}_{PL}^{\mathbf{Q}}$. 
Hence, if $P$ satisfies \eqref{unif.stab} then $\delta > 0$ and $P$ satisfies the condition $(b)_{\mathcal{C}_{PL}^{\mathbf{Q}}}$. 
Now suppose that $P$ satisfies \eqref{unif.rel.stab}. 
In this case $\delta$ might be $0$, but at least we have 
\begin{align*}
L_{V}(f) \geq \delta \int_{\partial P}f\,d\sigma \geq 0 
\end{align*}
for any $f \in \widetilde{\mathcal{C}}_{PL}^{\mathbf{Q}}$. 
For contradiction, suppose that our conclusion does not hold. 
Then there exists a sequence $\{f_{i}\}_{i=1}^{\infty}$ in $\widetilde{\mathcal{C}}_{PL}^{\mathbf{Q}}$ such that 
\begin{align}\label{unitbdrL1}
\int_{\partial P}f_{i}\,d\sigma = 1
\end{align}
and 
\begin{align}\label{conv0}
\lim_{i \to \infty}L_{V}(f_{i}) = 0. 
\end{align}
Furthermore, we can find a subsequence $\{f_{i_{k}}\}_{k=1}^{\infty}$ of $\{f_{i}\}_{i=1}^{\infty}$ and $f \in \widetilde{\mathcal{C}}_{*}$ satisfying the conditions (i), (ii), (iii) of Proposition \ref{cptness}. 
Hence we have
\begin{align}\label{estDF}
L_{V}(f) 
\leq \liminf_{k \to \infty}L_{V}(f_{i_{k}}) 
= \lim_{k \to \infty}L_{V}(f_{i_{k}}) 
= 0. 
\end{align}
Now suppose $f \not \equiv 0$ on $P^{*}$. 
Then $U = \{x \in P^{*} \mid f(x) > 0\}$ is a nonempty open set of $P^{*}$. 
Since $V$ is a nonconstant affine function, it holds that $V < \max_{P}V$ almost everywhere on $P$. 
It follows that 
\begin{align*}
L_{V}(f) 
&= \int_{\partial P}f\,d\sigma - \int_{P}(\overline{s}+V)f\,dx \\
&> \int_{\partial P}f\,d\sigma - (\overline{s}+\max_{P}V)\int_{U}f\,dx \\
&\geq \delta \int_{\partial P}f\,d\sigma \\
&\geq 0, 
\end{align*}
which contradicts to \eqref{estDF}. 
Hence we have $f \equiv 0$ on $P^{*}$, and 
\begin{align}\label{convDF}
\lim_{k \to \infty}\int_{P}(\overline{s} + V)f_{i_{k}}\,dx = 0. 
\end{align}
On the other hand, by \eqref{unitbdrL1} and \eqref{conv0} we have 
\begin{align*}
\lim_{k \to \infty}\int_{P}(\overline{s} + V)f_{i_{k}}\,dx = 
\lim_{k \to \infty}\left(\int_{\partial P}f_{i_{k}}\,d\sigma - L_{V}(f_{i_{k}})\right) = 1, 
\end{align*}
which contradicts to \eqref{convDF}. 


\end{document}